\documentclass[a4paper]{article}

\usepackage[latin1]{inputenc}
\usepackage[T1]{fontenc}
\usepackage{amssymb,amsmath,amsthm,amscd,mathrsfs}
\usepackage[all]{xy}
\usepackage{color}
\usepackage[margin=3cm]{geometry}
\usepackage{listings}
\usepackage{color}
\usepackage{tikz}
\usetikzlibrary{trees}

\definecolor{dkgreen}{rgb}{0,0.6,0}
\definecolor{gray}{rgb}{0.5,0.5,0.5}
\definecolor{mauve}{rgb}{0.58,0,0.82}

\lstset{frame=tb,
  language=Python,
  aboveskip=3mm,
  belowskip=3mm,
  showstringspaces=false,
  columns=flexible,
  basicstyle={\small\ttfamily},
  numbers=none,
  numberstyle=\tiny\color{gray},
  keywordstyle=\color{blue},
  commentstyle=\color{dkgreen},
  stringstyle=\color{mauve},
  breaklines=true,
  breakatwhitespace=true,
  tabsize=3
}

\newtheorem{thm}{Theorem}[section]
\newtheorem{defi}[thm]{Definition}
\newtheorem{prop}[thm]{Proposition}
\newtheorem{lemme}[thm]{Lemma}
\newtheorem{cor}[thm]{Corollary}

\newtheorem{defipro}[thm]{Definition-Proposition}
\newtheorem{nota}[thm]{Notation}
\newtheorem{hypo}[thm]{Hypothesis}
\newtheorem{erra}[thm]{Erratum}

\theoremstyle{remark}
\newtheorem{remark}[thm]{Remark}
\newtheorem{rmk}[thm]{Remark}
\newtheorem{ex}[thm]{Example}

\DeclareMathOperator{\Z}{\mathbb{Z}}

\DeclareMathOperator{\N}{\mathbb{N}}

\DeclareMathOperator{\codim}{codim}

\DeclareMathOperator{\Pic}{Pic}
\DeclareMathOperator{\rk}{rk}

\DeclareMathOperator{\Fix}{Fix}
\DeclareMathOperator{\SpFix}{SpFix}
\DeclareMathOperator{\Supp}{Supp}

\DeclareMathOperator{\Aut}{Aut}

\DeclareMathOperator{\id}{id}

\DeclareMathOperator{\Prym}{Prym}

\DeclareMathOperator{\Ker}{Ker}

\DeclareMathOperator{\diag}{diag}
\DeclareMathOperator{\Sing}{Sing}

\DeclareMathOperator{\GL}{GL}
\DeclareMathOperator{\pr}{pr}
\DeclareMathOperator{\Q}{\mathbb{Q}}

\DeclareMathOperator{\C}{\mathbb{C}}

\DeclareMathOperator{\Sy}{\mathfrak{S}}
\DeclareMathOperator{\Ay}{\mathfrak{A}}
\DeclareMathOperator{\Ab}{Ab}
\DeclareMathOperator{\by}{\mathfrak{b}}

\DeclareMathOperator{\SU}{SU}
\DeclareMathOperator{\SL}{SL}

\DeclareMathOperator{\Out}{Out}
\DeclareMathOperator{\AGL}{AGL}

\DeclareMathOperator{\tr}{tr}
\DeclareMathOperator{\inv}{inv}
\DeclareMathOperator{\Agam}{A\Gamma L}

\DeclareMathOperator{\SmallGroup}{\textbf{SmallGroup}}

\newcommand{\eq}[1][r]
{\ar@<-3pt>@{-}[#1]
\ar@<-1pt>@{}[#1]|<{}="gauche"
\ar@<+0pt>@{}[#1]|-{}="milieu"
\ar@<+1pt>@{}[#1]|>{}="droite"
\ar@/^2pt/@{-}"gauche";"milieu"
\ar@/_2pt/@{-}"milieu";"droite"}

\newcommand{\incl}[1][r]
  {\ar@<-0.2pc>@{^(-}[#1] \ar@<+0.2pc>@{-}[#1]}

\begin{document}
\title{\bf Thirty-three deformation classes of compact hyperkähler orbifolds}
\author{Grégoire \textsc{Menet}} 

\maketitle
\begin{center}
\emph{In honor of Professor Dimitri Markushevich for his 60 (+2) birthday}
\end{center}
\begin{abstract}
As their smooth analogue the irreducible symplectic varieties appear as elementary bricks in the generalizations of the Beauville--Bogomolov decomposition theorem (\cite{Bakker}, \cite{Campana}).
Generalizing the Fujiki construction \cite{Fujiki}, we investigate the irreducible symplectic varieties with simply connected smooth locus that can be obtained as terminalizations of quotients of the product $S^n$, where $S$ is a K3 surface.
In dimension 4, we compute the singularities for 29 orbifolds examples which appear to not be deformation equivalent. We also provide 4 additional orbifolds examples in dimension 6.
\end{abstract}
\section{Introduction}
\subsection{Motivation and main results}
Irreducible symplectic orbifolds (or compact hyperkähler orbifolds) can be seen as the simplest generalization of compact hyperkähler manifolds;
they are orbifolds with a simply connected smooth locus carrying a unique (up to scalar) non-degenerate holomorphic 2-form and they have a singular locus in codimension 4. 
In the last few years, the theory of irreducible symplectic (IHS) orbifolds have been developed especially with the generalization of the global Torelli theorem \cite{Lol} and of the Kähler cone knowledge \cite{Ulrike}. In this context, it appears natural to search for examples of such orbifolds. In a more general perspective, the irreducible symplectic orbifolds are a particular case of the irreducible symplectic (IHS) varieties which are getting attention in the last years especially because of recent important generalizations as the Beauville--Bogomolov decomposition theorem \cite{Bakker} and the global Torelli theorem \cite{Bakker2}.

In order to produce examples of irreducible symplectic orbifolds a huge amount of combinations seems possible to explore. Let $X$ be a projective symplectic manifold endowed with a finite automorphism group $G$ such that $X/G$ has a unique (up to scalar) non-degenerate holomorphic 2-form on its smooth locus. Let $Y\rightarrow X/G$ be a terminalization (see Section \ref{Terminalsection} for definition and existence). Let $Y_{reg}$ be the smooth locus of $Y$. If we assume that $\pi_1(Y_{reg})=0$, then $Y$ is an irreducible symplectic variety with singular locus of codimension 4 (see Definition \ref{Defi0} and Proposition \ref{basisi}). Therefore, it is natural to ask which conditions we need on $X$ and $G$ in order to have $\pi_1(Y_{reg})=0$. 
\begin{prop}\label{mainprop}
Assume that $X$ is simply connected then $\pi_1(Y_{reg})=0$ if and only if $G$ is generated by automorphisms with fixed locus of codimension 2.
\end{prop}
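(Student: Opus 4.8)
The plan is to establish the sharper statement $\pi_1(Y_{reg})\cong G/N$, where $N\trianglelefteq G$ is the normal subgroup generated by all automorphisms with fixed locus of codimension $2$; the proposition then follows by taking $N=G$. First I would record that $G$ acts by symplectic automorphisms of $X$: the $2$-form prescribed on the smooth locus of $X/G$ pulls back, and extends by Hartogs, to a $G$-invariant non-degenerate holomorphic $2$-form on $X$; consequently every $g\in G\setminus\{\id\}$ has $\codim_X\Fix(g)$ \emph{even}, hence $\geq 2$. Setting $\Sigma=\bigcup_{g\ne\id}\Fix(g)$, a $G$-stable closed analytic subset of complex codimension $\geq 2$, the Chevalley--Shephard--Todd theorem (there are no symplectic quasi-reflections) gives $(X/G)_{reg}=(X\setminus\Sigma)/G$, so $X\setminus\Sigma\to(X/G)_{reg}$ is an \'etale Galois cover with group $G$. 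Since $X$ is simply connected and $\Sigma$ has real codimension $\geq 4$, removing it does not change $\pi_1$, so $\pi_1(X\setminus\Sigma)=\pi_1(X)=0$ and therefore $\pi_1((X/G)_{reg})\cong G$.

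Next I would compare $Y_{reg}$ with $(X/G)_{reg}$. The terminalization $f\colon Y\to X/G$ is crepant, hence an isomorphism over $(X/G)_{reg}$: an $f$-exceptional divisor centred in the smooth locus would have discrepancy $\geq 1$, and a birational morphism onto a smooth variety contracting no divisor is an isomorphism by purity of the exceptional locus. Since $Y$ is $\Q$-factorial, the exceptional locus of $f$ is a union of prime divisors $E_1,\dots,E_k$; none of them lies in $\Sing(Y)$ (which has codimension $\geq 2$), so each $E_i$ meets $Y_{reg}$, and $f(E_i)\subseteq(X/G)_{sing}$. Thus, inside the \emph{smooth} manifold $Y_{reg}$, the open subset $f^{-1}((X/G)_{reg})\cong(X/G)_{reg}$ is exactly the complement of the divisor $\bigcup_i(E_i\cap Y_{reg})$. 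By the standard presentation of the fundamental group of the complement of a divisor in a smooth variety, $\pi_1((X/G)_{reg})\to\pi_1(Y_{reg})$ is surjective with kernel normally generated by the meridians $\mu_1,\dots,\mu_k$ of the $E_i$; combined with the previous paragraph this gives $\pi_1(Y_{reg})\cong G/M$, where $M\trianglelefteq G$ is generated by the images of the $\mu_i$ under the identification $\pi_1((X/G)_{reg})\cong G$.

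Finally I would identify $M$ with $N$. For $N\subseteq M$, work at a general point $x$ of a codimension-$2$ component $C$ of $\Sigma$: there the stabilizer $G_x\subset\SL(2,\C)$ fixes no nonzero normal vector, $X/G$ is analytically a transverse ADE surface singularity $(\C^2/G_x)\times(\text{polydisc})$, and $f$ restricts to its (transverse) minimal resolution. As that resolution is simply connected while the local smooth locus has fundamental group $G_x$, the meridians of the $E_i$ over $C$ normally generate $G_x$; hence $G_x\subseteq M$, and since every nontrivial element of $G_x$ has fixed locus of codimension $2$ (it contains $C$) we also get $G_x\subseteq N$. Letting $C$ vary yields $N\subseteq M$, because each $g$ with $\codim\Fix(g)=2$ lies in $G_x$ for a general point $x$ of a codimension-$2$ component of $\Fix(g)$. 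For the reverse inclusion $M\subseteq N$ one must show the meridian of \emph{every} exceptional divisor lies in $N$ --- including those over the deep strata of $(X/G)_{sing}$, of codimension $\geq 4$ --- and this is where I expect the real work to be. My plan is to use that such an $E_i$, being crepant, is over a general point of $f(E_i)$ a discrepancy-zero divisor on $\C^{2n}/G_x$, hence corresponds by the McKay correspondence (Ito--Reid, Batyrev, Denef--Loeser) to a junior (age-$1$) conjugacy class of $G_x$, its meridian being conjugate to a representative of that class; and a finite-order symplectic automorphism has age $1$ precisely when its fixed locus has codimension $2$, so $\mu_i\in N$. Alternatively one can sidestep the McKay input over the deep strata by an induction on $\dim X$, applying the proposition to the analytic-local model $\C^{2n}/G_x$. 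Granting $M=N$, we get $\pi_1(Y_{reg})\cong G/N$, which vanishes if and only if $G=N$, i.e.\ if and only if $G$ is generated by automorphisms with fixed locus of codimension $2$.
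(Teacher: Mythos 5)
Your route is genuinely different from the paper's and most of it is sound. You compute $\pi_1(Y_{reg})$ ``from below'' as $G/M$, where $M$ is the normal closure of the meridians of the crepant exceptional divisors inside $\pi_1((X/G)_{reg})\cong G$, and then try to identify $M$ with the normal subgroup $N$ generated by elements with fixed locus of codimension $2$. The preliminary steps are correct: there are no symplectic quasi-reflections, so $(X/G)_{reg}=(X\smallsetminus\Sigma)/G$ and $\pi_1((X/G)_{reg})\cong G$; crepancy forces $f$ to be an isomorphism over $(X/G)_{reg}$; and the inclusion $N\subseteq M$ via the transverse ADE model at a general point of a codimension-$2$ component of $\Sigma$ is fine. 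The paper instead works ``from above'': it identifies $\pi_1(Y_{reg})$ with $\pi_1\bigl(X/G\smallsetminus f(\Sing Y)\bigr)$ using Koll\'ar's theorem on fundamental groups of resolutions and Yamagishi's bound on the codimension of $Y_{reg}\smallsetminus Y^o$, settles one implication with Fujiki's computation of $\pi_1$ of a quotient minus a codimension-$4$ set, and settles the converse by exhibiting an explicit nontrivial \'etale cover of $X/G\smallsetminus f(\Sing Y)$; the key input there is Verbitsky's theorem that $\C^n/G_x$ admits no crepant resolution when $G_x$ is not generated by symplectic reflections, which guarantees that the offending points survive into $f(\Sing Y)$ and are therefore removed.

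The genuine gap in your argument is exactly where you place it: the inclusion $M\subseteq N$ for exceptional divisors whose image lies in the deep strata of $(X/G)_{\mathrm{sing}}$. What you need is not merely the bijection between crepant divisorial valuations over $\C^{2n}/G_x$ and junior conjugacy classes (which is in Ito--Reid), but the further statement that the \emph{meridian} of such a divisor, read in $\pi_1\bigl((\C^{2n}/G_x)_{reg}\bigr)\cong G_x$, is conjugate to a representative of the corresponding junior class. Batyrev and Denef--Loeser compute stringy invariants and say nothing about meridians, so the citation does not cover the statement; it can be extracted from Ito--Reid's explicit weighted-blowup construction, but that is a substantial verification you have not supplied. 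Your fallback --- ``induction on $\dim X$ applied to the local model $\C^{2n}/G_x$'' --- does not work as stated: the local model has the same dimension, is non-compact, and carries no symplectic manifold to which the proposition applies, so at best one could induct on $|G_x|$ or on the depth of the stratification, and that argument is not sketched. This is precisely the difficulty the paper's proof is engineered to avoid: rather than determining \emph{which} group elements the deep meridians are, it only needs to know that points whose stabilizer is not generated by symplectic reflections remain singular in any crepant partial resolution (Verbitsky), so that the relevant \'etale cover is unaffected by them. If you want to complete your approach, either prove the meridian--junior identification carefully or replace the whole $M\subseteq N$ step by the Verbitsky argument.
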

This proposition corresponds to Proposition \ref{fonda} in Section \ref{fundasection}.

Already in dimension 4, several cases of the previous simple construction can be studied. The variety $X$ can be:
\begin{itemize}
\item[(1)]
the product of two K3 surfaces;
\item[(2)]
a hyperkähler manifold of $K3^{[2]}$-type (deformation equivalent to a Hilbert scheme of 2 points on a K3 surface);
\item[(3)]
a hyperkähler fourfold of Kummer type (deformation equivalent to a generalized Kummer fourfold);
\item[(4)]
a complex torus of dimension 4 (however Proposition \ref{mainprop} do not apply in this case).
\end{itemize}
Forty years ago, Fujiki partially investigated the case (1) in \cite[Section 13]{Fujiki}. The main goal of this paper is to generalize and complete the Fujiki investigations. 

Fujiki has provided his construction in dimension 4. It can be generalized for every dimension as follows. 
Let $S$ be a projective K3 surface and $G$ a finite non-trivial symplectic automorphism group. Let $\theta:G\rightarrow G$ be an involution (which can be the identity). Let $n\geq 2$ be an integer.
We set $j_{\theta}:G\hookrightarrow \Aut(S^n)$ defined by
\begin{equation} 
j_{\theta}(g)(x_1,x_2,x_3,...,x_n)=(g(x_1),\theta(g)(x_2),x_3,...,x_n).
\label{jtheta}
\end{equation}
This means that $j_{\theta}(g)$ is given by the diagonal action with $g$ on the first factor, $\theta(g)$ on the second and the identity on the other factors. 
The permutation group $\mathfrak{S}_n$ also acts naturally on $S^n$. 
\begin{defi}\label{Fujikiconstruc}
Let 
$$Y\rightarrow S^n/\left\langle j_{\theta}(G),\mathfrak{S}_n\right\rangle$$
be a Kähler terminalization of the quotient $S^n/\left\langle j_{\theta}(G),\mathfrak{S}_n\right\rangle$.
Such a variety $Y$ is called a \emph{Fujiki variety}, moreover we denote $S(G)_{\theta}^{[n]}:=Y$. This construction of varieties is called the \emph{Fujiki construction}.
\end{defi}
\begin{rmk}
Note that a projective (and thus Kähler) terminalization of $S^n/\left\langle j_{\theta}(G),\mathfrak{S}_n\right\rangle$ always exists from \cite{Birkar}.
\end{rmk}
\begin{rmk}
Note that all the Fujiki varieties are primitively symplectic varieties with singularities in codimension 4 (see Definition \ref{Primitivy} and Proposition \ref{primimi}).
\end{rmk}
\begin{rmk}
Since $S(G)_{\theta}^{[n]}$ necessarily contains at least one exceptional divisor (see Proposition \ref{terminal}),
the Fujiki varieties consist in a strict sub-space of their space of deformations; therefore, these deformations are generically new objects.
\end{rmk}
\begin{rmk}
In the previous definition of Fujiki varieties, we did not emphasize the choice of the terminalization for $S^n/\left\langle j_{\theta}(G),\mathfrak{S}_n\right\rangle$; this is because this choice does not change the deformation class of the variety as explained by Proposition \ref{bimero}. 

The deformation class of $S(G)_{\theta}^{[n]}$ only depends of $\theta$, $n$ and the deformation class of $(S,G)$. To be more precise, if $(S,G)$ and $(S',G')$ are deformation equivalent, given an involution $\theta$ on $G$, we can find an involution $\theta'$ on $G'$ such that $S(G)^{[n]}_{\theta}$ and $S(G')^{[n]}_{\theta'}$ are deformation equivalent. The deformation classes of the couples $(S,G)$ are classified in \cite{xiao}.
\end{rmk}
It is natural to ask when $S(G)_{\theta}^{[n]}$ is an irreducible symplectic variety (see Definition \ref{Defi0}). In particular, note that a primitively symplectic variety with rational singularities and simply connected smooth locus is irreducible symplectic (see Proposition \ref{basisi}).
In order to study this property, we introduce the following definition.
\begin{defi}\label{validintro}
An involution $\theta:G\rightarrow G$ is said \emph{valid} if there exist a family of generators $(g_1,...,g_k)$ of $G$ such that $\theta(g_i)=g_i^{-1}$ for all $i\in\left\{1,...,k\right\}$.
\end{defi}
As a consequence of Proposition \ref{mainprop}, we have the following Corollaries. We distinguish the case $n=2$ and the case $n\geq3$.
\begin{cor}\label{corirr}
A Fujiki variety $S(G)_{\theta}^{[2]}$ is an irreducible symplectic variety with simply connected smooth locus if and only if $\theta$ is valid.
\end{cor}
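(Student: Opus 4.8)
The plan is to deduce this directly from Proposition \ref{mainprop} applied to $X = S^2$ with $G' = \langle j_\theta(G), \mathfrak{S}_2\rangle \subset \Aut(S^2)$. First I would note that $X = S^2$ is simply connected, so Proposition \ref{mainprop} tells us that $Y = S(G)_\theta^{[2]}$ has $\pi_1(Y_{reg}) = 0$ if and only if $G'$ is generated by automorphisms whose fixed locus has codimension $2$ in $S^2$; combined with Proposition \ref{basisi} (primitively symplectic with rational singularities plus simply connected smooth locus implies irreducible symplectic), this reduces the Corollary to a purely group-theoretic/geometric statement: $G'$ is generated by codimension-$2$-fixed-locus elements if and only if $\theta$ is valid.

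Next I would classify the elements of $G'$ with fixed locus of codimension $2$ (i.e.\ a surface) in $S^2$. An element is either of the form $j_\theta(g)$ acting as $(x_1,x_2)\mapsto (g x_1, \theta(g) x_2)$, or of the form $(x_1,x_2)\mapsto (g x_2, \theta(g) x_1)$ composed with the swap. For the non-swap type $j_\theta(g)$ with $g\neq \id$: since $G$ acts symplectically on the K3 surface $S$, any nontrivial $g$ has only finitely many fixed points on $S$, so $\Fix(j_\theta(g))$ is a finite set of points (dimension $0$) unless $\theta(g) = \id$, in which case it is $\Fix(g)\times S$, which is $2$-dimensional — codimension $2$. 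So the non-swap elements of codimension-$2$ fixed locus are exactly the $j_\theta(g)$ with $\theta(g) = \id$, i.e.\ $g \in \Ker\theta$, and these generate $j_\theta(\Ker\theta)$. For the swap type: an element $\sigma\circ j_\theta(g)$ sends $(x_1,x_2)\mapsto(\theta(g)x_2, g x_1)$; its fixed locus is $\{(x, g x) : \theta(g) g\, x = x\}$, which is the graph of $g$ restricted to $\Fix(\theta(g)g)$. This graph is a surface (codimension $2$) precisely when $\theta(g)g = \id$, i.e.\ $\theta(g) = g^{-1}$; otherwise $\Fix(\theta(g)g)$ is finite and the fixed locus is $0$-dimensional. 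Hence the codimension-$2$ swap-type elements are exactly $\sigma \circ j_\theta(g)$ with $\theta(g) = g^{-1}$.

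Finally I would assemble the equivalence. The subgroup $H \subset G'$ generated by all codimension-$2$-fixed elements contains $j_\theta(\Ker\theta)$ and all $\sigma j_\theta(g)$ with $\theta(g) = g^{-1}$. Products of two swap-type generators $\big(\sigma j_\theta(g)\big)\big(\sigma j_\theta(h)\big)$ land in the non-swap part and equal $j_\theta$ of some combination; conjugating $\sigma j_\theta(h)$ by $j_\theta(k)$ for $k\in\Ker\theta$ stays in $H$, etc. The key point: $H$ projects onto all of $G'$ (equivalently $H = G'$, since $H\ni\sigma j_\theta(g)$ for such $g$ and one checks $\mathfrak{S}_2\subset H$ when $G$ is nontrivial, using $\id\in G$ with $\theta(\id)=\id^{-1}$) if and only if the images of these generators generate $G$ under the first-factor projection — which says precisely that $G$ is generated by elements $g$ with $\theta(g) = g^{-1}$ (the $\Ker\theta$ generators contribute elements fixed by $\theta$, and $g$ fixed with $\theta(g)=g$ satisfies $g = g^{-1}$ only if $g^2=\id$, so these are subsumed), i.e.\ that $\theta$ is valid. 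I expect the main obstacle to be the careful bookkeeping in this last step: showing that the subgroup generated by the codimension-$2$ elements of $G'$ is all of $G'$ exactly under the validity hypothesis, in particular verifying that the swap $\sigma$ itself is always recovered (taking $g = \id$) so that no genuine constraint is lost on the $\mathfrak{S}_2$-part, and that the condition "$G$ generated by $g$ with $\theta(g)=g^{-1}$" is genuinely equivalent to Definition \ref{validintro} rather than something a priori weaker.
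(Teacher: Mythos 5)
Your proof is correct in substance, but it takes a more direct route than the paper. The paper disposes of the corollary in two lines by citing Theorem \ref{quotientsvalide} for the ``if'' direction and Claim 7 of its proof for the ``only if'' direction; that theorem classifies \emph{all} finite groups $\mathcal{G}\subset\Aut(S^n)$ whose quotient terminalizes to an IHS variety with simply connected smooth locus, which is far more than the corollary needs. You instead exploit that $\mathcal{G}=\left\langle j_{\theta}(G),\mathfrak{S}_2\right\rangle$ is already known explicitly: after reducing via Propositions \ref{fonda} and \ref{basisi} to the statement ``$\mathcal{G}$ is generated by its elements with fixed locus of codimension $2$ iff $\theta$ is valid,'' you simply list those elements. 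This is exactly the content of Remark \ref{fixed} and Lemma \ref{surfaces0} in the paper, and your computation of the fixed locus of $s_0\circ j_{\theta}(g)$ as the graph of $g$ over $\Fix(\theta(g)\circ g)$ matches theirs. The subgroup $H$ they generate contains $s_0$ (take $g=\id$), hence contains $j_{\theta}(g)$ for every $g\in F=\{g:\theta(g)=g^{-1}\}$, and since $F$ is $\theta$-stable one gets $H=j_{\theta}(\left\langle F\right\rangle)\rtimes\mathfrak{S}_2$, so $H=\mathcal{G}$ iff $\left\langle F\right\rangle=G$ iff $\theta$ is valid. What your approach buys is independence from the heavy machinery of Theorem \ref{quotientsvalide} (in particular from the primitivity discussion); what the paper's approach buys is that the same claims simultaneously prove the much stronger Theorem \ref{geneth}.

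One small correction: your discussion of non-swap elements with $\theta(g)=\id$ is vacuous. An involution $\theta$ of $G$ is a group automorphism, hence injective, so $\Ker\theta=\{\id\}$ and there are \emph{no} nontrivial elements $j_{\theta}(g)$ with two-dimensional fixed locus; every nontrivial non-swap element fixes the finite set $\Fix(g)\times\Fix(\theta(g))$. Consequently the ``subsumption'' argument in your last paragraph (about generators fixed by $\theta$) is not needed, and the phrase ``$g$ fixed with $\theta(g)=g$'' conflates $\theta(g)=\id$ with $\theta(g)=g$; neither case contributes. Deleting that branch leaves a clean and complete argument.
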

\begin{cor}\label{corirr2}
Let $n\geq3$ be an integer. All the Fujiki varieties $S(G)_{\theta}^{[n]}$ are irreducible symplectic varieties with simply connected smooth locus.
\end{cor}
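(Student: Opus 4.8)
The plan is to apply Proposition \ref{mainprop} to the group $G'=\langle j_{\theta}(G),\mathfrak{S}_n\rangle\subset\Aut(S^n)$ acting on $X=S^n$. Since $S$ is simply connected, so is $S^n$, so Proposition \ref{mainprop} tells us that $\pi_1(Y_{reg})=0$ precisely when $G'$ is generated by automorphisms whose fixed locus has codimension $2$ in $S^n$. Combined with Proposition \ref{basisi} and the remark that Fujiki varieties are primitively symplectic with (rational, indeed terminal) singularities in codimension $4$, this will give the irreducible symplectic conclusion. So the whole content is: for $n\geq 3$, the group $G'$ is always generated by such codimension-$2$ elements, with \emph{no} hypothesis on $\theta$.

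First I would handle the transpositions. For $n\geq 2$ the transposition $\tau_{ij}\in\mathfrak{S}_n$ has fixed locus $\{x_i=x_j\}$ in $S^n$, which is a copy of $S^{n-1}$, hence of codimension $2$. Since $\mathfrak{S}_n$ is generated by transpositions, all of $\mathfrak{S}_n\subset G'$ is generated by codimension-$2$ elements. It then remains only to realize each generator $j_{\theta}(g)$, $g\in G$, as a product of codimension-$2$ elements of $G'$. Here is where $n\geq 3$ is used: consider the element $h_g:=j_{\theta}(g)\circ j_{\id}(g)^{-1}$ — more concretely, I would look at the automorphism $j_{\id}(g)$, which acts by $g$ on the first factor, $g$ on the second, and trivially elsewhere, and compare it with suitable conjugates by permutations. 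Conjugating $j_{\theta}(g)$ (resp. $j_{\id}(g)$) by transpositions moves the two ``active'' coordinates around; because $n\geq 3$ there is a third coordinate available on which to ``park'' things, and a short computation shows that $j_{\theta}(g)$ can be written as a product of elements each of which acts nontrivially on only one $S$-factor (an automorphism of the form $(x_1,\dots,x_n)\mapsto(\dots,g(x_k),\dots)$), together with transpositions. An automorphism of $S^n$ acting as a nontrivial symplectic automorphism $g$ of finite order on a single factor and trivially on the others has fixed locus $\Fix(g)\times S^{n-1}$; since $g$ is a symplectic automorphism of a K3 surface its fixed locus is a finite nonempty set of points, so $\Fix(g)\times S^{n-1}$ has codimension $2$ in $S^n$. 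Thus every such ``single-factor'' element is a codimension-$2$ automorphism, and assembling these (with the transpositions) exhibits the full set of generators of $G'$ as products of codimension-$2$ elements.

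The main obstacle — and the reason $n=2$ is excluded and treated separately in Corollary \ref{corirr} — is exactly the middle step: expressing $j_{\theta}(g)$ via single-factor automorphisms. For $n\geq 3$ one has room to do this for \emph{every} $g$ and every involution $\theta$; the identity along the lines of $j_{\theta}(g) = \big(\text{single-factor }g\text{ on coord }1\big)\cdot\big(\text{single-factor }\theta(g)\text{ on coord }2\big)$ works once we check that each single-factor piece genuinely lies in $G'$, which is where conjugation by the available transpositions (needing a spare index) comes in. For $n=2$ no such spare index exists, $j_{\theta}(g)$ need not decompose this way, and one is forced back to the validity condition on $\theta$; this contrast is what I would make precise. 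I would close by noting simple-connectedness of $S^n$ and invoking Proposition \ref{mainprop} to conclude $\pi_1(Y_{reg})=0$, hence (with Proposition \ref{basisi}) that $S(G)_{\theta}^{[n]}$ is irreducible symplectic.
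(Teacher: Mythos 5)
Your overall strategy (reduce to showing that $\mathcal{G}=\langle j_{\theta}(G),\mathfrak{S}_n\rangle$ is generated by automorphisms with fixed locus of codimension $2$, then invoke Propositions \ref{fonda} and \ref{basisi}) is the right one, and the observation about transpositions is correct. But the middle step is wrong as stated. You propose to write $j_{\theta}(g)$ as a product of ``single-factor'' automorphisms $(x_1,\dots,x_n)\mapsto(\dots,g(x_k),\dots)$ together with transpositions. For this to be a generation argument each factor must itself lie in $\mathcal{G}$, and in general it does not: by Lemmas \ref{n>2bis} and \ref{n>2}, when $G$ is abelian and $\theta$ is the inverse involution (the main case), $\mathcal{G}$ contains \emph{no} nontrivial trivial complex reflexion, so the only single-factor element of $\mathcal{G}$ is the identity, while $j_{\theta}(g)=(g,g^{-1},\id,\dots,\id)$ is certainly not a product of transpositions. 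Note also that a product such as $(g,\id,\dots,\id)\circ(\id,\theta(g),\id,\dots,\id)$, even when it happens to lie in $\mathcal{G}$, has fixed locus $\Fix(g)\times\Fix(\theta(g))\times S^{n-2}$, of codimension $4$, so this kind of decomposition could not certify codimension-$2$ generation in any case.

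The codimension-$2$ elements that actually do the work are of a third kind: $(1,2)\circ(g,g^{-1},\id,\dots,\id)$, whose fixed locus $\left\{(x,g(x),y_3,\dots,y_n)\right\}$ has codimension $2$ for \emph{every} $g$. The correct use of your ``parking on a third coordinate'' idea, and the place where $n\geq 3$ enters, is to show that $(g\circ\theta(g),\id,\dots,\id)\in\mathcal{G}$ — this is exactly the computation in the proof of Lemma \ref{n>2}. From it one gets $(g,g^{-1},\id,\dots,\id)=j_{\theta}(g)\circ(\id,g\circ\theta(g),\id,\dots,\id)^{-1}\in\mathcal{G}$ and hence a decomposition of $j_{\theta}(g)$ as $(1,2)$ times $(1,2)\circ(g,g^{-1},\id,\dots,\id)$ times $(\id,g\circ\theta(g),\id,\dots,\id)$, all three being codimension-$2$ elements of $\mathcal{G}$ (the last because a nontrivial finite-order symplectic automorphism of a K3 surface has a nonempty set of isolated fixed points). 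The paper avoids this direct computation altogether: it first uses Lemma \ref{n>2} to replace $(S,G,\theta)$ by $(\Sigma,G',\theta')$ with $G'$ abelian and $\theta'$ valid without changing the quotient, and then applies Theorem \ref{quotientsvalide}, whose ``if'' direction exhibits the generators $(1,2)\circ j_{\theta'}(g_i)$.
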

These corollaries are proved in Section \ref{Fujikivar}.

In some sense the Fujiki construction is fully general.
\begin{thm}\label{geneth}
Let $\mathcal{G}$ be a finite automorphism group of $S^n$. Let $Y\rightarrow S^n/\mathcal{G}$ be a terminalization. If $Y$ is an irreducible symplectic variety with simply connected smooth locus, then there exists:
\begin{itemize}
\item
 a K3 surface $\Sigma$, 
\item 
a finite symplectic automorphism group $G$ on $\Sigma$ which is abelian when $n\geq3$,
\item
a valid involution $\theta$,
\end{itemize}
such that all Kähler terminalizations of $\Sigma^n/\left\langle j_{\theta}(G),\mathfrak{S}_n\right\rangle$ are deformation equivalent to $Y$.
\end{thm}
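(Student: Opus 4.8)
The plan is to analyze the automorphism group $\mathcal{G} \subset \Aut(S^n)$ by exploiting the structure of $\Aut(S^n)$ and then using Proposition \ref{mainprop} to control which subgroups can give simply connected smooth locus. First I would recall that for a K3 surface $S$ (or more generally when $S$ has no special symmetries), $\Aut(S^n)$ is an extension $1 \to \Aut(S)^n \to \Aut(S^n) \to \mathfrak{S}_n \to 1$; in general, the factors could be permuted only among isomorphic ones, but after passing to the orbit decomposition of $\{1,\dots,n\}$ under the image of $\mathcal{G}$ in $\mathfrak{S}_n$, each orbit corresponds to a bunch of mutually isomorphic K3 factors, and since we want a single product $\Sigma^n$ at the end we may identify them. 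Write $\mathcal{G}_0 = \mathcal{G} \cap \Aut(S)^n$ for the ``diagonal part'' and $P = \Image(\mathcal{G} \to \mathfrak{S}_n)$ for the permutation part. The first key step is to show that, because the quotient must carry a holomorphic symplectic form on its smooth locus (it is irreducible symplectic), the componentwise automorphisms appearing in $\mathcal{G}_0$ must act symplectically on each factor (a non-symplectic automorphism on some factor would kill the $2$-form after quotient, or produce the wrong Hodge structure); this pins down $\mathcal{G}_0$ inside $(\text{symplectic automorphisms of } S)^n$.

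The second step is the heart of the matter: applying Proposition \ref{mainprop} with $X = S^n$ (which is simply connected) and $G = \mathcal{G}$, the hypothesis $\pi_1(Y_{reg}) = 0$ forces $\mathcal{G}$ to be generated by elements whose fixed locus in $S^n$ has codimension $2$. An element $\sigma \in \Aut(S^n)$ with nontrivial permutation part moving $k \geq 2$ indices has fixed locus of codimension at least $2(k-1) \geq 2$, with equality exactly when it is a single transposition composed with a diagonal automorphism that is ``compatible'' on the two swapped factors; an element of $\mathcal{G}_0$ acting by $(g_1,\dots,g_n)$ has fixed locus of codimension $\sum_i \codim \Fix(g_i)$, which is $2$ iff exactly one $g_i$ is a non-identity symplectic involution (its fixed locus on a K3 being $8$ points, codimension $2$) and the rest are $\id$. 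So the generating set of $\mathcal{G}$ consists of: transpositions (with a twist) swapping two factors, and symplectic involutions supported on a single factor. I would then show that the transpositions generate (together with the conjugation action) exactly a copy of $\mathfrak{S}_n$ acting in the standard way on $\Sigma^n$ for a suitable common K3 surface $\Sigma$ — here one uses that conjugating a single-factor involution by a transposition moves it to another factor, forcing all factors to be ``the same'' with the same group $G$ acting, and the twist in the transpositions gets absorbed into the choice of identification, yielding $j_\theta(g)$-type elements and the involution $\theta$ on $G$.

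The third step is to extract $\theta$ and the abelianness/validity. The single-factor symplectic involutions, once we have normalized the permutation action to be standard $\mathfrak{S}_n$, generate a group $G \subset \Aut(\Sigma)$ acting symplectically; the way the factors are glued by the twisted transpositions determines an automorphism $\theta$ of $G$ which is an involution (since transpositions square to the identity), and the fact that $\mathcal{G}$ is generated by these codimension-$2$ elements translates precisely into the statement that $\theta$ is valid in the sense of Definition \ref{validintro} (each generator $g_i$ of $G$ arising this way satisfies $\theta(g_i) = g_i^{-1}$, because the corresponding twisted transposition, being an involution of order $2$ on $S^n$, forces $\theta(g_i) g_i = \id$). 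When $n \geq 3$ the abelianness of $G$ follows as in the analysis behind Corollary \ref{corirr2} / Theorem \ref{geneth}'s hypotheses: a valid involution on $G$ with $n\ge 3$ factors to permute forces relations (via the braid-type relations among transpositions interacting with the single-factor involutions) that make $G$ abelian — concretely, conjugating generators by disjoint transpositions and using that the resulting elements must commute inside $\Aut(S^3)$. Finally, assembling these pieces gives the $\mathfrak{S}_n$-equivariant isomorphism $S^n/\mathcal{G} \simeq \Sigma^n/\langle j_\theta(G), \mathfrak{S}_n\rangle$.

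The main obstacle I anticipate is the second step — proving that the permutation part of $\mathcal{G}$ must be the \emph{full} symmetric group $\mathfrak{S}_n$ (not merely a transitive or generated-by-transpositions subgroup) and that all the K3 factors can be simultaneously identified with a single $\Sigma$ carrying a single group $G$. The codimension-$2$ generation condition from Proposition \ref{mainprop} gives transpositions, and transpositions of $\{1,\dots,n\}$ that generate a transitive subgroup generate all of $\mathfrak{S}_n$, but one must be careful that the ``twists'' on the transpositions are coherent (a cocycle condition) so that they can be trivialized by a global change of coordinates on $\Sigma^n$ — this is where a small cohomological computation with the twisting data, or an explicit induction on $n$ untwisting one transposition at a time, will be needed, and where the hypothesis that $G$ is symplectic and that $\theta$ ends up being an involution are essential to close the argument.
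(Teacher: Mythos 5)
Your overall skeleton --- decompose $\Aut(S^n)$ as $\Aut(S)^n\rtimes\mathfrak{S}_n$, invoke Proposition \ref{mainprop} to force generation by codimension-$2$ elements, classify those as twisted transpositions $(i,j)\circ(\id,\dots,g,\dots,g^{-1},\dots,\id)$ together with single-factor automorphisms, then untwist the transpositions by a diagonal change of coordinates --- is exactly the route the paper takes in Theorem \ref{quotientsvalide}, of which Theorem \ref{geneth} is a corollary. But there is a genuine structural error in your third step: you take $G$ to be the group generated by the \emph{single-factor} symplectic automorphisms. Such elements (the paper's ``trivial complex reflexions'') can never lie in $\left\langle j_{\theta}(G),\mathfrak{S}_n\right\rangle$, since $j_{\theta}(g)$ acts nontrivially on exactly two factors whenever $g\neq\id$; in fact the target group contains no nontrivial single-factor element at all (Lemma \ref{n>2bis}). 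The correct roles are the opposite of what you assign: the single-factor automorphisms generate a normal subgroup $H^n\subset\mathcal{G}$, and $\Sigma$ is the minimal resolution of $S/H$ --- this, not an ``identification of the factors'', is why the theorem must allow a K3 surface $\Sigma$ different from $S$ --- while $G$ is generated by the set $F$ of diagonal twists $g$ carried by the transpositions, with $\theta(g)=g^{-1}$ on $F$. As written, your quotient $\Sigma^n/\left\langle j_{\theta}(G),\mathfrak{S}_n\right\rangle$ would not contain the images of the single-factor generators and so could not equal $S^n/\mathcal{G}$.

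Two further points depend on this and are left unaddressed. First, $\theta$ is only \emph{defined} on the generating set $F$ by $\theta(g)=g^{-1}$; extending it to a homomorphism of $G=\left\langle F\right\rangle$ requires checking that a relation $h_1\circ\cdots\circ h_k=\id$ among elements of $F$ forces $h_1^{-1}\circ\cdots\circ h_k^{-1}=\id$, and the paper's verification of this (Claim 7 of Theorem \ref{quotientsvalide}) uses precisely the absence of nontrivial single-factor elements after the reduction to $\Sigma$. Your justification that the twisted transposition ``has order $2$, hence $\theta(g_i)g_i=\id$'' also inverts the logic: order $2$ is a \emph{consequence} of the codimension-$2$ condition forcing $g_{s(j)}=g_j^{-1}$, not a property you may assume of the generators. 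Second, your abelianness argument for $n\geq3$ amounts to producing the commutator $(g\circ h\circ g^{-1}\circ h^{-1},\id,\dots,\id)$ inside the group; this yields a contradiction only once single-factor elements are known to be trivial, which again requires the reduction from $S$ to $\Sigma$ that your plan has misplaced.
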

This theorem is a consequence of Theorem \ref{quotientsvalide} and is proven in Section \ref{mainthsection}. 
Let $\prod S_i$ be the product of K3 surfaces not all isomorphic. Let $\mathcal{G}$ be a finite automorphism group of $\prod S_i$. According to \cite[Page 10, Remark b]{Beauville}, a terminalization $Y\rightarrow \prod S_i/\mathcal{G}$ is not primitively symplectic. Therefore, we see with Theorem \ref{geneth} that, up to deformation, the Fujiki construction covers all possible examples obtained from quotients of products of K3 surfaces.

In dimension 4, the Fujiki construction leads to several examples of irreducible symplectic orbifolds. 
\begin{defi}\label{admissiblegroup}\label{admidefi}
A finite symplectic automorphism group $G$ on a K3 surface $S$ is said 
\emph{admissible} if $S/G$ has only singularities of type $A_1$, $A_2$, $A_3$ or $A_5$.
\end{defi}
This definition is related to the notion of admissible singularities of Fujiki in \cite[Section 7]{Fujiki}. Indeed, if $G$ is admissible, then $S^2/\left\langle j_{\theta}(G),\mathfrak{S}_2\right\rangle$ has admissible singularities in the sense of Fujiki. It corresponds to singularities with a well known orbifold terminalization (see Section \ref{recallFujiki} for more details).
\begin{thm}\label{main4}
All the possible (up to deformation) irreducible symplectic orbifolds in dimension 4 obtained by Fujiki construction with an admissible group $G$ are listed below. We provide 
their second Betti number $b_2$, the group $G$ used for the construction and their singularities:
\[
\begin{tabular}{|c|c|c|}
\hline
 $b_2$&$G$  & singularities\\
\hline
 4&$\Ay_4^2$  & $a_2=4$, $a_3=6$, $a_4=4$, $\by_6=2$\\
\hline
 5&$C_3\times  \Ay_4$ & $a_2=3$, $a_3=9$, $a_4=3$, $\by_6=1$\\
\hline
 5&$C_4^2\rtimes C_3$ & $a_2=10$, $a_3=15$, $a_4=1$, $a_8=2$, $\by_4=1$\\
\hline
 5&$C_2^4\rtimes C_6$ & $a_2=16$, $a_3=6$, $a_4=4$, $a_6=1$, $\by_4=1$\\
\hline
 6&$C_3\times \Sy_3$  & $a_2=9$, $a_3=10$, $a_6=1$\\
\hline
 6&$C_2\times \Ay_4$  & $a_2=13$, $a_3=6$, $a_4=4$, $a_6=1$\\
\hline
 6&$C_3^2\rtimes C_4$ & $a_2=10$, $a_3=14$, $a_4=6$\\
\hline
 6&$C_2^2\rtimes \Ay_4$ & $a_2=14$, $a_3=15$, $a_4=6$\\
\hline
 6&$C_2^4\rtimes \Sy_3$ & $a_2=19$, $a_3=12$, $a_4=6$, $\by_4=1$\\
\hline
 7&$C_3^2$  &  $a_3=12$\\
\hline
 7&$\Ay_4$  & $a_2=12$, $a_3=15$, $a_4=4$\\
\hline
 7& $C_2^2\times \Ay_4$& $a_2=12$, $a_3=3$, $a_4=4$\\
\hline
 7&$\mathfrak{S}_3\wr C_2$  & $a_2=20$, $a_3=12$, $a_4=3$\\
\hline
 8&$C_6$  & $a_2=9$, $a_3=6$, $a_6=1$\\
\hline
 8&$C_2\times C_6$  & $a_2=12$, $a_3=3$\\
\hline
 8&$C_4^2$  & $a_2=6$\\
\hline
 8&$\Ay_{3,3}$  & $a_2=28$, $a_3=12$\\
\hline
 8&$\Ay_{3,3}$  & $a_2=28$, $a_3=20$\\
\hline
 8&$\Sy_4$  & $a_2=24$, $a_3=12$, $a_4=3$\\
\hline
 8&$C_2^3\rtimes C_4$ & $a_2=17$, $a_4=6$, $\by_4=1$\\
\hline
 8&$\Sy_{3,3}$ & $a_2=20$, $a_3=15$\\
\hline
 10&$C_4$  & $a_2=10$, $a_4=6$\\
\hline
10 & $C_2^2\rtimes C_4$ & $a_2=10$, $a_4=6$\\
\hline
 10&$\Sy_3$  & $a_2=28$, $a_3=12$\\
\hline
 10&$C_2\times C_4$ & $a_2=12$, $a_4=4$ \\
\hline
 10&$\mathcal{D}_6$  & $a_2=28$, $a_3=10$\\
\hline
10& $C_2\times \Sy_4$  & $a_2=28$, $a_3=10$\\
\hline
 11&$C_3$  & $a_3=15$\\
\hline
 11&$\mathcal{D}_4$  & $a_2=36$, $a_4=3$\\
\hline
11 & $C_2^2\wr C_2$ & $a_2=36$, $a_4=3$\\
\hline
 14&$C_2^2$  & $a_2=36$\\
\hline
 16&$C_2$  & $a_2=28$ \\
\hline
\end{tabular} 
 \]
\end{thm}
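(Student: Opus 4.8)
\emph{Setup and reduction.} The plan is to run the Fujiki construction in the only four-dimensional case, $n=2$, and to split the statement into three finite enumerations followed by a local computation at each stratum of the quotient. Since $\dim Y=4$ forces $n=2$, Corollary \ref{corirr} says that $S(G)_\theta^{[2]}$ is an irreducible symplectic orbifold precisely when $\theta$ is valid, and by the remark following Definition \ref{validintro} its deformation class depends only on $\theta$ and on the deformation class of $(S,G)$. Hence the classification reduces to: (i) list the \emph{admissible} finite symplectic automorphism groups $G$ of a K3 surface, i.e.\ those for which every stabiliser of a point of $S$ is cyclic of order $2$, $3$, $4$ or $6$ (equivalently $S/G$ has only $A_1,A_2,A_3,A_5$ singularities); (ii) for each such $G$, list the valid involutions $\theta\in\Aut(G)$ up to the equivalence preserving the deformation type, noting that $\theta=\id$ is valid exactly when $G$ is generated by involutions; (iii) record the deformation types of pairs $(S,G)$ realising a fixed abstract group, which is what produces, e.g., the two $\Ay_{3,3}$ entries. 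For (i) one starts from the known classification of finite symplectic automorphism groups of K3 surfaces (abelian case after Nikulin) and discards every group admitting a fixed point whose stabiliser has order $5$, $7$, $8$, or is non-cyclic (binary dihedral, tetrahedral, octahedral or icosahedral).

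\emph{Local computation.} For a fixed admissible pair $(G,\theta)$ put $H=\langle j_\theta(G),\mathfrak{S}_2\rangle$, acting on $S\times S$. The fixed loci are explicit: $\Fix(j_\theta(g))=\Fix(g)\times\Fix(\theta(g))$, while for the transposition $\tau$ the element $\tau\,j_\theta(g)$ has fixed locus isomorphic to $\Fix(\theta(g)g)$ lying over the graph of $g$. Since every element of $G$ is symplectic, all of these are finite except when $\theta(g)=g^{-1}$, in which case $\tau\,j_\theta(g)$ fixes a ``twisted diagonal'' surface. Thus the codimension-$2$ part of $\Sing(S^2/H)$ is swept out by the involutions $\tau\,j_\theta(g)$ with $\theta(g)=g^{-1}$, is of transverse type $A_1$, and is resolved by any terminalisation exactly as for $S^{[2]}$; the codimension-$4$ part is a finite union of quotient singularities whose analytic types are read off from the stabilisers in $H$ of the isolated fixed points and from their mutual incidences. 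Because $G$ is admissible, $S^2/H$ has admissible singularities in the sense of Fujiki \cite[Section 7]{Fujiki}, so the explicit orbifold terminalisation recalled in Section \ref{recallFujiki} applies stratum by stratum: it prescribes how many exceptional divisors are created and which terminal symplectic singularities of type $a_k$ or $\by_k$ survive. Summing these local contributions over the strata yields the singularity column, while $b_2(Y)$ is obtained by adding to $\dim\bigl(H^2(S^2)^H\bigr)=\dim\bigl((H^2(S)^{\oplus 2})^H\bigr)$ the number of exceptional divisors, the invariant subspace being computed from Nikulin's values of $\dim H^2(S)^g$ together with the $\mathfrak{S}_2$-action exchanging the two summands.

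\emph{Exhaustiveness and deformation separation.} Completeness of the list is built into the enumeration (i)--(iii). For the ``independent under deformation'' assertion, the pair $\bigl(b_2(Y),\ \text{analytic type of }\Sing Y\bigr)$ is a deformation invariant — $b_2$ trivially, and the finitely many isolated terminal symplectic singularities because they are locally rigid — and one checks that it separates all but the entries with coinciding invariants, which are exactly the potential identifications hedged by the word ``appear'' in the abstract. The main obstacle is the bulk of the second step: organising the case analysis of the fixed-point strata of $H$ and of their incidences (the ``singularities of the singularities''), and matching each local model with the right item of Fujiki's list of admissible terminalisations. The admissibility hypothesis on $G$ is precisely what keeps this finite and tractable, but it must still be pushed through for every group in the table.
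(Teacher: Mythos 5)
Your overall architecture matches the paper's: enumerate the admissible groups from Xiao's list, enumerate the valid involutions up to an equivalence that preserves the deformation class, compute the singularities stratum by stratum from the stabilizers using Fujiki's local terminalizations of the $N_k$, $M_k$ models, and read off $b_2$ as $\dim H^2(S^2,\C)^{H}$ plus the number of exceptional divisors. But there are two concrete gaps. First, your enumeration produces one row per equivalence class of pairs $(G,\theta)$, which gives a strictly longer table than the one stated: for example $C_2^3$, $C_2\times\mathcal{D}_4$, $C_2^2\wr C_2$ with $\theta=\id$, and $C_2^4\rtimes\mathfrak{S}_3$ with $\theta=\id$ all appear as separate valid cases, yet they are absent from the theorem's table. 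To prove the theorem as stated you must show that these are deformation equivalent to $S(C_2)^{[2]}$, $S(C_2^2)^{[2]}$, $S(C_2)^{[2]}$ and $S(\mathfrak{S}_3)^{[2]}$ respectively (and that $S(C_2^4)^{[2]}\sim S^{[2]}$ drops out entirely). Coincidence of $b_2$ and of the singularity data does not imply deformation equivalence — indeed the paper explicitly leaves three such coincident couples unresolved — so this step requires a genuine construction: in the paper it is done by realising $S$ as a Kummer surface and producing bimeromorphisms of the quotients via the isogeny $(x,y)\mapsto(x+y,x-y)$ of $T^2$, then invoking the fact that two terminalizations of bimeromorphic primitively symplectic varieties are deformation equivalent. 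Your proposal contains no mechanism for this.

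Second, your item (iii) misidentifies the source of the duplicated $\mathfrak{A}_{3,3}$ rows: they do not come from two deformation types of pairs $(S,G)$ for the same abstract group, but from two inequivalent valid involutions on $\mathfrak{A}_{3,3}$ (one equivalent to $\id$, one not), distinguished in the end by the fact that the two resulting orbifolds have different singularity counts ($a_3=12$ versus $a_3=20$). The same phenomenon occurs for $C_2^2\wr C_2$ and $C_2^4\rtimes\mathfrak{S}_3$. This matters because the hard part of step (ii) is precisely deciding when two valid involutions are equivalent: one needs a workable sufficient criterion (conjugation data $h_1,h_2$ in an overgroup $\widetilde G$ that is itself a symplectic K3 group) and, when the criterion fails to merge two candidates, an invariant that separates them. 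Your proposal gives no method for either direction of this comparison, and without it the involution column of the table — hence the table itself — cannot be certified as complete and irredundant.
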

The previous theorem is a consequence of Theorem \ref{main3} and Proposition \ref{defequiprop}.
The numbers $a_k$ correspond to the number of symplectic cyclic singularities of order k, for $k\leq 6$. 
In dimension 4, there are several possibilities for symplectic cyclic singularities of order 8. In our case there are of analytic type:
$\C^4/\left\langle g\right\rangle$ with:
$g=\diag(\xi_8,\xi_8^{-1},\xi_8^{3},\xi_8^{5})$ and their number is denoted by $a_8$.
The number $\mathfrak{b}_k$ corresponds to the number of singular points of the analytic form $\C^4/\left\langle g,s\right\rangle$ with:
$$g=\diag(\xi_k,\xi_k^{-1},\xi_k^{-1},\xi_k)\ \text{and}\   s=\begin{pmatrix}
																													0 & 0 & 1 & 0\\
																													0 & 0 & 0 & 1\\
																													-1 & 0 & 0 & 0\\
																													0 & -1 & 0 & 0
																													\end{pmatrix}.$$
More details on the groups names are given in Section \ref{notanota}.		

There are two main difficulties to prove the previous theorem which are the reasons for the length of this paper. One of the difficulty is to determine the singularities of the orbifolds. The second difficulty is to prove that we have found all the possible valid involutions. For both difficulties, the help of a computer has been required.
\begin{rmk}
Note that there are three couples of orbifolds that share the same second Betti number and singularities. They are orbifolds constructed with the groups:
\begin{itemize}
\item
$C_2^2\wr C_2$ and $\mathcal{D}_4$;
\item
$C_2^2\rtimes C_4$ and $C_4$;
\item
$C_2\times \mathfrak{S}_4$ and $\mathcal{D}_6$.
\end{itemize}
These three couples could lead to deformation equivalent orbifolds. However all the other orbifolds are necessarily independent under deformation
according to Proposition \ref{locallytriv}. Therefore, we have found at least 29 deformation classes of irreducible symplectic orbifolds.
\end{rmk}
When $G$ is abelian, there is only one possible valid involution given by $\inv(g)=g^{-1}$ for all $g\in G$. To simplify the notation, when $G$ is abelian, if we do not specify the involution $\theta$, it refers to this unique valid involution $\inv$. 
In dimension higher or equal to 6, we will see in Lemma \ref{n>2} that without loss of generality, we can assume that $G$ is abelian and $\theta$ valid.
We obtain:
\begin{prop}\label{Higherprointro}
In dimension higher or equal to 6, all the Fujiki varieties (up to deformation) are given by the 14 following series:
$S(C_2)^{[n]}$; $S(C_3)^{[n]}$; $S(C_2^2)^{[n]}$; $S(C_4)^{[n]}$; $S(C_5)^{[n]}$; $S(C_6)^{[n]}$; $S(C_7)^{[n]}$; $S(C_2^3)^{[n]}$; $S(C_2\times C_4)^{[n]}$; $S(C_8)^{[n]}$; $S(C_3^2)^{[n]}$; $S(C_2\times C_6)^{[n]}$; $S(C_2^4)^{[n]}$; $S(C_4^2)^{[n]};$ with $n\geq3$.
Moreover all these varieties are independent under deformation.
\end{prop}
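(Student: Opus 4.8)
The plan is to reduce the classification to abelian groups, apply Nikulin's classification of abelian symplectic automorphism groups of K3 surfaces, and then separate the finitely many resulting families using their singularities and their second Betti number. First, by Lemma~\ref{n>2}, for $n\geq 3$ every Fujiki variety $S(G)_{\theta}^{[n]}$ is deformation equivalent to one in which $G$ is abelian and $\theta$ is valid; since, as recalled above, the inversion $\inv\colon g\mapsto g^{-1}$ is the only valid involution on an abelian group, we may take $\theta=\inv$, so that every higher-dimensional Fujiki variety is deformation equivalent to $S(G)^{[n]}$ for some non-trivial finite abelian group $G$ acting symplectically on a K3 surface $S$ (by Theorem~\ref{geneth} this is in fact already the most general situation). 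Moreover, as recalled after Definition~\ref{Fujikiconstruc}, the deformation class of $S(G)^{[n]}$ depends only on $n$ and on the deformation class of the pair $(S,G)$.

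Next I would invoke Nikulin's classification: the non-trivial finite abelian groups admitting a faithful symplectic action on a K3 surface are exactly the $14$ groups $C_2,C_3,C_4,C_5,C_6,C_7,C_8$, $C_2^2$, $C_2^3$, $C_2^4$, $C_2\times C_4$, $C_4^2$, $C_2\times C_6$ and $C_3^2$; and for each of these abstract groups the co-invariant sublattice of $H^2(S,\Z)$ together with the induced action is unique up to isometry. Combining this with surjectivity of the period map and the global Torelli theorem for K3 surfaces, the pairs $(S,G)$ with $G$ isomorphic to a fixed one of these groups form an irreducible family, so $S(G)^{[n]}$ is well defined up to deformation for each of them; since each of the $14$ groups does occur (the inversion always being a valid involution), we obtain exactly the $14$ announced series.

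It then remains to prove that for each fixed $n\geq 3$ these $14$ series are pairwise non-deformation-equivalent. As $S(G)^{[n]}$ is a primitively symplectic variety with $\Q$-factorial terminal singularities, its deformations are locally trivial, and hence preserve both the second Betti number and the analytic types and multiplicities of the singular points (Proposition~\ref{locallytriv}). I would therefore compute, for each of the $14$ groups, the singular locus of $S^n/\left\langle j_{\inv}(G),\mathfrak{S}_n\right\rangle$ and of its terminalization, via the local analysis of the singularities developed below, read off the list of analytic singularity types with their multiplicities, and compute $b_2(S(G)^{[n]})$ from the $\left\langle j_{\inv}(G),\mathfrak{S}_n\right\rangle$-invariant part of $H^2(S^n,\Q)$ together with the exceptional divisors introduced by the terminalization; one then checks that this package of invariants takes $14$ pairwise distinct values.

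The main obstacle is precisely this last step. As the dimension-$4$ table already illustrates, $b_2$ by itself does not separate all the cases, so one genuinely needs the full singularity profiles, and assembling these requires the careful, partly computer-assisted orbit counting that occupies much of this paper --- most delicately for the $2$-elementary groups $C_2^2$, $C_2^3$, $C_2^4$, whose fixed-point configurations overlap in intricate ways, and for the mixed abelian groups $C_2\times C_4$, $C_4^2$, $C_2\times C_6$ and $C_3^2$. Once all $14$ profiles are made explicit, pairwise distinctness is a finite verification.
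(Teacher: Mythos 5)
Your reduction to the $14$ abelian groups is exactly the paper's first step (Lemma \ref{n>2} plus the classification of abelian symplectic groups on K3 surfaces), and your observation that local triviality of deformations (Proposition \ref{locallytriv}) reduces everything to exhibiting a deformation invariant that separates the series is also correct. The gap is in how you propose to separate them. You defer the entire separation to a computation of the singularity profiles of the terminalizations $S(G)^{[n]}$ for $n\geq 3$, which you do not carry out and which cannot be carried out with the tools available here: the local terminalizations of the codimension-$2$ singularities of $S^n/\langle j_{\inv}(G),\mathfrak{S}_n\rangle$ are known only in the special situations of Section \ref{recallFujiki} (dimension $4$, admissible groups), and the paper explicitly notes that for $n\geq 3$ the terminalizations are mostly unknown (only $S(C_2^k)^{[3]}$ is analyzed, in Proposition \ref{dim6bis}). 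So "read off the list of analytic singularity types with their multiplicities" is not an available step, and without it your invariant package reduces to $b_2$, which by Corollary \ref{examples} fails to separate $\{C_5,C_6,C_2\times C_4,C_3^2\}$ (all $b_2=7$), $\{C_7,C_8,C_2\times C_6,C_4^2\}$ (all $b_2=5$), $\{C_4,C_2^3\}$ and $\{C_3,C_2^2\}$.

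The paper's way around this is the Fujiki constant, which is computable \emph{without} knowing the terminalization: Proposition \ref{Fujiki} gives $C_{S(G)^{[n]}}\,q(r^*\epsilon(\alpha))^n=\frac{(2n)!|G|}{n!2^n}\bigl(|G|^{2n-3}(n-1)!^2(\alpha^2)\bigr)^n$ by pushing classes forward along the quotient map, using only $|\mathcal{G}|=|G|^{n-1}n!$. If two series with the same $b_2$ were deformation equivalent they would share their Fujiki constant, forcing $(|H_1|/|H_2|)^{1/n}$ to be rational; the relevant ratios of group orders ($3/4$, $1/2$, $5/6$, $5/8$, $5/9$, $2/3$, $8/9$, $7/8$, $7/12$, $7/16$) have irrational $n$-th roots for every $n\geq 3$. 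You should replace your singularity-profile step with an argument of this kind, or otherwise supply a genuinely computable deformation invariant finer than $b_2$.
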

This proposition is proved in Section \ref{Fujikirelationsection}.
The second Betti numbers of the previous series are provided in Corollary \ref{examples}.

It is not an easy problem to determine if the irreducible symplectic varieties given by Proposition \ref{Higherprointro} are orbifolds since the terminalization are mostly unknown in practice. However in few cases, the terminalizations are easy to get.
\begin{prop}\label{dim6}
The varieties
$S(C_2^k)^{[3]}$ are irreducible symplectic orbifolds for all $1\leq k \leq 4$.
\end{prop}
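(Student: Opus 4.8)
The plan is to show that for $G = C_2^k$ with $1 \le k \le 4$ the quotient $S^3/\langle j_{\mathrm{inv}}(G), \mathfrak{S}_3\rangle$ admits a terminalization that is an orbifold; combined with Corollary \ref{corirr2} (which already gives that $S(C_2^k)^{[3]}$ is an irreducible symplectic variety with simply connected smooth locus), this yields the claim. Since every nontrivial element of $C_2^k$ is an involution and $\mathrm{inv} = \mathrm{id}$ on $C_2^k$, the involution $\mathrm{inv}$ is automatically valid (take any generating set), so Corollary \ref{corirr2} applies. The key point to establish is thus purely local: every singularity of $S^3/\langle j_{\mathrm{inv}}(G),\mathfrak{S}_3\rangle$ has a terminalization with at worst quotient singularities.

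First I would analyze the stabilizers of points of $S^3$ under $\mathcal{G} := \langle j_{\mathrm{inv}}(C_2^k), \mathfrak{S}_3\rangle$. A symplectic involution on a K3 surface has exactly $8$ fixed points and acts near each of them as $\mathrm{diag}(-1,-1)$; more generally a point $x\in S^3$ has stabilizer in $\mathcal{G}$ that is an extension of a subgroup of $\mathfrak{S}_3$ by a product of subgroups of $C_2^k$ acting symplectically on the $\C^2$-tangent factors. The local model at such a point is $\C^6/H$ where $H \le \mathrm{Sp}(6,\C)$ is generated by elements of two types: symplectic involutions of $\C^6$ coming from $C_2^k$ acting on tangent spaces, and the permutation action of (a subgroup of) $\mathfrak{S}_3$ permuting the three $\C^2$ blocks. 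I would enumerate, for each $k$, the finitely many conjugacy classes of such subgroups $H$ — the relevant ones being controlled by how the chosen generators of $C_2^k$ distribute their $8$-point fixed loci across the first two K3 factors via $j_{\mathrm{inv}}$, together with the three transpositions and the $3$-cycles in $\mathfrak{S}_3$.

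Next I would exhibit an explicit terminalization in each local model. The transposition $(12)\in\mathfrak{S}_3$ acts on $\C^6 = \C^2_1 \oplus \C^2_2 \oplus \C^2_3$ by swapping the first two blocks, which after the change of coordinates $(u,v) \mapsto (u+v, u-v)$ becomes $\mathrm{diag}(1,1,-1,-1,1,1)$ — a symplectic reflection fixing a codimension-$2$ locus — and its quotient $\C^6/(12)$ is the product of $\C^4$ with the $A_1$ surface singularity $\C^2/\pm 1$, whose minimal resolution is crepant and smooth. The crucial structural input is Fujiki's analysis (Section \ref{recallFujiki}) together with Definition \ref{admissiblegroup}: since every element of $C_2^k$ generates a group $\langle C_2\rangle$ with $S/\langle C_2\rangle$ having only $A_1$-singularities, the group $C_2^k$ is admissible, and the combined local models involving $\mathfrak{S}_3$ and commuting symplectic involutions are exactly of the "admissible" type with a known orbifold (indeed often crepant) terminalization. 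I would treat the $3$-cycle: $\langle(123)\rangle \cong C_3$ acting diagonally on $\C^2\oplus\C^2\oplus\C^2$ gives the well-understood symplectic quotient singularity of order $3$, which has a crepant symplectic resolution, so contributes no obstruction. Finally I would assemble: the terminalization of $S^3/\mathcal{G}$ is obtained by gluing these local terminalizations (using Proposition \ref{bimero} for independence of the choice), and since each local piece has at worst finite quotient singularities, $Y$ is an orbifold.

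The main obstacle I anticipate is the local model where several commuting symplectic involutions from $C_2^k$ (for $k$ large, up to $4$) pile up at a single point together with a transposition or a $3$-cycle from $\mathfrak{S}_3$: one must check that the resulting subgroup $H \le \mathrm{Sp}(6,\C)$ — which can be as large as $C_2^4 \rtimes \mathfrak{S}_3$-type — still has a terminalization that is a quotient singularity, rather than acquiring a genuinely non-quotient terminal singularity in codimension $\ge 4$. This is precisely where the hypothesis $n=3$ (rather than general $n$) and the restriction to $C_2^k$ are used: the small number of factors keeps the ambient dimension at $6$, and the $A_1$-only local behavior of each involution keeps the combined quotient within Fujiki's admissible list. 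I would verify this case-by-case using the classification of symplectic quotient singularities of $\C^6$ and the existence results for crepant resolutions in low dimension, possibly invoking the same computer-assisted enumeration used for Theorem \ref{main4}.
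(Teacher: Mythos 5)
Your overall strategy is the same as the paper's: invoke Theorem \ref{quotientsvalide}/Corollary \ref{corirr2} for the global statement, then reduce to showing that each local quotient-singularity model of $S^3/\mathcal{G}$ admits a terminalization with only quotient singularities, using Propositions \ref{bimero} and \ref{locallytriv} to conclude that one good terminalization suffices. However, there is a genuine gap in the local analysis, which is where all the work lies. First, a structural point you miss: since the stabilizer of a point of $S$ embeds in $\SL_2(\C)$ and every finite abelian subgroup of $\SL_2(\C)$ is cyclic, at most \emph{one} involution of $C_2^k$ fixes any given point of $S$. This is what prevents the ``pile-up'' you worry about and makes the list of local models short and essentially independent of $k$: up to permutation one gets only the types $(\C^2)^3/(-\id,\id,\id)$, $\bigl((\C^2)^2/\langle(-\id,-\id),(1,2)\rangle\bigr)\times\C^2$, $(\C^2)^3/\mathfrak{S}_3$, $(\C^2)^3/\langle(-\id,-\id,\id),(1,2),(-\id,\id,-\id)\rangle$, and $(\C^2)^3/\langle(-\id,-\id,\id),\mathfrak{S}_3\rangle$. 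Your proposal never pins down this list.

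Second, and more seriously, your two main tools do not cover the hard cases. Fujiki's admissible singularities $N_k, M_k$ and Proposition \ref{mini} are $4$-dimensional local models; they only handle the cases that are products of such a model with $\C^2$ (the paper's case (ii)). There is no ``classification of symplectic quotient singularities of $\C^6$ with known crepant resolutions'' that one can simply invoke for the remaining groups. The genuinely $6$-dimensional cases require separate arguments: for $(\C^2)^3/B$ with $B$ of order $16$ one first resolves the normal subgroup $\langle(-\id,-\id,\id),(1,2)\rangle$ crepantly and checks that the residual involutions act with fixed locus of codimension $4$, so the further quotient is already terminal (Proposition \ref{terminal}) and is a quotient of an orbifold; and for the crucial case $(\C^2)^3/\langle(-\id,-\id,\id),\mathfrak{S}_3\rangle$ the paper exhibits an explicit linear change of variables identifying this group with $\mathfrak{S}_4$ acting on $(\C^2)^3\subset(\C^2)^4$ via $(x,y,z)\mapsto(x,y,z,-x-y-z)$, whence a crepant resolution exists (the Kummer-type resolution). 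This identification is the key idea of the proof and is absent from your proposal; without it (or a substitute), the argument does not close.
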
 
This proposition corresponds to Proposition \ref{dim6bis}. 

The 29 examples of irreducible symplectic orbifolds in dimension 4 (see Theorem \ref{main4}) and the 4 examples of irreducible symplectic orbifolds in dimension 6 (see Proposition \ref{dim6}) allow to get 33 new deformation classes of compact hyperkähler orbifolds.

\subsection{Previous related works} 
We give an overview on the previous works providing examples of irreducible symplectic varieties. More information can be found in \cite{Perego}.
\subsubsection*{Fujiki work}
Fujiki in \cite[Theorem 13.1]{Fujiki} provides the orbifolds $S(G)^{[2]}$ when $G$ is admissible and abelian. In this paper, we have shown via Corollary \ref{corirr} that these examples are irreducible symplectic orbifolds. Moreover, Fujiki has made mistakes on his computation of the singularities for some of these orbifolds. These mistakes have been corrected in the present paper (see Remark \ref{mistakeFujiki}).
\subsubsection*{Orbifolds of Nikulin type}
The most well known example of IHS orbifolds are the \emph{orbifolds of Nikulin type} (see \cite[Definition 3.1]{Camere} for the terminology). 
These orbifolds are deformation equivalent to the Fujiki orbifold $S(C_2)^{[2]}$ as explained in \cite[Proposition 3.10]{Ulrike}. These orbifolds were already known to be irreducible symplectic as explained in \cite[Proposition 3.8]{Lol}.
As explained in \cite[Proposition 3.12]{Ulrike}, the Nikulin orbifolds are also deformation equivalent to the Markushevich--Tikhomirov varieties $\mathcal{P}^{0}$ introduced in \cite[Definition 3.3]{Markou}. Their Beauville--Bogomolov lattice has been computed in \cite{Lol3} and their wall divisors are given in \cite{Ulrike2}. Moreover a compete family of these orbifolds is given in \cite{Camere}.
\subsubsection*{Quotients of generalized Kummer}
Let $K_2(T)$ be a generalized Kummer fourfold constructed from a 2-dimensional torus $T$. The involution $-\id$ on $T$ induces a symplectic involution $\iota$ on $K_2(T)$ with a fixed locus given by a K3 surface and 36 isolated points. Let $K'\rightarrow K_2(T)/\iota$ be the blow-up in the K3 surface of singularities. The orbifold $K'$ is an irreducible symplectic orbifold as explained in \cite[Proposition 3.8]{Lol}. Moreover, its Beauville--Bogomolov form has been computed in \cite{Kapfer}. Since the orbifold $K'$ has second Betti number 8 and 36 singularities of type $a_2$ (see Notation \ref{defsing}), we know by Proposition \ref{locallytriv} that it is not deformation equivalent to one of the examples provided by Theorem \ref{main4}.

Similarly in \cite[Section 5.5]{Fu}, we provided the orbifold $K'_3$ obtained as a blow-up of $K_2(T)$ quotiented by an automorphism of order 3. By Proposition \ref{mainprop}, we know that $K'_3$ is an irreducible symplectic orbifold. However in \cite[Section 5.5]{Fu}, it is shown that $K'_3$ has second Betti number 7 and 12 singularities of type $a_3$ (see Notation \ref{defsing}). Therefore according to the data of Theorem \ref{main4}, the orbifold $K_3'$ could be deformation equivalent to $S(C_3^2)^{[2]}$.
\subsubsection*{Relative Prymian}
Another method to construct irreducible symplectic orbifolds existing in the literature is via \emph{relative Prymian} (see \cite{Markou}, \cite{Arbarello}, \cite{Matteini} and \cite{Shen}). The general idea is the following. Let $(S,H)$ be a polarized K3 surface endowed with an anti-symplectic involution $i$. Let $C\in \left|H\right|^i$ be a smooth curve, we can consider the Prym variety $\Prym(C,i_{|C})$. The relative Prymian associated to $(S,H)$ and $i$ is a variety $Y$ with a Lagrangian fibration $Y\rightarrow \left|H\right|^i$ such that the generic fibers are the $\Prym(C,i_{|C})$.

The relative Prymian in \cite{Markou} is deformation equivalent to $S(C_2)^{[2]}$ as explained before. However several questions are open when considering the other examples in \cite{Arbarello}, \cite{Matteini} and \cite{Shen}. Are they all irreducible symplectic? Are they all orbifold? What are their second Betti numbers? Because of this lack of knowledge, it is not possible to tell if these examples are deformation equivalent or not to one of the examples in the current paper.
\subsubsection*{Moduli spaces of semi-stable sheaves}
In a larger perspective, I also would like to mention the work of Perego and Rapagnetta in \cite{Perego2} who provide series of irreducible symplectic varieties which are not orbifold. These examples are obtained via moduli spaces of semi-stable sheaves on K3 surfaces or abelian surfaces.
\subsection{What remains to be explored in dimension 4}\footnote{Several of the cases listed below have been studied during the referring process of this paper. Up to my knowledge all the new works are mentioned in this section.}
Already in dimension 4, many directions remain to be explored in order to find more examples of irreducible symplectic orbifolds. As mentioned above an irreducible symplectic orbifold $Y$ can be obtained as a terminalization of a quotient $X/G$ with at least 4 possibilities for $X$. 
\subsubsection*{When $X$ is the product of two K3 surfaces}
This is the case considered in the present paper and by Theorem \ref{geneth}, it corresponds to the orbifolds obtained via Fujiki construction from a K3 surface $S$, a symplectic group $G$ on $S$ and a valid involution $\theta: G\rightarrow G$ (see Definition \ref{Fujikiconstruc}). However, we have only explored the examples with admissible groups $G$ (see Definition \ref{admissiblegroup}). Therefore according to \cite{xiao}, it remains 47 groups to be studied. The difficulty to study these other cases when $G$ is not admissible is the lack of known terminalizations.
The quotient $S^2/\left\langle j_{\theta}(G),\mathfrak{S}_2\right\rangle$ has singularities in codimension 2 for which a terminalization is not known. However, our method to determine the valid involutions (see Section \ref{examplescalculs}) and to compute the Betti numbers (See Proposition \ref{b2} and Section \ref{topo}) still apply when $G$ is not admissible.
\subsubsection*{When $X$ is a manifold of Kummer type}
Let $T$ be a 2-dimensional torus. Let $K_n(T)$ be the associated generalized Kummer hyperkähler manifold of dimension $2n$. An automorphism group of $K_n(T)$ is said \emph{natural} if it is induced by an automorphism of $T$.
 
Bertini, Capasso, Grossi, Mauri and Mazzon (\cite{Bertini}) have classified the cases when $X$ is a generalized Kummer hyperkähler manifold and $G$ a natural symplectic automorphism group. It remains to study the cases when $G$ is non-natural. 
\subsubsection*{When $X$ is a manifold of $K3^{[2]}$-type}
In \cite{Gerald}, the symplectic automorphism groups of such a manifold are classified. Therefore, it could be possible to study these cases. 

Let $S$ be a K3 surface. Let $S^{[2]}$ be the Hilbert scheme of 2 points on $S$. As before a \emph{natural} automorphism of $S^{[2]}$ is an automorphism induced by an automorphism of $S$.
Note that 
the varieties obtained with $X=S^{[2]}$ and $G$ a natural symplectic automorphism groups correspond to the Fujiki varieties when $\theta=\id$. 
Bertini, Capasso, Grossi, Mauri and Mazzon (\cite{Bertini}) have classified this case.

Therefore, it remains to study the case when $G$ is non-natural. When $G$ is natural but non admissible (see Definition \ref{admidefi}), it also remains to compute the singularities of the obtained varieties.
\subsubsection*{When $X$ is a 4-dimensional torus}
Fujiki in \cite[Section 13 and 14]{Fujiki}, has already partially considered this case. Indeed, we can perform the Fujiki construction with a 2-dimensional torus instead of a K3 surface. Let $T$ be a 2-dimensional torus, $G$ a symplectic automorphism group of $T$ and $\theta:G\rightarrow G$ an involution. We can consider a terminalization $T(G)^{[2]}_{\theta}\rightarrow T^2/\left\langle j_{\theta}(G),\mathfrak{S}_2\right\rangle$. In \cite[Theorem 13.1]{Fujiki}, Fujiki studies the examples $T(G)^{[2]}_{\theta}$ when $G$ is abelian and $\theta$ the valid involution on an abelian group. 
However, Proposition \ref{mainprop} is not applicable in this context; consequently, these orbifolds are not necessarily irreducible symplectic. It can be verified that while some are irreducible symplectic, others are not. For instance, when $G$ is abelian, Yamaguchi in \cite{Yamaguchi} has classified which orbifolds $T(G)^{[2]}_{\theta}$ are irreducible symplectic (with $\theta$ the only valid involution on an abelian group). He proved that $T(C_3^3)^{[2]}$, $T(C_2^2\times C_4)^{[2]}$ and $T(C_6)^{[2]}$ are irreducible symplectic while $T(C_3)^{[2]}$, $T(C_3^2)^{[2]}$, $T(C_4)^{[2]}$ and $T(C_2\times C_4)^{[2]}$ are not. Note that $T(C_3^3)^{[2]}$ is deformation equivalent to the generalized Kummer fourfold. 
\subsubsection*{Relative Prymians}
As mentioned above, some irreducible symplectic orbifolds can be obtained via the relative Prymian varieties. There are many possibilities for such constructions varying the K3 surface $S$, the involution $i$ and the polarization $H$. These varieties have been studied in full generality in \cite{Brakkee}. The authors provide a criteria for when these varieties are irreducible symplectic. The remaining task is to determine which of these varieties are orbifolds.
\subsubsection*{To go further}
We could even imagine to go further and choose for $X$ one of our orbifold examples. For instance the symplectic automorphisms of a Nikulin type orbifold have been classified in \cite{Brandhorst}. In particular two new automorphisms are identified (new in the sense that they are not induced from automorphisms of a $K3^{[2]}$-type hyperkähler manifold and so can potentially lead to orbifolds that could not be obtained from a quotient of a $K3^{[2]}$-type hyperkähler manifold). 
\subsection{Organization of the Paper}
The paper is organized as follows. The section \ref{remind} is dedicated to some reminders and to the proof of Proposition \ref{mainprop}. 
In Sections \ref{mainthsection} we prove Theorem \ref{geneth}. In Section \ref{Fujikivar}, Corollaries \ref{corirr} and \ref{corirr2} are proved. In Section \ref{BettiSection}, we explain how to compute the second Betti number of a Fujiki variety. In Section \ref{Fujikirelationsection}, we study the Fujiki constant of a Fujiki variety and we prove
Proposition \ref{Higherprointro}.
In section \ref{deformation}, we propose a criterion to determine when two involutions on a same group provide bimeromorphic Fujiki varieties. In Section \ref{sing}, we explain our method to determine the singularities of a Fujiki variety of dimension 4 when $G$ is admissible. In particular in Section \ref{verif}, we provide a technique to verify the computation of the singularities based on a result of Beckmann and Song \cite{Song}. 
In Section \ref{examplescalculs}, we provide our examples of orbifolds; in particular, we prove Theorem \ref{main4} and Proposition \ref{dim6}. Finally
 in the annexes, we provide explanations on our computer programs.
\subsection{Notation}\label{notanota}
\begin{nota}
Let $Y$ be a complex space. We denote $Y_{reg}:=Y\smallsetminus \Sing Y$.
\end{nota}
\begin{nota}
\begin{itemize}
\item
We denote by $C_n$ the cyclic group of order $n$.
\item
We denote by $\mathcal{D}_n$ the dihedral group of order $2n$.
\item
We denote by $\mathfrak{S}_n$ and $\mathfrak{A}_n$ the symmetric and the alternating groups of $n$ elements.
\item
We set $\mathfrak{A}_{3,3}:=\mathfrak{S}_{3}^2\cap \mathfrak{A}_{6}$.
\end{itemize}
\end{nota}																													
For some groups, the groups names that we use are not the one used in \cite{xiao}. We provide the relation between our groups names and the Xiao's groups names.
\begin{center}
 \underline{\textbf{Groups names}}
\end{center}
\[
\begin{tabular}{|c|c|c|}
\hline
Group name of the paper & Xiao's group name & \textbf{SmallGroup} \\
\hline
$C_2^2\rtimes C_4$ & $\Gamma_2c_1$ & (16,3) \\
\hline
$C_2^3\rtimes C_4$ & $\Gamma_7a_1$ & (32,6) \\
\hline
$C_2^2\wr C_2$ & $2^4C_2$ & (32,27) \\
\hline
$C_3^2\rtimes C_4$ & $3^2C_4$ & (36,9)\\
\hline
 $\mathfrak{S}_3^2$ &$\mathfrak{S}_{3,3}$& (36,10)\\
\hline
$C_4^2\rtimes C_3$ & $4^2C_3$&  (48,3) \\
\hline
$C_2^2\times\mathfrak{A}_4$ & $2^2(C_2\times C_6)$& (48,49)\\
\hline
$C_2^2\rtimes \mathfrak{A}_4$ & $2^4C_3$ & (48,50)\\
\hline
$\mathfrak{S}_3\wr C_2$ & $N_{72}$ & (72,40) \\
\hline
$C_2^4\rtimes C_6$ & $2^4C_6$ & (96,70)\\
\hline
$C_2^4\rtimes \mathfrak{S}_3$ & $2^4D_6$ & (96,227)\\
\hline
\end{tabular}
\]
The group $\mathfrak{S}_3\wr C_2$ alias $N_{72}$ is one of the maximal symplectic finite group of a K3 surface described by Mukai in \cite{Mukai}.

~\\
\noindent\textbf{Acknowledgements.} 
I am very grateful to Romain Demelle, Arvid Perego, Martin Schwald and Jieao Song for very useful discussions. I also want to thank Annalisa Grossi, Mirko Mauri and Enrica Mazzon for discussing their on coming paper and for very interesting comments. I would like to warmly thank the referee for the valuable suggestions that help to improve the
readability of the paper. 
This work has been financed by the PRCI SMAGP (ANR-20-CE40-0026-01).

\section{Irreducible symplectic varieties}\label{remind}
\subsection{Definitions}\label{defdef}
We follow the usual definitions from \cite{Bakker} and \cite{Bakker2}.
\begin{defi}
Let $X$ be a normal complex analytic space. 
\begin{itemize}
\item
We denote by $\Omega_X^{[p]}$ the sheaf of \emph{reflexive holomorphic $p$-forms} on $X$ given by $i_*\Omega_{X_{reg}}^p$, where $i:X_{reg}\hookrightarrow X$ is the inclusion.
\item
We call a \emph{holomorphic symplectic form} on $X$ a closed reflexive 2-form on $X$ which is non-degenerate at each point of $X_{reg}$.
\item
Let $f:\widetilde{X}\rightarrow X$ be a resolution of singularities. We say that $X$ is a \emph{symplectic variety} if $X$ admits a symplectic form $\sigma$ such that $f^*(\sigma)$ extends to a holomorphic 2-form on $\widetilde{X}$.
\end{itemize}
\end{defi}
\begin{rmk}
By \cite[Corollary 1.8]{Kebekus}, a complex analytic space with only rational singularities is a symplectic variety if and only if it admits a symplectic form.
\end{rmk}
\begin{defi}\label{Primitivy}
A \emph{primitive symplectic variety} is a normal compact Kähler symplectic variety with $h^1(X,\mathcal{O}_X)=0$ and $h^0(X,\Omega_X^{[2]})=1$.
\end{defi}
\begin{defi}\label{Defi0}
Let $X$ be a symplectic compact Kähler variety $(X,\sigma_X)$ with rational singularities. We say that $X$ is an \emph{irreducible holomorphic symplectic (IHS) variety} if for all quasi-étale covers $q:\widetilde{X}\rightarrow X$, the algebra $H^0\left(\widetilde{X},\Omega^{[\bullet]}_{\widetilde{X}}\right)$ is generated by the reflexive pullback $q^*(\sigma_X)$.
\end{defi}
\begin{rmk}
An irreducible symplectic variety is a primitive symplectic variety; however the contrary is not always true.
\end{rmk}
\subsection{$\Q$-factorial and terminal singularities}\label{Terminalsection}
\begin{defi}
Let $X$ be a variety with canonical singularities. Let $Y$ be a $\Q$-factorial normal variety with only terminal singularities. We say that $Y$ is a \emph{terminalization} of $X$ if there exists a crepant proper bimeromorphic morphism $Y\rightarrow X$. 
\end{defi}
\begin{thm}[\cite{Birkar}, Corollary 1.4.3]
If $X$ is an algebraic variety then $X$ admits a terminalization.
\end{thm}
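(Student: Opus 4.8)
The plan is to build $Y$ by a relative minimal model program over $X$. First I reduce to the essential case: since a crepant birational morphism onto $X$ with terminal source forces $X$ to have canonical singularities, I may assume $X$ is canonical --- this holds in every situation where the statement is applied below, where $X$ is a quotient of a smooth variety by a symplectic group and hence has symplectic, in particular canonical, singularities.

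Next I fix a projective log resolution $f\colon W\to X$ with $W$ smooth and exceptional locus a simple normal crossings union $\bigcup_i E_i$, taken high enough that every divisor over $X$ of discrepancy $0$ already appears among the $E_i$; only finitely many such divisors exist, since they all occur on a single resolution. Writing $K_W=f^{*}K_X+\sum_i a_iE_i$ with all $a_i\ge 0$, I choose rationals $0<\varepsilon_i\ll 1$, form the $\mathbb{Q}$-factorial klt pair $(W,\Delta)$ with $\Delta=\sum_{a_i>0}\varepsilon_iE_i$, and run the $(K_W+\Delta)$-MMP over $X$ with scaling of an ample divisor. Since $K_W+\Delta-f^{*}K_X=\sum_{a_i>0}(a_i+\varepsilon_i)E_i$ is effective and $f$-exceptional, the curves on which $K_W+\Delta$ is negative over $X$ all lie in this exceptional locus, so each $E_i$ with $a_i>0$ gets contracted along the way and the program outputs a relative minimal model $g\colon Y\to X$.

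It then remains to check that $g$ is a terminalization. The variety $Y$ is $\mathbb{Q}$-factorial, being obtained from the smooth (hence $\mathbb{Q}$-factorial) variety $W$ by a sequence of divisorial contractions and flips. Tracking $K_{\bullet}-(\text{pullback of }K_X)$ through each step shows it stays effective and exceptional, so $K_Y-g^{*}K_X$ is effective, $g$-exceptional, and (being the difference of the $g$-nef divisor $K_Y$ and the $g$-trivial divisor $g^{*}K_X$) also $g$-nef; by the negativity lemma it vanishes, i.e.\ $g$ is crepant. Finally $Y$ is terminal: a divisor $E$ over $X$ not lying on $Y$ satisfies $a(E;Y)=a(E;X)$ by crepancy, and this is positive --- either $E$ is one of the contracted $E_i$ with $a_i>0$, or $E$ does not occur on $W$ at all and then $a(E;X)>0$ by the choice of $W$.

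The single serious obstacle is the termination of the relative MMP in the second step; resolution of singularities, the negativity lemma, and the discrepancy bookkeeping are all routine, whereas termination-with-scaling at this level of generality is exactly the deep input coming from the finite-generation and minimal model machinery of Birkar--Cascini--Hacon--McKernan and Birkar. This is why the result is quoted from \cite{Birkar}.
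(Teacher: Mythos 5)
The paper offers no proof of this statement: it is imported verbatim as \cite{Birkar}, Corollary 1.4.3, so there is nothing internal to compare your argument against. Your sketch is, however, an essentially correct account of how that corollary is actually proved, and you are right to isolate termination of the relative MMP with scaling as the one non-elementary input (for a birational morphism every divisor is automatically big over the base, so the BCHM existence-and-termination theorem for klt pairs with big boundary applies). Two small points. First, your opening reduction is not just permissible but necessary: as literally stated the theorem is false for an arbitrary algebraic variety, since ``crepant'' presupposes $K_X$ $\mathbb{Q}$-Cartier and a terminal crepant model forces $X$ canonical; BCHM 1.4.3 is a statement about klt pairs, and the paper's formulation is an abuse that is harmless only because every $X$ to which it is applied is a symplectic quotient, hence canonical. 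Second, your justification that only finitely many divisors over $X$ have discrepancy $0$ (``since they all occur on a single resolution'') is circular as phrased --- occurring on a single model is the conclusion, not the reason; the correct reference is the standard finiteness of divisors of discrepancy $\le 0$ over a klt variety (e.g.\ Koll\'ar--Mori, Corollary 2.36 and its proof). With that reference supplied, the rest of your bookkeeping --- no divisor with $a_i=0$ is ever contracted because any $(K_W+\Delta)$-negative curve over $X$ lies in the support of $\sum_{a_i>0}(a_i+\varepsilon_i)E_i$, the negativity lemma forcing $K_Y=g^*K_X$, and the identification of discrepancies over $Y$ with discrepancies over $X$ --- is sound.
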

Therefore in a program of classification, the IHS varieties with $\Q$-factorial and terminal singularities are more relevant. Moreover in this case, their moduli spaces are more coherent as shown by the following result. 
\begin{prop}[\cite{Bakker2}, Proposition 5.13]\label{locallytriv}
Let $X$ be an IHS varieties with $\Q$-factorial and terminal singularities. Then any deformations of $X$ is locally trivial.
\end{prop}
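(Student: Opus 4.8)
The plan is to reduce the assertion to the infinitesimal rigidity of the singularities of $X$, and then invoke the local structure theory of symplectic singularities. First I would use the standard local criterion for local triviality (Flenner, Bingener): a flat deformation $\pi\colon\mathcal X\to S$ of $X$ is locally trivial in a neighbourhood of the central fibre as soon as the relative first cotangent sheaf vanishes there, and this is guaranteed once the first cotangent sheaf $\mathcal T^1_X$ of the central fibre itself vanishes. Since $\mathcal T^1_X$ is coherent and supported on $\Sing X$, with stalkwise value the space of local first-order deformations $T^1_{(X,x)}$, the whole proposition reduces to the purely local statement: \emph{a $\Q$-factorial terminal symplectic singularity is infinitesimally rigid, $\mathcal T^1_X=0$}. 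Note that in this form the statement already yields local triviality over an arbitrary (not necessarily one-parameter) base.

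Next I would analyse the germ $(X,x)$ at a point $x\in\Sing X$. Terminality forces $\codim_X\Sing X\ge 4$: a transverse slice of codimension $2$ would be a Du Val surface singularity, whose minimal resolution is crepant, contradicting terminality. Kaledin's decomposition theorem for symplectic singularities then gives, analytically locally, $(X,x)\cong(\Sigma,o)\times(\C^{2m},0)$ with $(\Sigma,o)$ an affine symplectic variety having an \emph{isolated} singular point; the transverse slice $\Sigma$ inherits terminality and the symplectic form (and, in the relevant analytic-local sense, $\Q$-factoriality), and $\dim\Sigma\ge 4$. Because $\mathcal T^1$ is unaffected by the smooth factor, we are reduced to proving that an isolated $\Q$-factorial terminal symplectic singularity of dimension $\ge 4$ has vanishing $T^1$. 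For quotient singularities $\C^{2n}/G$ this is Schlessinger's rigidity theorem for isolated quotient singularities of dimension $\ge 3$. The general case is the main obstacle, and it is where the hypotheses are genuinely used: it rests on Namikawa's deformation theory of symplectic varieties, in which $\Q$-factoriality together with terminality is exactly the condition forcing the (Poisson, and hence here also the ordinary) local deformation space to be a reduced point — the delicate point being the comparison between Poisson and ordinary first-order deformations for such singularities.

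Granting $\mathcal T^1_X=0$, every flat deformation of $X$ is locally trivial near its central fibre, which is the claim. As a byproduct one then obtains a clean description of the deformation theory of $X$: locally trivial first-order deformations are classified by $H^1(X,\mathcal T_X)$ with $\mathcal T_X=\mathcal{H}om(\Omega_X^{[1]},\mathcal O_X)$, and the holomorphic symplectic form induces an isomorphism $\mathcal T_X\cong\Omega_X^{[1]}$, so $T^1_X\cong H^1(X,\Omega_X^{[1]})$; combined with the Hodge decomposition available for the primitively symplectic variety $X$ this recovers $\mathrm{Def}(X)=\mathrm{Def}_{lt}(X)$, smooth of dimension $h^{1,1}(X)$, consistently with Namikawa's unobstructedness theorem. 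Alternatively one could argue in the opposite order — establish smoothness of $\mathrm{Def}(X)$ of dimension $h^{1,1}(X)$ by a Bogomolov--Tian--Todorov / Kaledin--Verbitsky-type $T^1$-lifting argument adapted to $\Q$-factorial terminal symplectic varieties, observe that $\mathrm{Def}_{lt}(X)\subseteq\mathrm{Def}(X)$ is a smooth closed subspace with the same tangent space, and conclude equality — but the rigidity route above is the most direct, and I expect the local rigidity step to carry essentially all of the difficulty.
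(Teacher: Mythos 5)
This proposition is quoted in the paper from [Bakker--Lehn, Prop.\ 5.13] without proof, so the comparison has to be with the argument in that reference (which in turn is Namikawa's theorem on deformations of $\Q$-factorial terminal symplectic varieties, extended from the projective to the compact K\"ahler setting). Measured against that, your central move --- reducing everything to the purely local statement $\mathcal T^1_X=0$ for a $\Q$-factorial terminal symplectic germ --- is a genuine gap, not just a different route. That local rigidity statement is strictly stronger than the proposition (it would give local triviality with no compactness hypothesis at all), it is not what Namikawa or Bakker--Lehn prove, and it is not available in the literature in the generality you need; you yourself flag it as ``the main obstacle'' and then discharge it by appealing to ``Namikawa's deformation theory, in which $\Q$-factoriality together with terminality is exactly the condition forcing the \ldots local deformation space to be a reduced point.'' No such local theorem exists: what $\Q$-factoriality plus terminality forces in Namikawa's work is a \emph{global} coincidence of deformation functors, proved by Hodge theory on the compact variety. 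Two further steps in your reduction are also unjustified: (a) $\Q$-factoriality is not an analytic-local notion and does not pass to the germ or to Kaledin's transverse slice (the standard example being a $\Q$-factorial nodal threefold whose analytic germ $xy=zw$ is not analytically $\Q$-factorial), so ``the slice inherits $\Q$-factoriality'' cannot be asserted; and (b) vanishing of local \emph{Poisson} deformations does not imply vanishing of ordinary flat deformations of the germ, so the parenthetical ``(Poisson, and hence here also the ordinary)'' hides exactly the comparison you would need to prove.

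The correct argument is the one you relegate to a final aside, and its substance lies precisely where you stop. One shows that $\mathrm{Def}_{lt}(X)\subseteq\mathrm{Def}(X)$ are both smooth (unobstructedness via $T^1$-lifting, which already uses compactness) and that the inclusion of tangent spaces $H^1(X,T_X)\hookrightarrow \mathbb{T}^1_X=\Ext^1(\Omega^1_X,\mathcal O_X)$ is an equality. Terminality (codimension $\geq 4$ of the singular locus, via depth/local-cohomology arguments) identifies $\Ext^1(\Omega^1_X,\mathcal O_X)$ with $H^1(X_{reg},T_{X_{reg}})$, the symplectic form turns this into $H^1(X_{reg},\Omega^1_{X_{reg}})$, and $\Q$-factoriality enters through the mixed Hodge structure on $H^2(X_{reg})$ to match this with $H^1(X,\Omega^{[1]}_X)$. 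Every hypothesis of the proposition is consumed by this global dimension count, and no statement about $\mathcal T^1_X$ as a sheaf is ever made. As written, your proposal proves the result only conditionally on a local rigidity theorem that you have not established and that the actual proof deliberately avoids.
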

For these reasons, we are going to investigate IHS varieties with $\Q$-factorial and terminal singularities in this paper.
We recall how to characterize terminal singularities in our case.
\begin{prop}[\cite{Namikawa}, Corollary 1]\label{terminal}
A symplectic variety has terminal singularities if and only if $\codim \Sing X\geq 4$.
\end{prop}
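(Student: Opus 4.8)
The plan is to reduce everything to one observation: if $\pi\colon\widetilde X\to X$ is a resolution that is an isomorphism over $X_{reg}$ and $2n=\dim X$, then the discrepancy $a_E$ along a $\pi$-exceptional prime divisor $E$ is exactly the order of vanishing of $\widetilde\sigma^n$ along $E$, where $\widetilde\sigma=\pi^*\sigma$. To set this up I would first recall (Beauville) that a symplectic variety is Gorenstein with $\omega_X$ trivial: $\sigma^n$ generates $\omega_{X_{reg}}$, so its unique extension to a section of the reflexive sheaf $\omega_X$ has zero locus contained in $\Sing X$, hence of codimension $\geq 2$; since $\omega_X$ is a line bundle this forces $\omega_X\cong\mathcal O_X$ and $K_X=0$. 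By definition of a symplectic variety $\widetilde\sigma$ extends to a holomorphic $2$-form on $\widetilde X$, so $\widetilde\sigma^n=\pi^*(\sigma^n)$ is a section of $\omega_{\widetilde X}$ which generates $\omega_{\widetilde X}$ over $\pi^{-1}(X_{reg})$; hence its divisor is the relative canonical divisor $K_{\widetilde X/X}=\sum_E a_E E$, and $\operatorname{ord}_E\widetilde\sigma^n=a_E$ for every $E$.

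For the implication ``$\codim\Sing X\geq 4\Rightarrow$ terminal'' I would argue as follows. Let $E$ be $\pi$-exceptional; then $\pi(E)\subseteq\Sing X$, so $\dim\pi(E)\leq 2n-4$, and since $\dim E=2n-1$ a general fibre of $E\to\pi(E)$ has dimension $\geq 3$. Thus at a general point $p\in E$ the fibre $\pi^{-1}(\pi(p))$ is positive-dimensional and $\ker(d\pi_p)\neq 0$; from $\widetilde\sigma_p(v,w)=\sigma_{\pi(p)}(d\pi_p v,d\pi_p w)$ any non-zero $v\in\ker(d\pi_p)$ lies in the radical of $\widetilde\sigma_p$, so $\widetilde\sigma$ is degenerate along $E$ and $\widetilde\sigma^n$ vanishes there. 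Hence $a_E\geq 1>0$ for every exceptional $E$, which is the condition that $X$ be terminal.

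For the converse I would prove the contrapositive: if $\codim\Sing X\leq 3$ then $X$ is not terminal. By normality $\codim\Sing X\geq 2$, so $\Sing X$ has a component $Z$ of minimal codimension $d\in\{2,3\}$; near a general point $x\in Z$ the singular locus is smooth of codimension $d$ and $X$ is analytically a product $(Y,y)\times(\C^{2n-d},0)$, where $(Y,y)$ is a $d$-dimensional germ with an isolated singularity which, by Kaledin's transverse-slice theorem for symplectic singularities, is again symplectic. The case $d=3$ cannot occur, since $Y_{reg}$ would then carry a non-degenerate holomorphic $2$-form on a $3$-dimensional complex manifold; hence $d=2$, and a $2$-dimensional symplectic variety with an isolated singularity is a Du Val (rational double point) singularity, whose minimal resolution is crepant. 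Therefore $X$, being analytically a product with such a germ, admits an exceptional divisor of discrepancy $0$ and is not terminal.

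The easy half is the first implication, essentially a single line once $a_E$ is identified with the vanishing order of $\widetilde\sigma^n$. I expect the real work to lie in the converse, and specifically in its two structural inputs: Beauville's theorem that symplectic singularities are canonical and Gorenstein (already used above so that discrepancies make sense), and Kaledin's slice theorem, which is what excludes odd codimension. Granting these, the reduction to a transverse surface germ and the Du Val computation are routine.
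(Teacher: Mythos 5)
The paper does not actually prove this statement; Proposition \ref{terminal} is quoted verbatim from Namikawa's note, so there is no in-house argument to compare against. Judged on its own, your converse direction (terminal $\Rightarrow$ $\codim\Sing X\geq 4$, via Kaledin's slice theorem, parity excluding codimension $3$, and Du Val singularities in codimension $2$) is the standard argument and is sound as a citation-level proof. The forward direction, however, is broken. The step ``$\widetilde\sigma_p(v,w)=\sigma_{\pi(p)}(d\pi_p v,d\pi_p w)$, hence any non-zero $v\in\ker(d\pi_p)$ lies in the radical of $\widetilde\sigma_p$'' is both unjustified and false: $\pi(p)$ lies in $\Sing X$, where the reflexive form $\sigma$ has no pointwise value, and the extension $\widetilde\sigma$ need not degenerate on $\ker d\pi_p$. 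Concretely, for $X=(\C^2/\pm\id)\times\C^{2n-2}$ with its minimal (crepant, symplectic) resolution, the exceptional divisor $E\cong\mathbb{P}^1\times\C^{2n-2}$ has $\ker d\pi_p\neq 0$ at every point while $\widetilde\sigma$ is non-degenerate everywhere and $a_E=0$. The tell-tale sign is that your argument never uses the hypothesis $\codim\Sing X\geq 4$: it only uses that the general fibre of $E\to\pi(E)$ is positive-dimensional, which holds for every exceptional divisor, so taken at face value it would prove that every symplectic variety is terminal --- which the example above refutes.

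What the forward direction actually requires is the main technical content of Namikawa's note: if $a_E=0$, then $\widetilde\sigma$ is non-degenerate at a general point $p$ of $E$, the restriction $\widetilde\sigma|_{T_pE}$ has a one-dimensional radical, and one must show that the resulting characteristic foliation of $E$ is exactly the fibration $E\to\pi(E)$ (equivalently, that the fibres of $\pi|_E$ are isotropic and one-dimensional); this rests on a Wierzba-type isotropy lemma proved by Hodge theory on the compact fibres, and it is emphatically not ``a single line''. From it one concludes $\codim_X\pi(E)=2$, which contradicts $\codim\Sing X\geq 4$ since $\pi(E)\subseteq\Sing X$. Your dimension count $\dim F\geq 3$ points at the right place, but the mechanism you supply to exploit it does not work; as written, the easy half of your proof is the one that fails.
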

\begin{prop}\label{bimero}
Let $Y$ and $Y'$ be two primitive symplectic varieties which are two terminalizations of a same projective variety $Z$. 
Then $Y$ and $Y'$ are deformation equivalent.
\end{prop}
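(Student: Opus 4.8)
The plan is to realise $Y$ and $Y'$ as the two ends of a chain of flops over $Z$, and then to show that a single flop between symplectic varieties preserves the deformation class.

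First I would record the birational geometry. Write $f\colon Y\to Z$ and $f'\colon Y'\to Z$ for the two terminalizations and $\phi=(f')^{-1}\circ f\colon Y\dashrightarrow Y'$ for the induced bimeromorphic map. Since $f$ and $f'$ are crepant, $\phi$ is a crepant birational map, and since $Y$ and $Y'$ are both $\Q$-factorial with terminal singularities, $\phi$ contracts and extracts no divisor, i.e. it is an isomorphism in codimension $1$ (negativity lemma). Because $Z$ is projective, $Y$ and $Y'$ are relative minimal models over $Z$, and Kawamata's theorem that birational minimal models are connected by flops produces a finite factorization
\[
Y=Y_0\dashrightarrow Y_1\dashrightarrow\cdots\dashrightarrow Y_N=Y',
\]
in which each $Y_i$ is again a terminalization of $Z$ (in particular again primitively symplectic) and each $Y_i\dashrightarrow Y_{i+1}$ is a flop over $Z$, fitting into a diagram $Y_i\to W_i\leftarrow Y_{i+1}$ of small crepant contractions with $W_i$ a symplectic variety (being a crepant image of $Y_i$). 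Since deformation equivalence is transitive, it suffices to handle one step, so I may assume $N=1$ and write $Y\to W\leftarrow Y'$.

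Then I would prove the core claim: a flop between symplectic varieties deforms to an isomorphism. Using the deformation theory of projective symplectic varieties (unobstructedness of deformations, going back to Namikawa, and the fact that a crepant projective birational morphism lifts along deformations of its target) one builds a deformation $\mathcal{W}\to(B,0)$ of $W$ over a smooth pointed base together with compatible deformations $\mathcal{Y}\to B$ of $Y$ and $\mathcal{Y}'\to B$ of $Y'$ sitting in a family of contractions $\mathcal{Y}\to\mathcal{W}\leftarrow\mathcal{Y}'$. The decisive point, where the symplectic form is genuinely used, is that the non-$\Q$-factorial locus of $W$ — the locus over which $Y\to W$ and $Y'\to W$ fail to be isomorphisms — is destroyed by a general deformation: for very general $b\in B$ the fibre $\mathcal{W}_b$ is $\Q$-factorial. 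This is the singular analogue of the classical fact that a Mukai flop deforms to an isomorphism, and it follows from Namikawa's description of the deformations of a symplectic variety (the flopping loci cut out hyperplanes, or walls, in $H^2$, and the general period lies off them). For such $b$ the two small crepant maps $\mathcal{Y}_b\to\mathcal{W}_b$ and $\mathcal{Y}'_b\to\mathcal{W}_b$ must both be isomorphisms, so $\mathcal{Y}_b\cong\mathcal{Y}'_b$. Hence $Y=\mathcal{Y}_0$ is deformation equivalent to $\mathcal{Y}_b\cong\mathcal{Y}'_b$, which is deformation equivalent to $\mathcal{Y}'_0=Y'$; concatenating over the chain of flops yields the proposition.

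I expect the main obstacle to be this last step — showing that a \emph{general} deformation of $W$, equivalently of $Y$, is $\Q$-factorial, i.e. that the deformations really move the flopping loci. This is exactly where one needs unobstructedness together with a local Torelli statement for these symplectic varieties, which is supplied by the global moduli theory of symplectic varieties of \cite{Bakker2}. An alternative that bypasses the step-by-step flop analysis is the Huybrechts-type argument carried out globally: the codimension-$1$ isomorphism $\phi$ identifies $H^2(Y)$ with $H^2(Y')$ compatibly with Hodge structure and with the Beauville--Bogomolov form, after which surjectivity of the period map and the description of the K\"ahler cone allow one to deform both $Y$ and $Y'$ to a common, very general, non-projective symplectic variety, on which a bimeromorphic map that is an isomorphism in codimension $1$ is necessarily biregular.
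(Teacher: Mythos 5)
Your proposal is correct in substance, but your main line of argument takes a genuinely different and considerably heavier route than the paper. The first step is the same in both: the two terminalizations are connected by a bimeromorphic map which is an isomorphism in codimension one (the paper gets this by copying the proof of \cite[Lemma 3.2]{Ulrike}, you get it from crepancy plus the negativity lemma; these are the same argument). From there the paper stops almost immediately: $\Q$-factoriality turns the codimension-one isomorphism into an isomorphism $(\Pic Y)_{\Q}\simeq(\Pic Y')_{\Q}$, and \cite[Corollary 6.17]{Bakker2} -- the singular analogue of Huybrechts' theorem that birational hyperk\"ahler manifolds are deformation equivalent -- is invoked as a black box. Your main route (Kawamata's factorization into flops over $Z$, followed by deforming each flop to an isomorphism via Namikawa-type unobstructedness and genericity of $\Q$-factoriality) essentially re-proves the content of that corollary rather than citing it. It can be made to work, but note two points of care: the intermediate models $Y_i$ and the flopping contractions $W_i$ are themselves singular, so the classical statements for crepant \emph{resolutions} (Namikawa's $\mathrm{Def}(Y)\to\mathrm{Def}(W)$ being finite and surjective, and the general deformation of $W$ being $\Q$-factorial) do not apply verbatim; one must work in the locally trivial deformation theory of \cite{Bakker2}, exactly as you flag. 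Your closing ``alternative that bypasses the step-by-step flop analysis'' is in fact the paper's actual proof, so the shortest correct write-up of your argument is to promote that last paragraph to the main one.
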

\begin{proof}
Note first that $Y$ and $Y'$ are projective. Indeed they are Kähler from the definition of primitive symplectic varieties and they are Moishezon because $Z$ is Moishezon. Therefore by \cite[Theorem 6]{Namikawa2} they are projective.

Since $Y$ and $Y'$ have terminal singularities, 
the proof of \cite[Lemma 3.2]{Ulrike} can be copied word for word to prove that there exists a birational map
$f:Y\dashrightarrow Y'$ which is an isomorphism in codimension 1. Since $Y$ and $Y'$ have $\Q$-factorial singularities, this implies a well defined isomorphism map $f_*:(\Pic Y)_{\Q} \rightarrow (\Pic Y')_{\Q}$. Therefore we conclude the proof with \cite[Corollary 6.17]{Bakker2}.
\end{proof}
\begin{thm}[\cite{Yamagishi}, Theorem 3.1]\label{YamaTh}
Let $f:Y \to \C^n/G$ be a terminalization for a quotient $\C^n/G$ with $G \subset \mathrm{SL}_n(\C)$ finite.
Then there exists a bijection
\[
\{\text{irreducible exceptional divisors of }f\} 
\;\longleftrightarrow\; 
\{\text{conjugacy classes of junior elements of }G\},
\]
such that, if $E$ corresponds to an element $g \in G$ of order $r$, one has the equality of valuations
\[
\nu_E \;=\; \tfrac{1}{r}\,\nu_g|_{\mathbb{C}(Y)}.
\]
\end{thm}

\begin{remark}\label{YamaRk}
The equality $\nu_E = \tfrac{1}{r}\,\nu_g$ should be understood as a precise identification between 
two valuations defined on the invariant field $\mathbb{C}(X_1,\dots,X_n)^G$:
\begin{itemize}
  \item the \emph{geometric valuation} $\nu_E$, associated with the exceptional divisor $E$ over $\mathbb{C}^n/G$, and
  \item the \emph{combinatorial valuation} $\nu_g$, associated with the group element $g\in G$.
\end{itemize}

\smallskip
Recall that if 
\[
g=\mathrm{diag}(e^{2\pi i a_1/r},\dots,e^{2\pi i a_n/r}),
\]
then for a monomial $X_1^{m_1}\cdots X_n^{m_n}$ in $\mathbb{C}[X_1,\dots,X_n]$, one defines:
\[
\nu_g(X_1^{m_1}\cdots X_n^{m_n}) = \sum_{j=1}^n m_j a_j.
\]
This construction induces by restriction a valuation on the invariant ring 
$\mathbb{C}[X_1,\dots,X_n]^G$.

\smallskip
If $g$ is a \emph{junior element}, then in a suitable basis we may write
\[
g=\mathrm{diag}(e^{2\pi i a_1/r},\,e^{2\pi i a_2/r},\,1,\dots,1),
\]
with $a_1,a_2>0$ and $a_1+a_2=r$.  
In this case the fixed locus of $g$ is:
\[
\mathrm{Fix}(g)=\{(x_1,x_2,\dots,x_n)\in\mathbb{C}^n \mid x_1=x_2=0\}.
\]
Consequently,
\[
\nu_g(x_1^{m_1}\cdots x_n^{m_n}) = m_1a_1+m_2a_2.
\]
Hence $\nu_g$ measures only the vanishing of a $G$-invariant function 
along the directions of the coordinates $(x_1,x_2)$, which are normal to $\mathrm{Fix}(g)$.

\smallskip
The identity $\nu_E = \tfrac{1}{r}\nu_g$ means that $E$ is exactly the exceptional divisor 
whose geometric valuation coincides with the combinatorial one determined by $g$.  
Therefore, this implies that $E$ dominates the image of the fixed locus in the quotient:
\[
E \text{ lies over } \mathrm{Fix}(g)/G \subset \mathbb{C}^n/G.
\]

\smallskip
In particular, since only junior elements of $G$ give rise to exceptional divisors in the resolution $f$, all exceptional divisors of $f$ lie over loci of codimension~2 in $\mathbb{C}^n/G$.
\end{remark}
\subsection{Fundamental group of the smooth locus}\label{fundasection}
In this paper, we look for IHS variety with simply connected smooth locus. 
In this section, we explain why this is relevant and we provide a criterion in the case of terminalizations of quotients.
\begin{prop}\label{basisi}
Let X be a compact Kähler analytic complex space with rational singularities such that $H^0\left(X, \Omega^{[2]}_{X}\right)=\C\sigma$ with $\sigma$ a holomorphic symplectic form. If $\pi_1(X_{reg})=0$ then $X$ is an IHS variety.
\end{prop}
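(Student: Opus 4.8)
The plan is to verify the definition of an IHS variety from Definition \ref{Defi0} directly, the only non-trivial point being that the quasi-\'etale covers of $X$ are controlled by $\pi_1(X_{reg})$. First I would recall that since $X$ has rational singularities (hence, being compact K\"ahler with a symplectic form by hypothesis, $X$ is a symplectic variety by the remark after Kebekus's result) and is normal, any quasi-\'etale cover $q:\widetilde{X}\rightarrow X$ restricts to a genuine (unramified, finite) topological covering of $X_{reg}$ over $q^{-1}(X_{reg})$; this is the definition of quasi-\'etale, the branch locus being contained in $\Sing X$, which has codimension $\geq 2$ since $X$ is normal. Because $\pi_1(X_{reg})=0$, every such connected covering of $X_{reg}$ is trivial, so $q^{-1}(X_{reg})\rightarrow X_{reg}$ is an isomorphism, and then by normality of $\widetilde X$ and Riemann--Hartogs / purity of the branch locus, $q$ itself is an isomorphism (a finite quasi-\'etale morphism that is an isomorphism over the complement of a codimension $\geq 2$ set, with $\widetilde X$ normal, is an isomorphism). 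Hence $X$ is simply connected in the sense that it admits no non-trivial quasi-\'etale cover.

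With that reduction in hand, checking Definition \ref{Defi0} amounts to showing that the reflexive exterior algebra $H^0\bigl(X,\Omega^{[\bullet]}_X\bigr)$ is generated by $\sigma$, since the only quasi-\'etale cover to consider is $\widetilde X = X$ itself. The hypothesis already gives $H^0(X,\Omega^{[2]}_X)=\C\sigma$, so I need: $H^0(X,\Omega^{[1]}_X)=0$; that $\sigma^k$ is a nonzero section of $\Omega^{[2k]}_X$ spanning it, up to the middle degree $2k=\dim X$; and that $H^0(X,\Omega^{[p]}_X)=0$ in odd degrees and is one-dimensional, spanned by $\sigma^{p/2}$, in even degrees. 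For the vanishing of reflexive $1$-forms, I would pass to a resolution $f:\widetilde{X}\rightarrow X$ and use that for a space with rational singularities $H^0(X,\Omega^{[1]}_X)=H^0(\widetilde X,\Omega^1_{\widetilde X})$, which injects into $H^1(\widetilde X,\C)$ via the Hodge decomposition on the compact K\"ahler manifold $\widetilde X$; but $\widetilde X$ has the same fundamental group as $X_{reg}$ up to the issue that $f$ may contract exceptional divisors — more robustly, $h^1(\widetilde X,\mathcal O_{\widetilde X}) = h^1(X,\mathcal O_X)=0$ by rationality of singularities combined with the Leray spectral sequence and $R^if_*\mathcal O_{\widetilde X}=0$, and $h^1(X,\mathcal O_X)=0$ is forced because $X_{reg}$ simply connected implies $H^1(X_{reg},\mathcal O) = 0$ and this propagates; alternatively simple connectedness of $X_{reg}$ directly gives $H^1(X_{reg},\C)=0$, and reflexive $1$-forms are determined on $X_{reg}$. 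So $H^0(X,\Omega^{[1]}_X)=0$.

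For the powers of $\sigma$: since $\sigma$ is non-degenerate on the smooth locus $X_{reg}$, $\sigma^k$ is a nowhere-vanishing section of $\Omega^{2k}_{X_{reg}}$ for $2k\leq \dim X$, in particular nonzero, and $\sigma^{\dim X/2}$ trivializes the canonical sheaf of $X_{reg}$, so $\Omega^{[p]}_X$ is nonzero in every even degree up to $\dim X$. The one-dimensionality of $H^0(X,\Omega^{[p]}_X)$ for even $p$ and its vanishing for odd $p$ then follows from the standard Bogomolov-type decomposition / Beauville argument adapted to the reflexive setting: wedging with $\sigma^{k}$ gives isomorphisms $\Omega^{[p]}_X \xrightarrow{\sim} \Omega^{[p+2k]}_X$ in suitable ranges by non-degeneracy, so it suffices to know $H^0(X,\Omega^{[0]}_X)=\C$ (clear, $X$ connected compact) and $H^0(X,\Omega^{[1]}_X)=0$ (just shown) to conclude that every $H^0(X,\Omega^{[p]}_X)$ with $p$ odd vanishes and with $p$ even equals $\C\sigma^{p/2}$. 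This is exactly the statement that the algebra is generated by $\sigma$.

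The main obstacle, and the step I would spend the most care on, is the passage from $\pi_1(X_{reg})=0$ to the \emph{absence of non-trivial quasi-\'etale covers} and, relatedly, the bookkeeping that the reflexive cohomology $H^0(X,\Omega^{[\bullet]}_X)$ truly equals the exterior-algebra-on-$\sigma$ rather than just containing it: the Hodge-symmetry and dimension count for $h^0(X,\Omega^{[p]}_X)$ in a singular (rational-singularities) setting needs either a reference to the extension results for reflexive differentials on klt/symplectic spaces (Greb--Kebekus--Kov\'acs--Peternell, or Namikawa's results on symplectic varieties) or a careful reduction to a symplectic resolution when one exists. I would cite the reflexive-differential extension theorem and Namikawa's structure results rather than reprove them, since everything else is formal once quasi-\'etale covers of $X$ are known to be trivial.
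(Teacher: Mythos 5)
Your first paragraph reproduces the paper's entire proof: a quasi-\'etale cover is \'etale over $X_{reg}$ by purity of the branch locus, hence trivial because $\pi_1(X_{reg})=0$, hence an isomorphism; the paper says exactly this and then stops, treating the generation of $H^0\left(X,\Omega^{[\bullet]}_X\right)$ by $\sigma$ as already settled by the hypothesis together with Definition \ref{Defi0}. So on the step the paper actually carries out, your argument agrees with it.

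Where you go further, one step is genuinely wrong. Wedging with $\sigma$ does not give an isomorphism $\Omega^{[p]}_X\to\Omega^{[p+2]}_X$: for a symplectic vector space $V$ of dimension $2n$, the Lefschetz map $\wedge\sigma\colon\Lambda^pV^*\to\Lambda^{p+2}V^*$ is injective for $p\leq n-1$ and surjective for $p\geq n-1$, and the two spaces have different dimensions except at $p=n-1$. Injectivity only yields $h^0\left(X,\Omega^{[p]}_X\right)\leq h^0\left(X,\Omega^{[p+2]}_X\right)$ in the relevant range, i.e.\ the inequality in the wrong direction: from $h^0\left(\Omega^{[0]}_X\right)=h^0\left(\Omega^{[2]}_X\right)=1$ you cannot deduce $h^0\left(\Omega^{[4]}_X\right)=1$ this way, only that it is at least $1$. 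The one-dimensionality in every even degree and the vanishing in odd degree are not formal consequences of non-degeneracy; they come from the Bochner principle for singular K\"ahler spaces with trivial canonical class together with the computation of the invariants of the restricted holonomy group $\mathrm{Sp}(n)$, i.e.\ from the singular decomposition theorem of \cite{Bakker} and \cite{Campana} (the reflexive-extension theorem and Namikawa's results you propose to cite do not by themselves give this count). Since the paper does not supply this step either, this does not put you behind its proof, but the sketch as written would not close the argument; you should replace the wedging claim by an explicit appeal to the Bochner/holonomy statement for the symplectic variety $X$.
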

\begin{proof}
 Indeed, let
$X$ be as in the statement of the proposition. Let $\gamma:Y\rightarrow X$ be a quasi-étale cover. By purity of branch loci, $\gamma$ is étale over the smooth locus $X_{reg}$ of $X$. Since $\pi_1(X_{reg})=0$, the cover $\gamma$ is necessarily trivial. Hence, $X$ is an IHS variety.
\end{proof}
However, there exists IHS varieties with $\pi_1(X_{reg})\neq0$.
\begin{prop}\label{codim4}
Let $X$ be an IHS manifold and $G$ a non-trivial finite symplectic automorphism group of $X$. If for all $g\in G$, we have $\codim \Fix g\geq 4$ then $X/G$ is an IHS variety. Moreover $\pi_1((X/G)_{reg})\neq0$.
\end{prop}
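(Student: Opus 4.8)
The plan is to prove the two assertions of Proposition~\ref{codim4} separately. For the first assertion, that $X/G$ is an IHS variety, I would proceed as follows. First I would check that $X/G$ has rational singularities: since $X$ is smooth and $G$ acts by automorphisms, $X/G$ is a quotient singularity, hence has rational (in fact klt) singularities, and it is a compact K\"ahler space. Next I would produce the symplectic form: the holomorphic symplectic form $\sigma$ on $X$ is $G$-invariant (since $G$ acts symplectically, i.e.\ preserves $\sigma$ up to the trivial character, and being symplectic here means it fixes $\sigma$), so it descends to a reflexive $2$-form $\bar\sigma$ on $X/G$ which is non-degenerate on the smooth locus; together with the rationality of the singularities and \cite[Corollary 1.8]{Kebekus} this makes $X/G$ a symplectic variety. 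The hypothesis $\codim\Fix g\geq 4$ for all $g\in G$ guarantees $\codim\Sing(X/G)\geq 4$, so by Proposition~\ref{terminal} the singularities are terminal. To upgrade ``symplectic variety'' to ``IHS variety'' I would invoke Proposition~\ref{basisi} applied to quasi-\'etale covers: one must show that for every quasi-\'etale cover $q:\widetilde{X}\to X/G$, the reflexive form algebra is generated by a symplectic form. Here I would use that $X\to X/G$ is itself quasi-\'etale (it is \'etale in codimension~$1$ precisely because all fixed loci have codimension $\geq 4$, in particular $\geq 2$), so any quasi-\'etale cover of $X/G$ is dominated by a quasi-\'etale cover of $X$; since $X$ is an IHS manifold, its reflexive form algebra (which is just the usual form algebra) is generated by $\sigma$, and one checks the invariant part under the relevant group action is still generated by a symplectic form. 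Alternatively, and more cleanly, one notes $H^0(X/G,\Omega^{[2]}_{X/G}) = H^0(X,\Omega^2_X)^G = \C\bar\sigma$ and that the same computation on quasi-\'etale covers reduces to the IHS property of $X$.

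For the second assertion, that $\pi_1((X/G)_{reg})\neq 0$, the key point is that the quotient map restricted to the smooth locus is an honest covering map. Since every $g\in G\setminus\{\id\}$ has $\codim\Fix g\geq 4 \geq 1$, the fixed loci do not form divisors; more importantly, the branch locus in $X/G$ has codimension $\geq 4$, and removing it (together with its preimage) does not change $\pi_1$ of either space by purity of the branch locus / the codimension~$\geq 2$ argument. Concretely I would argue: let $U \subseteq X$ be the locus where $G$ acts freely; then $U = X\setminus \bigcup_{g\neq\id}\Fix g$ has complement of codimension $\geq 4$, so $\pi_1(U)=\pi_1(X)=0$ (or at least the inclusion induces a surjection $\pi_1(U)\twoheadrightarrow\pi_1(X)$; since $X$ is simply connected we get $\pi_1(U)$ need not vanish, but we only need the map $U/G \to X/G$ picture). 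The quotient $U/G$ is an open subset of $(X/G)_{reg}$ whose complement has codimension $\geq 4$, hence $\pi_1(U/G)\twoheadrightarrow \pi_1((X/G)_{reg})$ is an isomorphism. Now $U\to U/G$ is a Galois covering with group $G$, giving a short exact sequence $1\to\pi_1(U)\to\pi_1(U/G)\to G\to 1$. Since $G$ is non-trivial, $\pi_1(U/G)\neq 0$, hence $\pi_1((X/G)_{reg})\neq 0$.

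The main obstacle, I expect, is the careful bookkeeping of codimensions and the use of purity of the branch locus to pass between $X$, $U$, $U/G$, $(X/G)_{reg}$, and $X/G$ without changing fundamental groups: one needs that complements of codimension $\geq 2$ (here even $\geq 4$) subsets do not affect $\pi_1$, which is standard for smooth spaces but requires a word for the singular space $(X/G)_{reg}$ — though note $(X/G)_{reg}$ is smooth, so this is fine, and the subtlety is only that $U/G$ might a priori miss part of $(X/G)_{reg}$, which it does not beyond a codimension $\geq 4$ locus. A secondary subtlety is verifying the IHS condition on \emph{all} quasi-\'etale covers rather than just on $X/G$ itself; here the cleanest route is to observe that $X \to X/G$ is quasi-\'etale, invoke that compositions and fiber products of quasi-\'etale covers are quasi-\'etale, and reduce to the fact that $X$ — being an IHS manifold — has no nontrivial \'etale covers and has one-dimensional space of holomorphic $2$-forms, so the reflexive algebra upstairs is generated by $\sigma$; the group acting on any such cover of $X/G$ then has invariant form algebra generated by a symplectic form. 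I would also remark that this proposition shows the necessity of the codimension-$2$ fixed locus condition in Proposition~\ref{mainprop}: the present construction, with all fixed loci of codimension $\geq 4$, produces IHS varieties whose smooth locus is \emph{not} simply connected, in contrast to the situation there.
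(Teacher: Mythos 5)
Your proof is correct and follows essentially the same route as the paper: for the first assertion the paper simply invokes Proposition~\ref{terminal} and cites \cite[Proposition 2.15]{Perego}, whose argument is exactly the quasi-\'etale descent you spell out, and for the second assertion the paper makes the identical observation that $p:X\smallsetminus\bigcup_{g\neq\id}\Fix g\to (X/G)_{reg}$ is a non-trivial \'etale cover. (One small remark: your hedge that ``$\pi_1(U)$ need not vanish'' is unnecessary and in fact false --- removing an analytic subset of complex codimension $\geq 2$ from the smooth simply connected $X$ leaves $\pi_1(U)=0$ --- but your argument only uses the surjection $\pi_1(U/G)\twoheadrightarrow G$, so nothing is affected.)
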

\begin{proof}
By Proposition \ref{terminal}, we know that $X/G$ has terminal singularities.
Then, the proof of the first statement is identical to the one of \cite[Proposition 2.15]{Perego}.
Let $p:X\rightarrow X/G$ be the quotient map.
To prove the second statement, we only need to remark that the restriction
$$p:X\smallsetminus \left(\bigcup_{g\in G\smallsetminus\left\{\id\right\}}\Fix g\right)\rightarrow (X/G)_{reg}$$
is an étale cover.
\end{proof}

In order to obtain irreducible symplectic varieties with simply connected smooth locus, we propose the following criterion. A similar result have been proved independently by Bertini, Capasso, Grossi, Mauri and Mazzon (\cite[Proposition 7.1]{Bertini}).
\begin{prop}\label{fonda}
Let $X$ be a simply connected manifold which admits a nowhere vanishing holomorphic 2-form $\varphi$. Let $G$ be a finite automorphism group of $X$ which respects the holomorphic 2-form $\varphi$ (that is $g^*(\varphi)=\varphi$ for all $g\in G$). Let $Y\rightarrow X/G$ be a terminalization of $X/G$. Let $Y_{reg}:=Y\smallsetminus \Sing Y$. Then $\pi_1(Y_{reg})=0$ if and only if $G$ is generated by automorphisms with fixed locus of codimension 2.
\end{prop}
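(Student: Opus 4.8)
The plan is to relate $\pi_1(Y_{reg})$ to $\pi_1((X/G)_{reg})$ first, and then compute the latter via covering space theory applied to the étale cover $X \setminus (\text{fixed loci})\to (X/G)_{reg}$. The key observation is that since $Y\to X/G$ is a crepant proper bimeromorphic morphism (a terminalization), it is an isomorphism over the locus where $X/G$ is already terminal, and more importantly the exceptional locus of a crepant birational morphism of normal varieties has codimension $\geq 2$ in $Y$. Combined with the fact that $Y$ is normal, hence $Y_{reg}$ is connected and $Y_{reg}$ minus a codimension $\geq 2$ analytic subset has the same fundamental group as $Y_{reg}$, one can reduce to understanding $\pi_1$ of an open subset of $(X/G)_{reg}$ obtained by removing the images of the fixed loci. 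I would cite/reuse the argument that for a resolution or partial resolution with small exceptional locus, $\pi_1$ of the smooth locus is unchanged; this is essentially the statement that $Y_{reg}$ and $(X/G)_{reg}$ have a common open subset whose complement is everywhere of codimension $\geq 2$, so $\pi_1(Y_{reg}) \cong \pi_1((X/G)_{reg})$.

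**Next I would** analyze $\pi_1((X/G)_{reg})$ directly. Let $p:X\to X/G$ be the quotient. Write $Z = \bigcup_{g\neq \id}\Fix g$ and decompose $Z = Z_2 \cup Z_{\geq 4}$ according to whether a component has codimension $2$ or codimension $\geq 4$ (these are the only options for symplectic automorphisms: fixed loci of symplectic automorphisms are symplectic submanifolds, hence even codimension, and Proposition~\ref{terminal}/the structure of the quotient singularities shows codimension $2$ or $\geq 4$). The singular locus of $X/G$ is exactly $p(Z)$, but the codimension $\geq 4$ part of $Z$ contributes nothing to $\pi_1((X/G)_{reg})$ after removing a codimension $\geq 4$ (hence codimension $\geq 2$) set. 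So $\pi_1((X/G)_{reg}) = \pi_1((X/G)\setminus p(Z)) / N$ where one must account for how loops around the codimension-$2$ branch divisors behave; more cleanly, I would use the orbifold fundamental group / Armstrong's theorem: $\pi_1((X\setminus Z_{\geq 4})/G) \cong G / H$, where $H$ is the normal subgroup generated by the elements of $G$ having fixed points of codimension $2$ on $X\setminus Z_{\geq 4}$, i.e. those pseudo-reflections in codimension $1$ of the covering $X\setminus Z_{\geq 4}\to (X\setminus Z_{\geq 4})/G$. This is precisely Armstrong's computation of the fundamental group of an orbit space: $\pi_1(M/\Gamma) = \Gamma/\Gamma'$ where $\Gamma'$ is generated by elements with fixed points, when $M$ is simply connected — but here one only wants to kill the elements whose fixed locus is a divisor, because the higher-codimension fixed loci get removed.

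**The main obstacle** is making this Armstrong-type computation rigorous in the precise form needed: one wants to show $\pi_1(Y_{reg}) = 0$ iff $G$ is generated by the automorphisms $g$ with $\codim \Fix g = 2$. The "if" direction: if such $g$ generate $G$, then the normal subgroup $H$ they generate is all of $G$, so $\pi_1 = G/G = 0$. The "only if" direction requires that the $g$ with codimension-$2$ fixed locus are *exactly* the elements killed when passing from $X$ to $(X/G)_{reg}$, i.e. that no "extra" relations appear and that elements with higher-codimension (or empty) fixed locus genuinely survive in $\pi_1$. For this I would invoke: (a) Armstrong's theorem gives $\pi_1((X/G)_{reg})$ as $G/H$ with $H$ generated by stabilizer elements of points of $X_{reg}':=X\setminus Z_{\geq 4}$ — and a point of $X_{reg}'$ has nontrivial stabilizer only if it lies on some codimension-$2$ $\Fix g$; (b) purity of the branch locus / the étale-in-codimension-one structure ensures no further collapsing; (c) removing the codimension $\geq 4$ strata does not change $\pi_1$ (standard transversality). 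I'd also need the elementary fact that the set of $g\in G$ with $\codim \Fix g = 2$ is closed under conjugation, so the subgroup it generates is automatically normal, and that this subgroup equals $H$. The delicate point is step (a)–(b): verifying that Armstrong's hypotheses apply to the (non-free, non-compact-quotient) action on $X_{reg}'$ and that the resulting quotient's smooth locus has the claimed fundamental group — here I'd lean on the cited analogue in \cite{Bertini} and on the standard reference (Armstrong, or Kollár's treatment of orbifold fundamental groups) to close the argument.
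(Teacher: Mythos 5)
Your reduction of $\pi_1(Y_{reg})$ to a fundamental group downstairs is where the argument breaks. A terminalization of $X/G$ is emphatically not small: over every codimension-$2$ component of $\Sing (X/G)$ it has an exceptional divisor (this is forced by crepancy over transverse $A_1$-points, and the paper itself remarks that $S(G)_\theta^{[n]}$ always contains an exceptional divisor), so the common open subset of $Y_{reg}$ and $(X/G)_{reg}$ has codimension-$1$ complement in $Y_{reg}$, and the claimed isomorphism $\pi_1(Y_{reg})\cong\pi_1((X/G)_{reg})$ is false. The simplest case already refutes it: for $X=S\times S$ and $G=\mathfrak{S}_2$ one has $Y=S^{[2]}$ with $\pi_1(Y_{reg})=0$, while $(X/G)_{reg}=S^{(2)}\smallsetminus\Delta$ has fundamental group $\Z/2$. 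The correct reduction compares $Y^o:=Y\smallsetminus f^{-1}(f(\Sing Y))$ with $\overline{X}':=X/G\smallsetminus f(\Sing Y)$, a space that still \emph{contains} the generic points of the codimension-$2$ singular locus, using the fact that a proper resolution of singularities does not change $\pi_1$ (Koll\'ar) together with a codimension bound on $f^{-1}(f(\Sing Y))$ (Yamagishi). Note also that removing a codimension-$\geq 2$ analytic subset from a \emph{singular} space can change $\pi_1$ (e.g.\ $\C^2/\pm\id$ versus $(\C^2\smallsetminus 0)/\pm\id$), so your second paragraph cannot freely identify $\pi_1((X/G)_{reg})$, $\pi_1((X/G)\smallsetminus p(Z))$ and $\pi_1((X\smallsetminus Z_{\geq 4})/G)$; these are three different groups in general.

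Even granting a correct reduction to an Armstrong-type computation on $X'/G$ with $X'=X\smallsetminus p^{-1}(f(\Sing Y))$, the ``only if'' direction needs to know that every $g\in G$ with a fixed point outside $p^{-1}(f(\Sing Y))$ already lies in $H$; equivalently, that $Y$ is genuinely singular over every point whose stabilizer is not generated by elements with codimension-$2$ fixed locus. Nothing in your proposal controls $\Sing Y$, and without that the set of group elements killed in $\pi_1$ is not pinned down. The paper supplies exactly this input via Verbitsky's theorem that a symplectic quotient $\C^n/G_x$ admits no crepant resolution unless $G_x$ is generated by symplectic reflections, and then, rather than invoking Armstrong for the converse, exhibits the nontrivial \'etale cover $X'/H\to X'/G$ directly. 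Your forward direction (if $G$ is generated by codimension-$2$ elements then $\pi_1(Y_{reg})=0$) is sound and matches the paper's use of Fujiki's Lemma 1.2, but the two points above are essential missing ingredients.
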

\begin{proof}
Let $f:Y\rightarrow X/G$ be a terminalization. Let $Y^{o}:=Y\smallsetminus f^{-1}(f(\Sing Y))$. 
We consider the restriction $f_{reg}:Y^o\rightarrow \overline{X}'$, with $\overline{X}':=X/G\smallsetminus f(\Sing Y)$. 
Let $P:X\rightarrow X/G$ be the quotient map. We also denote $X'=P^{-1}(\overline{X}')$. We summarize our notation in the following figure.
$$\xymatrix@R10pt{Y\ar[r]^f&X/G&\ar[l]^{P}X\\
Y^{o}\ar[r]^{f_{reg}}\incl[u]&\overline{X}'\incl[u]&\ar[l]^{P_{|X'}}\incl[u] X'\\
Y\smallsetminus f^{-1}(f(\Sing Y))\ar@{=}[u]\ar[r]^{f_{reg}}& X/G\smallsetminus f(\Sing Y)\ar@{=}[u]&\ar[l]^{\ \ \ \ \ \ P_{|X'}}\ar@{=}[u]P^{-1}(\overline{X}').}$$

The morphism $f_{reg}$ is a resolution of singularities. Therefore by \cite[Theorem 7.8]{Kolar}, $\pi_1(Y^o)=\pi_1(\overline{X}')$. 
Moreover by Proposition \ref{terminal}, $\codim \Sing Y\geq 4$, therefore $\codim f(\Sing Y)\geq 4$. Then 
by Theorem \ref{YamaTh} and Remark \ref{YamaRk}, we have $\codim f^{-1}(f(\Sing Y))\geq2$. Hence the space $Y_{reg}\smallsetminus Y^o$ has codimension at least 2 in $Y_{reg}$, therefore we obtain $\pi_1(Y_{reg})=\pi_1(Y^o)$.
It remains to prove that $\pi_1(\overline{X}')=0$ if and only if $G$ is generated by automorphisms with fixed locus of codimension 2.

By Proposition \ref{terminal}, we have $\codim \Sing Y\geq 4$, in particular $\codim f(\Sing Y)\geq 4$. Moreover $\codim P^{-1}(f(\Sing Y))\geq 4$. It follows from \cite[Lemma 1.2]{Fujiki} that $\pi_1(\overline{X}')=0$ if $G$ is generated by automorphisms with fixed locus of codimension 2.

Now we assume that $G$ is not generated by automorphisms with fixed locus of codimension 2 and we show that 
$\pi_1(\overline{X}')\neq0$. Let $H\subset G$ be the sub-group of $G$ generated by elements with fixed locus of codimension 2. By assumption $H\neq G$. Moreover $H$ is a normal sub-group of $G$. Let $\overline{G}:=G/H$. We have $X/G=(X/H)/\overline{G}$. To prove our claim, it is enough to show that
$(X'/H)\rightarrow(X'/H)/\overline{G}$ is a non-ramified cover. Let $\overline{g}\in \overline{G}\smallsetminus\left\{\overline{\id}\right\}$. We show that $\Fix \overline{g} \cap X'/H=\emptyset$. Let $Q:X\rightarrow X/H$. Let $\overline{x}\in \Fix \overline{g}$ and $x\in Q^{-1}(\overline{x})$. We prove that $\overline{x}\notin X'/H$. Before, we summarize the new notation in the following figure.
$$\xymatrix@C15pt{P(x)\ar@{}[r]|-{\in}&X/G&\incl[l]\left(X'/H\right)/\overline{G}&\ar@{=}[l]\overline{X}'\\
\overline{x}\ar@{}[r]|-{\in}\ar[u]&X/H\ar[u]&\ar[u]\incl[l]X'/H&\\
x\ar@{}[r]|-{\in}\ar[u]&X\ar[u]^{Q}&\incl[l]X'\ar[u]^{Q_{|X'}}\ar[uur]_{P_{|X'}}&}$$
The sub-group $G_x:=\left\{\left.g\in G\right|\ g(x)=x\right\}$ induces a local action around the point $x$. The point $P(x)$ is a singular point in $X/G$ of analytic type $\C^n/G_x$, with $n=\dim X$. Since $\overline{x}\in \Fix \overline{g}$, we have $G_x\nsubseteq H$ and $G_x$ is not generated by automorphisms with fixed locus of codimension 2. It follows by \cite[Theorem 1.2]{Verbitsky} that the space $\C^n/G_x$ does not admit a crepant resolution. Hence necessarily $P(x)\notin \overline{X}'$ otherwise $Y^{o}$ would induce a crepant resolution for the singularities $\C^n/G_x$. Finally since $P(x)\notin \overline{X}'$, we obtain that $\overline{x}\notin X'/H$.

\end{proof}
\begin{rmk}\label{mainremark}
We consider the same objects as in the statement of the previous proposition.
If $G$ is not generated by automorphisms with fixed locus in codimension 2,
it is shown by Bertini, Capasso, Grossi, Mauri and Mazzon in \cite[Proposition 7.1]{Bertini}, that we can always find a terminalization $Y\rightarrow X/G$ such that $Y=Z/H$ with $Z$ 
a symplectic variety with simply connected smooth locus
and $H$ a non trivial group such that $\codim \Fix h\geq4$ for all $h\in H\smallsetminus \left\{\id\right\}$.
\end{rmk}
\begin{rmk}
Searching for examples in a program of classification, Propositions \ref{codim4}, \ref{fonda} and Remark \ref{mainremark} show that it is relevant to restrict our attention on the groups $G$ generated by automorphisms with fixed locus in codimension 2 and on irreducible symplectic varieties $Y$ with $\pi_1(Y_{reg})=1$. Moreover, in the orbifold case, the condition $\pi_1(Y_{reg})=1$ is required to be irreducible symplectic (see Definition \ref{defiorbi} below); this condition allows the uniqueness in the Beauville--Bogomolov decomposition theorem \cite{Campana}.
\end{rmk}
\subsection{Irreducible symplectic orbifolds}
As explained in the previous sections \ref{Terminalsection} and \ref{fundasection}, we want to look for IHS varieties with $\Q$-factorial terminal singularities and a simply connected regular locus. A particular case are the IHS orbifolds which appear very similar to the IHS manifolds in their behavior 
(see \cite{Lol} and \cite{Ulrike}). 
\begin{defi}\label{defiorbi}
Let $X$ be a compact Kähler complex analytic complex space with a unique, up to scalar, holomorphic symplectic form.
We say that $X$ is a \emph{primitively symplectic orbifold} if:
\begin{itemize}
\item
$X$ has only quotient singularities;
\item
$\codim \Sing X\geq 4$;
\end{itemize}
If in addition $\pi_1(X_{reg})=0$,
we say that $X$ is an \emph{irreducible symplectic (IHS) orbifold}.
\end{defi}
\begin{rmk}
In the orbifold case, note that the condition $\pi_1(X_{reg})=0$ allows the uniqueness of the Beauville--Bogomolov decomposition (see \cite{Campana}).
\end{rmk}

\subsection{Topological invariants of orbifolds in dimension 4}\label{topo}
In this section, we recall some results from \cite{Fu} with some additional computations in order to provide the Betti numbers and the Chern numbers of the examples in Theorem \ref{main3}. This is necessary to apply the verification method described in Section \ref{verif}.
\begin{defipro}[{\cite[Definition-Proposition 2.3]{Fu}}]\label{defilocal uniformizing system}
Let $X$ be an $n$-dimensional complex orbifold. Let $x\in \Sing X$. 
Then there exist a finite subgroup $G_x$ of $\GL_{n}(\C)$ and an open neighborhood $V_x\subset \C^n$ of the origin $0\in \C^n$, stable under the action of $G_x$, with $V_x/G_x$ isomorphic to an open neighborhood $U_x$ of $x$ (the map $\pi_x:V_x\rightarrow V_x/G_x\simeq U_x$ sending 0 to $x$) and such that:
$$\codim\Fix(g)\geq 2\ \text{for all}\ g\in G_x\backslash \{\id\}.$$
Such a group $G_x$ is unique up to conjugation. 
We call $(U_x,V_x,G_x,\pi_x)$ a \emph{local uniformizing system} of $x$, and $G_x$ the \emph{regional fundamental group} of $X$ at $x$.
\end{defipro}
\begin{defi}[locally V-free sheaf]\label{def:VB}
Let $X$ be an orbifold. 
Let $\mathscr{F}$ be a coherent sheaf on $X$. 
The sheaf $\mathscr{F}$ is said to be \emph{locally} V-\emph{free} if for any $x\in X$, there exist a local uniformizing system $(U_x,V_x,G_x,\pi_x)$, a free coherent sheaf $\hat{\mathscr{F}}_{V_x}$ on $V_x$, and a $G_x$-action on $\hat{\mathscr{F}}_{V_x}$ such that $\mathscr{F}_{|U_x}\simeq \pi_{x*}\left(\hat{\mathscr{F}}_{V_x}^{G_x}\right)$. 
\end{defi}
\begin{nota}\label{repre}
Let $X$ be an orbifold, $x\in X$ and $\mathscr{F}$ a locally V-free sheaf. Let $(U_x,V_x,G_x,\pi_x)$ be a local uniformizing system of $x$ and let $\hat{\mathscr{F}}_{V_x}$ be a locally free sheaf on $V$ endowed with an action of $G_x$ as in Definition \ref{def:VB}. Hence, the fiber of $\hat{\mathscr{F}}_{V_x}$ at $0$ is endowed with an action of $G_x$, which provides a representation of $G_x$. 
We denote by $\rho_{x,\mathscr{F}}$ \emph{the representation of $G_x$ associated with $x$ and $\mathscr{F}$}.
\end{nota}
\begin{nota}\label{sS0}
Let $X$ be an orbifold. Let $x\in \Sing X$ be an isolated singularity. We set:
\begin{equation}
s_x:=\frac{1}{|G_x|}\left[6\left(\sum_{\substack{g\in G_x\\g\neq \id}}\frac{\tr(\rho_{x,\Omega_X^{[1]}}(g))-4}{\det(\rho_{x, T_{X}}(g)-\id)}\right)+|G_x|-1\right],
\label{sdef}
\end{equation}
with $(U_x,V_x,G_x,\pi_x)$ a local uniformizing system around $x$. We set:
\begin{equation}\label{eqn:Si}
S_{0,x}:=\frac{1}{|G_x|}\sum_{g\neq\id}\frac{1}{\det(\id-\rho_{x, T_{X}}(g))}.
\end{equation}
When $X$ has only isolated singularities, we set:
$$s(X):=\sum_{x\in\Sing X} s_x\ \ \ \ \text{and}\ \ \ \ S_0(X):=\sum_{x\in\Sing X} S_{0,x}.$$
\end{nota}
\begin{prop}[{\cite[Proposition 3.6]{Fu}}]\label{RR}
Let $X$ be a primitively symplectic orbifold of dimension 4. We have:
\begin{equation*}
b_{4}(X)+b_{3}(X)-10b_{2}(X)=46+s(X),
\end{equation*}
or equivalently,
$$\chi(X)=48+12b_2(X)-3b_3(X)+s(X).$$
\end{prop}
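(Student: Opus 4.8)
The plan is to combine the Hodge theory of compact Kähler orbifolds with Kawasaki's orbifold Riemann--Roch theorem, using the symplectic form both to control the Hodge diamond of $X$ and to simplify its Chern classes. First I would record the shape of the Hodge diamond. Since $X$ is a compact Kähler orbifold the Hodge decomposition holds, with $h^{p,q}(X)=\dim H^{q}(X,\Omega_X^{[p]})$ and $h^{p,q}=h^{q,p}$; the symplectic form $\sigma$ trivialises $K_X=\Omega_X^{[4]}$ (through $\sigma^{2}$) and induces reflexive isomorphisms $\Omega_X^{[1]}\simeq T_X$, hence $\Omega_X^{[p]}\simeq\Omega_X^{[4-p]}$, so that $h^{p,q}=h^{4-p,q}=h^{p,4-q}$; combined with $h^{1}(X,\mathcal{O}_X)=0$ and $h^{0}(X,\Omega_X^{[2]})=1$ this determines the whole diamond in terms of $h^{1,1}$, $h^{2,1}$, $h^{2,2}$, with in particular $h^{3,1}=h^{1,1}$. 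One reads off $\chi(X,\mathcal{O}_X)=3$, $\chi(X,\Omega_X^{[1]})=h^{2,1}-2h^{1,1}$, $\chi(X,\Omega_X^{[2]})=2-2h^{2,1}+h^{2,2}$, and $b_{2}=2+h^{1,1}$, $b_{3}=2h^{2,1}$, $b_{4}=2+2h^{1,1}+h^{2,2}$, hence $b_{4}+b_{3}-10b_{2}=-18-8h^{1,1}+2h^{2,1}+h^{2,2}$.

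Next I would apply Kawasaki--Riemann--Roch to the locally $V$-free sheaves $\mathcal{O}_X$, $\Omega_X^{[1]}$ and $\Omega_X^{[2]}$. As $\codim\Sing X\ge 4=\dim X$, the singular set is a finite set of points $x$, each with local group $G_x$ acting symplectically on $\C^{4}$ and with $\Fix(g)=\{0\}$ for every $g\ne\id$; moreover $c_{1}(X)=c_{3}(X)=0$, the odd orbifold Chern classes vanishing because $T_X\simeq\Omega_X^{[1]}$. For such an $X$ the theorem reads $\chi(X,\mathscr{F})=\int_X\mathrm{ch}(\mathscr{F})\,\mathrm{td}(X)+\Sigma_{\mathscr{F}}(X)$ with twisted-sector term $\Sigma_{\mathscr{F}}(X)=\sum_{x\in\Sing X}\frac{1}{|G_x|}\sum_{g\ne\id}\frac{\tr(\rho_{x,\mathscr{F}}(g))}{\det(\id-\rho_{x,T_X}(g))}$; for $\mathscr{F}=\mathcal{O}_X$ this is exactly $S_{0}(X)$ of Notation \ref{sS0}, and I write $\Sigma_{1}(X)$, $\Sigma_{2}(X)$ for $\mathscr{F}=\Omega_X^{[1]},\Omega_X^{[2]}$. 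Because $c_{1}(X)=c_{3}(X)=0$, both $\mathrm{ch}(\Omega_X^{[p]})$ and $\mathrm{td}(X)$ are universal polynomials in $c_{2}(X)$ and $c_{4}(X)$ alone, so a once-and-for-all computation turns the three ``main'' integrals into explicit $\Q$-linear combinations of the two orbifold Chern numbers $u:=\int_Xc_{2}(X)^{2}$ and $v:=\int_Xc_{4}(X)$ (for instance $\int_X\mathrm{td}(X)=(3u-v)/720$).

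This leaves three identities relating $\{3,\,\chi(X,\Omega_X^{[1]}),\,\chi(X,\Omega_X^{[2]})\}$ to $\{u,v\}$ and $\{S_{0}(X),\Sigma_{1}(X),\Sigma_{2}(X)\}$; eliminating $u$ and $v$ expresses $h^{2,2}$ as $64-22S_{0}(X)+4\Sigma_{1}(X)+\Sigma_{2}(X)-2h^{2,1}+8h^{1,1}$, and substituting this into $b_{4}+b_{3}-10b_{2}=-18-8h^{1,1}+2h^{2,1}+h^{2,2}$ the Hodge numbers cancel, leaving $46+\bigl(4\Sigma_{1}(X)-22S_{0}(X)+\Sigma_{2}(X)\bigr)$. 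It remains to identify the bracket with $s(X)=\sum_{x}s_{x}$: rewriting the definition \eqref{sdef} as $s_{x}=6\Sigma_{1,x}-24S_{0,x}+\mathcal{E}_{x}$ with $\mathcal{E}_{x}:=(|G_x|-1)/|G_x|$, and using the elementary identity $\sum_{p=0}^{4}(-1)^{p}\tr(\rho_{x,\Omega_X^{[p]}}(g))=\det(\id-\rho_{x,T_X}(g))$ --- valid because $\det(\rho_{x,T_X}(g))=1$ for a symplectic $g$, and equivalent to the local form of the orbifold Gauss--Bonnet formula $\chi_{\mathrm{top}}(X)=\int_Xc_{4}(X)+\sum_{x}\mathcal{E}_{x}$ --- one gets $\mathcal{E}_{x}=2S_{0,x}-2\Sigma_{1,x}+\Sigma_{2,x}$, whence $s_{x}=4\Sigma_{1,x}-22S_{0,x}+\Sigma_{2,x}$, as needed.

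The main obstacle is bookkeeping rather than anything conceptual. One has to make sure the orbifold versions of the Hodge decomposition, Dolbeault's theorem and Serre duality for the reflexive sheaves $\Omega_X^{[p]}$ are on solid ground (these are classical for $V$-manifolds, and $X$ being a $\Q$-homology manifold also supplies Poincaré duality on cohomology), and above all one must get every rational coefficient in Kawasaki's formula and in the classes $\mathrm{ch}(\Omega_X^{[p]})\,\mathrm{td}(X)$ exactly right, so that the surviving combination of local terms is precisely $s(X)$ and the additive constant is precisely $46$; the vanishing $c_{1}(X)=c_{3}(X)=0$ and the value $\chi(X,\mathcal{O}_X)=3$ are what keep the computation manageable.
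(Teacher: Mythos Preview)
Your proposal is correct. The paper does not actually supply a proof of Proposition~\ref{RR}: it is quoted verbatim from \cite[Proposition~3.6]{Fu}. What you have written is essentially the argument given there --- orbifold (Kawasaki/Blache) Riemann--Roch for $\mathcal{O}_X$, $\Omega_X^{[1]}$, $\Omega_X^{[2]}$, the Hodge-diamond symmetries forced by the symplectic form, the vanishing $c_1(X)=c_3(X)=0$, and then the elimination of $c_2^2$ and $c_4$ --- so there is nothing to compare: your route and the source coincide. Your identification $s_x=4\Sigma_{1,x}-22S_{0,x}+\Sigma_{2,x}$ via $\sum_p(-1)^p\tr\rho_{x,\Omega^{[p]}}(g)=\det(\id-\rho_{x,T_X}(g))$ is the right bookkeeping device, and the claimed linear relation $-22A+4B+C=0$ among the main terms does hold (with $A=\tfrac{3u-v}{720}$, $B=\tfrac{12u-124v}{720}$, $C=\tfrac{18u+474v}{720}$), which is the one nontrivial numerical check.
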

\begin{rmk}
Proposition \ref{RR} shows that the knowledge of $b_2$, $b_3$ and the singularities is enough to compute the topological Euler characteristic and all the Betti numbers of a 4-dimensional primitively symplectic orbifold ($b_1=0$ by \cite[Proposition 6.7]{Fujiki}).
\end{rmk}
The invariant $s_x$ can be computed for all kind of isolated singularities $x$. We provide some examples of computations that are necessary for this paper.
\begin{lemme}[{\cite[Example 3.8]{Fu}}]\label{s}
Let $x$ be an isolated singularity of analytic type $\C^4/\left\langle g\right\rangle$, with
$g=\diag(\xi_n,\xi_n^{-1},\xi_n^{k},\xi_n^{-k})$ and $\gcd(n,k)=1$.
Then $$s_x=-(n-1).$$
\end{lemme}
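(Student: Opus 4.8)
The plan is to compute $s_x$ directly from Notation \ref{sS0} by evaluating the two ingredients $\tr(\rho_{x,\Omega_X^{[1]}}(h))$ and $\det(\rho_{x,T_X}(h)-\id)$ for each nontrivial $h\in\langle g\rangle$. Since the singularity is $\C^4/\langle g\rangle$ with $g=\diag(\xi_n,\xi_n^{-1},\xi_n^k,\xi_n^{-k})$ acting linearly, a local uniformizing system is just $(\C^4,\C^4,\langle g\rangle,\pi)$ and the representation on $T_X$ at the origin is the defining representation, so $\rho_{x,T_X}(g^m)=\diag(\xi_n^m,\xi_n^{-m},\xi_n^{mk},\xi_n^{-mk})$; dually, $\rho_{x,\Omega_X^{[1]}}(g^m)$ has eigenvalues $\xi_n^{-m},\xi_n^{m},\xi_n^{-mk},\xi_n^{mk}$, the same multiset, hence $\tr(\rho_{x,\Omega_X^{[1]}}(g^m))=\xi_n^m+\xi_n^{-m}+\xi_n^{mk}+\xi_n^{-mk}$, which is real.

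First I would write, for $h=g^m$ with $m\in\{1,\dots,n-1\}$, set $\zeta=\xi_n^m$ and $\eta=\xi_n^{mk}$ (both nontrivial $n$-th roots of unity since $n\wedge k=1$). Then
$$\frac{\tr(\rho_{x,\Omega_X^{[1]}}(h))-4}{\det(\rho_{x,T_X}(h)-\id)}=\frac{(\zeta+\zeta^{-1}-2)+(\eta+\eta^{-1}-2)}{(\zeta-1)(\zeta^{-1}-1)(\eta-1)(\eta^{-1}-1)}.$$
The key simplification is that $(\zeta-1)(\zeta^{-1}-1)=2-\zeta-\zeta^{-1}=-(\zeta+\zeta^{-1}-2)$, and likewise for $\eta$. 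Writing $a=\zeta+\zeta^{-1}-2$ and $b=\eta+\eta^{-1}-2$, the summand becomes $\dfrac{a+b}{(-a)(-b)}=\dfrac{a+b}{ab}=\dfrac1a+\dfrac1b$. So
$$\sum_{m=1}^{n-1}\frac{\tr(\rho_{x,\Omega_X^{[1]}}(g^m))-4}{\det(\rho_{x,T_X}(g^m)-\id)}=\sum_{m=1}^{n-1}\frac{1}{\xi_n^m+\xi_n^{-m}-2}+\sum_{m=1}^{n-1}\frac{1}{\xi_n^{mk}+\xi_n^{-mk}-2}.$$
Since $\gcd(k,n)=1$, the map $m\mapsto mk$ permutes $\{1,\dots,n-1\}$ modulo $n$, so the second sum equals the first; hence the whole expression is $2\sum_{m=1}^{n-1}\frac{1}{\xi_n^m+\xi_n^{-m}-2}$.

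Then I would invoke the standard evaluation $\sum_{m=1}^{n-1}\frac{1}{2-\xi_n^m-\xi_n^{-m}}=\frac{n^2-1}{12}$, equivalently $\sum_{m=1}^{n-1}\frac{1}{\xi_n^m+\xi_n^{-m}-2}=-\frac{n^2-1}{12}$. (This is classical; it can be derived from $\sum_{m=1}^{n-1}\frac{1}{|1-\xi_n^m|^2}$, or from the logarithmic-derivative identity for $\prod_{m=1}^{n-1}(x-\xi_n^m)=1+x+\cdots+x^{n-1}$ evaluated suitably — I would cite or sketch this in one line.) Plugging back:
$$s_x=\frac1n\left[6\cdot 2\cdot\left(-\frac{n^2-1}{12}\right)+n-1\right]=\frac1n\left[-(n^2-1)+(n-1)\right]=\frac1n\bigl[-(n-1)(n+1)+(n-1)\bigr]=\frac{(n-1)(-(n+1)+1)}{n}=\frac{-(n-1)n}{n}=-(n-1).$$
The only real obstacle is the trigonometric sum $\sum_{m=1}^{n-1}\frac{1}{2-2\cos(2\pi m/n)}=\frac{n^2-1}{12}$; everything else is bookkeeping with the duality $\Omega^{[1]}\cong (T_X)^\vee$ and the coprimality of $k$ with $n$. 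I would present that sum as a known lemma (it is the same identity used implicitly in \cite[Example 3.8]{Fu}) rather than reprove it in detail.
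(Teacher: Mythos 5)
Your computation is correct: the reduction of each summand to $\tfrac1a+\tfrac1b$ via $(\zeta-1)(\zeta^{-1}-1)=-(\zeta+\zeta^{-1}-2)$, the use of coprimality to identify the two partial sums, and the classical evaluation $\sum_{m=1}^{n-1}|1-\xi_n^m|^{-2}=\tfrac{n^2-1}{12}$ all check out, and they yield $s_x=-(n-1)$ as claimed. The paper itself gives no proof here — it simply cites \cite[Example 3.8]{Fu} — and your argument is consistent with that source: specializing your intermediate formula to eigenvalues $(\xi_{2n},\xi_{2n}^{-1},\xi_{2n}^{-1},\xi_{2n})$ recovers exactly the identity $\sum_{h\in A}\frac{\tr(\rho_{x,\Omega_X^{[1]}}(h))-4}{\det(\rho_{x,T_X}(h)-\id)}=-\frac{4n^2-1}{6}$ quoted from \cite[(15)]{Fu} in the proof of Lemma \ref{s2}, so your derivation is essentially the same computation carried out in the reference.
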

\begin{lemme}\label{s2}
Let $x$ be an isolated singularity of analytic type $\C^4/\left\langle g,\sigma\right\rangle$, with
$$g=\diag(\xi_{2n},\xi_{2n}^{-1},\xi_{2n}^{-1},\xi_{2n})\ \text{and}\   \sigma=\begin{pmatrix}
																													0 & 0 & 1 & 0\\
																													0 & 0 & 0 & 1\\
																													-1 & 0 & 0 & 0\\
																													0 & -1 & 0 & 0
																													\end{pmatrix}.$$
Then 	$$s_x=-(n+2).$$																												
\end{lemme}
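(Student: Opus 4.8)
The plan is to compute $s_x$ directly from the definition in Notation \ref{sS0}, exactly as in the proof of Lemma \ref{s}. The group here is $G_x = \langle g, \sigma \rangle$, which has order $4n$: it is a non-abelian group of order $4n$, with the cyclic subgroup $\langle g \rangle$ of order $2n$ as an index-$2$ normal subgroup, and $\sigma$ acting by an order-$2$ outer automorphism (one checks $\sigma g \sigma^{-1} = g^{-1}$ and $\sigma^2 = -\id = g^n$, so in fact $G_x$ is a generalized quaternion group $Q_{4n}$; but we only need the list of elements). First I would enumerate the $4n-1$ non-identity elements as the $2n-1$ elements $g^j$ with $1 \le j \le 2n-1$, and the $2n$ elements $\sigma g^j$ with $0 \le j \le 2n-1$, and for each compute the two quantities entering \eqref{sdef}: $\det(\rho_{x,T_X}(h) - \id)$ and $\tr(\rho_{x,\Omega_X^{[1]}}(h)) - 4$. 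Here $\rho_{x,T_X}$ is just the given $4$-dimensional representation (the action on $\C^4$) and $\rho_{x,\Omega_X^{[1]}}$ is its dual, so $\tr(\rho_{x,\Omega_X^{[1]}}(h)) = \overline{\tr(\rho_{x,T_X}(h))} = \tr(\rho_{x,T_X}(h^{-1}))$.

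Next I would carry out the two sums. For the diagonal elements $g^j = \diag(\xi_{2n}^j, \xi_{2n}^{-j}, \xi_{2n}^{-j}, \xi_{2n}^j)$, we get $\det(g^j - \id) = (\xi_{2n}^j - 1)^2(\xi_{2n}^{-j}-1)^2 = |\xi_{2n}^j - 1|^4$ and $\tr(\rho_{\Omega}(g^j)) = \tr(g^{-j}) = 2\xi_{2n}^{-j} + 2\xi_{2n}^j = 4\cos(\pi j/n)$; this part of the sum is essentially two copies of the computation from Lemma \ref{s} (with the pair of eigenvalues repeated), and should contribute something close to $-2(2n-1)$ up to the normalization. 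For the off-diagonal elements $\sigma g^j$: since $\sigma$ swaps the first pair of coordinates with the second pair (with a sign), each $\sigma g^j$ is a "block anti-diagonal" $4\times 4$ matrix whose square lies in $\langle g \rangle$; one computes its characteristic polynomial directly. Concretely $\sigma g^j$ sends $(x_1,x_2,x_3,x_4) \mapsto (\xi^{-j}x_3, \xi^{j} x_4, -\xi^j x_1, -\xi^{-j} x_2)$ (indices mod $2n$), which as a $4\times 4$ matrix has $\det(\sigma g^j - \id)$ independent of $j$ — the eigenvalues of $\sigma g^j$ come in the form $\pm i$ times roots of unity, so $\det(\id - \sigma g^j)$ takes a constant value (I expect $4$) for all $2n$ of these elements, and $\tr(\sigma g^j) = 0$ so $\tr(\rho_\Omega(\sigma g^j)) - 4 = -4$. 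Summing, the $\sigma g^j$ block contributes a clean term proportional to $2n$.

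Then I would assemble: plug $|G_x| = 4n$ into \eqref{sdef}, add the $\langle g\rangle$-contribution (from the Lemma \ref{s}-type computation, doubled) and the $\sigma$-coset contribution, add $|G_x| - 1 = 4n - 1$, divide by $4n$, and simplify to obtain $s_x = -(n+2)$. As a sanity check one can verify the special small case: for $n = 1$, $g = \diag(\xi_2, \xi_2^{-1}, \xi_2^{-1}, \xi_2) = \diag(-1,-1,-1,-1) = -\id$, so $G_x = \langle -\id, \sigma\rangle$ has order $4$ and the formula predicts $s_x = -3$, which one can confirm by hand.

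The main obstacle I anticipate is the bookkeeping for the off-diagonal coset $\sigma\langle g\rangle$: correctly computing the characteristic polynomial of the block anti-diagonal matrices $\sigma g^j$ and confirming that $\det(\id - \sigma g^j)$ really is constant in $j$ (and equal to the claimed value), keeping careful track of the minus signs in $\sigma$. Everything else is a finite geometric-type sum over roots of unity, directly parallel to the proof of Lemma \ref{s}, so once the two determinant-and-trace tables are in hand the algebra is routine. One should also double-check that $\langle g, \sigma\rangle$ genuinely satisfies the codimension condition $\codim \Fix(h) \ge 2$ for all $h \ne \id$ (it does, since no non-identity element has $1$ as an eigenvalue of multiplicity $\ge 3$), so that $s_x$ is well defined via Notation \ref{sS0}.
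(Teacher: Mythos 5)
Your proposal follows essentially the same route as the paper: split $G_x\smallsetminus\{\id\}$ into the cyclic part $\{g^j\}$ and the coset $\{\sigma g^j\}$, reuse the Lemma \ref{s}-type roots-of-unity sum (the paper cites \cite[(15)]{Fu}, which gives $-\tfrac{4n^2-1}{6}$ for the cyclic part) and observe that each $\sigma g^j$ has trace $0$ and $\det(\sigma g^j-\id)=4$, then assemble via \eqref{sdef}. The computation is correct and the $n=1$ sanity check is consistent; only your heuristic guess of "$-2(2n-1)$" for the cyclic contribution is off, but since you plan to carry out that sum explicitly this does not affect the argument.
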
	
\begin{proof}
Let $G=\left\langle g,\sigma\right\rangle$. The elements of $G\smallsetminus \left\{\id\right\}$ can be divided in two sets $A:=\left\{g,g^2,...,g^{2n-1}\right\}$ and $B:=\left\{\sigma,\sigma\circ g,...,\sigma\circ g^{2n-1}\right\}$. Therefore:
$$s_x:=\frac{1}{4n}\left[6\left(\sum_{\substack{h\in A}}\frac{\tr(\rho_{x,\Omega_X^{[1]}}(h))-4}{\det(\rho_{x, T_{X}}(h)-\id)}+\sum_{\substack{h\in B}}\frac{\tr(\rho_{x,\Omega_X^{[1]}}(h))-4}{\det(\rho_{x, T_{X}}(h)-\id)}\right)+4n-1\right].$$
By \cite[(15)]{Fu}:
$$\sum_{\substack{h\in A}}\frac{\tr(\rho_{x,\Omega_X^{[1]}}(h))-4}{\det(\rho_{x, T_{X}}(h)-\id)}=-\frac{4n^2-1}{6}.$$
Moreover:
$$\sigma\circ g^k=\begin{pmatrix}
																													0 & 0 & \xi_{2n}^{-k} & 0\\
																													0 & 0 & 0 & \xi_{2n}^k\\
																													-\xi_{2n}^k & 0 & 0 & 0\\
																													0 & -\xi_{2n}^{-k} & 0 & 0
																													\end{pmatrix}.$$
	Hence: $$\tr (\sigma\circ g^k)=0\ \text{and}\ \det(\sigma\circ g^k-\id)=4.$$
	We obtain:
		$$\sum_{\substack{h\in B}}\frac{\tr(\rho_{x,\Omega_X^{[1]}}(h))-4}{\det(\rho_{x, T_{X}}(h)-\id)}=-2n.$$	
Therefore:
\begin{align*}
s_x&=\frac{1}{4n}\left[-4n^2+1-12n+4n-1\right]\\
&=-(n+2)
\end{align*}
\end{proof}			
We can also compute the Chern numbers $c_4$ and $c_2^2$ via the Blache--Rieamann--Roch theorem (\cite{Blache}).	
\begin{prop}[{\cite[Theorem 2.14]{Blache}}]\label{c4}
Let $X$ be a 4-dimensional compact complex orbifold with only
isolated singularities. Then:
$$c_4(X)=\chi(X)-\sum_{x\in\Sing X}\left(1-\frac{1}{|\"aG_x|}\right),$$
where $G_x$ is the regional fundamental group (see Definition-Proposition \ref{defilocal uniformizing system}) of $X$ at $x$.
\end{prop}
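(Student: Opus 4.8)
The statement is quoted from \cite{Blache}, so I only sketch the argument I would use to recover it. The plan is to reduce the identity to two ingredients: a Gauss--Bonnet theorem for V-manifolds, and the additivity of the (topological and orbifold) Euler characteristics under stratification by locally closed subsets.

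First I would invoke Satake's Gauss--Bonnet theorem for V-manifolds applied to the orbifold tangent V-bundle $T_X$: the top Chern number $c_4(X)=\langle c_4(T_X),[X]\rangle$, computed from a Chern--Weil representative adapted to the local uniformizing systems $(U_x,V_x,G_x,\pi_x)$, equals the orbifold Euler characteristic $\chi_{\mathrm{orb}}(X)$. Here one has to check that the Chern number $c_4$ as normalized by Blache on an orbifold with isolated singularities (where $H^{8}(X,\Q)\simeq\Q$ carries the fundamental class, and $c_i(T_X)$ for $i<4$ is pulled back from $X_{reg}$ since $\Sing X$ has real codimension $8$) coincides with this cap product; this compatibility is built into \cite[\S 2]{Blache}.

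Next I would compare $\chi(X)$ and $\chi_{\mathrm{orb}}(X)$ stratum by stratum. Decompose the compact complex space $X$ into the locally closed pieces $X_{reg}$ and the finitely many points of $\Sing X$. Additivity of the topological Euler characteristic for such a decomposition gives $\chi(X)=\chi(X_{reg})+\#\Sing X$. The orbifold Euler characteristic is additive in the same way, but a singular point $x$, whose regional fundamental group is $G_x$, contributes $1/|G_x|$ rather than $1$ (this is its definition through the local uniformizing system, the stratum $X_{reg}$ carrying trivial isotropy), so $\chi_{\mathrm{orb}}(X)=\chi(X_{reg})+\sum_{x\in\Sing X}\tfrac{1}{|G_x|}$. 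Subtracting, $c_4(X)=\chi_{\mathrm{orb}}(X)=\chi(X)-\#\Sing X+\sum_{x\in\Sing X}\tfrac{1}{|G_x|}=\chi(X)-\sum_{x\in\Sing X}\bigl(1-\tfrac{1}{|G_x|}\bigr)$, which is the assertion.

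The only genuinely delicate point is the first step: making precise, for orbifolds with isolated singularities, that the algebraically defined Chern number $c_4$ agrees with the analytic orbifold Euler number, i.e. the compatibility of Chern--Weil theory for V-bundles with the pairing against $[X]$. One can bypass this by instead reading the identity off Blache's orbifold Hirzebruch--Riemann--Roch formula directly, comparing its correction terms at the singular points with the smooth Noether-type identity $c_4=\chi$ (valid on a resolution, or on $X$ itself when $X$ is smooth); the discrepancies are exactly the $1-\tfrac{1}{|G_x|}$ above.
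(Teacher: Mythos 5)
Your sketch is correct and is exactly the standard derivation of Blache's formula: Satake's Gauss--Bonnet theorem identifies $c_4(X)$ with the orbifold Euler characteristic, and the stratification $X=X_{reg}\sqcup\Sing X$ together with the weights $1/|G_x|$ at the isolated singular points yields the stated correction terms. The paper itself gives no proof here --- it simply cites \cite[Theorem 2.14]{Blache} --- so there is nothing internal to compare against; your outline, including your identification of the Chern--Weil/V-bundle compatibility as the one delicate point, faithfully reflects the argument in the cited source.
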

\begin{prop}[{\cite[(2)]{Fu}}]\label{c2}
Let $X$ be a primitively symplectic orbifold of dimension 4. Then:
$$c_2(X)^2=720-240S_0(X)+\frac{c_4(X)}{3}.$$
\end{prop}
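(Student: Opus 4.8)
The plan is to apply the orbifold Hirzebruch--Riemann--Roch theorem to the structure sheaf $\mathcal{O}_X$ and to match it against the value $\chi(X,\mathcal{O}_X)=3$, which I would establish first. Since $X$ carries a holomorphic symplectic form $\sigma$, the reflexive $4$-form $\sigma\wedge\sigma$ is a nowhere-vanishing section of $\omega_X=\Omega_X^{[4]}$, so $\omega_X\simeq\mathcal{O}_X$ and $X$ is Gorenstein with trivial canonical sheaf. Orbifold Serre duality then gives $h^i(\mathcal{O}_X)=h^{4-i}(\mathcal{O}_X)$. Combining this with $h^0(\mathcal{O}_X)=1$, with $h^1(\mathcal{O}_X)=0$ (a consequence of $b_1(X)=0$, see \cite[Proposition 6.7]{Fujiki}, via the Hodge decomposition on compact K\"ahler orbifolds), and with $h^2(\mathcal{O}_X)=h^0(\Omega_X^{[2]})=1$ (orbifold Hodge symmetry together with the uniqueness, up to scalar, of the symplectic form), one gets $h^3(\mathcal{O}_X)=0$, $h^4(\mathcal{O}_X)=1$, hence $\chi(X,\mathcal{O}_X)=1-0+1-0+1=3$.

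Next I would write out the Blache--Riemann--Roch formula for $\mathcal{O}_X$, in the same spirit as Proposition \ref{c4}. As $\dim X=4$ and $\codim\Sing X\geq 4$, the singularities are isolated, so the formula takes the form
$$\chi(X,\mathcal{O}_X)=\int_X\operatorname{td}(X)+\sum_{x\in\Sing X}\delta_x,$$
where $\int_X\operatorname{td}(X)$ is the ordinary Todd integral evaluated on the orbifold Chern classes and $\delta_x$ is the local contribution at the singular point $x$. Since $X$ is symplectic, $c_1(X)=0$, so the only surviving term in $\int_X\operatorname{td}(X)$ is the degree-$4$ part of the Todd polynomial, namely $\tfrac{1}{720}\bigl(3c_2(X)^2-c_4(X)\bigr)$. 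For $\delta_x$, in a local uniformizing chart the only fixed point of a nontrivial $g\in G_x$ is the origin, whose normal space is all of $T_X$, and $\mathcal{O}_X$ corresponds to the trivial representation; therefore the contribution reduces to
$$\delta_x=\frac{1}{|G_x|}\sum_{g\neq\id}\frac{1}{\det\bigl(\id-\rho_{x,T_X}(g)\bigr)}=S_{0,x},$$
where the last equality is just the change of variable $g\mapsto g^{-1}$ in the sum. Summing over $x$ gives $\sum_{x\in\Sing X}\delta_x=S_0(X)$.

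Putting the pieces together yields $3=\tfrac{1}{720}\bigl(3c_2(X)^2-c_4(X)\bigr)+S_0(X)$, i.e. $3c_2(X)^2=2160-720\,S_0(X)+c_4(X)$, which is precisely the identity $c_2(X)^2=720-240\,S_0(X)+\tfrac{c_4(X)}{3}$ to be proved. The main obstacle is to set up the orbifold Riemann--Roch theorem in this analytic/V-manifold setting with enough precision, and in particular to check that the local term at an isolated quotient singularity is exactly $S_{0,x}$ with the correct normalization; once that is secured, together with the two elementary inputs that $\int_X\operatorname{td}(X)=\tfrac{1}{720}(3c_2^2-c_4)$ for a symplectic fourfold and that $\chi(X,\mathcal{O}_X)=3$, everything else is bookkeeping.
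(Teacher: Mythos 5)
Your argument is correct and is essentially the standard derivation of this identity: the paper itself does not reprove the statement but imports it from \cite[(2)]{Fu}, where it is obtained exactly as you do, by applying Blache's orbifold Riemann--Roch to $\mathcal{O}_X$ (whose local correction at an isolated quotient singularity is $S_{0,x}$, the $g\mapsto g^{-1}$ substitution being harmless), using $c_1(X)=0$ to reduce the Todd integral to $\tfrac{1}{720}(3c_2^2-c_4)$, and matching against $\chi(X,\mathcal{O}_X)=1+0+1+0+1=3$. The only inputs you should make sure are available in the orbifold setting are Baily's Serre duality and Hodge symmetry for compact K\"ahler $V$-manifolds, which hold here and justify $h^2(\mathcal{O}_X)=h^0(\Omega_X^{[2]})=1$ and $h^3(\mathcal{O}_X)=h^1(\mathcal{O}_X)=0$.
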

We provide $S_{0,x}$ for the singularities that appear in this paper.
\begin{lemme}\label{S0}
We consider the isolated singularity $x$ of analytic type $\C^4/G$. We set $$\sigma:=\begin{pmatrix}
																													0 & 0 & 1 & 0\\
																													0 & 0 & 0 & 1\\
																													-1 & 0 & 0 & 0\\
																													0 & -1 & 0 & 0
																													\end{pmatrix}.$$
\begin{itemize}
\item[(i)]
When $G=\left\langle -\id\right\rangle$, then $S_{0,x}=\frac{1}{2^5}$.
\item[(ii)]
When $G=\left\langle \diag(\xi_{3},\xi_{3}^{-1},\xi_{3}^{-1},\xi_{3})\right\rangle$, then $S_{0,x}=\frac{2}{27}$.
\item[(iii)]
When $G=\left\langle \diag(i,-i,-i,i)\right\rangle$, then $S_{0,x}=\frac{9}{2^6}$.
\item[(iv)]
When $G=\left\langle \diag(\xi_{6},\xi_{6}^{-1},\xi_{6}^{-1},\xi_{6})\right\rangle$, then $S_{0,x}=\frac{329}{864}$.
\item[(v)]
When $G=\left\langle \diag(\xi_{8},\xi_{8}^{-1},\xi_{8}^{3},\xi_{8}^5)\right\rangle$, then $S_{0,x}=\frac{41}{2^7}$.
\item[(vi)]
When $G=\left\langle \sigma,\diag(i,-i,-i,i)\right\rangle$, then $S_{0,x}=\frac{25}{2^7}$.
\item[(vii)]
When $G=\left\langle \sigma,\diag(\xi_{6},\xi_{6}^{-1},\xi_{6}^{-1},\xi_{6})\right\rangle$, then $S_{0,x}=\frac{545}{1728}$.
\end{itemize}
 
\end{lemme}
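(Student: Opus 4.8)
The plan is to compute each $S_{0,x}$ directly from its definition \eqref{eqn:Si}, namely $S_{0,x}=\frac{1}{|G_x|}\sum_{g\neq\id}\det(\id-\rho_{x,T_X}(g))^{-1}$, where $\rho_{x,T_X}$ is the tautological representation giving the listed matrices. For the cyclic cases (i)--(v) the group is $\langle g\rangle$ with $g$ diagonal, so each $\det(\id-\rho_{x,T_X}(g^j))$ is a product $\prod_{\ell}(1-\xi^{a_\ell j})$ over the four eigenvalue exponents $a_\ell$, and the whole sum is a finite sum of products of cyclotomic terms. I would evaluate these sums of the form $\frac{1}{n}\sum_{j=1}^{n-1}\prod_{\ell=1}^{4}(1-\xi_n^{a_\ell j})^{-1}$ either by hand (partial fractions / residues, as is standard for such Dedekind-sum–type expressions) or, as the paper does elsewhere, with the help of the computer; the results are the stated rational numbers. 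This is bookkeeping, not conceptual: for instance in (i), $g=-\id$, $|G_x|=2$, and $\det(\id-(-\id))=2^4=16$, so $S_{0,x}=\frac{1}{2}\cdot\frac{1}{16}=\frac{1}{2^5}$, matching the claim.

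For the two non-cyclic cases (vi) and (vii), with $G=\langle\sigma,h\rangle$ where $h=\diag(i,-i,-i,i)$ resp. $h=\diag(\xi_6,\xi_6^{-1},\xi_6^{-1},\xi_6)$, I would mimic the partition of $G\smallsetminus\{\id\}$ used in the proof of Lemma \ref{s2}: write $G\smallsetminus\{\id\}=A\sqcup B$ with $A=\{h,h^2,\dots,h^{2n-1}\}$ the ``diagonal part'' ($n=2$ in case (vi), $n=3$ in case (vii)) and $B=\{\sigma h^k\}$ the ``off-diagonal part''. On $A$ the summand is the same cyclotomic product as in the cyclic case; on $B$ one uses that $\sigma h^k$ has the antidiagonal block form displayed in the proof of Lemma \ref{s2}, for which $\tr(\sigma h^k)=0$ and, by the same $2\times2$-block determinant computation, $\det(\id-\sigma h^k)=4$. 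Hence $\sum_{h'\in B}\det(\id-\rho_{x,T_X}(h'))^{-1}=\frac{2n}{4}=\frac{n}{2}$, and $S_{0,x}=\frac{1}{4n}\bigl(\text{(sum over }A)+\tfrac{n}{2}\bigr)$, which reduces everything to the cyclic computations already performed. Plugging in the values from cases (iii) and (iv) for the $A$-part (rescaled appropriately, since here $|G_x|=4n$ rather than $2n$) yields $\frac{25}{2^7}$ and $\frac{545}{1728}$ respectively.

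The only genuine subtlety — the part I would be most careful about — is making sure I use the \emph{correct representation}, i.e. $\rho_{x,T_X}$ on the tangent space, with the matrices exactly as normalized in the statement (the determinant $\det(\id-\rho_{x,T_X}(g))$ must be taken with $\id$, not $\rho(g)-\id$, and the ordering of eigenvalue exponents matters only through the product so is harmless, but the sign conventions in $\sigma$ do matter for the $B$-part). Everything else is a finite, mechanical evaluation of cyclotomic sums; I would organize it as one lemma-internal table and either cite the computer verification (consistent with the rest of the paper) or spell out the partial-fraction evaluation for one representative case and leave the rest to the reader.
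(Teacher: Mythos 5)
Your proposal is correct and follows essentially the same route as the paper: direct evaluation of the defining sum for the cyclic cases (i)--(v), and for (vi)--(vii) the same partition of $G\smallsetminus\{\id\}$ into the diagonal part $A$ and the antidiagonal part $B=\{\sigma\circ h^k\}$ with $\det(\id-\sigma\circ h^k)=4$, exactly as in the proof of Lemma \ref{s2}. The numerical outcomes you indicate (e.g.\ $\frac{1}{8}\bigl(\frac{9}{16}+1\bigr)=\frac{25}{2^7}$ and $\frac{1}{12}\bigl(\frac{1}{16}+2+\frac{2}{9}+\frac{3}{2}\bigr)=\frac{545}{1728}$) agree with the paper's computation.
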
		
\begin{proof}
\begin{itemize}
\item[(i)]
We have: $$S_{0,x}=\frac{1}{2}\times \frac{1}{2^4}=\frac{1}{2^5}.$$
\item[(ii)]
We have: $$S_{0,x}=\frac{2}{3}\times \frac{1}{\left(1-\xi_3\right)^2\left(1-\xi_3^{-1}\right)^2}=\frac{2}{3}\frac{1}{\left|1-\xi_3\right|^4}=\frac{2}{3}\times \frac{1}{9}=\frac{2}{27}.$$
\item[(iii)]
$$S_{0,x}=\frac{1}{4}\left(\frac{1}{2^4}+\frac{2}{(1-i)^2(1+i)^2}\right)=\frac{1}{4}\left(\frac{1}{2^4}+\frac{2}{2^2}\right)=\frac{9}{2^6}.$$
\item[(iv)]
We have:
$$S_{0,x}=\frac{1}{6}\left(\frac{1}{2^4}+\frac{2}{(1-\xi_6)^2(1-\xi_6^{-1})^2}+\frac{2}{(1-\xi_3)^2(1-\xi_3^{-1})^2}\right)=\frac{1}{6}\left(\frac{1}{2^4}+2+\frac{2}{9}\right)=\frac{329}{864}.$$
\item[(v)]
$$S_{0,x}=\frac{1}{8}\left(\frac{1}{2^4}+\frac{2}{(1-i)^2(1+i)^2}+\frac{4}{\left|1-\xi_8\right|^2\left|1-\xi_8^3\right|^2}\right).$$
However $\left|1-\xi_8\right|^2=2-\sqrt{2}$ and $\left|1-\xi_8^3\right|^2=2+\sqrt{2}$.
Hence:
$$S_{0,x}=\frac{1}{8}\left(\frac{1}{16}+\frac{1}{2}+2\right)=\frac{41}{2^7}.$$
\item[(vi)]
Let $g=\diag(i,-i,-i,i)$ and $G=\left\langle g,\sigma\right\rangle$.
The elements of $G\smallsetminus\left\{\id\right\}$ can be shared in two sets $A:=\left\{g,g^2,g^{-1}\right\}$ and $B:=\left\{\sigma,\sigma\circ g,\sigma\circ g^2,\sigma\circ g^{-1}\right\}$.
For all $h\in B$, we have $\det(h-\id)=4$. 
Hence:
$$S_{0,x}=\frac{1}{8}\left(\frac{4}{4}+\frac{1}{2^4}+\frac{2}{(1-i)^2(1+i)^2}\right)=\frac{1}{8}\left(1+\frac{1}{16}+\frac{1}{2}\right)=\frac{25}{2^7}.$$
\item[(vi)]
Let $g=\diag(\xi_{6},\xi_{6}^{-1},\xi_{6}^{-1},\xi_{6})$ and $G=\left\langle g,\sigma\right\rangle$.
As before, the elements of $G\smallsetminus\left\{\id\right\}$ can be shared in two sets $A:=\left\{g,g^2,...,g^{-1}\right\}$ and $B:=\left\{\sigma,\sigma\circ g,\sigma\circ g^2,...,\sigma\circ g^{-1}\right\}$.
For all $h\in B$, we have $\det(h-\id)=4$. 
Hence:
\begin{align*}
S_{0,x}&=\frac{1}{12}\left(\frac{6}{4}+\frac{1}{2^4}+\frac{2}{(1-\xi_6)^2(1-\xi_6^{-1})^2}+\frac{2}{(1-\xi_3)^2(1-\xi_3^{-1})^2}\right)\\
&=\frac{1}{12}\left(\frac{6}{4}+\frac{1}{16}+2+\frac{2}{9}\right)=\frac{545}{1728}.
\end{align*}
\end{itemize}
\end{proof}	
\section{Quotients of K3 surfaces to the power $n$}
\subsection{Criterion to be an irreducible symplectic variety}\label{mainthsection}
Let $S$ be a K3 surface and $n\geq2$ and integer.
In this section we want to exhibit the finite automorphism groups $\mathcal{G}$ on $S^n$ such that the quotient $S^n/\mathcal{G}$ leads to an irreducible symplectic variety. 
To simplify the readability, we recall Definition \ref{validintro} and notation (\ref{jtheta}) from the introduction.
\begin{nota}\label{jthetabis}
Let $S$ be a K3 surface. Let $G$ be a finite symplectic automorphism group of $S$. Let $\theta:G\rightarrow G$ be an involution.
We denote by $j_{\theta}:G\hookrightarrow \Aut(S^n)$ the embedding given by:
$$j_{\theta}(g)(x_1,x_2,x_3,x_4,...,x_n)=(g(x_1),\theta(g)(x_2),x_3,x_4,...,x_n),$$
that is $j_{\theta}(g)$ has the diagonal action given by $g$ on the first factor, by $\theta(g)$ on the second factor and by $\id$ on the other factors.
\end{nota}
\begin{defi}
Let $S$ be a K3 surface and $G$ a finite symplectic automorphism group of $S$.
Let $\theta:G\rightarrow G$ be an involution. We say that $\theta$ is a \emph{valid} involution if there exists $(g_1,...,g_k)$ a family of generators of $G$ such that $\theta(g_i)=g_i^{-1}$ for all $i\in\{1,...,k\}$.
\end{defi}
\begin{defi}
Let $n\geq2$ be an integer and $S$ a K3 surface. Let $\mathcal{G}$ be a finite subgroup of $\Aut(S^n)$. We say that $\mathcal{G}$ is \emph{primitive} if 
we cannot find $\mathcal{H}\subset \mathcal{G}$ a non-trivial normal subgroup and a K3 surface $\Sigma$ such that $\Sigma^n$ is a resolution of $S^n/\mathcal{H}$.
\end{defi}
\begin{defi}
Let $S$ be a K3 surface and $n\geq2$ an integer.
We call a \emph{trivial complex reflexion} an element of $\Aut(S)^n$ of the form $(\id,...,\id,g,\id,...,\id)$; that is an element which acts trivial on all factors of $S^n$ apart one.
\end{defi}
\begin{lemme}\label{n>2bis}
Let $S$ be a K3 surface endowed with a symplectic automorphism group $G$. Let $n\geq2$ be an integer. Let $\theta:G\rightarrow G$ be an involution. 
The group $\left\langle j_{\theta}(G),\mathfrak{S}_n\right\rangle\subset \Aut(S^n)$ is primitive if and only if the unique trivial complex reflection in $\left\langle j_{\theta}(G),\mathfrak{S}_n\right\rangle$ is the identity.
\end{lemme}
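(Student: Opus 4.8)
The plan is to prove both implications, with the forward direction being essentially tautological and the reverse direction carrying all the weight. First note that $\left\langle j_\theta(G),\mathfrak{S}_n\right\rangle$ is a subgroup of $\Aut(S^n)$ that fits in an exact sequence: its elements are pairs $(\underline{g},\tau)$ where $\tau\in\mathfrak{S}_n$ permutes the factors and $\underline{g}=(g_1,\dots,g_n)$ is a tuple of automorphisms of $S$, subject to the constraint imposed by $j_\theta$ (namely $g_i\in G$ for $i=1,2$, $g_i=\id$ for $i\geq 3$, and $g_2=\theta(g_1)$, before conjugating by permutations). Write $N\subset\left\langle j_\theta(G),\mathfrak{S}_n\right\rangle$ for the subgroup of elements acting trivially on the set of factors (i.e. $\tau=\id$); this is the ``diagonal'' part. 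A key preliminary observation I would record is that $N$ is generated by the trivial complex reflexions it contains together with genuinely diagonal elements — more precisely, analyzing the constraint from $j_\theta$ one sees that $N$ consists of tuples supported on at most two coordinates, and when such a tuple is supported on a single coordinate it is by definition a trivial complex reflexion.

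For the forward implication: if $\left\langle j_\theta(G),\mathfrak{S}_n\right\rangle$ is primitive, suppose toward a contradiction it contained a nontrivial trivial complex reflexion $r=(\id,\dots,\id,g,\id,\dots,\id)$ with $g\in G\smallsetminus\{\id\}$ in, say, the first slot. Since $\mathfrak{S}_n$ is in the group, all conjugates of $r$ — the trivial complex reflexions with the same $g$ placed in any single slot — also lie in the group, and together they generate a normal subgroup $\mathcal{H}\cong G_0^n$ where $G_0=\langle g\rangle$ acting coordinatewise. Then $S^n/\mathcal{H}=(S/G_0)^n$, and since $G_0$ is a finite symplectic group on a K3 surface, $S/G_0$ has ADE singularities and its minimal resolution $\Sigma$ is again a K3 surface; hence $\Sigma^n$ is a resolution of $S^n/\mathcal{H}$, contradicting primitivity. (One should be slightly careful: $\mathcal{H}$ need not be all of $G_0^n$ if the conjugation action of $\mathfrak{S}_n$ only yields a diagonal-type subgroup, but in any case the subgroup generated is of the form $\prod G_0$ over the orbit, which still exhibits a product-of-K3 resolution after reindexing; alternatively one uses the smallest such $\mathcal{H}$.)

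For the reverse implication — the main obstacle — assume $\left\langle j_\theta(G),\mathfrak{S}_n\right\rangle=:\mathcal{G}$ contains no nontrivial trivial complex reflexion, and suppose it is not primitive: there is a nontrivial normal $\mathcal{H}\triangleleft\mathcal{G}$ and a K3 surface $\Sigma$ with $\Sigma^n$ a resolution of $S^n/\mathcal{H}$. The strategy is to show $\mathcal{H}$ must then contain a trivial complex reflexion. First, since $\Sigma^n$ has trivial canonical bundle and is a resolution, the quotient $S^n/\mathcal{H}$ has a crepant resolution, which forces local structural constraints on $\mathcal{H}$ (via the reasoning in Proposition~\ref{fonda}/Verbitsky's criterion already invoked in the excerpt). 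Second, $\Sigma^n$ being a product of K3 surfaces — hence with $H^2$ of rank $n\cdot 22 - (\text{exceptional contributions})$ and a very specific Hodge/decomposition structure — should force $\mathcal{H}$ to respect the factor decomposition in a strong way: I would argue that $\mathcal{H}\cap N$ is already nontrivial (if $\mathcal{H}$ contained an element genuinely permuting factors, the quotient would identify factors and could not have $\Sigma^n$ with $n$ independent K3 factors as resolution — one can detect this on $H^{2,0}$ or on the Albanese/Bogomolov-type decomposition, or simply by a dimension/Betti count). Third, once $\mathcal{H}\cap N\neq\{\id\}$, take $0\neq h\in\mathcal{H}\cap N$, supported on at most two coordinates as noted; if it is supported on two coordinates $i,j$ with nontrivial entries $g_i,g_j$, conjugate $h$ by the transposition $(i\,j)\in\mathcal{G}$ (using normality of $\mathcal{H}$) to get $h'\in\mathcal{H}$ with entries swapped, and then $h(h')^{-1}$ or a suitable combination will be supported on a single coordinate — producing the forbidden trivial complex reflexion and the desired contradiction. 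The delicate point throughout is step two: ruling out the case where $\mathcal{H}$ contains factor-permuting elements while still allowing $\Sigma^n$ to be a resolution; I expect this to require a careful comparison of the second Betti numbers (or of the number of independent holomorphic $2$-forms on quasi-étale covers, using the IHS-type rigidity already developed), since a quotient by a factor-exchanging involution collapses the rank of $H^{1,1}$ in a way incompatible with a product of $n$ K3 surfaces.
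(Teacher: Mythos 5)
Your forward implication is essentially the paper's argument, up to a normality quibble: rather than the $\mathfrak{S}_n$-conjugates of a single reflexion (whose span need not be normal, since conjugating by $j_\theta(h)$ replaces $g$ by $hgh^{-1}$), take the subgroup generated by \emph{all} trivial complex reflexions in $\mathcal{G}$; it is automatically normal, equal to $H^n$ for a single $H\subset\Aut(S)$ because $\mathfrak{S}_n\subset\mathcal{G}$, and $\Sigma^n\rightarrow S^n/H^n$ with $\Sigma$ the K3 resolution of $S/H$ kills primitivity. The reverse implication, however, has a genuine gap. Your ``key preliminary observation'' that $N=\mathcal{G}\cap\Aut(S)^n$ consists of tuples supported on at most two coordinates is false: $N$ contains products of permutation-conjugates of elements $j_\theta(g)$, for instance $(g_1,\theta(g_1)\circ g_2,\theta(g_2),\id,\dots,\id)$, which is supported on three slots. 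So the combinatorial reduction in your third step does not get started; and even for an element $h$ supported on exactly two slots $i,j$ with entries $g_i,g_j$, conjugating by the transposition $(i\,j)$ and forming $h(h')^{-1}$ yields the tuple with entries $g_i\circ g_j^{-1}$ and $g_j\circ g_i^{-1}$ in slots $i$ and $j$ --- still supported on two coordinates unless $g_i=g_j$ --- so no trivial complex reflexion is produced this way.

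The ingredient you mention in your first step but never exploit is exactly what closes the argument. Since $\Sigma^n$ is a smooth, simply connected crepant resolution of $S^n/\mathcal{H}'$, Proposition \ref{fonda} forces $\mathcal{H}'$ to be \emph{generated by elements whose fixed locus has codimension $2$}. Combine this with the containment $\mathcal{H}'\subset\Aut(S)^n$, which follows cleanly from $h^0(\Omega^2_{\Sigma^n})=n=h^0(\Omega^2_{S^n})$: the group $\mathcal{H}'$ must act trivially on the $n$-dimensional space of holomorphic $2$-forms, whereas any element with nontrivial permutation part permutes the forms $\pr_i^*(\sigma)$ nontrivially (this disposes of the ``delicate point'' you were deferring to a Betti-number comparison). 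Then one is done: an element of $\Aut(S)^n$ with at least two nontrivial slots has fixed locus of codimension at least $4$, so the only elements of $\Aut(S)^n$ with codimension-$2$ fixed locus are trivial complex reflexions, and $\mathcal{H}'$ is therefore generated by them, contradicting your standing assumption. No coordinate-support bookkeeping is needed.
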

\begin{proof}
We set $\mathcal{G}=\left\langle j_{\theta}(G),\mathfrak{S}_n\right\rangle$. We denote by $\mathcal{H}$ the subgroup of $\mathcal{G}$ generated by trivial complex reflexions.

Since $\mathfrak{S}_n\subset \mathcal{G}$ necessarily we have $\mathcal{H}=H^n$ with $H$ a symplectic automorphism group of $S$. 
Let $\Sigma\rightarrow S/H$ be the K3 surface obtained by crepant resolution. Therefore $\Sigma^n\rightarrow S^n/H^n$ is a resolution. 
So if $\mathcal{H}$ is not trivial $\mathcal{G}$ is not primitive.

Now assume that $\mathcal{G}$ is not primitive. Then there exists a non-trivial subgroup 
$\mathcal{H}' \subset \mathcal{G}$ and a K3 surface $\Sigma$ such that 
$\Sigma^n \to S^n/\mathcal{H}'$ is a crepant resolution. 
By definition of a crepant resolution, holomorphic $2$-forms extend, hence:
$$
H^0(\Omega^2_{\Sigma^n}) \;\simeq\; H^0(\Omega^2_{S^n/\mathcal{H}'}) 
\simeq H^0(\Omega^2_{S^n})^{\mathcal{H}'}.
$$
On the other hand, we also know that:
$$
h^0(\Omega^2_{\Sigma^n}) \;=\; h^0(\Omega^2_{S^n}).
$$
Therefore the inclusion 
$H^0(\Omega^2_{S^n})^{\mathcal{H}'} \subset H^0(\Omega^2_{S^n})$ 
is an equality, which implies that $\mathcal{H}'$ acts trivially on 
$H^0(\Omega^2_{S^n})$, i.e.\ $\mathcal{H}'$ fixes all holomorphic $2$-forms on $S^n$.
It implies that $\mathcal{H}'\subset \mathcal{G}\cap \Aut(S)^n$.
Indeed all the elements in $\mathcal{G}$ can be written $\sigma \circ g$ with $\sigma \in \mathfrak{S}_n$ and $g\in \mathcal{G}\cap \Aut(S)^n$; moreover an element $\sigma \circ g$ with $\sigma \neq\id$ acts non-trivially on $H^0(\Omega_{S^n}^2)$, so such an element cannot be in $\mathcal{H}'$.

Moreover, by Proposition \ref{fonda}, we know that $\mathcal{H}'$ is generated by elements with fixed locus in codimension 2. In $\Aut(S)^n$ only the trivial complex reflexions can have a fixed locus in codimension 2. That is $\mathcal{H}'$ is generated by trivial complex reflexions and $\mathcal{H}'\subset \mathcal{H}$.
\end{proof}
\begin{lemme}\label{n>2}
Let $S$ be a K3 surface endowed with a symplectic automorphism group $G$. Let $n\geq3$ be an integer. Let $\theta:G\rightarrow G$ be an involution.
The group $\left\langle j_{\theta}(G),\mathfrak{S}_n\right\rangle$ is primitive if and only if $\theta$ is valid and $G$ is abelian. 

Moreover if $G$ is not abelian or $\theta$ is not valid, there exists a K3 surface $\Sigma$, an abelian automorphism group $G'$ on $\Sigma$, a valid involution $\theta'$ on $G'$ and a crepant bimeromorphic morphism:
\begin{equation}
\Sigma^n/\left\langle j_{\theta'}(G'),\mathfrak{S}_n\right\rangle\rightarrow S^n/\left\langle j_{\theta}(G),\mathfrak{S}_n\right\rangle.
\label{mainlemmaprimi}
\end{equation}
\end{lemme}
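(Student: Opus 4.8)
The plan is to deduce Lemma \ref{n>2} from the previous Lemma \ref{n>2bis}, which already characterizes primitivity of $\left\langle j_{\theta}(G),\mathfrak{S}_n\right\rangle$ in terms of the absence of non-trivial trivial complex reflexions. So the first task is to translate that condition into a statement about $G$ and $\theta$. Given $g\in G$, the element $j_{\theta}(g)$ acts by $g$ on the first factor and $\theta(g)$ on the second; conjugating $j_{\theta}(g)$ by transpositions in $\mathfrak{S}_n$ moves these two actions among the $n$ factors. Combining such conjugates (for instance $j_{\theta}(g)$ composed with a $\mathfrak{S}_n$-conjugate that swaps the roles of the two active factors) one produces, inside $\mathcal{G}$, elements of $\Aut(S)^n$ supported on a single factor. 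Working this out shows that $\mathcal{G}$ contains no non-trivial trivial complex reflexion if and only if, for every $g\in G$, $\theta(g)$ cannot be ``separated'' from $g$ — concretely, the subgroup of $G$ generated by all commutators and by all products $g\,\theta(g)$ must already force enough collapsing; I expect the clean statement to be that $\mathcal{G}$ is primitive iff $G$ is abelian \emph{and} $\theta = \inv$, the latter being exactly the validity condition for an abelian group (since for abelian $G$ the only valid involution is $g\mapsto g^{-1}$, as noted in the introduction).

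The argument for the first equivalence then splits in the two directions. For the ``if'' direction, assume $G$ abelian and $\theta=\inv$; one checks directly that any element of $\mathcal{G}\cap \Aut(S)^n$ is of the form $(g_1,\dots,g_n)$ with $\prod g_i$ constrained, and that if such an element is a trivial complex reflexion $(\id,\dots,g,\dots,\id)$ then $g$ must be trivial — this uses that the ``first factor'' and ``second factor'' components of $j_{\inv}(g)$ are $g$ and $g^{-1}$, so any single-factor support forces $g=\id$ once permutations are taken into account. Hence no non-trivial trivial complex reflexion exists and Lemma \ref{n>2bis} gives primitivity. For the ``only if'' direction, suppose $G$ is non-abelian or $\theta\neq \inv$; then pick $g,h$ with $[g,h]\neq \id$, or $g$ with $g\,\theta(g)\neq\id$ (or similar), and exhibit an explicit trivial complex reflexion in $\mathcal{G}$ by composing $j_{\theta}$-images with suitable permutations, contradicting primitivity via Lemma \ref{n>2bis}.

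For the ``moreover'' part — the isomorphism (\ref{mainlemmaprimi}) — the strategy is the quotient-in-stages construction already used inside the proof of Lemma \ref{n>2bis}. If $\mathcal{G}=\left\langle j_{\theta}(G),\mathfrak{S}_n\right\rangle$ is not primitive, let $\mathcal{H}=H^n$ be the normal subgroup generated by the trivial complex reflexions, where $H\trianglelefteq G$ is symplectic on $S$; let $\Sigma\to S/H$ be the crepant (K3) resolution, so $\Sigma^n\to S^n/H^n$ is a crepant resolution, and hence $S^n/\mathcal{G}$ is, up to this crepant resolution, isomorphic to $\Sigma^n/\overline{\mathcal{G}}$ where $\overline{\mathcal{G}}=\mathcal{G}/\mathcal{H}$ acts on $\Sigma^n$. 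The point is then to identify the induced action: $\mathfrak{S}_n$ survives as $\mathfrak{S}_n$, and $j_{\theta}(G)$ descends to $j_{\theta'}(G')$ with $G'=G/H$ acting on $\Sigma$ and $\theta'$ the involution induced by $\theta$ (one must check $\theta$ descends, i.e.\ $\theta(H)=H$, which follows since $H$ is characteristically the subgroup generated by the relevant elements). Iterating — replacing $(S,G,\theta)$ by $(\Sigma,G',\theta')$ and repeating until the group is primitive — terminates because $|G|$ strictly drops, and by the first part the terminal triple has $G'$ abelian and $\theta'$ valid. Since each step only changes $S^n/\mathcal{G}$ by a crepant resolution of an $S^n$-type quotient, the final quotient $\Sigma^n/\left\langle j_{\theta'}(G'),\mathfrak{S}_n\right\rangle$ is isomorphic to (a resolution tower over) $S^n/\mathcal{G}$; making ``isomorphic'' literally correct here is where one must be careful, and I expect the main obstacle to be precisely this bookkeeping — verifying that the composition of the crepant K3-resolutions on each factor really yields an isomorphism $S^n/\mathcal{G}\simeq \Sigma^n/\left\langle j_{\theta'}(G'),\mathfrak{S}_n\right\rangle$ of the quotient spaces themselves (not merely a birational or crepant correspondence), which hinges on the fact that $S^n/H^n\simeq (S/H)^n$ and that resolving $S/H$ to a K3 is an isomorphism away from the $A$-type singular points, compatibly with the residual group action.
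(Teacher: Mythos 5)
Your proposal follows essentially the same route as the paper's proof: reduce everything to Lemma \ref{n>2bis}, then for the ``only if'' direction exhibit explicit trivial complex reflexions supported on a single factor coming from commutators $g\circ h\circ g^{-1}\circ h^{-1}$ (when $G$ is non-abelian) and from products $g\circ\theta(g)$ (when $\theta$ is not valid), and for the converse write any element of $\mathcal{G}\cap\Aut(S)^n$ as a product of elementary elements $(\id,\dots,g_{i,j},\dots,g_{i,j}^{-1},\dots,\id)$ and check that single-factor support forces triviality. For the ``moreover'' part the paper quotients in two explicit stages (first by $[G,G]^n$, then by the subgroup generated by the $g\circ\overline{\theta}(g)$) rather than iterating on the full subgroup of trivial complex reflexions as you do, but this is the same mechanism; your concern about whether (\ref{mainlemmaprimi}) is a literal isomorphism rather than a crepant-resolution correspondence is a fair one, which the paper dispatches only with the phrase ``by construction.''
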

\begin{proof}
We set $\mathcal{G}=\left\langle j_{\theta}(G),\mathfrak{S}_n\right\rangle$. Using Lemma \ref{n>2bis}, it is enough to show that $\mathcal{G}$ does not contain a trivial complex reflexion different from the identity if and only if $\theta$ is valid and $G$ is abelian.

First, we assume that $G$ is not abelian, then there exists $g\in G$ and $h\in G$ such that $g\circ h \circ g^{-1} \circ h^{-1}\neq\id$.
Since $\mathfrak{S}_n\subset \mathcal{G}$, all the elements of the form $(\id,...,\id,b,\id,...,\id,\theta(b),\id,...,\id)$ are in $\mathcal{G}$ with $b$ in position $i$ and $\theta(b)$ in position $j$, for all $i$ and $j\neq i$ in $\left\{1,...,n\right\}$ and all $b\in G$.
Therefore, we have:
\begin{itemize}
\item[$\bullet$]
 $(g,\theta(g),\id,...,\id)\in \mathcal{G}$;
\item[$\bullet$]
$(h,\id,\theta(h),...,\id)\in \mathcal{G}$;
\item[$\bullet$]
 $(g^{-1},\theta(g^{-1}),\id,...,\id)\in \mathcal{G}$;
\item[$\bullet$]
$(h^{-1},\id,\theta(h^{-1}),...,\id)\in \mathcal{G}$.
\end{itemize}
The product of these elements provides:
\begin{equation}
(g\circ h\circ g^{-1}\circ h^{-1},\id,...,\id)\in \mathcal{G}.
\label{commutator}
\end{equation}

Now, we assume that $\theta$ is not valid. Therefore, there exists $g\in G$ such that $g\circ\theta(g)\neq\id$.
We have:
\begin{itemize}
\item[$\bullet$]
 $(g,\theta(g),\id,...,\id)\in \mathcal{G}$;
\item[$\bullet$]
$(\theta(g),\id,g,...,\id)\in \mathcal{G}$;
\item[$\bullet$]
 $(\id,\theta(g^{-1}),g^{-1},...,\id)\in \mathcal{G}$.
\end{itemize}
The product of these elements provides:
$(g\circ \theta(g),\id,...,\id)\in \mathcal{G}$.

Now, we prove the other direction; we assume that $G$ is abelian and $\theta$ is valid. Let $(g,\id,\id,...,\id)\in \mathcal{G}$ be a trivial complex reflexion, we are going to show that $g=\id$.
By definition of $\mathcal{G}$ with $\theta$ valid and $G$ abelian, we can write:
\begin{equation}
(g,\id,\id,...,\id)=\prod_{1\leq i<j\leq n}\overline{g_{i,j}},
\label{productss}
\end{equation}
where $\overline{g_{i,j}}=(\id,...,\id,g_{i,j},\id,...,\id,g_{i,j}^{-1},\id,...,\id)$ with $g_{i,j}\in G$ in position $i$ and $g_{i,j}^{-1}$ in position $j$. We set $g_{j,i}:=g_{i,j}^{-1}$.
Equation (\ref{productss}) implies that: 
\begin{equation}
g=\prod_{j=2}^n g_{1,j};
\label{prod1}
\end{equation}
and
\begin{equation}
\id=\prod_{i=2}^n\ \left(\prod_{1\leq j \leq n,\ j\neq i}^n g_{i,j}\right).
\label{prod2}
\end{equation}
Since $g_{j,i}:=g_{i,j}^{-1}$, equation (\ref{prod2}) implies that:
$$\prod_{i=2}^n g_{i,1}=\id.$$
Inverting the previous equation, we obtain that $g=\id$.

Now, we do not make any assumption on $G$ and $\theta$ and we show that there exist a K3 surface $\Sigma$ endowed with an abelian symplectic automorphism group $G'$ and a valid involution $\theta':G'\rightarrow G'$ such that (\ref{mainlemmaprimi}) is verified.
Let $[G,G]$ be the commutator subgroup of $G$. Since $\mathfrak{S}_n\subset \mathcal{G}$ and by (\ref{commutator}), we have $[G,G]^n\subset \mathcal{G}$.
Let $Y\rightarrow S/[G,G]$ be the K3 surface obtained after crepant resolution. Let $\overline{\theta}$ be the involution on $\Ab(G)$ obtained from $\theta$. Since $[G,G]^n$ is a normal subgroup of $\mathcal{G}$, the group $\overline{\mathcal{G}}:=\mathcal{G}/[G,G]^n$ induces an automorphism group of $Y^n$.
Then, we have $\mathcal{G}/[G,G]^n=\left\langle j_{\overline{\theta}}(\Ab(G),\mathfrak{S}_{n}\right\rangle$, with $\Ab(G)$ seen as an automorphism group of $Y$.
Similarly, we consider the group $H=\left\langle\left. g\circ\overline{\theta}(g)\right|\ g\in \Ab(G)\right\rangle$. We consider $\Sigma\rightarrow Y/H$ the K3 surface obtained from the crepant resolution. The group $G':=\Ab(G)/H$ induces an automorphism group of $\Sigma$ and $\overline{\theta}$ induces a valid involution $\theta'$ on $G'$. 
By construction the relation (\ref{mainlemmaprimi}) is verified.
\end{proof}
\begin{thm}\label{quotientsvalide}
Let $n\geq2$ be an integer and $S$ a K3 surface. Let $\mathcal{G}$ be a finite primitive subgroup of $\Aut(S^n)$.
Let $Y$ be a terminalization of $S^n/\mathcal{G}$. 

The complex space $Y$ is an irreducible symplectic variety with $\pi_1(Y_{reg})=0$ if and only if
there exists:
\begin{itemize}
\item
 $G$ a symplectic finite automorphism group of $S$ which is abelian if $n\geq3$ and 
\item
$\theta:G\rightarrow G$ a valid involution such that:
\end{itemize}
\begin{equation}
S^n/\mathcal{G}\simeq S^n/\left\langle j_{\theta}(G),\mathfrak{S}_n\right\rangle,
\label{main}
\end{equation}
with $\mathfrak{S}_n$ acting by permutation of the factors.
\end{thm}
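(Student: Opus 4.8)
The strategy is to split the statement into two implications and reduce both to results already available in the excerpt, most notably Proposition \ref{fonda} (the criterion $\pi_1(Y_{reg})=0 \iff G$ is generated by automorphisms with fixed locus in codimension $2$) and Lemma \ref{n>2} (the classification of primitive groups $\langle j_\theta(G),\mathfrak S_n\rangle$ when $n\geq 3$). I would first dispose of the easy direction: if $\mathcal G=\langle j_\theta(G),\mathfrak S_n\rangle$ with $\theta$ valid, I must check that the hypotheses of Proposition \ref{fonda} hold, i.e.\ that $X=S^n$ is simply connected (true, as $S$ is a K3 surface and $\pi_1$ is a homotopy invariant of products) and carries a nowhere-vanishing holomorphic $2$-form respected by $\mathcal G$ (the sum $\sum_i \mathrm{pr}_i^*\sigma_S$ of the pullbacks of the K3 form is such, since $G$ is symplectic and $\mathfrak S_n$ permutes the summands). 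Then I need that $\mathcal G$ is generated by elements with $\codim\Fix = 2$: the transpositions in $\mathfrak S_n$ have fixed locus the diagonal $\{x_i=x_j\}$ of codimension $2$, and if $(g_1,\dots,g_k)$ are generators of $G$ with $\theta(g_\ell)=g_\ell^{-1}$, then the elements $(g_\ell,\theta(g_\ell),\id,\dots,\id)$ — which lie in $j_\theta(G)$ — are conjugate under $\mathfrak S_n$ to all the ``two-slot'' elements $(\id,\dots,g_\ell,\dots,g_\ell^{-1},\dots,\id)$, and their fixed loci (inside the appropriate diagonal) have codimension $2$ because each $g_\ell$ is a symplectic automorphism of a K3 surface, hence has fixed locus of codimension $2$ or empty. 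Checking that these generate $\mathcal G$, and that $S^n/\mathcal G$ has a unique holomorphic symplectic form on its smooth locus with rational (even quotient) singularities so that Proposition \ref{basisi} applies, then gives that $Y$ is irreducible symplectic with $\pi_1(Y_{reg})=0$.

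For the converse, assume $Y$ is irreducible symplectic with $\pi_1(Y_{reg})=0$. By Proposition \ref{fonda} (applied with $X=S^n$, which is simply connected and carries the $2$-form above), $\mathcal G$ is generated by automorphisms of $S^n$ with fixed locus of codimension $2$. The key structural point is to understand which automorphisms of $S^n$ have $\codim \Fix = 2$: writing $\Aut(S^n)$ as an extension of a permutation group by $\Aut(S)^n$ (here I would use that $S$ need not have a nontrivial automorphism commuting with everything — more carefully, any finite $\mathcal G\subseteq \Aut(S^n)$ sits in $\Aut(S)^n\rtimes \mathfrak S_n$ up to the product structure, possibly after identifying isomorphic factors), an element with fixed locus in codimension $2$ is either a ``trivial complex reflexion'' $(\id,\dots,g,\dots,\id)$ with $g$ a symplectic automorphism of $S$, or a transposition-type element $(\id,\dots,g,\dots,g^{-1},\dots,\id)\cdot\tau$ for a transposition $\tau$ and $g$ symplectic (so that the fixed locus, a twisted diagonal, has the right codimension), or products thereof with disjoint support. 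Since $\mathcal G$ is generated by such elements and $Y$ is irreducible symplectic, $\mathcal G$ cannot contain a nontrivial trivial complex reflexion — otherwise, as in the proof of Lemma \ref{n>2bis}, $S^n/\mathcal G$ would factor through a resolution $\Sigma^n\to S^n/H^n$ and $Y$ would be a terminalization of a quotient of $\Sigma^n$ by a group still containing permutations, contradicting the uniqueness of the $2$-form / the primitivity hypothesis (indeed $\mathcal G$ is assumed primitive, so this is immediate). Hence $\mathcal G$ is generated by transposition-type elements, which forces the permutation part of $\mathcal G$ to be all of $\mathfrak S_n$ (transpositions generate $\mathfrak S_n$) and forces $\mathcal G = \langle j_\theta(G),\mathfrak S_n\rangle$ for $G$ the ``first-slot'' symplectic group and $\theta$ the involution of $G$ recording the correlated second-slot action; validity of $\theta$ is then exactly the statement that the generating transposition-type elements have the form $(g_\ell,\theta(g_\ell)=g_\ell^{-1},\dots)$. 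For $n\geq 3$, Lemma \ref{n>2} additionally gives that primitivity forces $G$ abelian, which is what the theorem asserts; and the isomorphism \eqref{main} is then a tautology.

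The main obstacle I anticipate is the precise identification of the automorphisms of $S^n$ with fixed locus in codimension $2$, and the bookkeeping that converts ``$\mathcal G$ is generated by such automorphisms and is primitive'' into ``$\mathcal G = \langle j_\theta(G),\mathfrak S_n\rangle$ with $\theta$ valid.'' One has to be careful that $\mathcal G$ might a priori involve a permutation subgroup smaller than $\mathfrak S_n$, or a nondiagonal twisting spread over more than two factors; ruling these out uses that codimension-$2$ fixed loci severely constrain the element (a permutation with a cycle of length $\geq 3$ or two disjoint transpositions has fixed locus of codimension $\geq 4$, and a genuinely $k$-slot diagonal element with $k\geq 3$ likewise), combined with the primitivity hypothesis to kill the trivial-complex-reflexion part. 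I would organize this as a short lemma classifying codimension-$2$ fixed loci in $\Aut(S^n)$, then feed it into Proposition \ref{fonda} and Lemma \ref{n>2bis}/\ref{n>2} mechanically. The $n=2$ case is slightly special since $\mathfrak S_2$ has a single transposition and there is no abelian constraint; I would treat it by the same argument, noting that Lemma \ref{n>2} is not needed there.
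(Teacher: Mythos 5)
Your overall strategy coincides with the paper's: the forward direction is Proposition \ref{fonda} applied to $X=S^n$ with the generators $\tau\circ j_\theta(g_\ell)$, and the converse combines Proposition \ref{fonda} with a classification of the elements of $\Aut(S)^n\rtimes\mathfrak{S}_n$ having fixed locus of codimension $2$, primitivity to kill trivial complex reflexions, and Lemma \ref{n>2} for abelianness when $n\geq3$. However, there are two genuine gaps in your converse direction. The main one is the jump from ``$\mathcal{G}$ is generated by transposition-type elements $\tau\circ(\id,\dots,g,\dots,g^{-1},\dots,\id)$ and its permutation part is $\mathfrak{S}_n$'' to ``$\mathcal{G}=\langle j_\theta(G),\mathfrak{S}_n\rangle$, so \eqref{main} is a tautology.'' This equality of groups is simply false in general: $\mathcal{G}$ need not contain any pure permutation. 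Already for $n=2$ the group generated by $s_0\circ(g,g^{-1})$ does not contain $s_0$ unless $g=\id$. The conclusion of the theorem is an \emph{isomorphism} of quotients, not an equality of groups, and producing it requires conjugating $\mathcal{G}$ by a suitably chosen $\psi=(\psi_1,\dots,\psi_n)\in\Aut(S)^n$, built recursively along a minimal spanning family of the transpositions occurring in the generators, so that $\psi\mathcal{G}\psi^{-1}$ actually contains $\mathfrak{S}_n$ (Claim 5 of the paper's proof). Your plan has no mechanism for this step.

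The second gap is the well-definedness of $\theta$. Even after arranging $\mathcal{G}'\supset\mathfrak{S}_n$, you obtain only a subset $F=\{g\in\Aut(S)\mid (g,g^{-1},\id,\dots,\id)\in\mathcal{G}'\}$ and you want to set $\theta(g)=g^{-1}$ on $F$ and extend to $G=\langle F\rangle$ as a group involution. This requires checking that every relation $h_1\circ\cdots\circ h_k=\id$ among elements of $F$ forces $h_1^{-1}\circ\cdots\circ h_k^{-1}=\id$; the paper derives this from the absence of trivial complex reflexions (Claim 7), and without it ``the involution recording the correlated second-slot action'' is not known to exist. A smaller slip in your forward direction: the diagonal elements $(g_\ell,\theta(g_\ell),\id,\dots,\id)=j_\theta(g_\ell)$ have fixed locus $\Fix(g_\ell)\times\Fix(g_\ell^{-1})\times S^{n-2}$, which has codimension $4$ (symplectic automorphisms of a K3 have isolated fixed points); the codimension-$2$ generators are the composites $(1,2)\circ j_\theta(g_\ell)$, whose fixed locus is the twisted diagonal $\{(x,g_\ell(x),y_3,\dots,y_n)\}$. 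Your text attributes the codimension-$2$ fixed locus to the wrong elements, which would make the verification of the hypothesis of Proposition \ref{fonda} fail as written.
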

\begin{proof}
If (\ref{main}) is realized, then $Y$ is an irreducible symplectic variety with $\pi_1(Y_{reg})=0$ by Proposition \ref{fonda}.
Indeed, let $(g_1,...,g_m)$ be a family of generators of $G$ such that $\theta(g_i)=g_i^{-1}$ for all $i\in\left\{1,...,m\right\}$.
Then, the group $\left\langle j_{\theta}(G),\mathfrak{S}_n\right\rangle$ is generated by the transpositions and by $(1,2)\circ j_{\theta}(g_i)$ for $i\in \left\{1,...,m\right\}$ which fix $\left\{\left.(x,g_i(x),y_3,....,y_n)\right|\ (x,y_3,...,y_n)\in S^{n-1}\right\}$.

We assume that $Y$ is an irreducible symplectic variety with $\pi_1(Y_{reg})=0$ and we are going to prove (\ref{main}).
We denote by $\pr_i:S^n\rightarrow S$ the $i$th projection.
As explained in \cite[Section 3]{Beauville} via the uniqueness of the Beauville--Bogomolov decomposition theorem, we know that $\Aut(S^n)$ is given by the natural semi-direct product between $\Aut(S)^n$ and $\mathfrak{S}_n$. We denote by $\Aut_0(S)$ the group of symplectic automorphisms of $S$. 
\begin{itemize}
\item\emph{First claim: $\mathcal{G}$ is a subgroup of $\Aut_0(S)^n\rtimes\mathfrak{S}_n$}

We denote $H^0(\Omega_{S}^2)=\C \sigma$. To have $Y$ primitively symplectic, we need that
$H^0(S^n,\Omega_{S^n}^2)^\mathcal{G}$ is generated by one non-degenerate holomorphic form. It can be written as follows: 
\begin{equation}
H^0(S^n,\Omega_{S^n}^2)^\mathcal{G}=\C\left(\sum_{i=1}^n \lambda_i\pr_i^*(\sigma)\right).
\label{2form}
\end{equation}
with $\lambda_i\neq0$ for all $i$.

Let $g\in \Aut(S)^n \cap \mathcal{G}$. We can write $g=(g_1,...,g_n)$ with $g_i\in \Aut(S)$ for all $i$. We have:
$$g^*\left(\sum_{i=1}^n \lambda_i\pr_i^*(\sigma)\right)=\sum_{i=1}^n \mu_i\lambda_i\pr_i^*(\sigma),$$
with $g_i^*(\sigma)=\mu_i\sigma$. However from (\ref{2form}), we have that $g$ fixes the form $\sum_{i=1}^n \lambda_i\pr_i^*(\sigma)$;
it implies that $\mu_i=1$ for all $i$. That is $\mathcal{G}$ is a sub-group of $\Aut_0(S)^n\rtimes\mathfrak{S}_n$.
\item\emph{Second claim: $\mathcal{G}/\Aut(S)^n \cap \mathcal{G}$ acts transitively on the $n$ factors of the product $S^n$}

We have seen that necessarily, $\Aut(S)^n \cap \mathcal{G}$ acts trivially on $H^0(S^n,\Omega_{S^n}^2)$. Hence to respect (\ref{2form}), the subgroup $ \mathcal{G}/\Aut(S)^n \cap \mathcal{G}$ has to act transitively on the factors of $S^n$. Indeed, if it was not the case, we would have at least two different orbits and therefore we would have $\rk H^0(S^n,\Omega_{S^n}^2)^\mathcal{G}\geq 2$.
\item\emph{Third claim: $\mathcal{G}$ does not contain any trivial complex reflexion different from the identity} 

Let $\mathcal{H}\subset \mathcal{G}$ be the normal subgroup generated by the trivial complex reflexions. By definition of trivial complex reflexion, we have $\mathcal{H}=\prod_{i=1}^{n} H_i$, with $H_i$ finite subgroups of $\Aut_0(S)$. Then $S^n/\mathcal{H}=\prod_{i=1}^{n} S/H_i$. We are going to show that $S/H_i\simeq S/H_1$ for all $i\in\left\{2,...,n\right\}$.

By claim 2, there exists $s_{1,i}\in \mathcal{G}$ that exchanges the first factor of $S^n$ with the $i$th factor. We can write $s_{1,i}=P\circ \left(g_1,...,g_n\right)$ with $P\in \mathfrak{S}_n$ such that $P(i)=1$ and $\left(g_1,...,g_n\right)\in \Aut_0(S)^n$. 
Let $h\in H_1$, we have:
$$s_{1,i}^{-1}\circ (h,\id,....,\id)\circ s_{1,i}=(\id,...,\id,g_i^{-1}\circ h\circ g_i,\id,...,\id),$$
with $g_i^{-1}\circ h\circ g_i$ in position $i$.
Therefore, we have $g_i^{-1} H_1 g_i\subset H_i$. Since the situation is symmetric, we also have $H_i \subset g_i^{-1}H_1 g_i $; that is $H_i=g_i^{-1} H_1 g_i$. Hence, the morphism $S\rightarrow S:\ x\mapsto g_i^{-1}(x)$ induces an isomorphism $S/H_1\simeq S/H_i$. Let $\Sigma$ be the crepant resolution of $S/H_1$. Then $\Sigma^n$ is a crepant resolution of $S^n/\mathcal{H}$. Since $\mathcal{G}$ is primitive, $\mathcal{H}$ has to be trivial.

\item\emph{Fourth claim: $\mathcal{G}/(\mathcal{G}\cap\Aut(S)^n)=\mathfrak{S}_n$}

If $\mathcal{G}\cap\Aut(S)^n$ is trivial, then by Proposition \ref{fonda}, we know that $\mathcal{G}$ is a permutation group generated by transpositions.
Moreover, by the second claim, $\mathcal{G}$ acts transitively on the factors of $S^n$. Necessarily, we have $\mathcal{G}=\mathfrak{S}_n$. This ends the proof; so for the sequel, we assume that $\mathcal{G}\cap\Aut(S)^n$ is not trivial.

Let $g\in \Aut(S)^n \cap \mathcal{G}$. Since $g$ is not a trivial complex reflexion, we can write $g=(g_1,...,g_n)$ with at least two non-trivial factors. For simplicity in the notation, we can assume without loss of generality that these factors are $g_1$ and $g_2$. Therefore $\Fix g\subset \Fix g_1\times \Fix g_2\times S^{n-2}$. Therefore, $\codim \Fix g>2$. However we have seen by Proposition \ref{fonda} that $\mathcal{G}$ needs to be generated by elements with fixed locus in codimension 2. Hence the generators of $\mathcal{G}$ are of the form $s \circ g$ with $s\in \mathfrak{S}_n$ non trivial. 
Let  $s\circ g$ be such an element with $\codim \Fix s \circ g=2$. We have $s\circ g(x_1,...,x_n)=(g_{s^{-1}(1)}(x_{s^{-1}(1)}),...,g_{s^{-1}(n)}(x_{s^{-1}(n)}))$.
If $(x_1,...,x_n)\in \Fix s \circ g$, then $g_{s^{-1}(i)}(x_{s^{-1}(i)})=x_i$ for all $i$. The element $s$ is not trivial, there is $j\in\left\{1,...,n\right\}$ such that $s(j)\neq j$.
We have:  
\begin{equation}
g_{j}(x_{j})=x_{s(j)}\ \text{and}\ g_{s(j)}(x_{s(j)})=x_{s^2(j)}.
\label{equal}
\end{equation}
 To verify $\codim \Fix s \circ g=2$, necessarily, the two previous equations have to be the same. There is only one possibility $s^2(j)=j$ and $g_{s(j)}=g_j^{-1}$. Moreover since the other equations $g_{i}(x_{i})=x_{s(i)}$ cannot be equal to (\ref{equal}), they are  necessarily trivial. That is $s(i)=i$ and $g_i=\id$ for all $i\notin \left\{j, s(j)\right\}$. Therefore $s=(j,s(j))$ and $g=(\id,...,g_j,\id,....,\id,g_j^{-1},\id,...,\id)$, with $g_j$ in position $j$ and $g_{j}^{-1}$ in position $s(j)$. We have seen that $\mathcal{G}$ is generated by elements $s \circ g$ with $s$ a transposition.
In particular $\mathcal{G}/(\mathcal{G}\cap\Aut(S)^n)$ is generated by transpositions.
Moreover by the second claim $\mathcal{G}/(\mathcal{G}\cap\Aut(S)^n)$ acts transitively on the factors. It follows that $\mathcal{G}/(\mathcal{G}\cap\Aut(S)^n)=\mathfrak{S}_n$.
 \item\emph{Fifth claim: there exist an automorphism group $\mathcal{G}'$ such that $S^n/\mathcal{G}\simeq S^n/\mathcal{G}'$ and $\mathcal{G}'\supset\mathfrak{S}_n$}

We have seen that $\mathcal{G}$ is an extension $(\mathcal{G}\cap\Aut(S)^n):\mathfrak{S}_n$. We are going to prove that we can find $\mathcal{G}\simeq\mathcal{G}'\supset \mathfrak{S}_n$.
 
Let $(s_i\circ g_{i,i})$ be a family of elements of $\mathcal{G}$ with $s_i=(j_i,k_i)$ a transposition and $g_{i,i}\in\mathcal{G}\cap\Aut(S)^n$ with an automorphism $g_i\in \Aut(S)$ in position $j_i$, the automorphism $g_i^{-1}$ in position $k_i$ and $\id$ in all other factors. We have seen in the previous claim that there exists a family $(s_i\circ g_{i,i})$ as described before such that $(s_i)$ generates $\mathfrak{S}_n$. We choose such a family $(s_i\circ g_{i,i})$ minimal; i.e. we cannot find an elements of $(s_i)$ which is generated by the other elements of the family. We are going to reorder the family $(s_i\circ g_{i,i})$ in a convenient way. We keep $s_1\circ g_{1,1}$. Then, we define the order recursively as follows. Assume that we have $(s_i\circ g_{i,i})$ for $i\leq m-1$, we explain how to choose $s_m\circ g_{m,m}$. Let $R_{m-1}:=\cup_{i=1}^{m-1} \Supp s_i$, with $\Supp s_i$ the support of the transposition. We choose $s_{m}\circ g_{m,m}$ such that $j_m\in R_{m-1}$. This is possible because if it was not the case the family $(s_i)$ would not generate $\mathfrak{S}_n$. Moreover this choice implies that $k_m\notin R_{m-1}$. Indeed, if $k_m\in R_{m-1}$, then $s_m$ would be generated by the $s_i$ for $1\leq i \leq m-1$; this would contradict our hypothesis of minimality on $(s_i\circ g_{i,i})$.
So we have our family $(s_m\circ g_{m,m})_{1\leq m \leq t}$ such that:
$$\left\langle (s_m)_{1\leq m \leq t}\right\rangle=\mathfrak{S}_n\ \text{and}\ j_m\in R_{m-1},\  k_m\notin R_{m-1},\ \forall\ 2\leq m \leq t.$$
Note that since $(s_m)_{1\leq m \leq t}$ generates $\mathfrak{S}_n$ necessarily we have $t=n-1$ and $\left\{j_1,k_1,k_2,k_3,...,k_{n-1}\right\}=\left\{1,...,n\right\}$.
Now, we are going to construct a morphism $\psi:S^n\rightarrow S^n$ such that:
\begin{equation}
\psi \circ s_m\circ g_{m,m}\circ\psi^{-1}=s_m,\ \forall\ 1\leq m \leq n-1.
\label{psi}
\end{equation}
We write $\psi$ diagonally:
$$\psi=(\psi_1,...,\psi_n).$$
We are going to define $\psi_{k_m}$ recursively. 
We set $\psi_{k_1}=g_{k_1}^{-1}$ and $\psi_{j_1}=\id$. Then, we assume that $\psi_{k_{i}}$ has been defined for $0\leq i \leq m-1$ and we provide $\psi_{k_{m}}$.
We set $\psi_{k_m}=\psi_{j_m}\circ g_{m}^{-1}$.
It remains to prove (\ref{psi}).
We compute $\psi \circ s_m\circ g_{m,m}\circ\psi^{-1}$:
We have:
\begin{align*}
&\psi \circ s_m\circ g_{m,m}\circ\psi^{-1}(x_1,...,x_n)\\
&=(x_1,....,x_{j_m-1},\psi_{j_m}\circ g_m^{-1}\circ\psi_{k_m}^{-1}(x_{k_m}),x_{j_m+1},...,x_{k_m-1},\psi_{k_m}\circ g_m\circ \psi_{j_m}^{-1}(x_{j_m}),x_{k_m+1},...,x_n).
\end{align*}
However, by construction $\psi_{j_m}\circ g_m^{-1}\circ\psi_{k_m}^{-1}=\id$. We set $\mathcal{G}'=\psi\circ\mathcal{G}\circ\psi^{-1}$ and we obtain our claim.
\item\emph{Sixth claim: $\mathcal{G}'\cap \Aut(S)^n$ is generated by elementary elements}

For simplicity in the notation, an element of the form $(\id,...,\id,g,\id,...,\id,g^{-1},\id,...,\id)$ is called an \emph{elementary element} for the sequel of the proof. 

Let $h\in \mathcal{G}'\cap \Aut(S)^n$.
As we have seen in the proof of the fourth claim, the group $\mathcal{G}'$ is generated by elements of the form $s\circ(\id,...,\id,g,\id,...,\id,g^{-1},\id,...,\id)$, with $s$ the transposition $(i,j)$; $g$ and $g^{-1}$ are in position $i$ and $j$ respectively. 
In particular, we can write:
$$h=\prod s_i \circ (\id,...,\id,g_i,\id,...,\id,g_i^{-1},\id,...,\id),$$
with $s_i$ transpositions.
That is $h=\sigma \circ \alpha$
with $\sigma$ a permutation and $\alpha$ the product of elementary elements conjugated by permutations. 
However, the conjugate by a permutation of an elementary element is an elementary element. 
Therefore $h=\sigma \circ \alpha$ with $\sigma$ a permutation and $\alpha$ a product of elementary elements. Finally since $h \in \mathcal{G}'\cap \Aut(S)^n$, we have $\sigma=\id$ and this prove our claim.

 \item\emph{Seventh claim: proof of (\ref{main})}

It remains to show that $\mathcal{G}'$ has the form prescribed by the proposition. 
Let: $$F=\left\{\left.g\in \Aut(S)\right|\ (g,g^{-1},\id,...,\id)\in \mathcal{G}'\cap \Aut(S)^n\right\}.$$ We consider $G=\left\langle F\right\rangle$. Moreover we consider $\theta:G\rightarrow G$ the involution such that $\theta(g)=g^{-1}$ for all $g\in F$. We only need to prove that $\theta$ is a well defined involution to conclude the proof; indeed, with claim 5 and 6, we will have $\mathcal{G}'=\left\langle j_{\theta}(G),\mathfrak{S}_n\right\rangle$. 
To prove that $\theta$ is well defined, it is enough to show that 
if there exists a dependency relation of elements of $F$, then it does not lead to any contradiction in the definition of $\theta$.
Let
$$h_1\circ...\circ h_k=\id,$$ with $h_i\in F$ for all $i$.
This imply:
\begin{equation}
h_k^{-1}\circ...\circ h_1^{-1}=\id.
\label{theta3}
\end{equation}
We need to show that: 
\begin{equation}
h_1^{-1}\circ...\circ h_k^{-1}=\id.
\label{theta2}
\end{equation}
We have $(h_i,h_i^{-1},\id,...,\id)\in \mathcal{G}'$ for all $i$. Hence:
$$(h_1\circ...\circ h_k,h_1^{-1}\circ...\circ h_k^{-1},\id,...,\id)\in \mathcal{G}'.$$
Therefore multiplying the first factor by (\ref{theta3}), we obtain:
$$(\id,h_1^{-1}\circ...\circ h_k^{-1},\id,...,\id)\in \mathcal{G}'.$$
Hence, by the third claim:
$h_1^{-1}\circ...\circ h_k^{-1}=\id$.
This corresponds to (\ref{theta2}).
\item\emph{Last claim: $G$ is abelian when $n\geq3$}

Since the group $\mathcal{G}$ is primitive, the group $\mathcal{G}'=\left\langle j_{\theta}(G),\mathfrak{S}_n\right\rangle$ is also primitive. Therefore by Lemma \ref{n>2}, the group $G$ is abelian.
\end{itemize}
\end{proof}
\begin{rmk}\label{fixed}
Let $S$ be a K3 surface, $G$ a finite symplectic automorphism group of $S$ and $\theta:G\rightarrow G$ an involution. Let $\mathcal{G}=\left\langle j_{\theta}(G),\mathfrak{S}_n\right\rangle$.
When $\mathcal{G}$ is primitive, we have seen in the previous proof that an element in $\left\langle j_{\theta}(G),\mathfrak{S}_n\right\rangle$ which has a fixed locus in codimension 2 is of the form: 
$$g_{i,j}:=s\circ(\id,...,\id,g,\id,...,\id,g^{-1},\id,...,\id),$$ with $s=(i,j)$ the transposition, where $g$ and $g^{-1}$ are in position $i$ and $j$ respectively. Moreover the fixed component in codimension 2 of this automorphism is:
$$\left\{\left.(y_1,...,y_{i-1},x,y_{i+1},...,y_{j-1},g(x),y_{j+1},...,y_n)\right|\ (x,y_1,...,y_n)\in S^{n-1}\right\}.$$
\end{rmk}
Theorem \ref{geneth} is a direct consequence of Theorem \ref{quotientsvalide} and Claim 3 of its proof.
\begin{proof}[Proof of Theorem \ref{geneth}]
Let $S$ be a K3 surface. Let $\mathcal{G}$ be a finite automorphism group of $S^n$. Let $Y\rightarrow S^n/\mathcal{G}$ be a terminalization which is an irreducible symplectic variety with simply connected smooth locus. Let $\mathcal{H}\subset \mathcal{G}$ be the normal subgroup generated by the trivial complex reflexions. We have seen in Claim 3 of the proof of Theorem \ref{quotientsvalide} that there exists a K3 surface $\Sigma$ such that 
$\Sigma^n\rightarrow S^n/\mathcal{H}$ is a crepant resolution. Since $\mathcal{H}$ is normal $\mathcal{G}/\mathcal{H}$ induces an automorphism group of $\Sigma^n$. By definition the group $\mathcal{G}/\mathcal{H}$ does not contain any trivial complex reflexion (different from $\id$), therefore the proof of Theorem \ref{quotientsvalide} applies to the couple $(\Sigma^n, \mathcal{G}/\mathcal{H})$. We can find $G$ a symplectic automorphism group of $\Sigma$ which is abelian if $n\geq3$ and $\theta$ a valid involution on $G$ such that:
$$\Sigma^n/\left(\mathcal{G}/\mathcal{H}\right)\simeq \Sigma^n/\left\langle j_{\theta}(G),\mathfrak{S}_n\right\rangle.$$
All Kähler terminalizations of $\Sigma^n/\left\langle j_{\theta}(G),\mathfrak{S}_n\right\rangle$ are primitive symplectic by Proposition \ref{primimi} and they are terminalization of $S^n/\mathcal{G}$; therefore Proposition \ref{bimero} provides the claim of Theorem \ref{geneth}.
\end{proof}
\subsection{Fujiki irreducible symplectic varieties}\label{Fujikivar}
After Theorem \ref{quotientsvalide}, the definition of Fujiki varieties is very natural; we recall Definition \ref{Fujikiconstruc}.
\begin{defi}
Let $S$ be a projective K3 surface and $G$ a finite symplectic automorphism group of $S$. Let $\theta:G\rightarrow G$ be an involution. Let $n\in\N\smallsetminus\left\{0,1\right\}$.
We denote by
$$S(G)^{[n]}_{\theta}\rightarrow S^n/\left\langle j_{\theta}(G),\mathfrak{S}_n\right\rangle,$$
a terminalization of $S^n/\left\langle j_{\theta}(G),\mathfrak{S}_n\right\rangle$ and we call it the \emph{Fujiki variety} of dimension $2n$ associated to $(S,G,\theta)$. 
\end{defi}
\begin{rmk}
According to Lemma \ref{n>2}, when $n\geq3$, we can assume without loss of generalities that $G$ is abelian and $\theta$ is valid.
\end{rmk}
\begin{nota}
When $G$ is abelian, there is only one valid involution which is $\inv(g)=g^{-1}$. To simplify the notation, when $G$ is abelian, we denote $S(G)^{[n]}$ instead of $S(G)^{[n]}_{\inv}$.
\end{nota}
\begin{prop}\label{primimi}
A Fujiki variety is a primitive symplectic variety.
\end{prop}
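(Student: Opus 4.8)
The plan is to verify the three conditions in Definition \ref{Primitivy}: that $S(G)^{[n]}_{\theta}$ is a normal compact Kähler symplectic variety, that $h^1(\mathcal{O}) = 0$, and that $h^0(\Omega^{[2]}) = 1$. First I would set $\mathcal{G} := \left\langle j_{\theta}(G), \mathfrak{S}_n\right\rangle$, which is a finite group of automorphisms of $S^n$. Since $S$ is projective, so is $S^n$, and the quotient $S^n/\mathcal{G}$ is a projective (hence compact Kähler) variety with quotient singularities, in particular normal with rational (even klt) singularities. A terminalization $Y = S(G)^{[n]}_{\theta} \to S^n/\mathcal{G}$ exists by \cite{Birkar} and is, by definition, a normal $\Q$-factorial variety with terminal singularities; being crepant proper bimeromorphic over a projective variety, $Y$ is compact Kähler. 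So the first ingredient is that $Y$ is a normal compact Kähler variety with rational singularities.

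Next I would handle the symplectic form. The K3 surface $S$ carries a holomorphic symplectic form $\sigma_S$ spanning $H^0(S, \Omega_S^2)$, and on $S^n$ the two-form $\varphi := \sum_{i=1}^n \pr_i^*(\sigma_S)$ is a nowhere vanishing holomorphic $2$-form. Since $G$ acts symplectically on $S$ and $\theta(g)$ is again symplectic, each $j_{\theta}(g)$ preserves $\varphi$; and permuting the factors permutes the summands $\pr_i^*(\sigma_S)$, so $\mathfrak{S}_n$ also preserves $\varphi$. Hence $\varphi$ is $\mathcal{G}$-invariant and descends to a reflexive symplectic form on $(S^n/\mathcal{G})_{reg}$, pulling back to a symplectic form on $Y_{reg}$. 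Because the terminalization $Y \to S^n/\mathcal{G}$ is crepant and $S^n$ resolves the quotient after the correct base change (or, more directly, because quotient singularities are canonical so the pullback of $\varphi$ to any resolution extends), the pulled-back form extends holomorphically; thus $Y$ is a symplectic variety in the sense of the definitions in Section \ref{defdef}. Equivalently, one may invoke \cite[Corollary 1.8]{Kebekus}: $Y$ has rational singularities and admits a symplectic form, hence is a symplectic variety.

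For $h^1(Y, \mathcal{O}_Y) = 0$: since $Y$ has rational singularities, $H^i(Y, \mathcal{O}_Y) = H^i(S^n, \mathcal{O}_{S^n})^{\mathcal{G}}$ via the terminalization and the quotient map (Leray for a finite quotient together with $R^j(\text{terminalization})_* \mathcal{O} = 0$ for $j > 0$). Now $H^1(S^n, \mathcal{O}_{S^n}) = \bigoplus_{i} H^1(S,\mathcal{O}_S) \otimes (\cdots) = 0$ already, since $H^1(S, \mathcal{O}_S) = 0$ for a K3 surface and by the Künneth formula $H^1(S^n,\mathcal{O})$ is a sum of tensor products each of which has an $H^1(S,\mathcal{O}_S) = 0$ factor. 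So $h^1(\mathcal{O}_Y) = 0$ with no need to take invariants. Finally, for $h^0(Y, \Omega_Y^{[2]}) = 1$: reflexive forms on $Y$ pull back to $\mathcal{G}$-invariant holomorphic $2$-forms on $S^n$ (extension over the terminal, hence smooth-in-codimension-two, locus), so $H^0(Y,\Omega_Y^{[2]}) = H^0(S^n, \Omega_{S^n}^2)^{\mathcal{G}}$. By Künneth, $H^0(S^n, \Omega_{S^n}^2) = \bigoplus_i \pr_i^* H^0(S,\Omega_S^2) \;\oplus\; \bigoplus_{i<j} \pr_i^*H^0(S,\Omega_S^1)\otimes \pr_j^*H^0(S,\Omega_S^1)$, and since $H^0(S,\Omega_S^1) = 0$ the second block vanishes, leaving an $n$-dimensional space $\bigoplus_i \C\,\pr_i^*(\sigma_S)$. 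A permutation in $\mathcal{G}$ acts on this space by permuting coordinates, so the $\mathcal{G}$-invariants are contained in the line spanned by $\sum_i \pr_i^*(\sigma_S) = \varphi$; since $\varphi$ is invariant, the invariant subspace is exactly $\C\varphi$, giving $h^0(\Omega_Y^{[2]}) = 1$.

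The only genuinely delicate point is the extension of the symplectic form across the terminalization (ensuring $Y$ is a symplectic \emph{variety}, not merely that $Y_{reg}$ carries a symplectic form); I expect this to be the main obstacle, and the cleanest route is to cite \cite[Corollary 1.8]{Kebekus} together with the fact that quotient singularities are rational, reducing the claim to the existence of the symplectic form on $(S^n/\mathcal{G})_{reg}$, which is immediate from the $\mathcal{G}$-invariance of $\varphi$. Everything else is Künneth plus the vanishing $H^1(S,\mathcal{O}_S) = H^0(S,\Omega_S^1) = 0$ for K3 surfaces and the standard cohomological behaviour of finite quotients and crepant terminalizations with rational singularities.
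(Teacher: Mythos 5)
Your argument is correct, and in fact more complete than the paper's own proof, which dispenses with the symplectic structure and the condition $h^0(\Omega^{[2]})=1$ in one line ("we only have to verify that $h^1(Y,\mathcal{O}_Y)=0$") and then establishes only that last vanishing. Where you differ is precisely on that last point: the paper deduces $h^1(Y,\mathcal{O}_Y)=0$ from the simple connectedness of $S^n/\left\langle j_{\theta}(G),\mathfrak{S}_n\right\rangle$ (inherited from $S^n$ via \cite[Lemma 1.2]{Fujiki}) together with rational singularities, whereas you compute $H^1(Y,\mathcal{O}_Y)=H^1(S^n,\mathcal{O}_{S^n})^{\mathcal{G}}=0$ directly from the K\"unneth formula and $H^1(S,\mathcal{O}_S)=0$. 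Your route is more elementary -- it needs only the standard cohomological identities for finite quotients and crepant morphisms between varieties with rational singularities, and avoids any topological input -- while the paper's route is shorter given that the simple connectedness of the quotient is needed elsewhere in the paper anyway. Your treatment of the two conditions the paper leaves implicit (the descent and reflexive extension of $\varphi=\sum_i\pr_i^*\sigma_S$ via \cite[Corollary 1.8]{Kebekus}, and the K\"unneth computation showing the $\mathcal{G}$-invariant $2$-forms form exactly the line $\C\varphi$) is sound and matches the arguments the paper does spell out later, in the first claim of the proof of Theorem \ref{quotientsvalide}. The one point you rightly flag as delicate -- non-degeneracy of the extended form at points of $Y_{reg}$ lying over $\Sing(S^n/\mathcal{G})$ -- is also taken for granted by the paper, so it is not a gap relative to the paper's own standard of rigour.
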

\begin{proof}
Let $Y=S(G)^{[n]}_{\theta}$ be a Fujiki variety. By definition $Y$ is Kähler.
We only have to verify that $h^1(Y,\mathcal{O}_Y)=0$. Since $S^n$ is simply connected, according to \cite[Lemma 1.2]{Fujiki}, this is also the case for $S^n/\left\langle j_{\theta}(G),\mathfrak{S}_n\right\rangle$. Since $Y$ and $S^n/\left\langle j_{\theta}(G),\mathfrak{S}_n\right\rangle$ have rational singularities, the result follows.
\end{proof}

We are ready to prove Corollary \ref{corirr} and \ref{corirr2}.
\begin{proof}[Proof of Corollary \ref{corirr}]
If $\theta$ is valid, $S(G)^{[n]}_{\theta}$ is an irreducible symplectic variety with simply connected smooth locus by Theorem \ref{quotientsvalide}.

Now, we assume that $S(G)^{[2]}_{\theta}$ is an irreducible symplectic variety with simply connected smooth locus. 
We set $\mathcal{G}=\left\langle j_{\theta}(G),\mathfrak{S}_2\right\rangle$. We apply Claim 7 of the proof of Theorem \ref{quotientsvalide} to $\mathcal{G}$.
Therefore, we can find $G'$ an automorphism group of $S$ and $\theta'$ a valid involution on $G'$ such that $\mathcal{G}=\left\langle j_{\theta'}(G'),\mathfrak{S}_2\right\rangle$. Since $\mathcal{G}\cap\Aut(S)^n=j_{\theta}(G)$, necessarily, we obtain $G=G'$ and $\theta=\theta'$. 
\end{proof}
\begin{proof}[Proof of Corollary \ref{corirr2}]
By Lemma \ref{n>2}, we can assume that $G$ is abelian and $\theta$ valid. Then the result follows directly from Theorem \ref{quotientsvalide}. 
\end{proof}
\subsection{Second Betti numbers}\label{BettiSection}
First, we provide the second Betti number of a Fujiki variety in dimension higher or equal to 6; we recall from Lemma \ref{n>2} that in this case we can assume without loss of generality that $G$ is abelian and $\theta$ valid. 
\begin{prop}\label{Betti}
Let $S$ be a K3 surface, $G$ a finite abelian symplectic automorphism group of $S$ and $\inv:G\rightarrow G$ the valid involution. Let $n\geq 3$ be an integer. 
Then:
$$b_2\left(S(G)^{[n]}_{\inv}\right)=\dim_{\C} H^2(S,\C)^G+1.$$
\end{prop}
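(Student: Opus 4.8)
The goal is to compute $b_2$ of $Y = S(G)^{[n]}_\theta$ for $n \geq 3$, $G$ abelian, $\theta = \inv$ valid. The strategy is to use the bimeromorphic invariance of $b_2$ under crepant partial resolutions together with a direct count of invariant $2$-cohomology on $S^n$ plus the contribution of the exceptional divisor(s). First I would observe that, since $Y \to X := S^n/\langle j_\theta(G),\mathfrak{S}_n\rangle$ is crepant and both sides have at worst rational (in fact quotient on a big open set) singularities, and since the terminalization adds exceptional divisors only over a locus of codimension $\geq 4$ after we have already extracted the codimension-$2$ fixed loci — here I would rather argue via $H^2$ directly: by \cite[Lemma 1.2]{Fujiki} (or the general principle for quotient singularities) one has $H^2(X,\Q) \cong H^2(S^n,\Q)^{\langle j_\theta(G),\mathfrak{S}_n\rangle}$, and then $b_2(Y) = b_2(X) + (\text{number of exceptional prime divisors of } Y \to X)$, since a crepant proper bimeromorphic morphism of $\Q$-factorial varieties changes $H^2$ exactly by the span of the exceptional divisors (which, by $\Q$-factoriality, are all divisorial).

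\textbf{Step 1: the invariant cohomology.} I would compute $H^2(S^n,\C)^{\langle j_\theta(G),\mathfrak{S}_n\rangle}$. By Künneth, $H^2(S^n,\C) = \bigoplus_{i=1}^n \pr_i^* H^2(S,\C) \;\oplus\; \bigoplus_{i<j} \pr_i^*H^1 \otimes \pr_j^*H^1$, and $H^1(S)=0$ since $S$ is a K3 surface, so $H^2(S^n,\C) = \bigoplus_{i=1}^n \pr_i^* H^2(S,\C)$. The permutation group $\mathfrak{S}_n$ permutes these $n$ summands, so the $\mathfrak{S}_n$-invariants form a single diagonal copy of $H^2(S,\C)$, on which we must further impose invariance under $G$ acting (simultaneously, after diagonalization) as $g$ on the first factor; because $G$ is abelian and symplectic, $g^*$ and $\theta(g)^* = (g^{-1})^*$ act on $H^2(S,\C)$ and the diagonal class $\sum_i \pr_i^*\alpha$ is $j_\theta(g)$-invariant iff $\alpha \in H^2(S,\C)^G$ (using that $\mathfrak{S}_n$-conjugation already forces all the "slots" to carry the same class, so the $\theta(g)$-twist on the second slot must also fix $\alpha$, and since $\theta(g) = g^{-1}$ this is the same condition). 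This yields $\dim H^2(S^n,\C)^{\langle j_\theta(G),\mathfrak{S}_n\rangle} = \dim_\C H^2(S,\C)^G$. I would want to be slightly careful here that no extra invariants appear from classes that are not $\mathfrak{S}_n$-invariant individually but become so in combination with $G$; but since $G$ acts within each slot and $\mathfrak{S}_n$ permutes slots, the two actions are "independent" in the appropriate sense and the invariants of the product group are the diagonal classes fixed by $G$, as claimed.

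\textbf{Step 2: counting exceptional divisors.} Here is where the $n\geq 3$ hypothesis and Remark \ref{fixed} enter. By Theorem \ref{quotientsvalide} / Remark \ref{fixed}, the codimension-$2$ part of the branch locus of $S^n \to X$ comes from the elements $g_{i,j} = (i\,j)\circ(\id,\dots,g,\dots,g^{-1},\dots,\id)$ with $g$ ranging over a generating set; I claim all these fixed divisors map to a single irreducible divisor $D$ in $X$, and correspondingly the terminalization contributes exactly one exceptional prime divisor over the generic point of $D$. The point is that $\mathfrak{S}_n$ acts transitively on the "positions" and $G$ acts transitively enough on the "values" (using that the generic stabilizer along such a component is just $\Z/2$-type, generated by a single transposition-with-twist, so the associated singularity is the $A_1$-type threefold-times-surface singularity whose crepant resolution is a single blow-up) that all these divisorial fixed loci lie in one $\langle j_\theta(G),\mathfrak S_n\rangle$-orbit. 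Hence $b_2(Y) = \dim_\C H^2(S,\C)^G + 1$. The \textbf{main obstacle} I anticipate is precisely this last count: one must show the codimension-$2$ fixed locus, after passing to the quotient, is \emph{irreducible} — i.e. that the various $g_{i,j}$ for different generators $g$ (and the relations among them) do not produce several distinct branch divisors — and that over each component the exceptional fibre of the terminalization is a single divisor (type $A_1$), so that the Picard rank jumps by exactly $1$. I would handle irreducibility by noting the fixed component of $g_{i,j}$ is $\{(\dots,x,\dots,g(x),\dots)\}$, which under the $S_n$-action and the diagonal $G$-action sweeps out a single orbit; and I would handle the "exactly one exceptional divisor" by a local computation showing the transverse singularity is $\C^2/\pm 1$ (an $A_1$ surface singularity) times a disc, resolved by one blow-up, as in the Fujiki admissible-singularity analysis recalled in Section \ref{recallFujiki}.
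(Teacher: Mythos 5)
Your proposal is correct and follows essentially the same route as the paper: Künneth plus invariance gives the $\dim_{\C}H^2(S,\C)^G$ term, and the $+1$ comes from showing that all the codimension-$2$ fixed loci $D_{i,j,g}$ form a single $\mathcal{G}$-orbit whose generic transverse singularity is $\C^2/\pm\id$, so the terminalization contributes exactly one exceptional divisor. The only point you leave implicit — and where the hypothesis $n\geq 3$ actually bites — is the explicit element realizing the transitivity on the "values": the paper uses $h=(\id,g^{-1},g,\id,\dots,\id)\in\mathcal{G}$, which needs a third factor to absorb the $g$ and hence does not exist when $n=2$ (compare Proposition \ref{b2}).
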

\begin{proof}
Let $\mathcal{G}=\left\langle j_{\inv}(G),\mathfrak{S}_n\right\rangle$.
We set $\pi: S^n\rightarrow S^n/\mathcal{G}$.
According to Remark \ref{fixed}, the sub-varieties of $S^n$ of codimension 2 fixed by an element of $\mathcal{G}$ are of the form:
$$D_{i,j,g}:=\left\{\left.(y_1,...,y_{i-1},x,y_{i+1},...,y_{j-1},g(x),y_{j+1},...,y_n)\right|\ (x,y_1,...,\hat{y}_i,...,\hat{y}_j,...,y_n)\in S^{n-1}\right\},$$
with $\hat{y}_i$ and $\hat{y}_j$ meaning that $y_i$ and $y_j$ are omitted. 
We are going to prove that $\mathcal{G}$ acts transitively on this set of varieties; that is all these varieties have the same image by $\pi$.
For this purpose, we prove that there exists $f\in \mathcal{G}$ such that $f(D_{i,j,g})=D_{1,2,\id}$ for all $i$, $j$ and $g\in G$.
Indeed, we can choose $f=h\circ s$, with $s$ a permutation which exchanges $i$ with $1$ and $j$ with $2$ and:
$$h=(\id,g^{-1},g,\id,...,\id).$$
We verify that $(h\circ s)(D_{i,j,g})=D_{1,2,\mathrm{id}}$. 
The permutation $s$ acts on a point of $D_{i,j,g}$ by moving the coordinates in positions $i$ and $j$ to the first two slots. Hence
\[
s(D_{i,j,g})=D_{1,2,g}.
\]
Now, take a point $(x,g(x),z_3,\dots,z_n)\in D_{1,2,g}$. Applying $h$ gives
\[
h(x,g(x),z_3,\dots,z_n)=(x,\,g^{-1}(g(x)),\,g(z_3),\,z_4,\dots,z_n)=(x,\,x,\,g(z_3),\,z_4,\dots,z_n).
\]
Since $g$ is an automorphism of $S$, the coordinate $g(z_3)$ still ranges over all of $S$ when $z_3$ varies, while the other coordinates remain free. Therefore
\[
h(D_{1,2,g})=D_{1,2,\mathrm{id}}.
\]
Hence, we obtain that $\Sing \left(S^n/\mathcal{G}\right)$ contains only one irreducible component of codimension 2 that we denote by $D$. Moreover each variety $D_{i,j,g}$ is globally fixed by only one non trivial element in $\mathcal{G}$ which is $g_{i,j}=s\circ(\id,...,\id,g,\id,...,\id,g^{-1},\id,...,\id),$ with $s=(i,j)$ the transposition (see Remark \ref{fixed}). 
The automorphism $g_{i,j}$ is an involution; therefore a generic point in $D$ corresponds to a singularity of analytic type $\C^{2n-2}\times(\C^2/\pm\id)$. Since the singularities in codimension 4 are terminal (see Proposition \ref{terminal}), necessarily a terminalization of $S^n/\mathcal{G}$ has only one exceptional divisor. 

To conclude the proof, we only have to compute $\dim_{\C} H^2(S^n/\mathcal{G},\C)=\dim_{\C} H^2(S^n,\C)^{\mathcal{G}}$. Let $\pr_i:S^n\rightarrow S$ be the $i$-th projection. According to the Künneth formula we have that: 
$$H^2(S^n,\C)=\bigoplus_{i=1}^n\pr_i^*\left(H^2(S,\C)\right).$$ 
We consider the sub-group of $H^2(S^n,\C)$ given by $\left\{\left.\sum_{i=1}^n\pr_i^*(\alpha)\right|\ \alpha\in H^2(S,\C)^G\right\}$. This sub-group is isomorphic to $H^2(S,\C)^G$, so for simplicity in the notation we simply denote it by $H^2(S,\C)^G$. By construction, we have that $H^2(S,\C)^G\subset H^2(S^n,\C)^{\mathcal{G}}$.  
On the other hand, if an element $\sum_{i=1}^n \pr_i(\alpha_i)$ of $H^2(S^n,\C)$ is invariant under the action of $\mathfrak{S}_n$, it takes the form $\sum_{i=1}^n \pr_i(\alpha)$. Moreover, invariance under the action of $j_{\inv}(G)$ requires that $\alpha$ belongs to $H^2(S,\C)^G$.
This shows that $H^2(S^n,\C)^{\mathcal{G}}=H^2(S,\C)^G$ and therefore $\dim_{\C} H^2(S^n,\C)^{\mathcal{G}}=\dim_{\C} H^2(S,\C)^G$.
\end{proof}
When $n\geq3$, according to Lemma \ref{n>2}, there are relatively few possibilities for a Fujiki variety.  According to \cite{xiao} and excluding $K3^{[n]}$, there are 14 series. We provide their second Betti number. The group names are given in Section \ref{notanota}.
\begin{cor}\label{examples}
We have the following second Betti numbers:

$$b_2\left(S(C_2)^{[n]}\right)=15;\ \ \ b_2\left(S(C_3)^{[n]}\right)=11;\ \ \ b_2\left(S(C_2^2)^{[n]}\right)=11;\ \ \ b_2\left(S(C_4)^{[n]}\right)=9;$$
$$b_2\left(S(C_5)^{[n]}\right)=7;\ \ \ \ b_2\left(S(C_6)^{[n]}\right)=7;\ \ \ \ b_2\left(S(C_7)^{[n]}\right)=5;\ \ \ \ b_2\left(S(C_2^3)^{[n]}\right)=9;$$
$$b_2\left(S(C_2\times C_4)^{[n]}\right)=7;\ \ \ b_2\left(S(C_8)^{[n]}\right)=5;\ \ \ \ b_2\left(S(C_3^2)^{[n]}\right)=7;\ \ \ \ b_2\left(S(C_2\times C_6)^{[n]}\right)=5;$$
$$b_2\left(S(C_2^4)^{[n]}\right)=8;\ \ \ b_2\left(S(C_4^2)^{[n]}\right)=5.$$
\end{cor}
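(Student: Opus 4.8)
The plan is to deduce everything from Proposition~\ref{Betti} together with a Lefschetz fixed-point count on the K3 surface $S$. All fourteen groups in the statement are abelian and $n\geq 3$, so Proposition~\ref{Betti} gives $b_2\big(S(G)^{[n]}\big)=\dim_{\C}H^2(S,\C)^G+1$, and the task reduces to computing $\dim_{\C}H^2(S,\C)^G$ for each $G$. First I would recall that a finite-order symplectic automorphism $g$ of a K3 surface acts trivially on $H^0(S,\C)$ and on $H^4(S,\C)$, while $H^1(S,\C)=H^3(S,\C)=0$; hence the topological Lefschetz fixed point formula reads $2+\tr\left(g^{*}|_{H^{2}(S,\C)}\right)=\chi(\Fix g)$. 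Averaging over $G$, together with $\dim_{\C}H^2(S,\C)^G=\frac{1}{|G|}\sum_{g\in G}\tr\left(g^{*}|_{H^{2}(S,\C)}\right)$, then yields
\begin{equation}
\dim_{\C}H^2(S,\C)^G=\frac{1}{|G|}\sum_{g\in G}\chi(\Fix g)-2.
\label{Lefcount}
\end{equation}

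Next I would invoke Nikulin's description of the fixed loci of symplectic automorphisms: if $g\neq\id$ is symplectic of order $m$ then $\Fix g$ is a finite set whose cardinality depends only on $m$, equal to $8,6,4,4,2,3,2$ for $m=2,3,4,5,6,7,8$ respectively, while $\chi(S)=24$ for $g=\id$. With this, evaluating \eqref{Lefcount} for each of the fourteen groups is pure bookkeeping: one records how many elements of each order the group has and sums. For the cyclic groups this is immediate; for the others one uses, e.g., that $C_2\times C_4$ has three elements of order $2$ and four of order $4$, that $C_8$ has one of order $2$, two of order $4$ and four of order $8$, that $C_2\times C_6\cong C_2^2\times C_3$ has three of order $2$, two of order $3$ and six of order $6$, that $C_2^4$ has fifteen elements of order $2$, and that $C_4^2$ has three of order $2$ and twelve of order $4$. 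Thus, for instance, $\dim_{\C}H^2(S,\C)^{C_2^4}=\tfrac{1}{16}\big(24+15\cdot 8\big)-2=7$, so $b_2=8$; $\dim_{\C}H^2(S,\C)^{C_2\times C_4}=\tfrac{1}{8}\big(24+3\cdot 8+4\cdot 4\big)-2=6$, so $b_2=7$; and the remaining twelve entries of the table fall out the same way.

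Two auxiliary points would need a word, though neither is a genuine obstacle. First, the right-hand side of \eqref{Lefcount} only sees the multiset of orders of elements of $G$, and Nikulin's fixed-point count only depends on the order of each automorphism; combined with the existence of each such abelian $G$ on some projective K3 surface --- which is the content of Xiao's classification \cite{xiao} --- this shows $\dim_{\C}H^2(S,\C)^G$ is independent of the chosen pair $(S,G)$, consistently with $b_2$ being a deformation invariant; alternatively one could simply read the invariant-lattice ranks off \cite{xiao} directly. Second, one should confirm that the fourteen groups listed are precisely those in the classification of finite abelian symplectic automorphism groups of K3 surfaces \cite{xiao} once the trivial group --- which returns $S^{[n]}$, a manifold of $K3^{[n]}$-type --- is discarded. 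The only place where a slip is easy is the element-order bookkeeping for the larger groups: the distinct order profiles within the pair $C_2^4,\,C_4^2$ (both of order $16$) and within the pair $C_2^3,\,C_2\times C_4$ (both of order $8$) are exactly what produce the different Betti numbers $8$ versus $5$ and $9$ versus $7$.
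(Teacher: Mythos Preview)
Your proof is correct. Both you and the paper reduce immediately to Proposition~\ref{Betti}, so the only content is the value of $\dim_{\C}H^2(S,\C)^G$ for each of the fourteen abelian groups. The paper does not prove the corollary explicitly: it simply reads off $\rk H^2(S,\Z)^G$ from Xiao's tables \cite{xiao} (as is made explicit later, in the proof of Theorem~\ref{main3}). You instead recompute these ranks from scratch via the Lefschetz fixed-point formula together with Nikulin's fixed-point counts for symplectic automorphisms. This is a genuinely more self-contained argument and has the virtue of explaining \emph{why} the numbers come out as they do, whereas the paper's approach is a one-line citation. Conversely, the paper's route avoids the element-order bookkeeping you flag as the likeliest source of slips. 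Either way the reduction is the same; your method just unpacks what is packaged inside the reference to \cite{xiao}.
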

When $n=2$, the situation is much richer.
\begin{prop}\label{b2}
Let $S$ be a K3 surface and $G$ a finite symplectic automorphism group of $S$. Let $\theta:G\rightarrow G$ be an involution (not necessarily valid). Let $F:=\left\{\left.g\in G\right|\ \theta(g)=g^{-1}\right\}$. The group $G$ acts on $F$ via the action $g\cdot h=\theta(g)\circ h\circ g^{-1}$. We denote by $F/G$ the orbits of this action. Then:
$$b_2\left(S(G)^{[2]}_{\theta}\right)=\dim_{\C} H^2(S,\C)^G+\#\left(F/G\right).$$
\end{prop}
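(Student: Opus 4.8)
The plan is to mimic the structure of the proof of Proposition \ref{Betti}, but now tracking the codimension-2 components of the singular locus of $S^2/\mathcal{G}$ more carefully, since for $n=2$ they need not form a single orbit. Set $\mathcal{G}=\left\langle j_{\theta}(G),\mathfrak{S}_2\right\rangle$ and $\pi:S^2\to S^2/\mathcal{G}$. By Remark \ref{fixed} (whose conclusion applies here because the codimension-2 fixed loci come from the elements $s\circ(g,g^{-1})$ with $s$ the transposition and $g\in F$), the codimension-2 irreducible components of $\bigcup_{h\in\mathcal{G}\smallsetminus\{\id\}}\Fix h$ are exactly the surfaces
$$D_g:=\left\{\left.(x,g(x))\right|\ x\in S\right\},\qquad g\in F,$$
each globally fixed by the single non-trivial involution $g\cdot :=(1,2)\circ j_\theta(g^{-1})$ (or rather the element $g_{1,2}$ attached to $g$ as in Remark \ref{fixed}). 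So the first step is to identify $\pi(D_g)=\pi(D_h)$ if and only if $D_g$ and $D_h$ lie in the same $\mathcal{G}$-orbit, and then to compute that orbit set.

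The second step is the combinatorial heart: I claim $D_g$ and $D_h$ are in the same $\mathcal{G}$-orbit exactly when $g$ and $h$ are in the same orbit under the action $g\cdot h=\theta(g)\circ h\circ g^{-1}$ of $G$ on $F$. For this one computes directly how a general element of $\mathcal{G}$ — written as $j_\theta(a)$ or $(1,2)\circ j_\theta(a)$ for $a\in G$ — moves $D_g$. One finds $j_\theta(a)(D_g)=D_{\theta(a)g a^{-1}}$ and $((1,2)\circ j_\theta(a))(D_g)=D_{(\theta(a)g a^{-1})^{-1}}$; since $g\in F$ one has $(\theta(a)ga^{-1})^{-1}=a\theta(g^{-1})\theta(a)^{-1}$, and one checks this again lies in $F$ and in the same $G$-orbit of $F$ (the transposition contributes an inversion which is absorbed because $F$ is stable under $h\mapsto h^{-1}$, $\theta$ being an involution). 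Hence the components of codimension 2 of $\Sing(S^2/\mathcal{G})$ are in bijection with $F/G$.

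The third step is the same argument as in Proposition \ref{Betti}: each $D_g$ is fixed by exactly one non-trivial element of $\mathcal{G}$, which is an involution, so a generic point of $\pi(D_g)$ has analytic type $\C^2\times(\C^2/\pm\id)$; the remaining singularities of $S^2/\mathcal{G}$ sit in codimension $\geq 4$ and hence are terminal by Proposition \ref{terminal}, so a terminalization $S(G)^{[2]}_\theta$ has exactly one exceptional divisor over each of the $\#(F/G)$ components $\pi(D_g)$ and no other exceptional divisors. Therefore $b_2(S(G)^{[2]}_\theta)=b_2(S^2/\mathcal{G})+\#(F/G)$. Finally $H^2(S^2/\mathcal{G},\C)=H^2(S^2,\C)^{\mathcal{G}}$, and by the K\"unneth formula together with $H^1(S,\C)=0$, $H^2(S^2,\C)^{\mathcal{G}}=(H^2(S,\C)\oplus H^2(S,\C))^{\mathcal{G}}$; the $\mathfrak{S}_2$-invariance identifies the two summands diagonally and the residual $G$-action is the given one, yielding $\dim H^2(S,\C)^G$. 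Combining gives the formula.

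The main obstacle I expect is the bookkeeping in step two: correctly tracking how both the diagonal elements $j_\theta(a)$ and the "swap" elements $(1,2)\circ j_\theta(a)$ act on the graphs $D_g$, verifying that the image is always again some $D_h$ with $h\in F$ (not just in $G$), and checking that the resulting equivalence on $F$ is precisely the orbit equivalence for $g\cdot h=\theta(g)\,h\,g^{-1}$ rather than some a priori coarser relation. One must be slightly careful that the validity of $\theta$ is \emph{not} assumed here, so $F$ need not generate $G$ and $\#(F/G)$ can be larger than $1$; the argument must use only that $F$ is $\theta$-and-inversion stable and $G$-stable under conjugation-twisted-by-$\theta$, all of which are immediate from $\theta$ being an involution. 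Everything else is a direct transcription of the $n\geq 3$ argument.
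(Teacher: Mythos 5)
Your proposal is correct and follows essentially the same route as the paper's proof: identify the codimension-2 fixed surfaces with $F$ via $g\mapsto D_g$, compute $h(D_g)=D_{\theta(h)\circ g\circ h^{-1}}$ and $s_0(D_g)=D_{g^{-1}}=g(D_g)$ so that the $\mathcal{G}$-orbits of surfaces match the stated $G$-orbits on $F$, and then count one exceptional divisor per component as in Proposition \ref{Betti}. The only slightly loose point is attributing the absorption of the inversion to the stability of $F$ under $h\mapsto h^{-1}$; the precise reason is that $g^{-1}=\theta(g)\circ g\circ g^{-1}$ already lies in the $G$-orbit of $g$, which is what the paper records.
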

\begin{proof}
The proof is similar to the one of Proposition \ref{Betti}. As before, we have:
$$b_2\left(S(G)^{[2]}_{\theta}\right)=\dim_{\C} H^2(S,\C)^G+R,$$
with $R$ the number of irreducible components of $\Sing S^2/\mathcal{G}$ in codimension 2 with $\mathcal{G}=\left\langle j_{\theta}(G),\mathfrak{S}_n\right\rangle$.
We are going to verify that $R=\#\left(F/G\right)$.
As we have seen in Remark \ref{fixed}, a fixed surface by an element of $\mathcal{G}$ is given by: 
$$D_g:=\left\{\left.(x,g(x))\right|\ x\in S\right\},$$
for some $g\in F$. Hence the set of fixed surfaces can be identified with $F$.
Let $h\in G$, then $h(x,g(x))=(h(x),\theta(h)\circ g(x))$.
If we set $y=h(x)$, we see that:
$$h(D_g)=D_{\theta(h)\circ g \circ h^{-1}}.$$
Let $s_0$ be the involution which permutes the two factors of $S^2$.
We have $s_0(D_g)=D_{g^{-1}}=g(D_{g})$. Hence the orbits under the action of $\mathcal{G}$ on $F$ correspond to the orbits under the action of $G$ as stated in the statement of the proposition. Let $\pi:S^2\rightarrow S^2/\mathcal{G}$ be the quotient map; since two fixed surfaces have the same image by $\pi$ if and only if there are in a same orbit under the action of $\mathcal{G}$, we obtain that $R=\#(F/G)$. 
\end{proof}
\subsection{The third Betti number}
From \cite{Fujiki}, we can deduced the third Betti number in dimension 4 when $G$ is admissible (see Definition \ref{admidefi}).
\begin{prop}\label{b3}
Let $S$ be a K3 surface and $G$ a finite admissible symplectic automorphism group of $S$.
Then: $$b_3\left(S(G)^{[2]}_{\theta}\right)=0.$$
\end{prop}
\begin{proof}
As before we set $\mathcal{G}=\left\langle j_{\theta}(G),\mathfrak{S}_n\right\rangle$. According to Remark \ref{fixed},
the fixed surfaces by the action of $\mathcal{G}$ on $S^2$ are given by 
$$\left\{\left.(x,g(x))\right|\ x\in S\right\},$$
with $\theta(g)=g^{-1}$. 
Therefore all the fixed surfaces are isomorphic to $S$; hence with a trivial third Betti number. Then, we conclude the proof by applying \cite[Lemma 7.11]{Fujiki}. Note that "admissible singularities" in \cite{Fujiki} are the singularities described in Section \ref{recallFujiki} (if $G$ is admissible $S^2/\left\langle j_{\theta}(G),\mathfrak{S}_2\right\rangle$ has admissible singularities in the sense of Fujiki). 
\end{proof}
\begin{rmk}
In particular, all the irreducible symplectic orbifolds provided by Theorem \ref{main4} have trivial third Betti number.
\end{rmk}
\subsection{Fujiki relation}\label{Fujikirelationsection}
Let $S$ be a K3 surface and $G$ a finite symplectic automorphism group of $S$. Let $\theta:G\rightarrow G$ be an involution. Let $n\geq 2$ be an integer. In this section, we set $\mathcal{G}=\left\langle j_{\theta}(G),\mathfrak{S}_n\right\rangle$ and according to Lemma \ref{n>2}, we assume that $G$ is abelian and $\theta$ is valid when $n\geq3$. Let $\pi: S^n\rightarrow S^n/\mathcal{G}$ be the quotient map and $r:S(G)^{[n]}_{\theta}\rightarrow S^n/\mathcal{G}$  be a terminalization. We consider the following injection:
$$\epsilon: H^2(S,\Z)^G\rightarrow H^2\left(S^n/\mathcal{G},\Z\right), \alpha\mapsto \pi_*(\alpha\otimes 1\otimes...\otimes 1).$$ 

We recall the following well known result.
\begin{prop}[{\cite[Section 3.4]{Tim}}]\label{Fujikiformula}
Let $X$ be a primitive symplectic variety of dimension $2n$. There exists a non-degenerate quadratic form $q_X$ on $H^2(X,\Z)$ and a positive rational number $C_X$ such that:
$$\alpha^{2n}=C_Xq_X(\alpha)^n,$$
for all $n\in H^2(X,\Z)$.
\end{prop}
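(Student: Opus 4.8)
The plan is to reduce the statement to the corresponding result for primitively symplectic \emph{manifolds}, or rather to invoke the general machinery that produces the Beauville--Bogomolov--Fujiki form on $H^2$ of a primitively symplectic variety. First I would recall that on a primitively symplectic variety $X$ of dimension $2n$ the space $H^2(X,\C)$ carries a pure Hodge structure of weight $2$ with $h^{2,0}=h^{0,2}=1$, the class $\sigma$ of the holomorphic symplectic form spanning $H^{2,0}$. One then defines, following Fujiki and as carried out in the reflexive/orbifold setting in \cite{Tim} (see also the manifold case in Beauville and Fujiki's original work), the quadratic form by the Fujiki-type expression
$$
q_X(\alpha) := \frac{n}{2}\int_X (\sigma\bar\sigma)^{n-1}\alpha^2 + (1-n)\left(\int_X \sigma^{n-1}\bar\sigma^{n}\alpha\right)\left(\int_X \sigma^{n}\bar\sigma^{n-1}\alpha\right),
$$
suitably normalised so that it is defined over $\Q$ on $H^2(X,\Z)$, after rescaling $\sigma$ so that $\int_X(\sigma\bar\sigma)^n=1$. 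The key points to verify are then: (i) $q_X$ is a non-degenerate rational quadratic form, and (ii) there is a positive rational constant $C_X$ with $\alpha^{2n}=C_X q_X(\alpha)^n$ for all $\alpha\in H^2(X,\Z)$.

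The core of the argument is the classical polynomiality/irreducibility trick. Consider the degree-$2n$ form $\alpha\mapsto \int_X\alpha^{2n}$ on the $\C$-vector space $H^2(X,\C)$, and the quadratic form $q_X$ above. The holomorphic $2$-form and its conjugate, together with the Kähler classes, give a Zariski-dense set of directions; using the Hodge-theoretic decomposition $H^2(X,\C)=\C\sigma\oplus H^{1,1}\oplus\C\bar\sigma$ one computes $\int_X\alpha^{2n}$ explicitly on the open locus $\alpha=\sigma+t\bar\sigma+(\text{Kähler})$ and finds it equal to a constant times $q_X(\alpha)^n$ there, because on this locus everything is controlled by the symplectic volume form. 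Since $\int_X\alpha^{2n}$ and $C_X q_X(\alpha)^n$ are both polynomials of degree $2n$ in the coordinates of $\alpha$ which agree on a nonempty Zariski-open subset, they agree identically. Non-degeneracy of $q_X$ follows because a Kähler class $\omega$ satisfies $q_X(\omega)>0$ (the integral $\int_X(\sigma\bar\sigma)^{n-1}\omega^2$ is positive by the Hodge--Riemann relations, which hold on $X_{reg}$ and extend since the singular locus has complex codimension $\geq 2$ and one works with $L^2$-cohomology / intersection cohomology identifications), together with the fact that $q_X$ restricted to the transcendental part is definite. Rationality of $C_X$ and of $q_X$ on the integral lattice is obtained by choosing $\alpha$ integral and using that $\int_X\alpha^{2n}\in\Q$ (indeed $\in\Z$ up to the orbifold/intersection-form denominators) together with the fact that $q_X$ takes rational values on an integral basis once normalised.

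The main obstacle is purely a matter of citing the right foundational input rather than a genuine difficulty: one needs that for a primitively symplectic \emph{variety} (not just a manifold) the cup product on $H^2$ together with Poincaré duality behaves well enough for the above polynomial identity to make sense — i.e. one needs the pure Hodge structure on $H^2$, Poincaré duality over $\Q$, and the Hodge--Riemann bilinear relations in this singular setting. These are exactly the ingredients established in the works cited in the excerpt (\cite{Bakker}, \cite{Bakker2}) for symplectic varieties with rational singularities, and for the Fujiki relation itself the statement and proof are given in \cite[Section 3.4]{Tim}; therefore I would simply invoke that reference. Indeed the cleanest write-up is: the statement is \cite[Section 3.4]{Tim} applied to $X$, noting that $X$ being primitively symplectic (Definition \ref{Primitivy}) supplies precisely the hypotheses needed there, so no independent argument is required.
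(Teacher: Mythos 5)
The paper gives no proof of this proposition at all: it is stated as a recalled ``well known result'' with the citation to \cite[Section 3.4]{Tim}, which is exactly the conclusion you arrive at in your final paragraph. Your sketch of the underlying Fujiki argument (the Hodge-theoretic definition of $q_X$, the polynomial identity on a Zariski-dense locus, and non-degeneracy via the Hodge--Riemann relations on $X_{reg}$) is a reasonable account of what the cited reference does, so your proposal is consistent with the paper's treatment.
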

\begin{rmk}
The form $q_X$ is called the \emph{Beauville--Bogomolov form} and $C_X$ is called the \emph{Fujiki constant}.
\end{rmk}
In the case of Fujiki variety, we obtain the following result.
\begin{prop}\label{Fujiki}
Let $q_{S(G)^{[n]}_{\theta}}$ and $C_{S(G)^{[n]}_{\theta}}$ be respectively the Beauville--Bogomolov quadratic form and the Fujiki constant of $S(G)^{[n]}_{\theta}$. Then:
$$C_{S(G)^{[n]}_{\theta}}q_{S(G)^{[n]}_{\theta}}(r^*(\epsilon(\alpha)))^n=\frac{(2n)!\left|G\right|}{n!2^n}\left(\left|G\right|^{2n-3}(n-1)!^2(\alpha^2)\right)^{n},$$
for all $\alpha\in H^2(S,\Z)$.
\end{prop}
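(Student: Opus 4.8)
The plan is to compute the left-hand side directly by pulling back to the cover $S^n$, where the Fujiki relation degenerates to an ordinary intersection-number computation on a product of K3 surfaces. Concretely, let $\mathcal{G}=\langle j_\theta(G),\mathfrak{S}_n\rangle$, let $\pi\colon S^n\to S^n/\mathcal{G}$ be the quotient map and $r\colon S(G)^{[n]}_\theta\to S^n/\mathcal{G}$ a terminalization, which is crepant and proper birational, hence induces an isomorphism $r^*$ on $H^2$ modulo the exceptional class and, more to the point, preserves all top self-intersection numbers of classes coming from $S^n/\mathcal{G}$ (since $r_*r^*=\mathrm{id}$ and $r_*$ of the exceptional divisor has no top self-intersection contribution; equivalently $(r^*\xi)^{2n}=\xi^{2n}$ for $\xi\in H^2(S^n/\mathcal{G})$ by the projection formula). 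So it suffices to compute $\epsilon(\alpha)^{2n}$ in $H^{4n}(S^n/\mathcal{G},\Q)$.

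Next I would use that $\pi$ is a degree-$|\mathcal{G}|$ quotient map, so $\pi^*\pi_*=\sum_{g\in\mathcal{G}}g^*$ and $\int_{S^n/\mathcal{G}}\xi^{2n}=\frac{1}{|\mathcal{G}|}\int_{S^n}(\pi^*\xi)^{2n}$. Applying this to $\xi=\epsilon(\alpha)=\pi_*(\alpha\otimes1\otimes\cdots\otimes1)$ gives
$$\int_{S^n/\mathcal{G}}\epsilon(\alpha)^{2n}=\frac{1}{|\mathcal{G}|}\int_{S^n}\Bigl(\sum_{g\in\mathcal{G}}g^*(\alpha\otimes1\otimes\cdots\otimes1)\Bigr)^{2n}.$$
Now $g^*(\alpha\otimes1\otimes\cdots\otimes1)$ depends only on which factor $g^{-1}$ sends to the first slot and on the automorphism acting there; writing $g=\sigma\cdot(g_1,\dots,g_n)$ with $\sigma\in\mathfrak{S}_n$, the pullback is $1\otimes\cdots\otimes g_{\sigma^{-1}(i)}^*\alpha\otimes\cdots\otimes1$ placed in slot $i=\sigma(1)$. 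Since $G$ acts symplectically on $S$, every $g_k^*\alpha=\alpha$ as a \emph{class} is false in general — but $g_k$ symplectic means $g_k^*$ acts trivially on $H^{2,0}\oplus H^{0,2}$ and, crucially, preserves the intersection form; what we actually need is only that $\int_S (g_k^*\alpha)\cdot(g_l^*\alpha)=\int_S\alpha^2$ pairwise, which holds because each $g_k$ is an isometry of $H^2(S,\Z)$, and more generally the mixed products appearing in the multinomial expansion all reduce to powers of $\alpha^2$. Collecting: each of the $n$ slots must receive exactly two of the $2n$ tensor factors (since $H^4(S)$ is spanned by the point class and $\alpha^2=(\alpha^2)\cdot[\mathrm{pt}]$ up to the self-intersection number), so only the multinomial terms with exactly two $g^*$-summands landing in each slot survive, contributing $(2n)!/2^n$ after the multinomial coefficient, each such term evaluating to $(\alpha^2)^n$.

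The remaining bookkeeping is counting, for a fixed distribution of slots, how many ordered $2n$-tuples of elements of $\mathcal{G}$ realize it: two elements of $\mathcal{G}$ with prescribed target slot for factor $1$; by Lemma \ref{card}, $|\mathcal{G}|=|G|^{n-1}n!$, and one checks the number of $g\in\mathcal{G}$ sending factor $1$ to a given slot is $|G|^{n-1}(n-1)!$, and among the two chosen for a given slot their first-factor automorphisms compose against each other and against the symplectic action, producing the factor $|G|^{2n-3}(n-1)!^2$ per slot (with one global $|G|$ and $n!$ not cancelled, matching the prefactor $\frac{(2n)!|G|}{n!2^n}$ after dividing by $|\mathcal{G}|$). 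Then one substitutes $r^*\epsilon(\alpha)$ for $\epsilon(\alpha)$ using the first paragraph and invokes Proposition \ref{Fujikiformula} to replace $\int(r^*\epsilon(\alpha))^{2n}$ by $C_{S(G)^{[n]}_\theta}\,q_{S(G)^{[n]}_\theta}(r^*\epsilon(\alpha))^n$, yielding the claimed identity. The main obstacle will be the combinatorial accounting in this last step — keeping straight the interplay between the permutation part and the $G^n$-part of $\mathcal{G}$ so that the counting of surviving multinomial terms produces exactly $|G|^{2n-3}(n-1)!^2$ and not an off-by-a-power-of-$|G|$ or off-by-$(n-1)!$ expression; the geometry (pullback to the cover, crepant invariance of top intersections) is routine.
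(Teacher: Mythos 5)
Your overall strategy (reduce to an intersection computation on the cover $S^n$, then invoke Proposition \ref{Fujikiformula}) is the right one and is essentially the paper's, but two points in your execution do not hold up. First, the claim that the mixed products $\int_S (g_k^*\alpha)\cdot(g_l^*\alpha)$ all equal $\int_S\alpha^2$ "because each $g_k$ is an isometry" is false: an isometry gives $\int_S(g^*\alpha)(g^*\beta)=\int_S\alpha\beta$, not $\int_S(g^*\alpha)(h^*\alpha)=\int_S\alpha^2$ for $g\neq h$ (take $\alpha$ with $g^*\alpha$ orthogonal to $\alpha$). This worry is in fact a red herring: the map $\epsilon$ is only defined on $H^2(S,\Z)^G$, so one should (as the paper does) take $\alpha$ to be $G$-invariant from the start, and then $g^*(\alpha\otimes 1\otimes\cdots\otimes 1)=\pr_j^*\alpha$ for the appropriate index $j$ with no correction term. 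Second, you leave the combinatorial accounting — which you yourself flag as "the main obstacle" — undone, and your sketch of it (tracking which two of the $2n$ group elements land in each slot) is considerably harder than necessary and is where an off-by-a-power-of-$|G|$ error would creep in.

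Both issues disappear if you push the invariance one step further: for $\alpha\in H^2(S,\Z)^G$ one has
\begin{equation*}
\pi^*\epsilon(\alpha)=\sum_{g\in\mathcal{G}}g^*(\pr_1^*\alpha)=|G|^{n-1}(n-1)!\sum_{j=1}^n\pr_j^*\alpha,
\end{equation*}
since exactly $|\mathcal{G}|/n=|G|^{n-1}(n-1)!$ elements of $\mathcal{G}$ carry the first slot to the $j$-th (using Lemma \ref{card} and transitivity of $\mathfrak{S}_n\subset\mathcal{G}$). The standard identity $\bigl(\sum_j\pr_j^*\alpha\bigr)^{2n}=\frac{(2n)!}{2^n}(\alpha^2)^n$ on a product of surfaces, together with $\int_{S^n/\mathcal{G}}\xi^{2n}=\frac{1}{|\mathcal{G}|}\int_{S^n}(\pi^*\xi)^{2n}$ and $(r^*\xi)^{2n}=\xi^{2n}$, then gives the stated constant with no further counting. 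The paper performs the equivalent computation on the quotient side, applying the pushforward identity $\pi_*(\widetilde{\alpha})^{2n}=|\mathcal{G}|^{2n-1}\pi_*(\widetilde{\alpha}^{2n})$ of \cite[Lemma 3.6]{Lol4} to the invariant class $\widetilde{\alpha}=\sum_i\pr_i^*\alpha$ and using $\pi_*\widetilde{\alpha}=n\,\epsilon(\alpha)$; either direction works, but in both the reduction to the single invariant class $\widetilde{\alpha}$ is the step that replaces your multinomial bookkeeping.
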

In order to prove this proposition, we need to compute the cardinality of $\mathcal{G}$.
\begin{lemme}\label{card}
Under the assumptions considered in this section, it holds that:
$$\left|\mathcal{G}\right|=\left|G\right|^{n-1}n!.$$
\end{lemme}
\begin{proof}
First, note that $\mathcal{G}$ does not contain any trivial complex reflexion different from $\id$. It is immediate when $n=2$ and a consequence of Lemmas \ref{n>2bis} and \ref{n>2} when 
$n\geq3$. 
We have $\mathcal{G}=(G^n\cap\mathcal{G})\rtimes \mathfrak{S}_n$. Let $g=(g_1,...,g_n)\in G^n\cap\mathcal{G}$, its first $n-1$ factors $g_i$ can be chosen freely in $G$. 
However, the last factor $g_n$ is determined by the $n-1$ first factors because $\mathcal{G}$ does not contain any trivial complex reflection different from $\id$. Therefore $\#(G^n\cap\mathcal{G})=\#(G^{n-1})$. The result follows.
\end{proof}
\begin{proof}[Proof of Proposition \ref{Fujiki}]
Let $\alpha\in H^2(S,\Z)^G$.
We denote $\widetilde{\alpha}=\sum_{i=1}^n\pr^*_i(\alpha)$.
By \cite[Lemma 3.6]{Lol4} and Lemma \ref{card}:
$$\pi_*(\widetilde{\alpha})^{2n}=\left(\left|G\right|^{n-1}n!\right)^{2n-1}\pi_*\left(\widetilde{\alpha}^{2n}\right).$$
We have $\pi_*(\sum_{i=1}^n\pr^*_i(\alpha))=\sum_{i=1}^n\pi_*(\pr^*_i(\alpha))=n\epsilon(\alpha)$; therefore:
$$n^{2n}\epsilon(\alpha)^{2n}=\left(\left|G\right|^{n-1}n!\right)^{2n-1}\frac{(2n)!}{2^n}(\alpha^2)^n.$$
We obtain:
$$\epsilon(\alpha)^{2n}=\frac{(2n)!\left(\left|G\right|^{n-1}(n-1)!\right)^{2n-1}}{n2^n}(\alpha^2)^n.$$
So:
$$\epsilon(\alpha)^{2n}=\frac{(2n)!\left|G\right|}{n!2^n}\left(\left|G\right|^{2n-3}(n-1)!^2(\alpha^2)\right)^{n}.$$
Finally, the Fujiki relation (Proposition \ref{Fujikiformula}) gives the result.
\end{proof}
\begin{rmk}
If $G$ is trivial, the morphism $i:H^2(S,\Z)\rightarrow H^2(S^{[n]},\Z)$ introduced in \cite{Beau} is given by $\frac{r^*\circ \epsilon}{(n-1)!}$. 
Hence, we obtain the well known Fujiki constant of $S^{[n]}$. If $G$ is not trivial, it is not clear if $\frac{r^*\circ \epsilon}{(n-1)!}$ is integral.
\end{rmk}
More generally, when the variety comes from a partial resolution of any quotient, we can still provide a relation between the Fujiki constants in a similar way as Proposition \ref{Fujiki} (it will be used in Section \ref{verif}).
\begin{prop}\label{Fujiki2}
Let $X$ be a primitive symplectic variety of dimension $2n$. Let $H$ be a finite symplectic automorphism group of $X$. 
Let $r:Y\rightarrow X/H$ be a proper bimeromorphic morphism such that $Y$ is a primitive symplectic variety. Let $\pi:X\rightarrow X/H$ be the quotient map.
Then:
$$C_Yq_Y(r^*\pi_*(\alpha))^n=\left|H\right|^{2n-1}C_Xq_X(\alpha)^n.$$
\end{prop}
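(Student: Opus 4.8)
The plan is to mimic closely the proof of Proposition \ref{Fujiki}, replacing the explicit Fujiki construction there by the abstract setup here. First I would set $\beta := \pi_*(\alpha) \in H^2(X/H,\Z)$ and recall the projection-formula-type identity from \cite[Lemma 3.6]{Lol4} (used already in the proof of Proposition \ref{Fujiki}): for the quotient map $\pi : X \to X/H$ of degree $|H|$ one has
\begin{equation*}
\pi_*(\gamma)^{2n} = |H|^{2n-1}\,\pi_*\!\left(\gamma^{2n}\right)
\end{equation*}
for any $\gamma \in H^2(X,\Z)$. Applying this with $\gamma = \alpha$, and using that $\pi_*(\pi^*(\beta)) = |H|\,\beta$ is not what we need — rather the cleaner route is to observe $\pi^*\pi_*(\alpha) = \sum_{h\in H} h^*\alpha$; but since we only want a top self-intersection number it suffices to push-pull: $\int_{X/H} \beta^{2n} = \frac{1}{|H|}\int_X \pi^*(\beta)^{2n}$ is \emph{false} in general, so I would instead stay on the quotient and use the displayed identity directly, giving $\beta^{2n} = |H|^{2n-1} \pi_*(\alpha^{2n}) = |H|^{2n-1}\,\frac{1}{\deg}\cdots$. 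The correct bookkeeping is exactly the one in Proposition \ref{Fujiki}: one has $\beta^{2n} = |H|^{2n-1}(\alpha^{2n})$ as integers, where on the right $\alpha^{2n}$ denotes the self-intersection number computed on $X$.

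Next I would transport this to $Y$ via $r$. Since $r : Y \to X/H$ is proper bimeromorphic and both $Y$ and $X/H$ are normal (primitively symplectic), $r_* r^* = \id$ on $H^2$, and more importantly $r^*$ preserves top self-intersection numbers: $(r^*\delta)^{2n} = \delta^{2n}$ for $\delta \in H^2(X/H,\Q)$. This is the same fact invoked implicitly in Proposition \ref{Fujiki} when passing from $\epsilon(\alpha)$ on the quotient to $r^*(\epsilon(\alpha))$ on the terminalization. Hence
\begin{equation*}
\left(r^*\pi_*(\alpha)\right)^{2n} = \left(\pi_*(\alpha)\right)^{2n} = |H|^{2n-1}\,(\alpha^{2n}).
\end{equation*}

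Finally I would apply the Fujiki relation (Proposition \ref{Fujikiformula}) on both sides: on $Y$ it gives $\left(r^*\pi_*(\alpha)\right)^{2n} = C_Y\, q_Y(r^*\pi_*(\alpha))^n$, and on $X$ it gives $\alpha^{2n} = C_X\, q_X(\alpha)^n$. Substituting yields
\begin{equation*}
C_Y\, q_Y(r^*\pi_*(\alpha))^n = |H|^{2n-1}\, C_X\, q_X(\alpha)^n,
\end{equation*}
which is the claim. I expect the only genuinely delicate point to be the justification of the two integral identities $(r^*\delta)^{2n}=\delta^{2n}$ and $\pi_*(\gamma)^{2n}=|H|^{2n-1}\pi_*(\gamma^{2n})$ in the singular (analytic, $\Q$-factorial) setting — but both are already used verbatim in the proof of Proposition \ref{Fujiki} via \cite[Lemma 3.6]{Lol4} and the birational invariance of intersection numbers on normal varieties, so here they may simply be cited. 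Everything else is a one-line substitution.
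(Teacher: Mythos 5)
Your proof is correct and follows essentially the same route as the paper: apply \cite[Lemma 3.6]{Lol4} to get $\pi_*(\alpha)^{2n}=|H|^{2n-1}(\alpha^{2n})$, note that $r^*$ preserves the top self-intersection number, and then invoke the Fujiki relation (Proposition \ref{Fujikiformula}) on both $Y$ and $X$. The only cosmetic difference is that you make explicit the birational invariance of $(r^*\delta)^{2n}=\delta^{2n}$, which the paper leaves implicit, and the paper works with $\alpha\in H^2(X,\Z)^H$ throughout.
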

\begin{proof}
Let $\alpha\in H^2(X,\Z)^{H}$. By \cite[Lemma 3.6]{Lol4}, we have:
\begin{equation}
\pi_*(\alpha)^{2n}=\left|H\right|^{2n-1}\pi_*\left(\alpha^{2n}\right).
\label{Fujikiequa}
\end{equation}
By Proposition \ref{Fujikiformula},
we have:
$$r^*\pi_*(\alpha)^{2n}=C_Yq_Y(r^*\pi_*(\alpha))^n\ \text{and}\ \alpha^{2n}=C_Xq_X(\alpha)^n.$$
Therefore (\ref{Fujikiequa}) becomes:
$$C_Yq_Y(r^*\pi_*(\alpha))^n=\left|H\right|^{2n-1}C_Xq_X(\alpha)^n.$$
\end{proof}
Now, we are ready to prove Proposition \ref{Higherprointro}.
\begin{proof}[Proof of Proposition \ref{Higherprointro}]
As we have seen in Lemma \ref{n>2}, when $n\geq3$, the Fujiki varieties are given by 14 series corresponding to the 14 possible abelian groups of a K3 surface. It remains to show that these 14 series are independent under deformation.

We recall from Corollary \ref{corirr2} that a Fujiki variety in dimension higher or equal to 6 is irreducible symplectic; moreover by construction it has 
$\Q$-factorial and terminal singularities. 
Therefore by Proposition \ref{locallytriv}, if two such varieties are deformation equivalent they have the same Fujiki constant and the same second Betti number.
Using Corollary \ref{examples}, we are going to compare the Fujiki constants of all the varieties with the same second Betti number.
If two varieties $S(H_1)^{[n]}$ and $S(H_2)^{[n]}$ would have the same Fujiki constant, Proposition \ref{Fujiki} would provide that: 
$$\frac{\left|H_1\right|\left(\left|H_1\right|^{2n-3}\right)^{n}}{q_{S(H_1)^{[n]}_{\theta}}(r^*(\epsilon(\alpha)))^n}=\frac{\left|H_2\right|\left(\left|H_2\right|^{2n-3}\right)^{n}}{q_{S(H_2)^{[n]}_{\theta}}(r^*(\epsilon(\alpha)))^n},$$
for some $\alpha\in H^2(S,\Z)$ such that $\alpha^2\neq0$.
Hence $\left(\frac{|H_1|}{|H_2|}\right)^{1/n}$ is a rational number. 
According to Corollary \ref{examples}, all the fractions $\frac{|H_1|}{|H_2|}$ that we have to consider are:
$$\frac{3}{4};\ \ \frac{1}{2};\ \ \frac{5}{6};\ \ \frac{5}{8};\ \ \frac{5}{9};\ \ \frac{2}{3};\ \ \frac{8}{9};\ \ \frac{7}{8};\ \ \frac{7}{12};\ \ \frac{7}{16};\ \ \frac{2}{3}.$$
However, for any $n\geq3$, the $n$th roots of all the previous numbers are irrational.

\end{proof}

\subsection{Some bimeromorphisms in dimension 4}\label{deformation}
Two different involutions $\theta$ on a given group can lead to two bimeromorphic Fujiki varieties. 
\begin{defi}\label{equiinv}
Let $G$ be a finite symplectic automorphism group of a K3 surface $S$. Let $\theta_1$ and $\theta_2$ be two involutions on $G$. We say that $\theta_1$ and $\theta_2$ are \emph{equivalent} if there is an isomorphism 
$$S^2/\left\langle j_{\theta_1}(G),\mathfrak{S}_2\right\rangle \rightarrow S^2/\left\langle j_{\theta_2}(G),\mathfrak{S}_2\right\rangle.$$
\end{defi}
\begin{rmk}
Let $G$ be a finite symplectic automorphism group of a K3 surface $S$.
According to Proposition \ref{bimero}, if $\theta_1$ and $\theta_2$ are equivalent the varieties $S(G)^{[2]}_{\theta_1}$ and $S(G)^{[2]}_{\theta_2}$ are equivalent by deformation.
\end{rmk}
The following results will be essential to prove Theorem \ref{main4} in Section \ref{examplescalculs}.
\begin{prop}\label{invodeform}
Let $G$ be a finite symplectic automorphisms group of a K3 surface $S$. Let $\widetilde{G}$ be another automorphism group of $S$ (not necessarily symplectic nor finite) such that $G\subset \widetilde{G}$. Let $\theta_1$ and $\theta_2$ be two involutions on $G$.
Let $\mathcal{G}_i=\left\langle j_{\theta_i}(G),\mathfrak{S}_2\right\rangle$, with $i\in\left\{1, 2\right\}$. 
We assume that there exists $h_1\in \widetilde{G}$ and $h_2\in \widetilde{G}$ such that:
\begin{itemize}
\item[(i)]
$h_1$ and $h_2$ acts on $G$ by conjugation;
\item[(ii)]
$h_1\circ h_2^{-1}\in G$;
\item[(iii)]
$\theta_2(h_1\circ h_2^{-1})=h_2\circ h_1^{-1}$;
\item[(iv)]
$\theta_2(h_1\circ g \circ h_1^{-1})=h_2\circ \theta_1(g)\circ h_2^{-1}$ for all $g\in G$.
\end{itemize}
Then $\theta_1$ and $\theta_2$ are equivalent.
\end{prop}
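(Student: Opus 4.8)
The plan is to construct an explicit automorphism of $S^2$ that descends to an isomorphism between the two quotients $S^2/\mathcal{G}_1$ and $S^2/\mathcal{G}_2$. The natural candidate is the diagonal automorphism $\Psi := (h_1, h_2) \in \widetilde{G}^2 \subset \Aut(S^2)$, viewed as an element of $\Aut(S^2)$ (note $\Psi$ need not be symplectic, but that is irrelevant for producing an isomorphism of the quotient spaces). First I would compute the conjugate group $\Psi \circ \mathcal{G}_1 \circ \Psi^{-1}$ and show it equals $\mathcal{G}_2$; once this is established, $\Psi$ induces the desired isomorphism $S^2/\mathcal{G}_1 \xrightarrow{\sim} S^2/\mathcal{G}_2$, and the conclusion follows from Definition \ref{equiinv}.

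To carry out the group computation, recall that $\mathcal{G}_1 = \langle j_{\theta_1}(G), \mathfrak{S}_2 \rangle$ is generated by the elements $j_{\theta_1}(g) = (g, \theta_1(g))$ for $g \in G$ together with the transposition $s_0$ swapping the two factors. The key steps are:

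\begin{itemize}
\item[(a)] For $g \in G$, compute $\Psi \circ (g, \theta_1(g)) \circ \Psi^{-1} = (h_1 g h_1^{-1}, h_2 \theta_1(g) h_2^{-1})$. By hypothesis (i), $h_1 g h_1^{-1} \in G$; set $g' := h_1 g h_1^{-1}$. By hypothesis (iv), $h_2 \theta_1(g) h_2^{-1} = \theta_2(h_1 g h_1^{-1}) = \theta_2(g')$. Hence the conjugate equals $(g', \theta_2(g')) = j_{\theta_2}(g')$, and since $g \mapsto h_1 g h_1^{-1}$ is an automorphism of $G$, these elements run over all of $j_{\theta_2}(G)$ as $g$ does. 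So $\Psi \circ j_{\theta_1}(G) \circ \Psi^{-1} = j_{\theta_2}(G)$.
\item[(b)] Compute $\Psi \circ s_0 \circ \Psi^{-1}$. A direct computation gives $\Psi \circ s_0 \circ \Psi^{-1}(x_1,x_2) = (h_1 h_2^{-1}(x_2), h_2 h_1^{-1}(x_1))$, i.e. $\Psi \circ s_0 \circ \Psi^{-1} = (h_1 h_2^{-1}, h_2 h_1^{-1}) \circ s_0$. By hypothesis (iii), $k := h_1 h_2^{-1} \in G$, and by hypothesis (ii), $h_2 h_1^{-1} = k^{-1} = \theta_2(k)$. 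Therefore $\Psi \circ s_0 \circ \Psi^{-1} = (k, \theta_2(k)) \circ s_0 = j_{\theta_2}(k) \circ s_0 \in \langle j_{\theta_2}(G), \mathfrak{S}_2 \rangle = \mathcal{G}_2$.
\item[(c)] Conclude from (a) and (b) that $\Psi \circ \mathcal{G}_1 \circ \Psi^{-1} \subseteq \mathcal{G}_2$; since $\Psi$ conjugates the generating set of $\mathcal{G}_1$ into $\mathcal{G}_2$ and, running the same computation with the roles reversed (or using that $|\mathcal{G}_1| = |\mathcal{G}_2| = 2|G|$ by Lemma \ref{card}), we get equality $\Psi \circ \mathcal{G}_1 \circ \Psi^{-1} = \mathcal{G}_2$.
\end{itemize}

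Finally, $\Psi: S^2 \to S^2$ is an isomorphism carrying the $\mathcal{G}_1$-orbits to the $\mathcal{G}_2$-orbits, hence it descends to an isomorphism $S^2/\mathcal{G}_1 \xrightarrow{\sim} S^2/\mathcal{G}_2$, which is exactly what is required by Definition \ref{equiinv} for $\theta_1$ and $\theta_2$ to be equivalent. The main obstacle, such as it is, lies in step (b): one must be careful that $j_{\theta_2}$ is only a homomorphism when restricted appropriately and that the conjugation identity $\Psi s_0 \Psi^{-1} = (h_1 h_2^{-1}, h_2 h_1^{-1}) s_0$ is correctly bookkept, since $h_1, h_2$ live in the larger group $\widetilde{G}$ rather than in $G$; hypotheses (ii) and (iii) are precisely what is needed to land the result back inside $\mathcal{G}_2$. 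Everything else is a routine verification.
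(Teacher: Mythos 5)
Your proposal is correct and follows essentially the same route as the paper: both use the diagonal automorphism $(x,y)\mapsto(h_1(x),h_2(y))$ of $S^2$ and verify, via hypotheses (i)--(iv), that it intertwines the actions of $\mathcal{G}_1$ and $\mathcal{G}_2$. The only difference is presentational -- you package the verification as the conjugation identity $\Psi\circ\mathcal{G}_1\circ\Psi^{-1}=\mathcal{G}_2$ on generators, while the paper checks directly that the images of the points $(y,x)$ and $(g(x),\theta_1(g)(y))$ lie in the $\mathcal{G}_2$-orbit of $\Phi(x,y)$; your formulation makes the resulting isomorphism of quotients slightly more transparent.
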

\begin{proof}
We consider the following isomorphism:
$$\Phi:\xymatrix@R0pt{S^2\ar[r]&S^2\\
(x,y)\ar[r]&(h_1(x),h_2(y))}$$
and we show that it leads to an isomorphism between the quotients.
Let $(x,y)\in S^2$ and $g\in G$, we need to show that $\Phi(x,y)$, $\Phi(y,x)$ and $\Phi(g(x),\theta_1(g)(y))$ are in the same orbit under the action of $\mathcal{G}_2$. We have:
\begin{itemize}
\item
$\Phi(x,y)=(h_1(x),h_2(y))$;
\item
$\Phi(y,x)=(h_1(y),h_2(x))$;
\item
$\Phi(g(x),\theta_1(g)(y))=(h_1\circ g(x),h_2\circ\theta_1(g)(y))$.
\end{itemize}
Let $\mathfrak{S}_2=\left\langle s_0\right\rangle$. We have:
$$h_1\circ h_2^{-1}\circ s_0\Phi(y,x)=h_1\circ h_2^{-1}(h_2(x),h_1(y))=(h_1(x),\theta_2(h_1\circ h_2^{-1})\circ h_1(y))=(h_1(x),h_2(y)).$$
So $\Phi(x,y)$ and $\Phi(y,x)$ are in the same orbit.
Similarly, we need to show that $(h_1(x),h_2(y))$ and $(h_1\circ g(x), h_2\circ \theta_1(g)(y))$ are in the same orbit under the action of $\mathcal{G}_2$. If we consider the action of $h_1\circ g\circ h_1^{-1}$ on $(h_1(x),h_2(y))$, we only need to verify that:
$$\theta_2(h_1\circ g\circ h_1^{-1})\circ h_2=h_2\circ \theta_1(g).$$
That is:
$$\theta_2(h_1\circ g\circ h_1^{-1})=h_2\circ \theta_1(g)\circ h_2^{-1},$$
which is exactly our assumption.
\end{proof}

With the previous proposition, we can find many equivalent involutions. We provide an example with Corollary \ref{classvalid} below. Before, we recall that the set of \emph{inner automorphisms} of a group $G$ 
is given by the image of the following map:
$$\xymatrix@R0pt{G\ar[r]&\Aut(G)\\
g\ar[r]&\left(h\mapsto ghg^{-1}\right).}$$ 
\begin{cor}\label{classvalid}
Let $G$ be a finite symplectic automorphism group of a K3 surface $S$. We assume that $G$ has a trivial center. 
Then all inner involutions of $G$ are equivalent.
\end{cor}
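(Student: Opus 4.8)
The plan is to deduce the statement from Proposition~\ref{invodeform}, applied with $\widetilde G = G$ itself. The first step is to describe the inner involutions of $G$ explicitly, using the hypothesis $Z(G)=\{\id\}$. Conjugation defines a homomorphism $G\to\Inn(G)$, $h\mapsto c_h$, where $c_h\colon g\mapsto hgh^{-1}$; its kernel is $Z(G)=\{\id\}$, so it is an isomorphism. Hence every inner automorphism of $G$ equals $c_a$ for a unique $a\in G$, and $c_a^2=c_{a^2}=\id$ forces $a^2\in Z(G)=\{\id\}$. Therefore the inner involutions of $G$ are exactly the maps $c_a$ with $a\in G$ and $a^2=\id$ (the value $a=\id$ corresponding to $\theta=\id$).

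The second step is the verification. Given two inner involutions $\theta_1=c_a$ and $\theta_2=c_b$, with $a^2=b^2=\id$, I would apply Proposition~\ref{invodeform} with $h_1=a$, $h_2=b$, and $\widetilde G=G$. Hypotheses (i) and (iii) of that proposition are immediate since $a,b\in G$. For (iv), every $g\in G$ satisfies
\[
\theta_2\bigl(h_1\circ g\circ h_1^{-1}\bigr)=c_b(aga^{-1})=b(aga^{-1})b^{-1}=b\,\theta_1(g)\,b^{-1}=h_2\circ\theta_1(g)\circ h_2^{-1}.
\]
For (ii), using $a^{-1}=a$ and $b^{-1}=b$,
\[
\theta_2\bigl(h_1\circ h_2^{-1}\bigr)=c_b(ab^{-1})=bab^{-1}b^{-1}=bab^{-2}=ba=ba^{-1}=h_2\circ h_1^{-1}.
\]
By Proposition~\ref{invodeform}, this produces an isomorphism between $S^2/\langle j_{\theta_1}(G),\mathfrak{S}_2\rangle$ and $S^2/\langle j_{\theta_2}(G),\mathfrak{S}_2\rangle$, i.e.\ $\theta_1$ and $\theta_2$ are equivalent in the sense of Definition~\ref{equiinv}. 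Since $\theta_1,\theta_2$ were arbitrary inner involutions, all of them are equivalent.

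I do not expect a genuine obstacle: the geometric content is entirely contained in Proposition~\ref{invodeform}, and the only point that requires care is the use of $Z(G)=\{\id\}$, which intervenes twice — once to write each inner involution as a \emph{single} conjugation $c_a$, and once to upgrade the relation $a^2\in Z(G)$ to $a^2=\id$. After that, the natural ``diagonal'' choice $h_1=a$, $h_2=b$ makes conditions (ii) and (iv) collapse to the identities $a^{-1}=a$ and $b^{-1}=b$, which hold precisely because $a$ and $b$ are involutions.
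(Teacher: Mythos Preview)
Your proof is correct and follows essentially the same approach as the paper: both apply Proposition~\ref{invodeform} with $\widetilde G=G$, after writing each inner involution as conjugation by an element of order~$2$. The only difference is cosmetic: you take the symmetric choice $(h_1,h_2)=(a,b)$, whereas the paper takes $(h_1,h_2)=(\id,\,f_2\circ f_1)$; either choice makes conditions (i)--(iv) immediate.
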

\begin{proof}
Let $\theta_1$ and $\theta_2$ be two inner involutions on $G$. There exists $f_1$ and $f_2$ of order 2 such that $\theta_1(g)=f_1\circ g \circ f_1$ and 
$\theta_2(g)=f_2\circ g \circ f_2$ for all $g\in G$.
We choose $h_2=f_2\circ f_1$ and $h_1=\id$ and we conclude with Proposition \ref{invodeform}. 
\end{proof}
\begin{rmk}
If $G$ has a non trivial center, there is a complication because $f_1$ and $f_2$ are not necessarily of order 2.
\end{rmk}
																				
\section{Computing the singularities in dimension 4}\label{sing}
\subsection{Overview of the section}
Let $S$ be a K3 surface, $G$ a symplectic automorphism group of $S$ and $\theta$ an involution of $G$ (not necessarily valid). 
In all this section, we set $\mathcal{G}=\left\langle j_{\theta}(G),\mathfrak{S}_2\right\rangle$.

In this section, we propose a method to compute the singularities of $S(G)_{\theta}^{[2]}$ when the quotient $S/G$ only has singularities of type  $A1$, $A2$, $A3$ and $A5$; when $S/G$ has other kind of singularities, a terminalization of $S^2/\mathcal{G}$ is not known. The goal is to characterize the singularities of $S(G)_{\theta}^{[2]}$ uniquely in term of $G$ and $\theta$. The number of singularities will be obtained in term of the cardinality of some subsets of $G$. The cardinality of these subsets can then be computed via a computer program (see \cite[Singularities]{github} and Section \ref{singProgram}).

The section is organized as follows. In Section \ref{recallFujiki}, we recall the terminalizations of some singularities of $S^2/\mathcal{G}$ that have been provided by Fujiki in \cite[Section 7]{Fujiki}. In Section \ref{specific}, we provide the notion of \emph{specific fixed points} of an automorphism; this notion is fundamental to determine the singularities of $S(G)_{\theta}^{[2]}$. In Sections \ref{onS} and \ref{onSxS}, we study respectively the specific fixed points for the action of $G$ on $S$ and for the action of $j_{\theta}(G)$ on $S\times S$. We deduce in Section \ref{singFinal} the singularities of $S(G)_{\theta}^{[2]}$. Since the computation of singularities is quite technical, we propose a method to verify the results in Section \ref{verif}; in particular, this method reveals mistakes of computation in \cite{Fu} and \cite{Fujiki} (a correction will be provided in Section \ref{examplescalculs}).

\subsection{Some known local terminalizations of symplectic singularities in dimension 4}\label{recallFujiki}
In \cite[Section 7]{Fujiki}, Fujiki considers several kinds of quotient singularities that he calls $N_k$ and $M_k$ for $k\in \left\{2,3,4,6\right\}$.
Fujiki call these singularities \emph{admissible singularities}.
They are defined as follows:
$$N_k=\C^4/\left\langle g_k,s_0\right\rangle\ \ \text{and}\ \ M_k=\C^4/\left\langle h_k,s_0\right\rangle,$$
with $$g_k=\diag(\xi_k,\xi_k^{-1},\xi_k^{-1},\xi_k),\ \ h_k=\diag(\xi_k,\xi_k^{-1},\xi_k,\xi_k^{-1})\ \ \text{and}\  \ s_0=\begin{pmatrix}
																													0 & 0 & 1 & 0\\
																													0 & 0 & 0 & 1\\
																													1 & 0 & 0 & 0\\
																													0 & 1 & 0 & 0
																													\end{pmatrix}.$$
\begin{rmk}
For $k>2$, note that $\left\langle h_k,s_0\right\rangle$ is commutative, however $\left\langle g_k,s_0\right\rangle$ is not.
\end{rmk}
\begin{prop}[\cite{Fujiki}, Section 7]\label{mini}
There exists terminalizations $\widetilde{N}_k$ and $\widetilde{M}_k$ of $N_k$ and $M_k$ respectively such that:
\begin{itemize}
\item
$\widetilde{N}_2=\widetilde{M}_2$, $\widetilde{N}_3$ and $\widetilde{N}_4$ are smooth;
\item
$\widetilde{N}_6$ has one isolated singularities of analytic type $\C^4/-\id$;
\item
$\widetilde{M}_3$ has 2 isolated singularities of analytic type $\C^4/g_3$;
\item
$\widetilde{M}_4$ has 4 isolated singularities of analytic type $\C^4/-\id$;
\item
$\widetilde{M}_6$ has 4 isolated singularities of analytic type $\C^4/g_3$.
\end{itemize}
\end{prop}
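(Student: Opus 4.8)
Every group occurring is a finite subgroup of $\mathrm{Sp}_4(\C)$: $g_k$ and $h_k$ preserve $dx_1\wedge dx_2+dx_3\wedge dx_4$ and $s_0$ exchanges the two symplectic planes, so $N_k$ and $M_k$ are symplectic varieties with quotient (hence rational, hence canonical) singularities. They are algebraic, so terminalizations exist by \cite[Corollary 1.4.3]{Birkar}, and by Proposition \ref{terminal} every terminalization has $\codim\Sing\geq 4$, i.e.\ only isolated singular points. So the statement reduces to producing one explicit crepant partial resolution in each case and identifying its residual points. I would first record the two ``atoms'' in the list: the germs $\C^4/\langle-\id\rangle$ and $\C^4/\langle g_3\rangle$ are $\Q$-factorial (all quotient singularities are) and \emph{terminal}, since in each every nontrivial element has age $2>1$ (Reid--Tai); hence each is its own terminalization. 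Consequently, a crepant partial resolution of $N_k$ or $M_k$ that is $\Q$-factorial and whose only remaining singular germs are of these two types, with the asserted multiplicities, is automatically a terminalization, and by the above it automatically has isolated singularities only.

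\textbf{The abelian cases $M_k$.} Substituting $u_i=x_i+x_{i+2}$, $v_i=x_i-x_{i+2}$ keeps $h_k$ diagonal and turns $s_0$ into $\diag(1,1,-1,-1)$, with the symplectic form a multiple of $du_1\wedge du_2+dv_1\wedge dv_2$; thus $\langle h_k,s_0\rangle$ becomes a finite \emph{diagonal} subgroup of $\mathrm{Sp}_4(\C)$ and $M_k$ is the affine toric variety of the cone $\sigma=\mathrm{Cone}(e_1,\dots,e_4)$ for the overlattice $N=\Z^4+\sum\Z w$, $w$ running over the weight vectors of the generators. A $\Q$-factorial terminal crepant partial resolution is then the toric variety attached to the triangulation of the junior simplex $\Delta=\mathrm{conv}(e_1,\dots,e_4)$ obtained by adjoining all lattice points of $N$ lying on $\Delta$ (the age-one elements) and triangulating; each resulting maximal cone of multiplicity $>1$ contributes one residual terminal cyclic quotient whose weights are read off from that cone. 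Running this: $M_2=N_2=(\C^2/\langle-\id\rangle)\times(\C^2/\langle-\id\rangle)$ and the forced triangulation is smooth, so $\widetilde M_2=\widetilde N_2$ is smooth; for $k=3$ the unique age-one point of $N$ lies in the interior of an edge of $\Delta$, the two resulting tetrahedra each have multiplicity $3$ and give two points of type $\C^4/\langle g_3\rangle$; for $k=4$ there are two age-one points on opposite edges, the four tetrahedra each have multiplicity $2$ and give four points of type $\C^4/\langle-\id\rangle$; for $k=6$ one again gets four tetrahedra, now of multiplicity $3$, giving four points of type $\C^4/\langle g_3\rangle$. In each case one must check that the triangulation is crepant (every new vertex has age exactly $1$) and that the residual cyclic action is, after the symplectic coordinate change, exactly the stated one.

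\textbf{The non-abelian cases $N_k$.} Here $s_0g_ks_0=g_k^{-1}$, so $\langle g_k,s_0\rangle$ is dihedral of order $2k$ with normal cyclic part $\langle g_k\rangle$; thus $N_k=(\C^4/\langle g_k\rangle)/(\Z/2)$, with $\C^4/\langle g_k\rangle$ toric. Each reflection $g_k^j s_0$ is an involution with $2$-dimensional fixed subspace acting by $-\id$ on the normal plane, so $N_k$ has a transverse $A_1$ along the images of these fixed loci; any terminalization resolves this codimension-$2$ locus, which I would do by the (unique, crepant) blow-up of a transverse $A_1$. What remains lies over the image of the origin and is governed by $\langle g_k\rangle$, whose nontrivial elements all have age $2$. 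For $k=2$ this is $M_2$, already settled. For $k=3,4$ I would check directly --- as Fujiki does in \cite[Section 7]{Fujiki}, equivalently via the classification of $4$-dimensional symplectic quotient singularities admitting a symplectic resolution --- that the partial resolution is in fact smooth (the groups $\mathfrak{S}_3$ and $\mathcal{D}_4$ act as $V\oplus V$ with $V$ the $2$-dimensional irreducible representation, lying in the resolvable list), so $\widetilde N_3,\widetilde N_4$ are smooth. For $k=6$, the element $g_6^3=-\id$ forces a residual $\C^4/\langle-\id\rangle$; using the toric model of $\C^4/\langle g_6\rangle$ together with the residual $\Z/2$-quotient one verifies that exactly one such point survives.

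\textbf{Main difficulty.} The conceptual picture is short; the work is the case-by-case bookkeeping --- fixing the overlattices $N$, producing the crepant triangulations, and (the most error-prone step) computing the multiplicities and the precise weights of the residual cyclic quotients so as to distinguish $\C^4/\langle-\id\rangle$ from $\C^4/\langle g_3\rangle$ and to get the counts $1,2,4,4$ right, together with confirming that in the cases $k=2,3,4$ nothing is left over. A useful independent consistency check is the orbifold Euler characteristic / McKay correspondence (number of crepant divisors $=$ number of age-one conjugacy classes, Euler number prescribed by the residual singularities), of the type used in Section \ref{verif}, and historically a source of corrections to earlier computations.
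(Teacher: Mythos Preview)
The paper does not prove this proposition: it is quoted from \cite[Section~7]{Fujiki}, and the only sentence following the statement is ``We refer to \cite[Section~7]{Fujiki} for the concrete construction of these terminalizations.'' So there is no proof in the paper to compare against; Fujiki's original argument proceeds by explicit sequences of blow-ups in coordinates.

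Your approach is a legitimate and somewhat more conceptual alternative. The diagonalisation $u_i=x_i+x_{i+2}$, $v_i=x_i-x_{i+2}$ turning $\langle h_k,s_0\rangle$ into a diagonal abelian group is correct, and the toric bookkeeping for $M_k$ checks out: the age-one lattice points are exactly the midpoints of one edge ($k=3$) or two opposite edges ($k=4,6$), and the multiplicities $3,2,3$ of the resulting simplices give the asserted residual types and counts. For $N_k$ your dihedral reduction is right; for $k=3,4$ the appeal to the known list of $4$-dimensional symplectic quotients admitting a symplectic resolution (the groups act as $V\oplus V^*$ for a Kleinian $V$, hence lie in the Hilbert-scheme family) is valid, though it imports a result of comparable depth to what is being proved. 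The $N_6$ case is the thinnest part of your sketch: ``one verifies that exactly one such point survives'' hides the actual computation, which in Fujiki's treatment is done by hand. If you want this to stand on its own you should either carry out the toric-plus-$\Z/2$ analysis explicitly for $N_6$, or simply cite Fujiki as the paper does.
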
	
We refer to \cite[Section 7]{Fujiki} for the concrete construction of these terminalizations.			

The next important remark has been communicated to the author by Mirko Mauri.
\begin{rmk}
Let $Y\rightarrow S^2/\left\langle j_{\theta}(G),\mathfrak{S}_2\right\rangle$ be a terminalization obtained from gluing the different terminalizations of Proposition \ref{mini}.
As explained in \cite[Proposition 7.9]{Fujiki} $Y$ is not necessarily Kähler. 
However the teminalizations of Proposition \ref{mini} can still be used to compute the singularities of $S(G)_{\theta}^{[2]}$. Indeed according to \cite[Corollary 25]{Namikawa3}, the varieties $Y$ and $S(G)_{\theta}^{[2]}$ have the same singularities.
\end{rmk}																								
\subsection{Definitions, hypothesis and notations}\label{specific}
\begin{defi}
Fix a set $X$ and a group $G$ acting on $X$. 
Let $x\in X$ and $G_x$ be its stabilizer under the $G$-action. We say that $x$ is a \emph{specific fixed point} of $g$ (related to the action of $G$ on $X$)
if:
$$G_x=\left\langle g\right\rangle.$$
\end{defi}
Let $G$ be an automorphism group of a K3 surface $S$.
We consider the specific fixed points only in the two following configurations: 
\begin{itemize}
\item
$G$ acts on $S$; 
\item
$j_{\theta}(G)$ acts on $S\times S$ 
(see (\ref{jtheta}) for the definition of $j_{\theta}$). 
\end{itemize}
\begin{nota}
To avoid too complicated notation, when we consider the action of the group $\mathcal{G}$ on $S\times S$, the subgroup $j_{\theta}(G)$ is denoted by $G$ and an element $j_{\theta}(g)$ is denoted by $g$.
\end{nota}
We recall Definition \ref{admissiblegroup} from the introduction.
\begin{defi}\label{admissiblegroup2}
A finite symplectic automorphism group $G$ on a K3 surface $S$ is said 
\emph{admissible} if $S/G$ has only singularities of type $A_1$, $A_2$, $A_3$ or $A_5$.
\end{defi}
When $G$ is admissible, we will see that $S^2/\left\langle j_{\theta}(G),\mathfrak{S}_2\right\rangle$ will have only codimension 2 singularities of analytic type $N_k$ or $M_k$ with $k\in \left\{2,3,4,6\right\}$.
For this reason, in order to determine the terminalization of $S^2/\mathcal{G}$ using Proposition \ref{mini}, we make the following assumption.
\begin{hypo}\label{hypo}
In all this section, we assume that $G$ is an admissible group. In particular, $G$ only contains automorphisms of order $2,3,4$ or $6$.
\end{hypo}
\begin{defi}\label{type}
A surface in $S\times S$ is called a \emph{fixed surface} if there exists an element in $\mathcal{G}$ which fixes all its points.
\end{defi}

\begin{nota}\label{fixednota}
Let $g\in G\smallsetminus \left\{\id\right\}$. We set:
\begin{itemize}
\item[(i)]
$\SpFix_S g$ the set of specific fixed points of $g$ for the action of $G$ on $S$;
\item[(ii)]
$n(g):=\#\SpFix_S g$;
\item[(iii)]
$\SpFix_{S\times S} g$ the set of specific fixed points of $g$ for the action of $G$
on $S\times S$;
\item[(iv)]
$F:=\left\{\left.s\in G\right|\ \theta(s)=s^{-1}\right\}$.
\item[(v)]
$s_0$ the automorphism of $S\times S$ which exchange the two copies of $S$:
$s_0(x,y)=(y,x)$;
\item[(vi)]
$Z:=S\times S/\mathcal{G}$;
\item[(vii)]
$\Sigma:=\left\{\left.(x,s(x))\right|\ x\in S,\ s\in G\right\}$.
\end{itemize}
\end{nota}
\begin{defi}\label{typefixed}
Let $h_1$ and $h_2$ in $G\smallsetminus\left\{\id\right\}$.
A point $(x,y)\in S\times S$ is said of \emph{type} $(h_1,h_2)$ if $x\in \SpFix_S h_1$ and $y\in \SpFix_S h_2$.
\end{defi}
This definition is important for the following reason explained in the next statements. 
\begin{remark}\label{Rapporteurbete}
It is important to emphasize that if $(x,y)\in S\times S$ is fixed by some $g\in G$, 
then $x$ is fixed by $g$ and $y$ is fixed by $\theta(g)$ ; however $x$ is not necessarily a specific fixed point of $g$ nor $y$ is 
a specific fixed point of $\theta(g)$. Indeed, it can be that $\langle g\rangle=\langle g_i\rangle \cap \langle g_j\rangle$ with $\langle g_i \rangle\ne \langle g_j \rangle$ (for instance with $g$ of order 3 and $g_i$, $g_j$ two different automorphisms of order 6) ; then we can have $(x,y)\in\SpFix_{S\times S} g$ with $(x,y)$ of type $(g_i,\theta(g_j))$.
\end{remark}
To help the reader to understand, we illustrate this configuration in the following example.
\begin{ex}\label{Rapporteurbete2}
Let $G=\langle a\rangle\times\langle b\rangle\simeq C_6\times C_2$, with $\mathrm{ord}(a)=6$, $\mathrm{ord}(b)=2$,
and take $\theta=\mathrm{id}$ for simplicity.
Set
\[
g_i := a,\qquad g_j := ab,
\]
so both $g_i$ and $g_j$ have order $6$ but generate \emph{different} cyclic subgroups:
\[
\langle g_i\rangle=\{a^k\mid k\in\mathbb Z/6\},\qquad
\langle g_j\rangle=\{a^kb^k\mid k\in\mathbb Z/6\}.
\]
A direct check shows
\[
\langle g_i\rangle\cap \langle g_j\rangle
= \langle a^2\rangle.
\]
Let $g:=a^2$, which has order $3$. Choose $x\in S$ with stabilizer $G_x=\langle g_i\rangle$
(i.e.\ $x$ is a \emph{specific fixed point} of $g_i$), and choose $y\in S$ with stabilizer
$G_y=\langle \theta(g_j)\rangle=\langle g_j\rangle$ (i.e.\ $y$ is a \emph{specific fixed point} of $\theta(g_j)$).
Then $g\in \langle g_i\rangle$ and $g\in \langle g_j\rangle$, hence $g$ fixes both $x$ and $y$, so
\[
g\cdot(x,y)=(x,y).
\]
In particular $(x,y)$ is \emph{fixed by $g$} and is of type $(g_i,\theta(g_j))$ with $\langle g_j\rangle\neq\langle g_i\rangle$,
while
\[
\langle g\rangle=\langle g_i\rangle\cap\langle g_j\rangle.
\]
Note more precisely that $(x,y)\in \SpFix_{S\times S} g$ because according to Hypothesis \ref{hypo} neither $x$ nor $y$ can be fixed by both $g_i$ and $g_j$. 
\end{ex}
\begin{lemme}\label{differenttypeofpoint}
Let $g\in G\smallsetminus \left\{\id\right\}$. Then $(x,y)\in \SpFix_{S\times S} g$ if and only if there exists $g_i$ and $g_j$ in $G$ such that $(x,y)$ is of type $(g_i,\theta(g_j))$ and $\left\langle g_i\right\rangle\cap \left\langle g_j\right\rangle=\left\langle g\right\rangle$.
\end{lemme}
\begin{proof}
For any $(x,y)\in S\times S$, the stabilizer 
for the action $G\curvearrowright S\times S$, $g\cdot(x,y)=(g(x),\theta(g)(y))$, is
\[
G_{(x,y)}=\{\,h\in G\mid h(x)=x,\ \theta(h)(y)=y\,\}
= G_x \cap \theta^{-1}(G_y).
\]
If $(x,y)$ is of type $(g_i,\theta(g_j))$ in the sense of Definition~4.10, i.e. 
$G_x=\langle g_i\rangle$ and $G_y=\langle \theta(g_j)\rangle$, then
\[
G_{(x,y)} \;=\; \langle g_i\rangle \cap \theta^{-1}\big(\langle \theta(g_j)\rangle\big)
\;=\; \langle g_i\rangle \cap \langle g_j\rangle.
\]

\smallskip
\emph{($\Rightarrow$)} If $(x,y)\in \SpFix_{S\times S}(g)$, then by definition 
$G_{(x,y)}=\langle g\rangle$. 
Writing $G_x=\langle g_i\rangle$ and $G_y=\langle \theta(g_j)\rangle$
(cyclic by Hypothesis \ref{hypo}), the identity above gives 
$\langle g\rangle=G_{(x,y)}=\langle g_i\rangle\cap \langle g_j\rangle$, and $(x,y)$ is of type 
$(g_i,\theta(g_j))$. 

\smallskip
\emph{($\Leftarrow$)} Conversely, if $(x,y)$ is of type $(g_i,\theta(g_j))$ and
$\langle g_i\rangle\cap \langle g_j\rangle=\langle g\rangle$, then the same identity yields
$G_{(x,y)}=\langle g\rangle$, i.e. $(x,y)\in \SpFix_{S\times S}(g)$.

This proves the equivalence.
\end{proof}
\subsection{Specific fixed points for the action on $S$}\label{onS}
In this section we study the specific fixed points of an automorphism $g\in G$ acting on $S$. We recall that $G$ is assumed to be admissible in Hypothesis \ref{hypo}.
\begin{lemme}\label{speci46}
Let $g\in G$. 
If $g$ has order $4$ (resp. has order $6$), then $g$ has $4$ (resp. $2$) specific fixed points. 
\end{lemme}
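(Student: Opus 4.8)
The plan is to work with the known local structure of symplectic automorphisms of K3 surfaces of a given order, together with the counting of fixed points via the holomorphic Lefschetz fixed point formula (or, equivalently, the Nikulin/Xiao tables), and then to combine this with the \emph{specificity} condition, which removes exactly those fixed points whose stabilizer is strictly larger than $\langle g\rangle$. Recall that a symplectic automorphism of finite order on a K3 surface has isolated fixed points, and near such a fixed point it acts as $\diag(\xi, \xi^{-1})$ for a primitive root of unity $\xi$ of the relevant order (this is forced by symplecticity, since the action on the tangent plane must have determinant $1$). For $g$ of order $4$ the total number of fixed points of $g$ is $6$: there are $4$ points where $g$ acts as a primitive order-$4$ rotation $\diag(i,-i)$ and $2$ points where $g^2$ (the nontrivial order-$2$ power) acts, i.e. fixed points of type $\diag(-1,-1)$; likewise for $g$ of order $6$ the $g$ has $2$ fixed points of primitive type $\diag(\xi_6,\xi_6^{-1})$, $2$ of type $\diag(\xi_3,\xi_3^{-1})$ (where in fact $g^2$ has order $3$) and $1$ of type $\diag(-1,-1)$ (where $g^3$ has order $2$). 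These local multiplicities are exactly what Nikulin computed, and they also match the singularities $A_3$ (resp. $A_5$) that $S/\langle g\rangle$ acquires at the primitive points.

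First I would pin down, for each fixed point $x$ of $g$, the order of the local action of $g$ at $x$: this equals the order of the image of $g$ in $\GL(T_xS)$. A fixed point $x$ of $g$ is a specific fixed point of $g$ precisely when $G_x = \langle g\rangle$; since $g$ already fixes $x$ we automatically have $\langle g\rangle \subseteq G_x$, so the condition is $G_x = \langle g\rangle$, equivalently $|G_x| = \mathrm{ord}(g)$. A necessary condition is that $g$ act on $T_xS$ \emph{faithfully}, i.e. with full order $\mathrm{ord}(g)$ — if $g$ acted through a proper quotient, then $g$ would not even generate its own image, forcing $G_x$ to be bigger (or the quotient to be cyclic of smaller order, contradicting $g\in G_x$ acting with that smaller order while $\langle g\rangle\subsetneq G_x$... more precisely, if $g$ acts with order $d<\mathrm{ord}(g)$ at $x$, then $g^d$ acts trivially on $T_xS$, hence $g^d=\id$ on $S$ since a symplectic automorphism acting trivially on a tangent space at a fixed point is the identity, contradiction). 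So the specific fixed points of $g$ must be among the \emph{primitive} fixed points — the $4$ points of type $\diag(i,-i)$ when $\mathrm{ord}(g)=4$, the $2$ points of type $\diag(\xi_6,\xi_6^{-1})$ when $\mathrm{ord}(g)=6$.

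It then remains to check that at each of these primitive points the stabilizer $G_x$ is exactly $\langle g\rangle$ and not larger. Here I would invoke Hypothesis \ref{hypo}: $G$ is admissible, so $S/G$ has only singularities of type $A_1,A_2,A_3,A_5$, which means every stabilizer $G_x$ is cyclic of order $2,3,4$ or $6$. If $\mathrm{ord}(g)=4$ and $x$ is a primitive fixed point of $g$, then $G_x$ is cyclic containing the order-$4$ element $g$, so $G_x$ has order $4$ (order $8$ or $12$ being excluded by admissibility, and the remaining cyclic orders dividing into nothing compatible), hence $G_x=\langle g\rangle$; so all $4$ primitive fixed points are specific, giving $n(g)=4$. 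If $\mathrm{ord}(g)=6$ and $x$ is a primitive fixed point of $g$, then $G_x$ is cyclic of order divisible by $6$ and at most $6$ (admissibility forbidding order $12$), hence $G_x = \langle g\rangle$, so both primitive fixed points are specific and $n(g)=2$.

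The main obstacle I anticipate is making the "primitive fixed point count" step rigorous and self-contained: namely justifying that a symplectic automorphism of order $4$ (resp. $6$) has exactly $4$ (resp. $2$) fixed points at which it acts with full order. The cleanest route is to cite Nikulin's classification of the fixed loci of symplectic automorphisms of K3 surfaces (the local action at each isolated fixed point and the number of such points for each order), which is presumably among the standard references invoked elsewhere in the paper via \cite{xiao}; alternatively one applies the holomorphic Lefschetz formula to $g$ and to its powers and solves the resulting linear system. Given that background, the specificity argument above is short and purely group-theoretic, using only admissibility to control the stabilizers.
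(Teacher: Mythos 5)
Your overall strategy is the same as the paper's: quote the known count of fixed points of a symplectic automorphism of order $4$ (resp.\ $6$) on a K3 surface, and then use admissibility (Hypothesis \ref{hypo}) to force every relevant stabilizer to equal $\left\langle g\right\rangle$. The admissibility step is argued correctly and in fact slightly more explicitly than in the paper, which simply asserts that a non-specific fixed point would contradict the hypothesis.

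There is, however, a factual error in your setup. You claim that $g$ of order $4$ has $6$ fixed points ($4$ of type $\diag(i,-i)$ and $2$ of type $\diag(-1,-1)$), and that $g$ of order $6$ has $5$ fixed points. This is false, and it is contradicted by your own argument two sentences later: as you correctly observe, if $g$ acted on $T_xS$ with order $d<\mathrm{ord}(g)$ at a fixed point $x$, then $g^d$ would act trivially on $T_xS$ and hence $g^d=\id$, which is absurd. Consequently \emph{every} fixed point of $g$ is ``primitive'' in your sense, and $\Fix(g)$ has exactly $4$ points for $\mathrm{ord}(g)=4$ and exactly $2$ points for $\mathrm{ord}(g)=6$ (this is the count in Nikulin's table and in Huybrechts, Ch.~15 \S1, which is what the paper cites). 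The extra points you list of type $\diag(-1,-1)$ (resp.\ $\diag(\xi_3,\xi_3^{-1})$, $\diag(-1,-1)$) are fixed points of $g^2$ (resp.\ $g^2$, $g^3$) that are \emph{not} fixed by $g$; they contribute the $A_1$ (resp.\ $A_2$, $A_1$) singularities of $S/\left\langle g\right\rangle$ but are irrelevant to $\Fix(g)$. Since the number you actually feed into the final count is the number of primitive points, namely $4$ (resp.\ $2$), your conclusion is unaffected; but the sentence asserting $6$ (resp.\ $5$) fixed points of $g$ should be deleted or corrected, as it conflates $\Fix(g)$ with $\bigcup_k\Fix(g^k)$.
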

\begin{proof}
For instance by \cite[Chapter 15 section 1]{HuybrechtsK3}, we know that a symplectic automorphism of order 4 on a K3 surface has 4 fixed points and a symplectic automorphism of order 6 has 2 fixed points. Moreover if such point was not specific, it would contradict our Hypothesis \ref{hypo} on $G$.
\end{proof}
\begin{lemme}\label{speci3}
Let $g\in G$ be an automorphism of order 3. Let $k_6(g)$ be the number of different cyclic sub-groups of $G$ of order 6 which contain $g$.
Then $6-2k_6(g)$ is a non-negative integer which corresponds to the number of specific fixed points of $g$.
\end{lemme}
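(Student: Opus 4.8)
The plan is to count the fixed points of $g$ on $S$ and sort them according to the order of their stabilizer in $G$. By \cite[Chapter 15, Section 1]{HuybrechtsK3}, an order-$3$ symplectic automorphism on a K3 surface has exactly $6$ isolated fixed points, each locally of type $\C^2/\langle\diag(\xi_3,\xi_3^{-1})\rangle$. Fix such a point $x$ and let $G_x$ be its stabilizer. Since $g\in G_x$, the cyclic group $\langle g\rangle$ is a subgroup of $G_x$, so $3\mid |G_x|$. Hypothesis \ref{hypo} forces $G$ to contain only elements of order $2,3,4,6$; I would argue that $G_x$ acts faithfully on the tangent space $T_xS\cong\C^2$ as a finite subgroup of $\SL_2(\C)$ (symplectic), hence $G_x$ is cyclic, and since $3\mid|G_x|$ and $G_x$ has exponent dividing the allowed orders, $G_x$ is either $\langle g\rangle\cong C_3$ or a cyclic group of order $6$ containing $g$. (An order-$12$ cyclic group would contain an element of order $12$, excluded by admissibility.) Therefore $x$ is a specific fixed point of $g$ precisely when $G_x=\langle g\rangle$, and otherwise $G_x=\langle g'\rangle$ for some order-$6$ element $g'$ with $g'^{2}$ or $g'^{4}$ equal to $g$, i.e. $G_x$ is one of the order-$6$ cyclic subgroups of $G$ containing $g$.

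Next I would count the non-specific fixed points. Let $C$ be a cyclic subgroup of $G$ of order $6$ containing $g$; write $C=\langle g'\rangle$. The generator $g'$ (an order-$6$ symplectic automorphism) has exactly $2$ fixed points on $S$ by Lemma \ref{speci46} — equivalently by \cite[Chapter 15]{HuybrechtsK3} — and both lie in $\Fix g$ since $g=g'^{\pm2}\in C$. At each such point $y$, the stabilizer $G_y$ contains $C$; by the same $\SL_2$ argument $G_y$ is cyclic of order dividing one of $2,3,4,6$, and since $6\mid|G_y|$ we get $G_y=C$ exactly. Thus each order-$6$ cyclic subgroup $C\supset\langle g\rangle$ contributes exactly $2$ fixed points of $g$ whose stabilizer is $C$, and two distinct such subgroups $C_1\ne C_2$ contribute disjoint sets of fixed points (a common fixed point would have stabilizer containing $C_1$ and $C_2$, hence a non-cyclic group, or a cyclic group properly containing an order-$6$ group, both impossible). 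Hence the number of non-specific fixed points of $g$ is exactly $2k_6(g)$, and so
$$n(g)=6-2k_6(g).$$
In particular $6-2k_6(g)\geq 0$, which also shows $k_6(g)\in\{0,1,2,3\}$.

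The main obstacle I anticipate is the rigorous justification that each fixed-point stabilizer $G_x$ is cyclic of the stated form — i.e. ruling out non-cyclic stabilizers and stabilizers with elements of order outside $\{2,3,4,6\}$ purely from Hypothesis \ref{hypo}. The cleanest route is: $G_x$ embeds in $\SL(T_xS)\cong\SL_2(\C)$ because the action is symplectic and fixes $x$; every finite subgroup of $\SL_2(\C)$ with a fixed-point-free action away from the origin that yields an $A_n$ quotient singularity on $S/G$ is cyclic (this is exactly the admissibility constraint, since $A_1,A_2,A_3,A_5$ correspond to $C_2,C_3,C_4,C_6$), so $G_x$ is cyclic; its order is then constrained by the list of allowed element orders together with $3\mid|G_x|$ to be $3$ or $6$. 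Once this is in place, the disjointness and counting arguments are routine. One should also double-check the edge cases $k_6(g)=2$ (giving $n(g)=2$) and $k_6(g)=3$ (giving $n(g)=0$) do occur or at least are not a priori contradictory, which the formula handles uniformly.
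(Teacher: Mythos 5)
Your proposal is correct and follows essentially the same route as the paper: both count the six fixed points of $g$ from \cite[Chapter 15, Section 1]{HuybrechtsK3}, observe via admissibility (Hypothesis \ref{hypo}) that every stabilizer is cyclic of order $3$ or $6$, and subtract the two fixed points contributed by each of the $k_6(g)$ pairwise disjoint order-$6$ cyclic subgroups containing $g$. Your explicit justification that $G_x$ is cyclic (embedding into $\SL(T_xS)$ and matching $A_n$ singularities with cyclic groups) merely spells out what the paper absorbs into Hypothesis \ref{hypo}.
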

\begin{proof}
Let $(\left\langle g_i\right\rangle)_{i\in \left\{1,...,k_6(g)\right\}}$ be the cyclic groups of order 6 containing $g$.
By Lemma \ref{speci46}, we know that each automorphism $g_i$ has 2 specific fixed points. 
Since $g_i^2=g$, we have:
$$\Fix g\supset \bigsqcup_{i=1}^{k_6(g)}\Fix g_i,$$
the union being disjoint because of Hypothesis \ref{hypo}. Note that $\#\Fix g=6$ by \cite[Chapter 15 section 1]{HuybrechtsK3}. 
Let $x\in \Fix g\smallsetminus \bigsqcup_{i=1}^{k_6(g)}\Fix g_i$; it remains to shows that $x$ is a specific fixed point of $g$. If it is not the case then $\left|G_x\right|>3$. By hypothesis \ref{hypo}, it follows that $\left|G_x\right|=6$ and $G_x$ is cyclic. Therefore, there exists $h\in G$ such that $G_x=\left\langle h\right\rangle$. Moreover since $g\in G_x$, we have $h^2=g$ or $h^{-2}=g$. Hence $h$ or $h^{-1}$ is one of the $g_i$, $i\in\left\{1,...,k_6(g)\right\}$. This is a contradiction since $x\in \Fix g\smallsetminus \bigsqcup_{i=1}^{k_6(g)}\Fix g_i$. 
\end{proof}
\begin{lemme}\label{speci2}
Let $g\in G$ be an automorphism of order 2. Let $k_6(g)$ (resp. $k_4(g)$) be the number of different cyclic sub-groups of $G$ of order 6 (resp. 4) which contain $g$.
Then $8-2k_6(g)-4k_4(g)$ is a non-negative integer which corresponds to the number of specific fixed point of $g$.
\end{lemme}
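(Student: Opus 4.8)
The plan is to transcribe the argument of Lemma \ref{speci3}, decomposing $\Fix g$ into the set of specific fixed points of $g$ and the fixed loci of the larger cyclic subgroups of $G$ containing $g$. First I would recall (e.g. \cite[Chapter 15, Section 1]{HuybrechtsK3}) that a symplectic involution on a K3 surface has exactly $8$ fixed points, so $\#\Fix g = 8$. Let $(\left\langle h_i\right\rangle)_{i\in\{1,\dots,k_4(g)\}}$ be the cyclic subgroups of $G$ of order $4$ containing $g$ and $(\left\langle h'_j\right\rangle)_{j\in\{1,\dots,k_6(g)\}}$ those of order $6$ containing $g$. In each of these subgroups $g$ is the unique element of order $2$, hence $h_i^2=g$ and $(h'_j)^3=g$, so $\Fix h_i\subset\Fix g$ and $\Fix h'_j\subset\Fix g$; by Lemma \ref{speci46} we have $\#\Fix h_i=4$ and $\#\Fix h'_j=2$.

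Next I would establish the disjointness needed to count without overlap. By Hypothesis \ref{hypo}, since $S/G$ has only $A_1, A_2, A_3, A_5$ singularities, the stabilizer $G_x$ of any point $x\in S$ is cyclic of order $2,3,4$ or $6$. If a point $x$ lay in $\Fix h\cap\Fix h'$ for two distinct subgroups $\left\langle h\right\rangle,\left\langle h'\right\rangle$ among the $\left\langle h_i\right\rangle,\left\langle h'_j\right\rangle$, then $G_x\supseteq\left\langle h,h'\right\rangle\supsetneq\left\langle h\right\rangle$, forcing $|G_x|>4$, hence $|G_x|=6$; but a cyclic group of order $6$ contains no subgroup of order $4$ and only one of order $6$, contradicting the distinctness of $\left\langle h\right\rangle$ and $\left\langle h'\right\rangle$. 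So these fixed loci are pairwise disjoint. The same kind of bookkeeping shows that a point $x\in\Fix g$ lying in none of the $\Fix h_i,\Fix h'_j$ is a specific fixed point of $g$: otherwise $G_x\supsetneq\left\langle g\right\rangle$, so $|G_x|\in\{4,6\}$ and $G_x=\left\langle h\right\rangle$ is cyclic with $g\in\left\langle h\right\rangle$, i.e. $\left\langle h\right\rangle$ is one of the $\left\langle h_i\right\rangle$ or $\left\langle h'_j\right\rangle$ and $x\in\Fix h$, a contradiction. Conversely every point of $\Fix h_i$ or $\Fix h'_j$ has stabilizer strictly larger than $\left\langle g\right\rangle$ and so is not a specific fixed point of $g$.

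Combining these observations yields the disjoint decomposition
$$\Fix g=\SpFix_S g\;\sqcup\;\bigsqcup_{i=1}^{k_4(g)}\Fix h_i\;\sqcup\;\bigsqcup_{j=1}^{k_6(g)}\Fix h'_j,$$
and counting points gives $8=n(g)+4k_4(g)+2k_6(g)$, that is $n(g)=8-2k_6(g)-4k_4(g)$. Non-negativity is automatic since the left-hand side is the cardinality of the set $\SpFix_S g$.

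The only real point of care — and the main obstacle, a mild one — is the stabilizer bookkeeping: one must invoke Hypothesis \ref{hypo} both to pin down the list of possible stabilizers and to know they are cyclic, and then use elementary divisibility to rule out all coincidences among the various $\Fix h_i$ and $\Fix h'_j$. Everything else is a direct adaptation of the proof of Lemma \ref{speci3}.
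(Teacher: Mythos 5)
Your proof is correct and follows exactly the paper's approach: the paper simply states that the proof of Lemma \ref{speci2} is identical to that of Lemma \ref{speci3}, namely the disjoint decomposition of $\Fix g$ (of cardinality $8$ for a symplectic involution) into the specific fixed points and the fixed loci of the generators of the order-$4$ and order-$6$ cyclic subgroups containing $g$, with disjointness and the characterization of the leftover points both enforced by the cyclicity and order constraints of Hypothesis \ref{hypo}. Nothing is missing.
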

\begin{proof}
The proof is identical to the one of Lemma \ref{speci3}.
\end{proof}
\subsection{Specific fixed points for the action on $S\times S$}\label{onSxS}
\subsubsection{Preliminaries}
The goal of this section is to understand the fixed points of the elements of $\mathcal{G}$.
First, we highlight that there are two different kinds of fixed points on $S\times S$.
We recall Remark \ref{fixed} in dimension 4.
\begin{lemme}\label{surfaces}
The set of fixed surfaces in $S\times S$ is in bijection with $F=\left\{\left.s\in G\right|\ \theta(s)=s^{-1}\right\}$ (see Notation \ref{fixednota} (iv)).
A fixed surface in $S\times S$ is given by:
$$\left\{\left.(x,s(x))\right|\ x\in S\right\},$$
with $s\in F$ and it is fixed by $s_0\circ s$.
\end{lemme}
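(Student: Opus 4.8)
The plan is to classify the fixed loci of all non-trivial elements of $\mathcal{G}=\langle j_\theta(G),\mathfrak{S}_2\rangle$ acting on $S\times S$ and to single out those of dimension $2$; this is exactly Remark \ref{fixed} specialized to $n=2$, and the point is that in this case the computation can be made completely explicit.

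First I would observe that, since $\mathfrak{S}_2=\{\id,s_0\}$ and $s_0\circ j_\theta(g)\circ s_0=j_\theta(\theta(g))$, the subgroup $j_\theta(G)$ is normalized by $s_0$, so every element of $\mathcal{G}$ is either $j_\theta(g)=(g,\theta(g))$ or $s_0\circ j_\theta(g)$ for some $g\in G$. If $g\neq\id$, then both $g$ and $\theta(g)$ are non-trivial (as $\theta$ is a bijection of $G$ fixing $\id$), and the fixed locus of $j_\theta(g)$ is $\Fix(g)\times\Fix(\theta(g))$, which is finite because a non-trivial symplectic automorphism of a K3 surface has only isolated fixed points; such elements therefore contribute no fixed surface. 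Next I would compute that $s_0\circ j_\theta(g)$ sends $(x,y)$ to $(\theta(g)(y),g(x))$, so $(x,y)$ is fixed if and only if $y=g(x)$ and $x=(\theta(g)\circ g)(x)$. If $g\in F$, i.e. $\theta(g)=g^{-1}$, the second condition is automatically satisfied and the fixed locus is exactly $D_g:=\{(x,g(x))\mid x\in S\}$, the graph of $g$, which is a surface; if $g\notin F$, then $\theta(g)\circ g$ is a non-trivial symplectic automorphism of $S$, so $\Fix(\theta(g)\circ g)$ is finite and the fixed locus of $s_0\circ j_\theta(g)$ is finite. This establishes that the fixed surfaces are exactly the $D_g$ with $g\in F$, each being fixed by $s_0\circ j_\theta(g)$.

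Finally, for the bijection I would check injectivity of $g\mapsto D_g$ on $F$: if $D_g=D_{g'}$, then for every $x\in S$ the point $(x,g(x))$ lies in $D_{g'}$, which forces $g(x)=g'(x)$, hence $g=g'$; surjectivity onto the set of fixed surfaces is exactly what the preceding paragraph gives. I do not expect a genuine obstacle here: the only step needing a moment's attention is the dichotomy $\theta(g)\circ g=\id$ versus $\theta(g)\circ g\neq\id$, together with the standard input that a non-trivial symplectic automorphism of a K3 surface fixes only finitely many points — the lemma being an immediate specialization of Remark \ref{fixed} to $n=2$.
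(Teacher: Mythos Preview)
Your proof is correct and follows essentially the same approach as the paper: the paper does not give a separate proof of this lemma but simply presents it as the specialization of Remark \ref{fixed} to $n=2$, and that remark in turn records exactly the case analysis you carry out (elements of the form $j_\theta(g)$ have finite fixed locus, while $s_0\circ j_\theta(g)$ fixes the graph of $g$ precisely when $\theta(g)\circ g=\id$). Your explicit verification of the bijection $g\mapsto D_g$ is a small addition that the paper leaves implicit.
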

We can be slightly more general and determine all the points $(x,y)\in S^2$ which are fixed by an element of the form $s_0\circ s$ for any $s\in G$.
\begin{lemme}\label{surfaces0}
Let $s\in G$.
A point $(x,y)\in S\times S$ is fixed by $s_0\circ s$ if and only if $y=s(x)$ and $x$ is a fixed point of $\theta(s)\circ s$.
\end{lemme}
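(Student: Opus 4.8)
The plan is simply to unwind the definitions. Recall from the notation preceding the statement that when $\mathcal{G}=\left\langle j_{\theta}(G),\mathfrak{S}_2\right\rangle$ acts on $S\times S$, the symbol $g$ stands for $j_{\theta}(g)$, which by Notation \ref{jthetabis} (in dimension $2$) acts by $j_{\theta}(g)(x,y)=(g(x),\theta(g)(y))$, while $s_0$ is the exchange $s_0(x,y)=(y,x)$.

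First I would compute the composite action on an arbitrary point:
$$(s_0\circ g)(x,y)=s_0\bigl(g(x),\theta(g)(y)\bigr)=\bigl(\theta(g)(y),\,g(x)\bigr).$$
Hence $(x,y)$ is fixed by $s_0\circ g$ if and only if the two equations $\theta(g)(y)=x$ and $g(x)=y$ hold simultaneously.

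Then I would solve this system. From $g(x)=y$ we get $y=g(x)$; substituting into $\theta(g)(y)=x$ gives $(\theta(g)\circ g)(x)=x$, i.e.\ $x$ is a fixed point of $\theta(g)\circ g$. Conversely, if $y=g(x)$ and $(\theta(g)\circ g)(x)=x$, then the second equation $g(x)=y$ is immediate, and the first reads $\theta(g)(y)=\theta(g)(g(x))=(\theta(g)\circ g)(x)=x$, so $(x,y)$ is fixed by $s_0\circ g$. This establishes the equivalence.

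There is no real obstacle here: the statement is a one-line verification. The only point requiring care is the bookkeeping convention identifying $g$ with $j_{\theta}(g)$, so that $g$ acts on the second factor through $\theta(g)$; once that is kept in mind, the computation of $(s_0\circ g)(x,y)$ and the extraction of the fixed-point conditions are routine. (Note that this lemma generalizes Lemma \ref{surfaces}: when $x$ is moreover fixed by $\theta(g)\circ g$ for \emph{every} $x\in S$, i.e.\ when $g\in F$, one recovers the fixed surfaces $\{(x,g(x))\mid x\in S\}$.)
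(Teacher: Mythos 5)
Your proof is correct and follows exactly the paper's argument: compute $(s_0\circ g)(x,y)=(\theta(g)(y),g(x))$ and read off the two fixed-point equations, giving $y=g(x)$ and $(\theta(g)\circ g)(x)=x$. The only difference is that you also spell out the (trivial) converse direction explicitly, which the paper leaves implicit.
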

\begin{proof}
$(\Rightarrow)$ If $s_0\circ s(x,y)=(x,y)$ then $(\theta(s)(y),s(x))=(x,y)$, hence $y=s(x)$ and $\theta(s)(y)=x$, i.e. $x$ is fixed by $\theta(s)\circ s$.\\
$(\Leftarrow)$ Conversely, if $y=s(x)$ and $x$ is fixed by $\theta(s)\circ s$, then
$s_0\circ s(x,y)=(\theta(s)(s(x)),s(x))=(x,y)$.
\end{proof}
\begin{ex}
We consider again the group in Example \ref{Rapporteurbete2} to illustrate the previous lemmas.
We choose $s=a$.
Then $\theta(s)\circ s=a^2$ is an automorphism of order $3$.
Choose $x\in \mathrm{Fix}(ab)$ and set $y:=s(x)=a(x)$. We check the two conditions of Lemma \ref{surfaces0}:
\[
y=s(x)\quad\text{and}\quad \theta(s)\circ s(x)=a^2(x)=x.
\]
Hence $s_0\circ s(x,y)=(x,y)$, i.e.\ $(x,y)$ is a fixed point of $s_0\circ s$.
\end{ex}
The previous lemma shows that a point in $S^2$ which is fixed by an element of the form $s_0\circ s$ is written $(x,s(x))$ with $x\in S$. It shows that there are two kinds of fixed points in $S^2$:
\begin{itemize}
\item
The points not in $\Sigma$ (see Notation \ref{fixednota} (iv)).
According to Lemma \ref{surfaces0}, they can only be fixed by elements in $G$. According to Hypothesis \ref{hypo}, their stabilizers are cyclic groups. 
\item
The points in $\Sigma$. They can potentially be fixed by elements of the form $s_0\circ s$. So their stabilizers are more complicated. Moreover these points can be in fixed surfaces according to Lemma \ref{surfaces} and therefore lead to the singularities $N_k$ and $M_k$ (see Section \ref{recallFujiki}).
\end{itemize}
We need to study these two kinds of fixed points separately. To determine the singularities, our analysis requires knowing both the number of fixed points and the structure of their stabilizers.
In order to achieve this goal, we organize this section as follows. 
\begin{itemize}
\item
In Section \ref{inSigma}, we count the points in $\Sigma$ fixed by an element in $G\smallsetminus \left\{\id\right\}$. 
\item
In Section \ref{notinsigma}, we count the fixed points which are not in $\Sigma$.
\item
In Section \ref{pointsinsigma}, we determine the stabilizers of the fixed points in $\Sigma$.
\end{itemize}
Then our algorithm works as follows. For each element in $g\in G\smallsetminus\left\{\id\right\}$ we consider its specific fixed points and determine their contribution to the singularities (see Section \ref{singFinal}).  

We summarize the organization of the section in the following tree. 
\begin{figure}[h!]
\centering
\begin{tikzpicture}[
  level 1/.style={sibling distance=50mm},
  level 2/.style={sibling distance=30mm},
  every node/.style={align=center, font=\small},
  edge from parent/.style={->,draw},
  >=latex
]

\node[font=\bfseries] {Specific fixed points of $g$ in $S\times S$}
  child { node {Points in $\Sigma$\\
	see Section \ref{inSigma}}
    child { node {type $(g,\theta(g))$} child { node {different stabilizers\\
		see Section \ref{pointsinsigma}} }}
    child { node {type $(g_i,\theta(g_j))$}  child { node {different stabilizers\\
		see Section \ref{pointsinsigma}} }
  }}
  child { node {Points not in $\Sigma$}
    child { node {different types\\
		see Lemma \ref{fixedG}} child { node {cyclic stabilizers: $\left\langle g\right\rangle$.} }
  }};

\end{tikzpicture}
\end{figure}
\subsubsection{Fixed points in $\Sigma$}\label{inSigma}
First, we determine the different possibilities for a fixed point in $\Sigma$.  
\begin{lemme}\label{order=}
Let $(x,y)\in\Sigma$. We assume that there exists $g_i$ and $g_j$ in $G$ such that $(x,y)$ is of type $(g_i,\theta(g_j))$. Then $g_i$ and $g_j$ have the same order.
\end{lemme}
\begin{proof}
Since $(x,y)\in\Sigma$, there exists $s\in G$ such that $y=s(x)$.
Since $\theta(g_j)$ fixes $s(x)$ then $s^{-1}\circ \theta(g_j)\circ s$ fixes $x$. Hence $s^{-1}\circ \theta(g_j)\circ s\in\left\langle g_i\right\rangle$. That is $\mathcal{O}(g_j)\leq \mathcal{O}(g_i)$. Similarly since $g_i$ fixes $x$, the element $s\circ g_i \circ s^{-1}$ fixes $s(x)$. Therefore, $s\circ g_i \circ s^{-1}\in \left\langle \theta(g_j)\right\rangle$. That is $\mathcal{O}(g_i)\leq \mathcal{O}(g_j)$.
\end{proof}
Let $g\in G\smallsetminus \left\{\id\right\}$. 
Let $(x,y)\in \left(\SpFix_{S\times S} g\right) \cap \Sigma$. According to the two previous lemmas, there exists $g_i$ and $g_j$ two automorphisms of the same order such that $(x,y)$ is of type $(g_i,\theta(g_j))$ and
$\left\langle g\right\rangle=\left\langle g_i\right\rangle\cap\left\langle g_j\right\rangle$.
There are two possibilities.
\begin{itemize}
\item[(a)]
We have $\left\langle g\right\rangle=\left\langle g_i\right\rangle=\left\langle g_j\right\rangle$ and $(x,y)$ is of type $(g,\theta(g))$. 
\item[(b)]
We have $\left\langle g_i\right\rangle\neq \left\langle g_j\right\rangle$.
In this case $g$ has order 2 or 3 and $g_i$, $g_j$ have order 4 or 6.
\end{itemize}
Our objective would be to count the number of points in $\left(\SpFix_{S\times S} g\right) \cap \Sigma$. However, as we have seen, the points in $\left(\SpFix_{S\times S} g\right) \cap \Sigma$ can be of different types, so could have different stabilizers. For this reason we need to be more subtle. 
\begin{lemme}\label{SgigjLemme}
Let $g_i\in G\smallsetminus\left\{\id\right\}$ and $g_j\in G\smallsetminus\left\{\id\right\}$. Let $x\in \SpFix _S g_i$. We set: 
$$\Sigma_{x,g_i,g_j}:=\left\{\left.(x,y)\in \Sigma\ \right|y\in\SpFix_S \theta(g_j)\right\}.$$
We set :
\begin{equation}
\mathcal{S}_{g_i,g_j}:=\left\{\left.s\in G\right|\ s^{-1}\circ \theta(g_j) \circ s=g_i\ \text{or}\ s^{-1}\circ \theta(g_j) \circ s=g_i^{-1}\right\}.
\label{Sgigj}
\end{equation}
There exists a bijection:
$$\xymatrix@R0pt{\mathcal{S}_{g_i,g_j}/\left\langle g_i\right\rangle\ar[r]&\Sigma_{x,g_i,g_j}\\
\overline{s}\ar[r]&(x,s(x)),}$$
where $\mathcal{S}_{g_i,g_j}/\left\langle g_i\right\rangle$ are the orbits of $\mathcal{S}_{g_i,g_j}$ under the right multiplication action of $\left\langle g_i\right\rangle$ and $\overline{s}$ the class of $s$ in $\mathcal{S}_{g_i,g_j}/\left\langle g_i\right\rangle$. 
\end{lemme}
\begin{proof}
Let $y\in \SpFix_S \theta(g_j)$ such that $(x,y)\in \Sigma$. Therefore by Lemma \ref{surfaces0}, there exists $s\in G$ such that $y=s(y)$. We show that $s\in\mathcal{S}_{g_i,g_j}$.
We have:
$$\theta(g_j)(s(x))=s(x).$$
That is:
$$s^{-1}\circ\theta(g_j)\circ s(x)=x.$$
Since $x\in \SpFix g_i$, this is possible if and only if $s^{-1}\circ\theta(g_j)\circ s\in \left\langle g_i\right\rangle$.
In fact, since $s^{-1}\circ\theta(g_j)\circ s$ and $g_i$ have the same order by Lemma \ref{order=}, this is possible if and only if $s^{-1}\circ\theta(g_j)\circ s= g_i$ or $=g_i^{-1}$. Hence we have a surjective map:
$$\mathcal{S}_{g_i,g_j}\rightarrow \Sigma_{x,g_i,g_j}: s\mapsto(x,s(x)).$$

If $s$ and $s'$ are in $\mathcal{S}_{g_i,g_j}$, it can be that $s(x)=s'(x)$. This means that $x$ is a fixed point of $s'^{-1}\circ s$. Again, this is possible if and only if $s'^{-1}\circ s\in \left\langle g_i\right\rangle$. 
Then, by taking the orbit of $\mathcal{S}_{g_i,g_j}$ under the right multiplication action of $\left\langle g_i\right\rangle$, we obtain the lemma.
\end{proof}

\begin{nota}\label{Sg}
When $\left\langle g_i\right\rangle=\left\langle g_j\right\rangle=\left\langle g\right\rangle$, we set $\mathcal{S}_{g}:=\mathcal{S}_{g_i,g_j}$.
\end{nota}
We summarize what we found in this section in the next remark.
\begin{rmk}\label{summarySigma}
We have:
\begin{itemize}
\item
If $g$ has order 4 or 6, then $\left(\SpFix_{S\times S} g\right) \cap \Sigma$ is isomorphic to $\left(\SpFix_S g\right)\times \mathcal{S}_{g}/\left\langle g\right\rangle$.
\item
If $g$ has order 2 or 3, then $\SpFix_{S\times S} g \cap \Sigma$ is isomorphic to 
$$\left(\SpFix_S g\right)\times \mathcal{S}_{g}/\left\langle g\right\rangle
\bigcup_{i,j,i\neq j} \left(\SpFix_S g_i\right)\times \mathcal{S}_{g_i,g_j}/\left\langle g_i\right\rangle,$$
where the $(g_i,g_j)$ are the couples of automorphisms with same order, $\left\langle g_i\right\rangle\neq \left\langle g_j\right\rangle$ and $\left\langle g_i\right\rangle\cap \left\langle g_j\right\rangle=\left\langle g\right\rangle$.

\end{itemize}
\end{rmk}
This achieves our first goal of counting the points in $\left(\SpFix_{S\times S} g\right) \cap \Sigma$ for a given $g\in G\smallsetminus \left\{\id\right\}$.

To help the understanding, we propose the following example.
\begin{ex}\label{exSgigj}
We keep the same notation as in Example \ref{Rapporteurbete2}.
\begin{itemize}
\item
We keep the choices $g_i=a$, $g_j=ab$ and $g=a^2$.
We have:
\[
\mathcal{S}_{g_i,g_j}:=\bigl\{\,s\in G\ \big|\ s^{-1}\,\theta(g_j)\,s=g_i\ \text{or}\ s^{-1}\,\theta(g_j)\,s=g_i^{-1}\,\bigr\}.
\]

Since $G$ is abelian, conjugation is trivial: $s^{-1}\theta(g_j)s=\theta(g_j)$ for all $s\in G$. 
With $\theta=\mathrm{id}$ we have $\theta(g_j)=g_j=ab$, hence the defining condition reduces to
\[
ab=a\quad\text{or}\quad ab=a^{-1},
\]
which is impossible. Therefore
\[
\mathcal{S}_{g_i,g_j}=\varnothing.
\]
It means that there is not point in $\Sigma$ of type $(g_i,\theta(g_j))$.
Moreover note that $k_6(g)=3$ because $g$ is contained in $\left\langle a\right\rangle$, $\left\langle ab\right\rangle$ and $\left\langle a^2b\right\rangle$. So according to Lemma \ref{speci46}, $\SpFix_S g=\varnothing$. Hence we obtain from Remark \ref{summarySigma} that $\SpFix_{S\times S} g \cap \Sigma=\varnothing$.

\medskip
\item
Now, we can consider $g=a$.
\[
\mathcal{S}_g=\{\,s\in G\mid s^{-1}g s=g \text{ or } g^{-1}\,\}=G,
\]
and $S_g/\langle g\rangle$ has $\frac{|G|}{|\langle g\rangle|}=\frac{12}{6}=2$ elements. 
Apply Lemma \ref{speci3}, we know that $\#\left(\SpFix_S g\right)=2$.
Therefore according to Remark \ref{summarySigma}, we obtain that: 
$$\#\left(\SpFix_{S\times S} g \cap \Sigma\right)=2\times 2=4.$$
\end{itemize}
\end{ex}
\subsubsection{Specific fixed points which are not in $\Sigma$}\label{notinsigma}
\begin{lemme}\label{fixedG}
Let $g\in G\smallsetminus\left\{\id\right\}$. Let $t(g):=\#(\mathcal{S}_g/\left\langle g\right\rangle)$.
If $g_i$ and $g_j$ are in $G\smallsetminus\left\{\id\right\}$, we also set $t(g_i,g_j):=\#(\mathcal{S}_{g_i,g_j}/\left\langle g_i\right\rangle)$.
Let $N(g):=\#\left(\SpFix_{S\times S}g\smallsetminus \Sigma\right)$ (see Notation \ref{fixednota} (iii) and (vii)).
We have:
\begin{itemize}
\item 
If $\mathcal{O}(g)=6$, $N(g)=2(2-t(g))$.
\item
If $\mathcal{O}(g)=4$, $N(g)=4(4-t(g))$.
\item
If $\mathcal{O}(g)=3$, let $(g_i)_{i\in \left\{1,...,k_6(g)\right\}}$ be the automorphisms (modulo inverse) such that $g_i^2=g$. Then:
$$N(g)=(6-2k_6(g))(6+2k_6(g)-t(g))+\sum_{i,j, i\neq j}2(2-t(g_i,g_j)).$$
\item
If $\mathcal{O}(g)=2$, let $(g_i)_{i\in \left\{1,...,k_6(g)\right\}}$ be the automorphisms (modulo inverse) such that $g_i^3=g$ and $(h_i)_{i\in \left\{1,...,k_4(g)\right\}}$ be the automorphisms (modulo inverse) such that $h_i^2=g$. Then:
\begin{align*}
N(g)&=(8-2k_6(g)-4k_4(g))(8+2k_6(g)+4k_4(g)-t(g))\\
&+\sum_{i,j, i\neq j}2(2-t(g_i,g_j))+\sum_{i,j, i\neq j}4(4-t(h_i,h_j))+16k_6(g)k_4(g).
\end{align*}
\end{itemize}
\end{lemme}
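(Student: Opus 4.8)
The plan is to enumerate the specific fixed points of $g\in G\smallsetminus\{\id\}$ — that is, of $j_\theta(g)\in\mathcal G$ — on $S\times S$ by reducing everything to the $G$-stabilisers of the two coordinates on $S$. Since $j_\theta(g)$ acts coordinate-wise, $\Fix_{S\times S}\big(j_\theta(g)\big)=\Fix_S g\times\Fix_S\theta(g)$, a finite set (both $g$ and $\theta(g)$ being non-trivial symplectic automorphisms of a K3 surface). Writing $s_0$ for the transposition of the two factors, any element of $\mathcal G\smallsetminus j_\theta(G)$ has the form $s_0\circ j_\theta(h)$ and, by the computation underlying Lemma \ref{surfaces0}, fixes $(x,y)$ only when $y=h(x)$, i.e.\ only when $(x,y)\in\Sigma$. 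Hence for $(x,y)\notin\Sigma$ the stabiliser $\mathcal G_{(x,y)}$ lies in $j_\theta(G)$ and equals $j_\theta\big(G_x\cap\theta(G_y)\big)$, where $G_x,G_y$ are the stabilisers for the $G$-action on $S$ (using that $\theta$ is an involution, so $\theta(h)\in G_y\Leftrightarrow h\in\theta(G_y)$). Therefore
$$
N(g)=\#\Big\{(x,y)\in\big(\Fix_S g\times\Fix_S\theta(g)\big)\smallsetminus\Sigma\ :\ G_x\cap\theta(G_y)=\langle g\rangle\Big\},
$$
and I will evaluate this as (the number of points $(x,y)$ of the product with $G_x\cap\theta(G_y)=\langle g\rangle$) minus (the number of those lying in $\Sigma$).

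The structural input is Hypothesis \ref{hypo}: every stabiliser $G_x$ is cyclic of order $2,3,4$ or $6$. So for $x\in\Fix_S g$ the group $G_x$ is a cyclic overgroup of $\langle g\rangle$ of admissible order. If $\mathcal{O}(g)\in\{4,6\}$ there is no such proper overgroup, so $G_x=\langle g\rangle$; if $\mathcal{O}(g)=3$ the proper overgroups are the $k_6(g)$ groups $\langle g_i\rangle$ of order $6$ with $g_i^2=g$; if $\mathcal{O}(g)=2$ they are moreover the $k_4(g)$ groups $\langle h_i\rangle$ of order $4$ with $h_i^2=g$. The same description holds for $G_y$ after applying $\theta$: the bijection $h\mapsto\theta(h)$ matches the order-$6$ (resp.\ order-$4$) roots of $\theta(g)$ with those of $g$, so the index sets and the numbers $k_6(\cdot),k_4(\cdot)$ agree. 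The elementary fact used throughout — itself forced by admissibility — is that two \emph{distinct} cyclic overgroups of $\langle g\rangle$ (two of order $6$; or one of order $6$ and one of order $4$; or two of order $4$) intersect exactly in $\langle g\rangle$. Consequently $G_x\cap\theta(G_y)=\langle g\rangle$ fails precisely when $G_x$ and $\theta(G_y)$ equal one and the same proper cyclic overgroup of $\langle g\rangle$.

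This settles $\mathcal{O}(g)\in\{4,6\}$ at once: the stabiliser condition holds for every point of the product, which has $4^2$ (resp.\ $2^2$) points by Lemma \ref{speci46}; the product points lying in $\Sigma$ are exactly $\bigsqcup_{x\in\SpFix_S g}\Sigma_{g,x}$, of total cardinality $4\,t(g)$ (resp.\ $2\,t(g)$) by Lemma \ref{Sg}; subtracting yields $4(4-t(g))$ and $2(2-t(g))$. For $\mathcal{O}(g)=3$ (resp.\ $\mathcal{O}(g)=2$) the product has $6^2$ (resp.\ $8^2$) points by Lemmas \ref{speci2} and \ref{speci3}. I first remove the "diagonal" strata with $G_x=\theta(G_y)=\langle g_i\rangle$ for a common $i$ ($(\#\SpFix_S g_i)^2=4$ points each) and, when $\mathcal{O}(g)=2$, those with $G_x=\theta(G_y)=\langle h_i\rangle$ ($4^2=16$ each); this leaves $6^2-4k_6(g)$, resp.\ $8^2-4k_6(g)-16k_4(g)$, points satisfying the stabiliser condition. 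Then I subtract the $\Sigma$-points among them, stratified by $G_x$: the ones with $x\in\SpFix_S g$ form $\bigsqcup_x\Sigma_{g,x}$, contributing $(\#\SpFix_S g)\,t(g)$; the ones with $G_x=\langle g_i\rangle$ (resp.\ $\langle h_i\rangle$) must have $\theta(G_y)$ a \emph{distinct} overgroup $\langle g_j\rangle$ (resp.\ $\langle h_j\rangle$), $j\ne i$, and for each such $x$ they are enumerated by $\mathcal S_{g_i,g_j}/\langle g_i\rangle$ (resp.\ $\mathcal S_{h_i,h_j}/\langle h_i\rangle$) via the $\mathcal S_{g_1,g_2}$-refinement of Lemma \ref{Sg}, contributing $2\sum_{i\ne j}t(g_i,g_j)$ (resp.\ $4\sum_{i\ne j}t(h_i,h_j)$). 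Inserting $\#\SpFix_S g_i=2$, $\#\SpFix_S h_i=4$ and the values $\#\SpFix_S g=6-2k_6(g)$, resp.\ $8-2k_6(g)-4k_4(g)$, from Lemmas \ref{speci2} and \ref{speci3}, and rearranging, gives exactly the two displayed formulas; the summand $16\,k_6(g)k_4(g)$ in the order-$2$ case is produced only by expanding $(8-2k_6-4k_4)(8+2k_6+4k_4-t(g))$ and carries no independent meaning.

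The step needing the most care is the $\Sigma$-bookkeeping in the last two cases. One must verify: (i) for $x\in\SpFix_S g$, any product point $(x,s(x))\in\Sigma$ automatically has $s(x)\in\SpFix_S\theta(g)$, so it lies in $\Sigma_{g,x}$ and such points number $t(g)$; (ii) for $x\in\SpFix_S g_i$ (order $6$), the point $(x,s(x))$ necessarily has $\theta(G_{s(x)})$ an order-$6$ overgroup $\langle g_j\rangle$ — never $\langle g\rangle$, never an order-$4$ group — so only the indices $j\ne i$ survive the stabiliser condition and no "mixed" $6$–$4$ configuration ever occurs in $\Sigma$ (similarly for $x\in\SpFix_S h_i$); and (iii) for fixed $x\in\SpFix_S g_i$ the number of such points is independent of $x$ and equals $t(g_i,g_j)$, which is precisely the content of the $\mathcal S_{g_1,g_2}$-variant of Lemma \ref{Sg}. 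Granting these, the remaining computation is the routine algebra assembling all contributions into the stated expressions.
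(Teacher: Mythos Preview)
Your argument is correct and follows essentially the same strategy as the paper: stratify $\Fix_S g\times\Fix_S\theta(g)$ according to the cyclic stabilisers $G_x$ and $G_y$, and subtract the $\Sigma$-points via Lemma~\ref{Sg} (and its $\mathcal S_{g_1,g_2}$-variant). The paper enumerates the ``types'' $(h_1,\theta(h_2))$ one by one and counts each separately, whereas you package everything into the single criterion $G_x\cap\theta(G_y)=\langle g\rangle$ and subtract globally; the two computations expand to the same expression. Your verification point (ii), that a $\Sigma$-point with $G_x$ of order~$6$ forces $\theta(G_y)$ also of order~$6$, is exactly Lemma~\ref{order=} (stated just after in the paper), and your remark that the $16\,k_6(g)k_4(g)$ term is purely algebraic is a nice observation---in the paper it is the count of the mixed type $(h_i,\theta(g_j))\cup(g_i,\theta(h_j))$, which your stabiliser criterion absorbs automatically since an order-$4$ and an order-$6$ overgroup of $\langle g\rangle$ can never coincide.
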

\begin{proof}
Let $(x,y)\in\SpFix_{S\times S}g\smallsetminus \Sigma$.
If $g$ is of order 4 or 6, according to Lemma \ref{differenttypeofpoint} and Hypothesis \ref{hypo}, a specific fixed point of $g$ for its action on $S\times S$ is necessarily of type 
$(g,\theta(g))$. There are $n(g)$ possible choices for $x$, since $x$ must be a specific fixed point of $g$ (see Notation \ref{fixednota} (ii)). 
For a given $x$, there are $n(g)$ possible fixed points $y$, but if $y$ coincides with $s(x)$ for some $s\in G$
we fall on the diagonal $\Sigma$ and such pairs must be excluded. By Lemma \ref{SgigjLemme} (see also Notation \ref{Sg}), there is a bijection between $\left\{\left.s(x)\right|\ s\in G\right\}\cap\SpFix_S \theta(g)$ and $\mathcal{S}_g/\left\langle g\right\rangle$.
Hence there remain $n(g)-t(g)$ admissible choices for $y$. 
This explains why the total number of such pairs is $n(g)\bigl(n(g)-t(g)\bigr)$.
This gives the result for $g$ of order 4 and 6.

Now, we assume that $\mathcal{O}(g)=3$. According to Lemma \ref{differenttypeofpoint}, there are several cases for the specific fixed points of $g$ for the action on $S\times S$:
\begin{itemize}
\item[(a)]
points of type $(g,\theta(g))$;
\item[(b)]
points of type $(g,\theta(g_i))$ or $(g_i,\theta(g))$;
\item[(c)]
points of type $(g_i,\theta(g_j))$, with $i\neq j$.
\end{itemize}
We count these different cases.
\begin{itemize}
\item[(a)]
According to Lemma \ref{speci3}, there are $6-2k_6(g)$ choices for $x$ and by Lemma \ref{SgigjLemme}, there are $6-2k_6(g)-t(g)$ choices for $y$. So the type (a) provides $(6-2k_6(g))(6-2k_6(g)-t(g))$ fixed points. 
\item[(b)]
We assume that $(x,y)$ is of type $(g,\theta(g_i))$. Since $g$ and $g_i$ do not have the same order, $(x,y)\notin\Sigma$ by Lemma \ref{order=}. 
Each $\theta(g_i)$ or $g_i$ has 2 fixed points. Hence there are $2\times (6-k_6(g))$ specific fixed points of $g$ of type $(g,\theta(g_i))$ and the same for the specific fixed points of $g$ of type $(g_i,\theta(g))$. There are $k_6(g)$ automorphisms $g_i$. So the type (b) provides 
$k_6(g)\times2\times2\times (6-k_6(g))$ fixed points.
\item[(b)]
For the type (c), we can choose $x\in \SpFix_S g_i$, then we need to choose $y\in \SpFix_S \theta(g_j)$ such that $y$ cannot be written $s(x)$ for some $s\in G$. Hence, we have 2 choices for $x$ and then $(2-t(g_i,g_j))$ choices for $y$ according to Lemma \ref{SgigjLemme}. Hence there are $\sum_{i,j, i\neq j}2(2-t(g_i,g_j))$ fixed points of type (c).
\end{itemize}
Then, the result follows for $\mathcal{O}(g)=3$.

Similarly, when $\mathcal{O}(g)=2$, there are several cases:
\begin{itemize}
\item[(1)]
points of type $(g,\theta(g))$;
\item[(2)]
points of type $(g,\theta(g_i))$ or $(g_i,\theta(g)))$;
\item[(3)]
points of type $(g,\theta(h_i))$ or $(h_i,\theta(g)))$;
\item[(4)]
points of type $(g_i,\theta(g_j))$, with $i\neq j$;
\item[(5)]
points of type $(h_i,\theta(h_j))$, with $i\neq j$;
\item[(6)]
points of type $(h_i,\theta(g_j))$ or $(g_i,\theta(h_j))$.
\end{itemize}
The number of specific fixed points for each type are given by:
\begin{itemize}
\item[(1)]
It is similar to the case (a) above for $g$ of order 3.
According to Lemma \ref{speci3}, there are $8-2k_6(g)-4k_4(g)$ choices for $x$ and again by Lemma \ref{SgigjLemme}, there are $8-2k_6(g)-4k_4(g)-t(g)$ choices for $y$. So the type (1) provides $(8-2k_6(g)-4k_4(g))(8-2k_6(g)-4k_4(g)-t(g))$ fixed points.
\item[(2)]
It is similar to the case (b) for $g$ of order 3. We obtain
$k_6(g)\times2\times2\times (8-2k_6(g)-4k_4(g))=4k_6(g)(8-2k_6(g)-4k_4(g))$  fixed points of type (2). 
\item[(3)]
This case is similar to the previous one, the only difference is that $h_i$ has order 4 so has 4 specific fixed points. 
Therefore, we have
$8k_4(g)(8-2k_6(g)-4k_4(g))$ fixed points of type (3).
\item[(4)]
This case is similar to the case (c) for $g$ of order 3. We obtain 
$\sum_{i,j, i\neq j}2(2-t(g_i,g_j))$ fixed points of type (4).
\item[(5)]
This case is similar to the previous one, the only difference is that $h_i$ has order 4 so has 4 specific fixed points.
We obtain $\sum_{i,j, i\neq j}4(4-t(h_i,h_j))$ fixed points of type (5).
\item[(6)]
We assume that $(x,y)$ is of type $(h_i,\theta(g_j))$. Since $h_i$ and $g_j$ do not have the same order, $(x,y)\notin\Sigma$ by Lemma \ref{order=}. 
Moreover $h_i$ has 4 specific fixed points and $g_j$ has 2 specific fixed points. So there are 4 choices for $x$ dans 2 choices for $y$. Moreover there are $k_4(g)$ choices for $h_i$ and $k_6(g)$ choices for $g_j$. So we obtain $4k_4(g)\times 2k_6(g)$ points. Identically, the points of type $(g_i,\theta(h_j))$ contributes for $2k_6(g)\times 4k_4(g)$ points. So in total, there are
$4k_4(g)\times 2k_6(g)+2k_6(g)\times 4k_4(g)=16k_6(g)k_4(g)$ points of type (6).
\end{itemize}
\end{proof}
\begin{ex}
We keep the same notation as in Example \ref{Rapporteurbete2} and we choose $g=a^2$.
We have seen in Example \ref{exSgigj} that $k_6(g)=3$. Moreover, we have seen in Example \ref{exSgigj} that $t(g_i,g_j)=0$.
Hence Lemma \ref{fixedG} gives:
$$N(g)=0+3\times2\times (2-0)=12.$$
\end{ex}
\subsubsection{Stabilizers of fixed points in $\Sigma$}\label{pointsinsigma}
It remains to understand the stabilizers of the fixed points in $\Sigma$. 
Let $g\in G\smallsetminus \left\{\id\right\}$. According to Lemmas \ref{differenttypeofpoint} and \ref{order=}, there are two possible types for the points in $\Sigma \cap \SpFix_{S\times S}g $:
\begin{itemize}
\item
points of type $(g,\theta(g))$, that is $x\in \SpFix g$ and $s(x)\in \SpFix \theta(g)$;
\item
points of type $(g_i,\theta(g_j))$, that is $x\in \SpFix g_i$ and $s(x)\in \SpFix \theta(g_j)$, with $g_i$ and $g_j$ two different automorphisms of the same order such that $\left\langle g\right\rangle=\left\langle g_i\right\rangle\cap\left\langle g_j\right\rangle$. In this case $g$ has order 2 or 3 and $g_i$, $g_j$ have order 4 or 6.
\end{itemize}
The purpose of the next lemmas is to determine the stabilizer of a fixed point in $\Sigma$. The formulation of the lemmas is made in order to be applied to the two previous possible types of fixed points.
\begin{lemme}\label{stabili2}
Let $g\in G\smallsetminus\left\{\id\right\}$. Let $g_i$ and $g_j$ be two automorphisms of the same order such that $\left\langle g_i\right\rangle\cap\left\langle g_j\right\rangle=\left\langle g\right\rangle$ (eventually with $\left\langle g_i\right\rangle=\left\langle g_j\right\rangle=\left\langle g\right\rangle$).
Let $s\in \mathcal{S}_{g_i,g_j}$ and $x\in\SpFix g_i$.
We set:
 $$H_{g_i,s}:=\left\{\left.s_0\circ h\right|\ s^{-1}\circ h\in\left\langle g_i\right\rangle\ \text{and}\ \theta(h)\circ h\in\left\langle g_i\right\rangle \right\}.$$	
Then the stabilizer of $(x,s(x))$ is $\left\langle H_{g_i,s},g\right\rangle$.
\end{lemme}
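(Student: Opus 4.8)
The plan is to compute the stabilizer $\mathcal{G}_{(x,s(x))}:=\{\gamma\in\mathcal{G}\mid\gamma(x,s(x))=(x,s(x))\}$ by hand, using the partition $\mathcal{G}=j_\theta(G)\sqcup s_0\cdot j_\theta(G)$ (valid since $j_\theta(G)$ is a normal subgroup of index $2$ in $\mathcal{G}$, as $s_0\circ j_\theta(g)\circ s_0=j_\theta(\theta(g))$), and keeping the convention that $j_\theta(h)$ is written $h$, so that $h$ acts by $(x,y)\mapsto(h(x),\theta(h)(y))$ and $s_0$ by $(x,y)\mapsto(y,x)$. Concretely I would prove two statements: \textbf{(a)} an element $h\in j_\theta(G)$ fixes $(x,s(x))$ if and only if $h\in\langle g\rangle$; \textbf{(b)} an element $s_0\circ h$ fixes $(x,s(x))$ if and only if $s_0\circ h\in H_{g_i,s}$. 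Granting (a) and (b), the stabilizer equals the set $\langle g\rangle\sqcup H_{g_i,s}$; this set is contained in $\langle H_{g_i,s},g\rangle$, and conversely both $g=j_\theta(g)$ and every element of $H_{g_i,s}$ lie in the stabilizer, which is a subgroup, so $\mathcal{G}_{(x,s(x))}=\langle H_{g_i,s},g\rangle$, as claimed.

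For step (a): since $h(x,s(x))=(h(x),\theta(h)(s(x)))$, the conditions are $h(x)=x$ and $\theta(h)(s(x))=s(x)$. As $x\in\SpFix g_i$ we have $G_x=\langle g_i\rangle$, so the first condition is $h\in\langle g_i\rangle$; the second condition says $(s^{-1}\circ\theta(h)\circ s)(x)=x$, hence $s^{-1}\circ\theta(h)\circ s\in\langle g_i\rangle$, i.e. $\theta(h)\in s\langle g_i\rangle s^{-1}$. Here I would use $s\in\mathcal{S}_{g_i,g_j}$: by definition $s^{-1}\circ\theta(g_j)\circ s\in\{g_i,g_i^{-1}\}$, whence $s\langle g_i\rangle s^{-1}=\langle\theta(g_j)\rangle=\theta(\langle g_j\rangle)$ (using that $\theta$ is a group automorphism of $G$). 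So the second condition becomes $h\in\langle g_j\rangle$, and combined with $h\in\langle g_i\rangle$ it reads $h\in\langle g_i\rangle\cap\langle g_j\rangle=\langle g\rangle$. All the equivalences are reversible, which gives (a).

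For step (b): since $(s_0\circ h)(x,s(x))=(\theta(h)(s(x)),h(x))$, the conditions for fixing the point are $h(x)=s(x)$ and $\theta(h)(s(x))=x$. The first says $(s^{-1}\circ h)(x)=x$, hence $s^{-1}\circ h\in G_x=\langle g_i\rangle$; the second says $(\theta(h)\circ s)(x)=x$, hence $\theta(h)\circ s\in\langle g_i\rangle$. These are exactly the two membership conditions appearing in the definition of $H_{g_i,s}$, which proves (b). (A harmless byproduct is that $s(x)\in\SpFix\theta(g_j)$, its stabilizer being $s\langle g_i\rangle s^{-1}=\langle\theta(g_j)\rangle$, and that the set $H_{g_i,s}$ does not depend on the particular $x\in\SpFix g_i$ chosen.)

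I do not expect a real obstacle: the lemma is essentially a bookkeeping computation, whose only genuine inputs are the dictionary ``an automorphism of $S$ fixing a specific fixed point of $g_i$'' $\Leftrightarrow$ ``it belongs to $\langle g_i\rangle$'', applied repeatedly, together with the defining equation of $\mathcal{S}_{g_i,g_j}$. The step deserving the most care is the chain $s\langle g_i\rangle s^{-1}=\langle\theta(g_j)\rangle=\theta(\langle g_j\rangle)$ and the use of $\theta^{-1}=\theta$ — i.e. invoking correctly that $\theta$ is a homomorphism — and bookkeeping the two cosets of $\mathcal{G}$ with the correct formulas for the actions of $j_\theta(h)$ and of $s_0\circ j_\theta(h)$.
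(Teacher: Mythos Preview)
Your proof is correct and follows essentially the same approach as the paper: split $\mathcal{G}$ into the two cosets $j_\theta(G)$ and $s_0\cdot j_\theta(G)$, and for each coset reduce the fixing condition at $(x,s(x))$ to membership in $\langle g_i\rangle$ via the specific-fixed-point hypothesis, then use the defining relation of $\mathcal{S}_{g_i,g_j}$ to convert $s\langle g_i\rangle s^{-1}$ into $\langle\theta(g_j)\rangle$. Your write-up is in fact slightly more explicit than the paper's at the point where one passes from $s^{-1}\circ\theta(h)\circ s\in\langle g_i\rangle$ to $h\in\langle g_j\rangle$, and in justifying why the set-theoretic union $\langle g\rangle\sqcup H_{g_i,s}$ coincides with the generated subgroup $\langle H_{g_i,s},g\rangle$.
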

\begin{proof}
Let $\mathcal{G}_{(x,s(x))}$ be the stabilizer of $(x,s(x))$ for the action of $\mathcal{G}$.
Let $s_0G:=\left\{\left.s_0\circ h\right|\ h\in G\right\}$. 
Since each element of $\mathcal{G}$ can be written uniquely either as an element of $G$ or as $s_0\circ h$ with $h\in G$, we have the disjoint union
$\mathcal{G}=G\bigsqcup s_0G$.
So:
$$\mathcal{G}_{(x,s(x))}=\left(\mathcal{G}_{(x,s(x))}\cap G\right) \bigsqcup \left(\mathcal{G}_{(x,s(x))}\cap s_0G\right).$$
We can compute $\mathcal{G}_{(x,s(x))}\cap G$ and $\mathcal{G}_{(x,s(x))}\cap s_0G$ separately. 

First we have $\mathcal{G}_{(x,s(x))}\cap G=G_{(x,s(x))}$. By Lemma \ref{SgigjLemme}, $(x,s(x))$ is of type $(g_i,\theta(g_j))$; therefore by Lemma \ref{differenttypeofpoint}: 
$$G_{(x,s(x))}=\left\langle g\right\rangle.$$
Now, we compute $\mathcal{G}_{(x,s(x))}\cap s_0G$. We proceed by equivalence.
\begin{align*}
&s_0\circ h\in \mathcal{G}_{(x,s(x))}\cap s_0G\\
\Leftrightarrow& s_0\circ h\ \text{fixes}\ (x,s(x))\\
\Leftrightarrow&(\theta(h)\circ s(x),h(x))=(x,s(x))\\
\Leftrightarrow&\theta(h)\circ s(x)=x\ \text{and}\ s^{-1}\circ h(x)=x\\
\Leftrightarrow&(\theta(h)\circ s,s^{-1}\circ h)\in \left\langle g_i\right\rangle^2.
\end{align*}
Moreover, we also have:
\begin{equation}
(s_0\circ h)^2=\theta(h)\circ h\in G_{(x,s(x))}=\left\langle g\right\rangle.
\label{hthetah}
\end{equation}
Hence:
\begin{align*}
&s_0\circ h\in \mathcal{G}_{(x,s(x))}\cap s_0G\\
\Leftrightarrow&(\theta(h)\circ s,s^{-1}\circ h)\in \left\langle g_i\right\rangle\ \text{and}\ \theta(h)\circ h\in \left\langle g\right\rangle\\
\Leftrightarrow& s^{-1}\circ h\in \left\langle g_i\right\rangle\ \text{and}\ \theta(h)\circ h\in \left\langle g\right\rangle\ \text{because}\ \theta(h)\circ s=\theta(h)\circ h\circ (s^{-1}\circ h)^{-1}.
\end{align*}
It finishes the proof.
\end{proof}
\begin{rmk}
If $\left\langle g_i\right\rangle\neq \left\langle g_j\right\rangle$, we have described the stabilizer of a point of type $(g_i,\theta(g_j))$. If $\left\langle g_i\right\rangle=\left\langle g_j\right\rangle$, we have described the stabilizer of a point of type $(g,\theta(g))$.
\end{rmk}
\begin{lemme}\label{s0h}
Let $g\in G\smallsetminus\left\{\id\right\}$. Let $g_i$, $g_j$, $s$ and $H_{g_i,s}$ as in Lemma \ref{stabili2}. There are only two possibilities for the group $\left\langle H_{g_i,s},g\right\rangle$.
\begin{itemize}
\item[(i)]
If there exists $h=s\circ g_i^k$ with $k\in \N$ such that $\theta(h)\circ h\in\left\langle g\right\rangle$, then 
$\left\langle H_{g_i,s},g\right\rangle=\left\langle g, s_0\circ h\right\rangle$.
\item[(ii)]
If there is not an element $h=s\circ g_i^k$ with $k\in \N$ such that $\theta(h)\circ h\in\left\langle g\right\rangle$, then 
$\left\langle H_{g_i,s},g\right\rangle=\left\langle g\right\rangle$.
\end{itemize}
\end{lemme}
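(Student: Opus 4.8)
The starting point is the description of the stabilizer furnished by Lemma \ref{stabili2}: the stabilizer of $(x,s(x))$ (with $x\in\SpFix g_i$, $s\in\mathcal{S}_{g_i,g_j}$, $\left\langle g_i\right\rangle\cap\left\langle g_j\right\rangle=\left\langle g\right\rangle$) equals $\left\langle H_{g_i,s},g\right\rangle$, where
$$H_{g_i,s}=\left\{\left. s_0\circ h\ \right|\ (s^{-1}\circ h,\theta(h)\circ s)\in\left\langle g_i\right\rangle^2\right\}.$$
So everything reduces to analysing which elements $s_0\circ h$ lie in $H_{g_i,s}$. The condition $s^{-1}\circ h\in\left\langle g_i\right\rangle$ already forces $h=s\circ g_i^k$ for some $k$; thus $H_{g_i,s}$ is contained in $\left\{s_0\circ s\circ g_i^k\ |\ k\in\N,\ \theta(s\circ g_i^k)\circ s\in\left\langle g_i\right\rangle\right\}$, and the second membership condition $\theta(h)\circ s\in\left\langle g_i\right\rangle$ becomes $\theta(s\circ g_i^k)\circ s\in\left\langle g_i\right\rangle$. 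The first step is therefore to rewrite the defining condition of $H_{g_i,s}$ purely as: $h=s\circ g_i^k$ and $\theta(h)\circ h\in\left\langle g_i\right\rangle$. For this I would use that $s\in\mathcal{S}_{g_i,g_j}$ means $s^{-1}\circ\theta(g_j)\circ s=g_i^{\pm1}$, hence $\theta(g_j)=s\circ g_i^{\pm1}\circ s^{-1}$; combined with $\theta$ being an involution and the relation $\left\langle g_i\right\rangle\cap\left\langle g_j\right\rangle=\left\langle g\right\rangle$, one checks that $\theta(s\circ g_i^k)\circ s\in\left\langle g_i\right\rangle$ is equivalent to $\theta(s\circ g_i^k)\circ(s\circ g_i^k)\in\left\langle g_i\right\rangle$ — and since this element also lies (as a conjugate consideration shows, because $\theta(h)\circ h$ is fixed by $x$ only up to $\left\langle g\right\rangle$) in $\left\langle g_j\right\rangle$, it must actually lie in $\left\langle g_i\right\rangle\cap\left\langle g_j\right\rangle=\left\langle g\right\rangle$. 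That last refinement is what pins the condition down to $\theta(h)\circ h\in\left\langle g\right\rangle$ rather than merely $\left\langle g_i\right\rangle$.

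The second step is the case split. In case (ii), by the reformulation just obtained, $H_{g_i,s}=\emptyset$, so $\left\langle H_{g_i,s},g\right\rangle=\left\langle g\right\rangle$; this is immediate. In case (i), pick one $h=s\circ g_i^k$ with $\theta(h)\circ h\in\left\langle g\right\rangle$; then $s_0\circ h\in H_{g_i,s}$, so $\left\langle g,s_0\circ h\right\rangle\subset\left\langle H_{g_i,s},g\right\rangle$. For the reverse inclusion I must show every other element $s_0\circ h'\in H_{g_i,s}$, with $h'=s\circ g_i^{\ell}$, already lies in $\left\langle g,s_0\circ h\right\rangle$. Write $h'=h\circ g_i^{m}$ where $m=\ell-k$. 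Then $(s_0\circ h)^{-1}\circ(s_0\circ h')$ should be expressible in terms of $g$: compute $(s_0\circ h)^{-1}\circ(s_0\circ h') = h^{-1}\circ\theta(\,\cdot\,)\cdots$ — more precisely $s_0\circ h$ and $s_0\circ h'$ both act on $S\times S$, and since both fix $(x,s(x))$ and the stabilizer of a specific fixed point in the relevant configuration is $\left\langle g\right\rangle\rtimes\langle \text{one reflection}\rangle$ (a group of order $2\cdot\mathcal{O}(g)$ by the admissibility Hypothesis \ref{hypo}), the product of two elements of $s_0 G$ in the stabilizer lands in $G\cap(\text{stabilizer})=\left\langle g\right\rangle$. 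So $(s_0\circ h')\in\left\langle g\right\rangle\cdot(s_0\circ h)\subset\left\langle g,s_0\circ h\right\rangle$, giving $H_{g_i,s}\subset\left\langle g,s_0\circ h\right\rangle$ and hence equality $\left\langle H_{g_i,s},g\right\rangle=\left\langle g,s_0\circ h\right\rangle$.

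The key subtlety — the step I expect to be the main obstacle — is proving that $\theta(h)\circ h$ lands in $\left\langle g\right\rangle$ and not merely in $\left\langle g_i\right\rangle$, i.e. the sharpening that makes the two cases exhaustive and clean. This needs the hypothesis $\left\langle g_i\right\rangle\cap\left\langle g_j\right\rangle=\left\langle g\right\rangle$ to be used in an essential way: one observes that $\theta(h)\circ h$, written via $h=s\circ g_i^k$ and $s\in\mathcal{S}_{g_i,g_j}$, is simultaneously a power of $g_i$ (from the $H_{g_i,s}$ condition) and — after conjugating by $s^{-1}$ and using $s^{-1}\circ\theta(g_j)\circ s=g_i^{\pm1}$, equivalently $\theta(h)$ is a power of $g_j$ up to conjugation — a power of $g_j$; the intersection forces it into $\left\langle g\right\rangle$. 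A secondary point requiring care is that $g_i$ has order $4$ or $6$ while $g$ has order $2$ or $3$ (as recorded in the discussion preceding the lemma), so $s_0\circ h$ has order exactly $2$ only when $\theta(h)\circ h=\id$; in general $s_0\circ h$ may have larger order, but $\left\langle g, s_0\circ h\right\rangle$ is still the full stabilizer — one should just remark that changing $k$ multiplies $s_0\circ h$ on the right by $g_i^{m}$ and modifies $\theta(h)\circ h$ by an element of $\left\langle g\right\rangle$, so the generated group is independent of the chosen representative $h$, which is why the statement can speak of "the group $\left\langle H_{g_i,s},g\right\rangle$" unambiguously. Everything else is routine manipulation of the relations $\theta^2=\id$, $g^*\varphi$-invariance being irrelevant here, and the order constraints from Hypothesis \ref{hypo}.
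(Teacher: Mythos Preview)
Your proposal is correct and follows essentially the same route as the paper: identify $H_{g_i,s}$ as consisting of $s_0\circ h$ with $h=s\circ g_i^k$, use that $(s_0\circ h)^2=\theta(h)\circ h$ fixes $(x,s(x))$ to force $\theta(h)\circ h\in\left\langle g_i\right\rangle\cap\left\langle g_j\right\rangle=\left\langle g\right\rangle$, and for the reverse inclusion in case (i) observe that the product of two elements of $s_0G$ in the stabilizer lies in $G\cap(\text{stabilizer})=\left\langle g\right\rangle$. The paper executes the refinement step $\theta(h)\circ h\in\left\langle g_j\right\rangle$ cleanly by noting that $\theta(\theta(h)\circ h)=h\circ\theta(h)$ fixes $s(x)\in\SpFix\theta(g_j)$, which is crisper than your algebraic sketch via conjugation by $s^{-1}$, but the content is the same.
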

\begin{proof}
In case (ii), we have $H_{g_i,s}=\varnothing$ according to Lemma \ref{stabili2}; hence $\left\langle H_{g_i,s},g\right\rangle=\left\langle g\right\rangle$.

We assume that we are in case (i) and we consider $h\in G$ which verifies the desired condition. 
Then by definition of $H_{g_i,s}$, we have $\left\langle H_{g_i,s},g\right\rangle\supset\left\langle g, s_0\circ h\right\rangle$.
It remains to show that $\left\langle H_{g_i,s},g\right\rangle\subset\left\langle g, s_0\circ h\right\rangle$.
Let $s_0\circ h'\in H_{g_i,s}$ be another element. We have $s_0\circ h\circ s_0\circ h'=\theta(h)\circ h'$ which fixes $(x,s(x))$.
Hence by Lemma \ref{differenttypeofpoint}, $\theta(h)\circ h'\in \left\langle g\right\rangle$.
Moreover by assumption $\theta(h)\circ h\in \left\langle g\right\rangle$.
Since both $\theta(h)\circ h'$ and $\theta(h)\circ h$ belong to $\left\langle g\right\rangle$, their quotient $h^{-1}\circ h'$ lies in $\left\langle g\right\rangle$. It provides that $s_0\circ h'=\left(s_0\circ h\right)\circ\left(h^{-1}\circ h'\right)\in \left\langle g, s_0\circ h\right\rangle$.
\end{proof}
\begin{ex}
We keep the same group and notation as in Example \ref{Rapporteurbete2} and we choose $g=a$. 

Let $x\in \SpFix_S g$. According to Example \ref{exSgigj}, we have seen that $\mathcal{S}_g=G$; more precisely, we can write $\mathcal{S}_g/\left\langle g\right\rangle=\{\overline{\id},\overline{b}\}$. Therefore, we can compute the stabilizers of $(x,x)$ and $(x,b(x))$.
In this two cases, we can choose $h=s$ (for each $s\in \mathcal{S}_g/\left\langle g\right\rangle$) and $\theta(h)\circ h\in \left\langle g\right\rangle$.
So by Lemma \ref{s0h}, the stabilizer is  $\left\langle g,s_0\circ s\right\rangle\simeq C_6\times C_2$ for all $s\in \mathcal{S}_g/\left\langle g\right\rangle$.
\end{ex}
There is one kind of fixed point which have not been investigated yet; it is the object of the next lemma.
\begin{lemme}\label{complete}
Let $(x,s(x))\in\Sigma$ be a fixed point which is not fixed by an element of $G\smallsetminus \left\{\id\right\}$. Then the stabilizer of $(x,s(x))$ is 
$\left\langle s_0\circ s\right\rangle$ with $s\in F$ (see Notation \ref{fixednota} (iv)).
\end{lemme}
\begin{proof}
Let $(x,y)\in\Sigma$ which is not fixed by an element of $G\smallsetminus \left\{\id\right\}$. Then it is fixed by an element $s_0\circ s\in s_0G$ with $y=s(x)$ (see Lemma \ref{surfaces0}). We have $(s_0\circ s)^2=\theta(s)\circ s\in G$ which fixes $(x,s(x))$. However we have assumed that $(x,s(x))$ is not fixed by a non-trivial element in $G$. Hence $\theta(s)\circ s=\id$; that is $s\in F$. Moreover if $(x,s(x))$ is fixed by another element $s_0\circ s'\in s_0G$, then $(s_0\circ s)\circ(s_0\circ s')=\theta(s)\circ s'=s^{-1}\circ s'\in G$ fixes $(x,s(x))$. So by assumption $s^{-1}\circ s'=\id$. We obtain $s=s'$. It shows that the stabilizer of $(x,s(x))$ is $\left\langle s_0\circ s\right\rangle$.
\end{proof}
With the next remark, we verify that we have investigated all fixed points by an element of $\mathcal{G}\smallsetminus\left\{\id\right\}$.
\begin{rmk}\label{completermk}
We recall that $\mathcal{G}$ is the union of two disjoint sub-sets: $G$ and $s_0G:=\left\{\left.s_0\circ g\right|\ g\in G\right\}$.
Let $(x,y)\in S^2$ fixed by an element in $\mathcal{G}\smallsetminus\left\{\id\right\}$.
The point $(x,y)$  can be in $\Sigma$ or not. 
\begin{itemize}
\item
If $(x,y)$ is not in $\Sigma$ then according to Lemma \ref{surfaces0}, the point $(x,y)$ can only be fixed by an element in $G$. These points have been studied in Lemma \ref{fixedG}.
\item
If $(x,y)$ is in $\Sigma$, there are also two possibilities. The point $(x,y)$ is fixed by an element of $G\smallsetminus \left\{\id\right\}$ or not.
\begin{itemize}
\item
Let $(x,y)\in\Sigma$ fixed by an element $g\in G\smallsetminus \left\{\id\right\}$; the stabilizer of such point have been identified in Lemma \ref{s0h}.
\item
Let $(x,y)\in\Sigma$ which is not fixed by an element of $G\smallsetminus \left\{\id\right\}$. According to Lemma \ref{complete} the stabilizer of such point is $\left\langle s_0\circ s\right\rangle$ with $s\in F$. Hence $(x,s(x))$ is a generic point of a fixed surface according to Lemma \ref{surfaces}; such a point does not contribute to the singularities of $S(G)_{\theta}^{[2]}$.
\end{itemize}
\end{itemize}
\end{rmk}
\subsubsection{The possible stabilizers}
In this section we investigate which groups the stabilizers $\left\langle s_0\circ h,g\right\rangle$ of Lemma \ref{s0h} can be.
We set 
\begin{equation}
T_{m}:=\begin{pmatrix}
																													0 & 0 & 1 & 0\\
																													0 & 0 & 0 & 1\\
																													\xi_m & 0 & 0 & 0\\
																													0 & \xi_m^{-1} & 0 & 0
																													\end{pmatrix},
																													\label{Tm}
																													\end{equation}
																													where $\xi_m=e^{\frac{2i\pi}{m}}$ with $m\in \N^*$.

\begin{lemme}\label{commute}
Let $g\in G\smallsetminus\left\{\id\right\}$ and $s\in G$ such that $\mathcal{G}_{(x,s(x))}:=\left\langle s_0\circ h,g\right\rangle$ is the stabilizer of a point $(x,s(x))\in S^2$ as in Lemma \ref{s0h}. Then:
$$g\circ (s_0\circ h)=(s_0\circ h)\circ g\ \text{or}\ g\circ (s_0\circ h)=(s_0\circ h)\circ g^{-1}.$$
\end{lemme}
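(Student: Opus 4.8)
The plan is to compute the conjugate $(s_0\circ h)^{-1}\circ g\circ (s_0\circ h)$ explicitly and to show that it equals $g$ or $g^{-1}$; the two displayed relations are then just rewritings of these two cases.

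Recall from Lemma \ref{s0h}(i) that $h=s\circ g_i^{k}$ for some $k\in\N$, where $x\in\SpFix g_i$, $s\in\mathcal{S}_{g_i,g_j}$ and $\langle g\rangle=\langle g_i\rangle\cap\langle g_j\rangle$ (possibly with $g=g_i=g_j$). The automorphism $s_0\circ h$ of $S\times S$ sends $(a,b)$ to $(\theta(h)(b),h(a))$, hence its inverse sends $(c,d)$ to $(h^{-1}(d),\theta(h^{-1})(c))$. Feeding $g\colon (a,b)\mapsto (g(a),\theta(g)(b))$ through this conjugation and using that $\theta$ is an involutive automorphism (so that $\theta(h^{-1})\circ g\circ\theta(h)=\theta(h^{-1}\circ\theta(g)\circ h)$), a direct computation gives
\[
(s_0\circ h)^{-1}\circ g\circ (s_0\circ h)=h^{-1}\circ\theta(g)\circ h .
\]
Consequently $g\circ(s_0\circ h)=(s_0\circ h)\circ g$ if and only if $h^{-1}\circ\theta(g)\circ h=g$, and $g\circ(s_0\circ h)=(s_0\circ h)\circ g^{-1}$ if and only if $h^{-1}\circ\theta(g)\circ h=g^{-1}$; so it remains to prove $h^{-1}\circ\theta(g)\circ h\in\{g,g^{-1}\}$.

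To this end, write $\theta(g)=\theta(g_j)^{m}$ (possible since $g\in\langle g_j\rangle$) and note that $s\in\mathcal{S}_{g_i,g_j}$ gives $s^{-1}\circ\theta(g_j)\circ s=g_i^{\varepsilon}$ with $\varepsilon\in\{1,-1\}$ by (\ref{Sg1g2}); hence $s^{-1}\circ\theta(g)\circ s=g_i^{\varepsilon m}\in\langle g_i\rangle$. Since $h=s\circ g_i^{k}$ and $g_i^{\varepsilon m}$ commutes with $g_i^{k}$, we get $h^{-1}\circ\theta(g)\circ h=s^{-1}\circ\theta(g)\circ s=g_i^{\varepsilon m}$, an element of the cyclic group $\langle g_i\rangle$ which, being conjugate to $\theta(g)$, has the same order as $g$. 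By Hypothesis \ref{hypo} this order belongs to $\{2,3,4,6\}$, and it divides $|g_i|\le 6$; in any cyclic group the elements of a fixed order $n\in\{2,3,4,6\}$ dividing the group order form exactly the set $\{g,g^{-1}\}$ (a single element when $n=2$). Therefore $h^{-1}\circ\theta(g)\circ h\in\{g,g^{-1}\}$, which concludes the proof. The only subtle point is the conjugation identity of the first step, where one must carefully track the $\theta$-twist in $s_0\circ h$ together with $\theta\circ\theta=\id$; everything afterwards is elementary arithmetic inside $\langle g_i\rangle$.
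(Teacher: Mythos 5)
Your proof is correct and follows essentially the same route as the paper's: both use $h=s\circ g_i^{k}$ together with the conjugation relation defining $\mathcal{S}_{g_i,g_j}$ to move $\theta(g)$ (resp.\ $g$) across $s$, and then conclude with the observation that two elements of equal order in the cyclic group $\left\langle g_i\right\rangle$ (of order at most $6$) must be equal or inverse to each other. The only difference is bookkeeping: you compute the conjugate $(s_0\circ h)^{-1}\circ g\circ (s_0\circ h)=j_{\theta}(h^{-1}\circ\theta(g)\circ h)$, while the paper pushes $g$ directly through $s_0\circ h$; both computations are valid.
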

\begin{proof}
According to Lemma \ref{complete}, there exists $g_i$ and $g_j$ in $G$ such that $s\in \mathcal{S}_{g_i,g_j}$ and $\left\langle g_i\right\rangle\cap \left\langle g_j\right\rangle=\left\langle g\right\rangle$ (eventually with $\left\langle g_i\right\rangle=\left\langle g_j\right\rangle=\left\langle g\right\rangle$); in particular, there exists $t\in \N$ such that $g=g_i^t$. Moreover by Lemma \ref{s0h}, there exists $k\in\N$ such that $h=s\circ g_i^k$.
It follows that:
$$(s_0\circ h)\circ g=s_0\circ s\circ g_i^k\circ g_i^t=s_0\circ s\circ g_i^t \circ g_i^k.$$
By (\ref{Sgigj}):
$$s_0\circ s\circ g_i^t \circ g_i^k=s_0\circ \theta(g_j)^{\pm t}\circ s \circ g_i^k= g_j^{\pm t}\circ s_0\circ s \circ g_i^k=g_j^{\pm t}\circ (s_0\circ h).$$
However $g_i$ and $g_j$ has the same order; hence $g_j^{\pm t}=g^{\pm 1}$. This concludes the proof.
\end{proof}																													%and $d\in\left\{1,...,t\right\}$.
\begin{lemme}\label{stabilyprely}
Let $g\in G\smallsetminus\left\{\id\right\}$ and $s\in G$ such that $\mathcal{G}_{(x,s(x))}:=\left\langle s_0\circ h,g\right\rangle$ is the stabilizer of a point $(x,s(x))\in S^2$ as in Lemma \ref{s0h}.
\begin{itemize}
\item[(i)]
If $\mathcal{G}_{x,s(x)}$ is abelian, then the local action of $\mathcal{G}_{x,s(x)}$ around $(x,s(x))$ corresponds to the action of the group
$$\left\langle \diag(\xi_k,\xi_k^{-1},\xi_k,\xi_k^{-1}),\ T_1\right\rangle\ \text{or}\ \left\langle T_k\right\rangle,$$
on $\C^4$, around $0$, with $k\in \left\{2,3,4,6\right\}$.
\item[(ii)]
If $\mathcal{G}_{x,s(x)}$ is non-abelian, then the local action of $G_{x,s(x)}$ around $(x,s(x))$ corresponds to the action of the group
$$\left\langle \diag(\xi_k,\xi_k^{-1},\xi_k^{-1},\xi_k),\ T_1\right\rangle\ \text{or}\ \left\langle\diag(\xi_k,\xi_k^{-1},\xi_k^{-1},\xi_k), T_2\right\rangle,$$
on $\C^4$, around $0$, with respectively $k\in \left\{3,4,6\right\}$ or $k\in \left\{4,6\right\}$.
\end{itemize}
\end{lemme}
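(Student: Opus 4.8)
The plan is to reduce the statement to a computation on the tangent space at $p:=(x,s(x))$ and then match the resulting matrix group with the list. By Hypothesis~\ref{hypo} the automorphisms $g_i$ and $\theta(g_j)$ are finite symplectic automorphisms of a K3 surface, so their fixed points are isolated; consequently $T_pS^2=T_xS\oplus T_{s(x)}S\cong\C^2\oplus\C^2$, each summand carrying the symplectic form and the relevant local stabiliser acting through $\SL_2(\C)$. I would fix an eigenbasis $(e_1,e_2)$ of $T_xS$ for $g:=g_i^t$ so that $g$ acts there as $\diag(\xi_k,\xi_k^{-1})$, where $k:=\mathcal O(g)\in\{2,3,4,6\}$, and transport it by $dh_x$ to a basis $(f_1,f_2)$ of $T_{s(x)}S$ (legitimate since $h(x)=s(x)$). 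It then suffices to describe, in the basis $(e_1,e_2,f_1,f_2)$, the group generated by the tangent actions of $g$ and of $s_0\circ h$.

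Next I would compute those two matrices. The element $g=j_\theta(g)$ acts on the first factor as $\diag(\xi_k,\xi_k^{-1})$ by choice and on the second as $\theta(g)$; Lemma~\ref{commute} gives $g\circ(s_0\circ h)=(s_0\circ h)\circ g^{\varepsilon}$ with $\varepsilon\in\{1,-1\}$, and comparing the two components of this equality yields $h^{-1}\circ\theta(g)\circ h=g^{\varepsilon}$. Transporting through $dh_x$ as above, $\theta(g)$ then acts on $(f_1,f_2)$ as $\diag(\xi_k^{\varepsilon},\xi_k^{-\varepsilon})$, so $g$ acts on $\C^4$ as $h_k$ when $\varepsilon=1$ and as $g_k$ when $\varepsilon=-1$. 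The element $s_0\circ h$ interchanges the two factors: it sends $e_i\mapsto f_i$ and $f_i\mapsto d(\theta(h)\circ h)_x(e_i)$; since $\theta(h)\circ h\in\langle g\rangle$ by Lemma~\ref{s0h}(i), say $\theta(h)\circ h=g^m$, this image is $\xi_k^{\pm m}e_i$, and after an inessential diagonal rescaling of $f_1,f_2$ (which leaves $g$ diagonal, hence unchanged, on them) the matrix of $s_0\circ h$ becomes $T_\ell$ with $\xi_\ell=\xi_k^m$ and $\ell\mid k$. Hence the local group is $\langle h_k,T_\ell\rangle$ if $\varepsilon=1$ and $\langle g_k,T_\ell\rangle$ if $\varepsilon=-1$.

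Finally I would identify these groups. Observe that $h_k$ commutes with every $T_\ell$, whereas $g_k$ equals $-\id$ (hence is central) when $k=2$ but fails to commute with $T_\ell$ when $k>2$; so $G_{x,s(x)}$ is abelian exactly when $\varepsilon=1$ or $k=2$, and this distinguishes cases (i) and (ii). In the non-abelian case we thus have $k\in\{3,4,6\}$, and since $\theta$ is a homomorphism the relation $\theta(h)\circ h=g^m$ gives $h\circ\theta(h)=\theta(g)^m$; comparing the differentials of these two equalities at the fixed point $s(x)$ forces $\xi_k^{m(1-\varepsilon)}=1$, i.e.\ $2m\equiv 0\pmod k$, whence $T_\ell^2$ has order $1$ or $2$ and so $\ell\in\{1,2\}$, with $\ell=2$ possible only for $k$ even; this yields the form $\langle g_k,T_2\rangle$ only for $k\in\{4,6\}$ and the form $\langle g_k,T_1\rangle$ for $k\in\{3,4,6\}$. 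In the abelian case one runs through the divisors $\ell\mid k$ for each $k\in\{2,3,4,6\}$ and checks, by comparing the eigenvalue data of the resulting subgroup of $\GL_4(\C)$ (of order at most $2k$), that it is conjugate to $\langle h_k,T_1\rangle$, or, when some element of it has order $2k$, to the cyclic group $\langle T_k\rangle$. The main obstacle is precisely this last bookkeeping — untangling the conjugacy classes of the order-$2k$ groups $\langle h_k,T_\ell\rangle$ and $\langle g_k,T_\ell\rangle$ and checking that nothing outside the stated list can occur — which is why it is cleanest to finish by an explicit, computer-assisted case analysis.
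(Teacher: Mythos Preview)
Your argument is correct and takes a genuinely different route from the paper's proof.

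The paper fixes $g$ in the form $Q_k=\diag(\xi_k,\xi_k^{-1},\xi_k,\xi_k^{-1})$ on an \emph{arbitrary} basis of $\C^4$, writes $s_0\circ h$ as a block matrix $\left(\begin{smallmatrix}0&A\\B&0\end{smallmatrix}\right)$ with $A,B\in\SU(2)$, and then solves the matrix equations $AB=BA=\diag(\xi_m,\xi_m^{-1})$ together with $A\mathfrak G=\mathfrak G^{\pm1}A$ (from Lemma~\ref{commute}) to pin down $A$; the abelian/non-abelian dichotomy and the reduction of $m$ to $1$ or $k$ (resp.\ $1$ or $2$) are then obtained by explicit basis changes, case by case in $k$ and $m$.

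Your approach instead chooses the basis of $T_{s(x)}S$ as the $dh_x$-transport of the eigenbasis on $T_xS$, which simultaneously puts \emph{both} generators in block form from the outset: $g$ becomes $h_k$ or $g_k$ directly according to $\varepsilon$, and $s_0\circ h$ becomes $T_\ell$ after a harmless rescaling. The constraint $\ell\in\{1,2\}$ in the non-abelian case then falls out of the consistency check you describe (computing $d(h\circ\theta(h))_{s(x)}$ directly versus via $h\circ\theta(h)=\theta(g)^m$). This is cleaner than the paper's linear-algebra computation and makes the non-abelian case essentially automatic.

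Two remarks. First, your phrasing ``comparing the differentials of these two equalities at the fixed point $s(x)$'' is a bit cryptic; what you are really doing is evaluating $h\circ\theta(h)$ on $f_1$ in two ways, and it would help to say so. Second, in the abelian case you defer the reduction of $\langle h_k,T_\ell\rangle$ to the two listed forms to a ``computer-assisted case analysis''; this is overkill, since there are at most four values of $\ell$ for each $k$, and the paper dispatches them in a few lines each by multiplying $T_\ell$ by a suitable power of $h_k$ and rescaling (e.g.\ $T_2\cdot h_4$ is conjugate to $T_1$ when $k=4$). You could simply carry this out by hand. Note also that for $k=3$ the two listed forms coincide (both are cyclic of order $6$), so your criterion ``some element has order $2k$'' does not actually separate cases there; this is harmless for the statement but worth being aware of.
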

\begin{proof}
The automorphism $g$ only has isolated fixed points and is of order 2, 3, 4 or 6. Hence, in a good base of $\C^4$, its local action around $(x,s(x))$ can be written:
$$Q_k:=\diag(\xi_k,\xi_k^{-1},\xi_k,\xi_k^{-1}),$$
for $k\in \left\{2,3,4,6\right\}$.

The automorphism $s_0\circ h$ when it acts on $S^2$ exchanges the K3 surfaces.
Therefore, the local action of $s_0\circ h$ around $(x,s(x))$ has to be of the form:
$$H:=\begin{pmatrix}
0 & A\\
B & 0
\end{pmatrix},$$
with $A$ and $B$ in $\SU(2)$.
Moreover, by (\ref{hthetah}), we recall that: 
\begin{equation}
(s_0\circ h)^2\in \left\langle g\right\rangle. 
\label{truc}
\end{equation}
So $H^2=\diag(\xi_m,\xi_m^{-1},\xi_m,\xi_m^{-1})$ with $m\in\left\{1,2,3,4,6\right\}$ and $m$ divides $k$.
We obtain that:
\begin{equation}
AB=BA=\diag(\xi_m,\xi_m^{-1}).
\label{AB}
\end{equation}
We denote by $I_2$ the identity matrix in dimension 2. If $k=2$, $Q_k$ is identical for all basis of $\C^4$; therefore, we can take $A=\id$ in a convenient basis of $\C^4$. Hence $H=T_1$ or $H=T_2$. We have found the two cases of (i) when $k=2$.

Now, we assume that $k\neq2$ for all the sequel of the proof.
We can write $$A=\begin{pmatrix}
a_1 & a_2\\
a_3 & a_4
\end{pmatrix}.$$
So, we have:
$$A^{-1}=\begin{pmatrix}
a_4 & -a_2\\
-a_3 & a_1
\end{pmatrix}.$$
By (\ref{AB}), we have $B=\diag(\xi_m,\xi_m^{-1})A^{-1}$.
Therefore $\diag(\xi_m,\xi_m^{-1})A=A\diag(\xi_m,\xi_m^{-1})$ provides that:
\begin{equation}
a_2(\xi_m^{-1}-\xi_m)=0\ \text{and}\ a_3(\xi_m^{-1}-\xi_m)=0.
\label{matrix}
\end{equation}
These equations have two different behaviors if $m\in \left\{1,2\right\}$ or if $m\in \left\{3,4,6\right\}$.
\begin{itemize}
\item\emph{First case $m\in \left\{1,2\right\}$:}
~\\
In this case, we obtain from (\ref{AB}):
\begin{equation}
A=\pm B^{-1}.
\label{matrixx}
\end{equation}
Let $\mathfrak{G}:=\diag(\xi_k,\xi_k^{-1})$.
By Lemma \ref{commute}, we know that:
$$Q_k H=H Q_k^{\pm 1}.$$
It follows:
$$A\mathfrak{G}=\mathfrak{G}^{\pm1}A.$$
\underline{If $A\mathfrak{G}=\mathfrak{G}A$}, then $\xi_k a_2=\xi_k^{-1}a_2$ and $\xi_k a_3=\xi_k^{-1}a_3$; that is $a_2=a_3=0$. Therefore in a convenient basis of $\C^4$, we can have $A=\id$ keeping $Q_k=\diag(\xi_k,\xi_k^{-1},\xi_k,\xi_k^{-1}).$ We obtain from (\ref{matrixx}) that $H=T_1$ or $H=T_2$.
We have found the two possible groups: 
\begin{itemize}
\item[(a)]
$\left\langle T_1,\diag(\xi_k,\xi_k^{-1},\xi_k,\xi_k^{-1}) \right\rangle$ which is the left case of (i)
\item[(b)]
or $\left\langle T_2,\diag(\xi_k,\xi_k^{-1},\xi_k,\xi_k^{-1}) \right\rangle$ with $k\in \left\{4,6\right\}$.
\end{itemize}
We prove that case (b) also corresponds to one of the groups of (i).
If $k=4$, then we can replace $T_2$ by:
$$T_2\times\diag(i,-i,i,-i)=\begin{pmatrix}
																													0 & 0 & i & 0\\
																													0 & 0 & 0 & -i\\
																													-i & 0 & 0 & 0\\
																													0 & i & 0 & 0
																													\end{pmatrix},$$ which is $T_1$ after changing the base of $\C^4$ by $(i e_1, -i e_2, e_3, e_4)$.
If $k=6$, then $T_2$ can be replaced by: $$T_2\times \diag(\xi_3,\xi_3^{-1},\xi_3,\xi_3^{-1})=\begin{pmatrix}
																													0 & 0 & \xi_3 & 0\\
																													0 & 0 & 0 & \xi_3^{-1}\\
																													-\xi_3 & 0 & 0 & 0\\
																													0 & -\xi_3^{-1} & 0 & 0
																													\end{pmatrix},$$ which is $T_6$ after changing the base of $\C^4$ by $(\xi_3e_1,\xi_3^{-1}e_2,e_3,e_4)$.

\underline{If $A\mathfrak{G}=\mathfrak{G}^{-1}A$}, then $a_1=a_4=0$. Denoting $(e_1,e_2,e_3,e_4)$ the canonical basis of $\C^4$. We can exchange $e_3$ and $e_4$ and obtain a new basis $(e_1',e_2',e_3',e_4')$. In this new basis: 
$Q_k=\diag(\xi_k,\xi_k^{-1},\xi_k^{-1},\xi_k)$ and $A=\diag(a_2,a_3)$. Then, after replacing $e_3'$ by $e_3'/a_2$ and $e_4'$ by $e_4'/a_3$, we obtain $H=T_1$ or $H=T_2$. This corresponds to case (ii).
\item\emph{Second case $m\in \left\{3,4,6\right\}$:}
~\\
In this case (\ref{matrix}) provides that $a_2=a_3=0$. Then taking $(e_1,e_2,e_3/a_1,e_4/a_2)$ for the basis of $\C^4$, we can set that $A=I_2$ and we obtain by (\ref{AB}) that $H=T_m$. To fit with case (i), it remains to show that we can always take $m=1$ or $m=k$.

If $k=3$, there is nothing to prove since $m$ divides $k$. 

If $k=4$, there are 3 possibilities for $m$ which are 1, 2 or 4. If $m=1$ or $m=4$, there is nothing to prove. If $m=2$, as before $T_2$ can be replaced by $T_2\times\diag(i,-i,i,-i)$
which is $T_1$ after a change of basis which keeps $Q_k$ unchanged. 

If $k=6$, the are 4 possibilities for $m$ which are 1, 2, 3 or 6. The cases $m=1$ and $m=6$ are clear. 
As before $T_3$ can be replaced by $T_3\times\diag(\xi_6,\xi_6^{-1},\xi_6,\xi_6^{-1})$ which is $T_1$ in a good basis and $T_2$ can be replaced by $T_2\times \diag(\xi_3,\xi_3^{-1},\xi_3,\xi_3^{-1})$ which is $T_6$ in a good basis.
\end{itemize}
\end{proof}
\subsection{Singularities of $S(G)_{\theta}^{[2]}$}\label{singFinal}
\begin{nota}\label{defsing}
Let $X$ be a given orbifold. We denote as follows the singularities of $X$.
\begin{itemize}
\item
We denote by $a_k(X)$ the number of singularities of analytic type
$$\C^4/\diag(\xi_k,\xi_k^{-1},\xi_k,\xi_k^{-1}),$$
where $k\in \left\{2,3,4,6\right\}$. We also say that these singularities are of \emph{type} $a_k$.
\item
For $k\in\left\{4,6\right\}$, we denote by $a_{2k}(X)$ the number of singularities of analytic type
$$\C^4/T_k,$$
where $T_k$ is defined in (\ref{Tm}). We also say that these singularities are of \emph{type} $a_{2k}$.
\item
For $k\in\left\{4,6\right\}$, we denote by $\mathfrak{b}_{k}(X)$ the number of singularities of analytic type
$$\C^4/\left\langle\diag(\xi_k,\xi_k^{-1},\xi_k^{-1},\xi_k), T_2\right\rangle,$$
with $T_2$ which is defined in (\ref{Tm}). We also say that these singularities are of \emph{type} $\mathfrak{b}_{k}$.
\end{itemize}
When the is no ambiguity on $X$, we simply denote $a_k$ and $\mathfrak{b}_k$ instead of $a_k(X)$ and $\mathfrak{b}_k(X)$.
\end{nota}
We consider the elements of order $k$ of a group $G$.
\begin{nota}
On $G$, we consider the bijection $\inv(g)=g^{-1}$. 
The group $\left\langle \inv\right\rangle$ acts on $G$.
For $k\in \left\{2,3,4,6\right\}$,
we denote:
$$\mathcal{O}(G)_k:=\left\{\left.g\in G\right|\ \mathcal{O}(g)=k\right\}/\left\langle \inv\right\rangle,$$
where $\mathcal{O}(g)$ is the order of $g$. In other words, in $\mathcal{O}(G)_k$ we have identified $g$ and $g^{-1}$.
\end{nota}
\begin{rmk}
An element $g\in G$ and its inverse $g^{-1}$ have the same specific fixed points. So these fixed points have to be counted only once for $g$ or for $g^{-1}$. It is why the sets $\mathcal{O}(G)_k$ have been introduced.
\end{rmk}
For a given $x\in \SpFix g$, we have seen with Lemma \ref{SgigjLemme}, that $\mathcal{S}_g/\left\langle g\right\rangle$ corresponds to fixed point of $g$ of the form $(x,s(x))$. In the previous section, we have seen that the stabilizers of $(x,s(x))$ can be different groups. The aim of the next notation is to make a partition of $\mathcal{S}_g$ with each subsets corresponding to one of the possible stabilizers. All the notation are explained in Remark \ref{explainSg}.
\begin{nota}
Let $g\in G\smallsetminus\left\{\id\right\}$.
\begin{itemize}
\item
Let $\mathcal{S}_g$ be the set defined in Notation \ref{Sg}. 
According to Lemma \ref{s0h}, the set $\mathcal{S}_g$ can be divided in two subsets:
$$\mathcal{S}_g^+:=\left\{\left.s\in \mathcal{S}_g\right|\ \exists \alpha\in \left\langle g\right\rangle,\ \theta(s\circ\alpha)\circ s\circ\alpha\in\left\langle g\right\rangle\right\}\ \text{and}\ \mathcal{S}_g^-:=\mathcal{S}_g\smallsetminus \mathcal{S}_g^+.$$
The set $\mathcal{S}_g^+$ can also be divided in two subsets:
$$\mathcal{S}_g^{+,c}:=\left\{\left.s\in \mathcal{S}_g^+\right|\ g\ \text{and}\ s_0\circ s\ \text{commute}\right\}\ \text{and}\ \mathcal{S}_g^{+,nc}:=\left\{\left.s\in \mathcal{S}_g^+\right|\ g\ \text{and}\ s_0\circ s\ \text{do not commute}\right\}.$$
We set $\overline{\mathcal{S}}_g^{\bullet}:=\mathcal{S}_g^{\bullet}/\left\langle g\right\rangle$ the orbits under the right multiplication action, with 
"$\bullet$" which can be any of the previous decorations.
\item
We consider the same notation for $\mathcal{S}_{g_i,g_j}$, where $g_i$, $g_j$ are two different automorphisms in $G$ such that $\left\langle g_i \right\rangle\cap\left\langle g_j\right\rangle=\left\langle g\right\rangle$.
$$\mathcal{S}_{g_i,g_j}^+:=\left\{\left.s\in \mathcal{S}_{g_i,g_j}\right|\ \exists \alpha\in \left\langle g_i\right\rangle,\ \theta(s\circ\alpha)\circ s\circ\alpha\in\left\langle g\right\rangle\right\}\ \text{and}\ \mathcal{S}_{g_i,g_j}^-:=\mathcal{S}_{g_i,g_j}\smallsetminus \mathcal{S}_{g_i,g_j}^+.$$
Also:
$$\mathcal{S}_{g_i,g_j}^{+,c}:=\left\{\left.s\in \mathcal{S}_{g_i,g_j}^+\right|\ g\ \text{and}\ s_0\circ s\ \text{commute}\right\}\ \text{and}\ \mathcal{S}_{g_i,g_j}^{+,nc}:=\left\{\left.s\in \mathcal{S}_{g_i,g_j}^+\right|\ g\ \text{and}\ s_0\circ s\ \text{do not commute}\right\}.$$
As before, we denote 
$\overline{\mathcal{S}}_{g_i,g_j}^{\bullet}:=\mathcal{S}_{g_i,g_j}^{\bullet}/\left\langle g_i\right\rangle$ the orbits under the right multiplication action, with 
"$\bullet$" which can be any of the previous decorations.
\item
We set $$\mathcal{F}_g:=\left\{\left.s\circ \alpha\right|\ s\in F,\ \alpha\in\left\langle g\right\rangle\right\}$$
and $\overline{\mathcal{F}}_g:=\mathcal{F}_g/\left\langle g\right\rangle$ the orbits under the right multiplication action.
We recall that $F$ is defined in Notation \ref{fixednota} (iv).
\end{itemize}
\end{nota}
\begin{rmk}\label{explainSg}
Let $(x,s(x))\in\Sigma$. 
\begin{itemize}
\item
By Lemma \ref{SgigjLemme}, $(x,s(x))$ is fixed by $g\in G$ and of type $(g,\theta(g))$ (resp. $(g_i,\theta(g_j))$) if and only if $s\in \mathcal{S}_g$ (resp. $s\in \mathcal{S}_{g_i,g_j}$). 
\item
Moreover the stabilizer of $(x,s(x))$ is as in case (i) of Lemma \ref{s0h} if and only if $s\in \mathcal{S}_g^+$ (resp. $s\in\mathcal{S}_{g_i,g_j}^+$). Then, the stabilizer of $(x,s(x))$ is as in case (ii) of Lemma \ref{s0h} if and only if $s\in \mathcal{S}_g^-$ (resp. $s\in\mathcal{S}_{g_i,g_j}^-$).
\item
When $s\in \mathcal{S}_g^+$ (resp. $s\in\mathcal{S}_{g_i,g_j}^+$), according to Lemma \ref{stabilyprely}, there are four possibilities for the stabilizer of $(x,s(x))$. These four possibilities correspond to the following four subsets of $\mathcal{S}_g^+$ (resp. $s\in\mathcal{S}_{g_i,g_j}^+$):
\begin{equation}
\mathcal{S}_g^{+,c}\cap \mathcal{F}_g,\ \ \ \mathcal{S}_g^{+,nc}\cap \mathcal{F}_g,\ \ \ \mathcal{S}_g^{+,c}\smallsetminus \mathcal{F}_g,\ \ \ \text{and}\ \ \ \mathcal{S}_g^{+,nc}\smallsetminus \mathcal{F}_g
\label{allsets}
\end{equation}
$$\text{(resp.}\ \ \ \mathcal{S}_{g_i,g_j}^{+,c}\cap \mathcal{F}_{g_i},\ \ \ \mathcal{S}_{g_i,g_j}^{+,nc}\cap \mathcal{F}_{g_i},\ \ \ \mathcal{S}_{g_i,g_j}^{+,c}\smallsetminus \mathcal{F}_{g_i},\ \ \ \text{and}\ \ \ \mathcal{S}_{g_i,g_j}^{+,nc}\smallsetminus \mathcal{F}_{g_i}\text{).}$$
\end{itemize}
\end{rmk}
\begin{prop}\label{possible}
Let $S$ be a K3 surface and 
$G$ an finite admissible symplectic automorphism group of $S$.
Let $\theta$ be an involution on $G$ (not necessarily valid).
Then $S(G)^{[2]}_{\theta}$ has only singularities of type $a_k$ and $\mathfrak{b}_m$ for $k\in \left\{2,3,4,6,8,12\right\}$ and $m\in \left\{4,6\right\}$.
\end{prop}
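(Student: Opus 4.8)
The plan is to analyze the singularities of $Z := S^2/\mathcal G$ with $\mathcal G = \langle j_\theta(G),\mathfrak S_2\rangle$ and show that, after applying the local terminalizations recalled in Proposition \ref{mini}, only singularities of the listed types survive. First I would separate the fixed points of elements of $\mathcal G\smallsetminus\{\id\}$ into the three categories isolated in Remark \ref{completermk}: (1) the isolated fixed points not lying on $\Sigma$; (2) the non-generic fixed points in $\Sigma$; and (3) the generic points of the fixed surfaces. The generic points of fixed surfaces, by Lemma \ref{surfaces}, give exactly the codimension-2 singularities; since $G$ is admissible (Hypothesis \ref{hypo}), every automorphism in $G$ has order $2,3,4$ or $6$, and a local computation at such a generic point shows the transverse singularity is of type $N_k$ or $M_k$ for some $k\in\{2,3,4,6\}$ — this is the point where admissibility is used, and it is essentially Fujiki's classification of the strata of $S^2/\mathcal G$. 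Applying Proposition \ref{mini}, each such stratum is resolved by $\widetilde N_k$ or $\widetilde M_k$; these partial resolutions are smooth except for finitely many isolated points of analytic type $\C^4/(-\id)$ or $\C^4/g_3 = \C^4/\diag(\xi_3,\xi_3^{-1},\xi_3^{-1},\xi_3)$, which after a change of basis are of type $a_2$ and $a_3$ respectively. So category (3) contributes only $a_2$ and $a_3$.

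Next I would treat category (1). A specific fixed point not on $\Sigma$ has stabilizer $\langle g\rangle$ with $g\in G$, $\mathcal O(g)\in\{2,3,4,6\}$, acting on $\C^4 = \C^2\oplus\C^2$ diagonally by $g$ on the first summand and $\theta(g)$ on the second; since $G$ is symplectic, in a suitable basis this is $\diag(\xi_k,\xi_k^{-1},\xi_k,\xi_k^{-1})$, i.e. a singularity of type $a_k$ with $k\in\{2,3,4,6\}$. Hence category (1) contributes only $a_2,a_3,a_4,a_6$. For category (2), the non-generic points of $\Sigma$, the stabilizer is the group $\langle H_{g_i,s},g\rangle$ of Lemma \ref{stabili2}, which by Lemma \ref{s0h} is either $\langle g\rangle$ (already covered, type $a_k$) or $\langle g, s_0\circ h\rangle$ for a suitable $h$. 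In the latter case Lemma \ref{stabilyprely} classifies the possible local actions: in the abelian case the group is $\langle \diag(\xi_k,\xi_k^{-1},\xi_k,\xi_k^{-1}), T_1\rangle$ or $\langle T_k\rangle$ with $k\in\{2,3,4,6\}$; in the non-abelian case it is $\langle\diag(\xi_k,\xi_k^{-1},\xi_k^{-1},\xi_k), T_1\rangle$ with $k\in\{3,4,6\}$ or $\langle\diag(\xi_k,\xi_k^{-1},\xi_k^{-1},\xi_k), T_2\rangle$ with $k\in\{4,6\}$.

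It remains to match these groups against the list in Notation \ref{defsing} and dispose of the cases not already terminal. The group $\langle T_k\rangle$ is cyclic of order $2k$, giving a singularity of type $a_{2k}$ with $k\in\{4,6\}$, so $a_8$ or $a_{12}$ (the case $k=2$ gives $T_2$ of order $4$, which after a change of basis is $a_4$, and $k=3$ gives $T_3$ of order $6$, which is $a_6$). The group $\langle \diag(\xi_k,\xi_k^{-1},\xi_k,\xi_k^{-1}), T_1\rangle$ is one of Fujiki's $M_{2k}$-type groups; for $k\in\{2,3\}$ these are $M_4,M_6$ and Proposition \ref{mini} resolves them with isolated $\C^4/(-\id)$ or $\C^4/g_3$ quotients, i.e. $a_2$ and $a_3$; for $k$ with $2k\in\{8,12\}$ one checks (as in the change-of-basis arguments already used in the proof of Lemma \ref{stabilyprely}) that it reduces to $\langle T_{k}\rangle$ up to basis change, giving $a_8,a_{12}$. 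The non-abelian groups $\langle\diag(\xi_k,\xi_k^{-1},\xi_k^{-1},\xi_k),T_1\rangle$ for $k\in\{3,4,6\}$ are exactly the groups $\langle g_k,s_0\rangle$ defining $N_k$, resolved by Proposition \ref{mini} with at worst isolated $\C^4/(-\id)$ or $\C^4/g_3$ singularities, hence only $a_2,a_3$; and $\langle\diag(\xi_k,\xi_k^{-1},\xi_k^{-1},\xi_k),T_2\rangle$ for $k\in\{4,6\}$ is by definition the group producing a $\mathfrak b_k$ singularity. Assembling all contributions, the only types that appear are $a_k$ for $k\in\{2,3,4,6,8,12\}$ and $\mathfrak b_m$ for $m\in\{4,6\}$, which is the claim. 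The main obstacle is the bookkeeping of which intermediate quotient groups from Lemma \ref{stabilyprely} are already terminal, which are resolved by Fujiki's $\widetilde N_k,\widetilde M_k$ and what isolated residual singularities those resolutions introduce, and which genuinely persist as $a_8,a_{12},\mathfrak b_4,\mathfrak b_6$; this requires carefully carrying the change-of-basis normalizations through each case, but no new idea beyond the lemmas already established.
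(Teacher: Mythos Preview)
Your overall approach is exactly the paper's: split fixed points into the three categories of Remark \ref{completermk}, use Lemmas \ref{stabili2}, \ref{s0h}, \ref{stabilyprely} to list all possible local stabilizer actions, and match each against Notation \ref{defsing} and Proposition \ref{mini}. The structure is correct.

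There is, however, a concrete error in your case analysis of the abelian stabilizer $\langle \diag(\xi_k,\xi_k^{-1},\xi_k,\xi_k^{-1}),T_1\rangle$. This group is $\langle h_k,s_0\rangle$ in the notation of Section \ref{recallFujiki}, so the corresponding singularity is $M_k$, not $M_{2k}$. Your claim that for $k\in\{4,6\}$ this group ``reduces to $\langle T_k\rangle$ up to basis change'' is false: for $k=4$ one has $\langle h_4,T_1\rangle\cong C_4\times C_2$ while $\langle T_4\rangle\cong C_8$, and similarly $C_6\times C_2\not\cong C_{12}$ for $k=6$. The correct treatment, as in the paper, is that $\langle h_k,T_1\rangle$ gives $M_k$ for \emph{every} $k\in\{2,3,4,6\}$, and Proposition \ref{mini} then resolves $M_3,M_4,M_6$ leaving only residual singularities of type $a_2$ or $a_3$. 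Thus this family never produces $a_8$ or $a_{12}$; those arise only from the cyclic groups $\langle T_4\rangle$ and $\langle T_6\rangle$ on the right side of Lemma \ref{stabilyprely}(i). Your final list of types happens to be unaffected because $a_2,a_3$ are already present, but the bookkeeping (and hence any subsequent counting, as in Propositions \ref{a8}--\ref{a3}) would be wrong.
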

\begin{proof}
Let $(x,y)\in S^2$. If $(x,y)\notin\Sigma$, then by Lemma \ref{surfaces0}, the stabilizer of $(x,y)$ is given by $\left\langle g\right\rangle$ with $g$ of order 2, 3, 4 or 6. If $(x,y)\in\Sigma$, by Lemmas \ref{stabili2} and \ref{s0h}, the stabilizer of $(x,y)$ can only be 
$\left\langle g\right\rangle$ or $\left\langle g,s_0\circ s\right\rangle$, with $g$ of order 2, 3, 4 or 6 and $s\in G$. 
Therefore we obtain that the singularities of $Z=S^2/\mathcal{G}$ can be of type $a_k$ for $k\in \left\{2,3,4,6\right\}$ or of type
$\C^4/\mathcal{G}_{(x,y)}$ with $\mathcal{G}_{(x,y)}$ described in Lemma \ref{stabilyprely}.

If $\mathcal{G}_{(x,y)}=\left\langle \diag(\xi_k,\xi_k^{-1},\xi_k,\xi_k^{-1}),T_1\right\rangle$ or $\mathcal{G}_{(x,y)}=\left\langle \diag(\xi_k,\xi_k^{-1},\xi_k^{-1},\xi_k),T_1\right\rangle$ with $k\in \left\{2,3,4,6\right\}$, the quotient $\C^4/\mathcal{G}_{(x,y)}$ is a singularity of type $M_k$ or $N_k$ respectively. 
According to Proposition \ref{mini}, these singularities induce on $S(G)^{[2]}_{\theta}$ only singularities of type $a_2$ or $a_3$.

The case when $\mathcal{G}_{(x,y)}=\left\langle \diag(\xi_k,\xi_k^{-1},\xi_k^{-1},\xi_k),T_2\right\rangle$ for $k\in\left\{4,6\right\}$ provides the singularities of type $\mathfrak{b}_k$.

The case  $\mathcal{G}_{(x,y)}=\left\langle T_{k}\right\rangle$ for $k\in \left\{2,3,4,6\right\}$ provides the singularities of type $a_{2k}$.
When $k\in \left\{2,3\right\}$ the singularities $\C^4/\diag(\xi_{2k},\xi_{2k}^{-1},\xi_{2k}^{-1},\xi_{2k})$ and $\C^4/T_k$ are equivalent; however this is not the case when $k\in \left\{4,6\right\}$.

\end{proof}
The singularities of types $a_8$, $a_{12}$, $\mathfrak{b}_4$ and $\mathfrak{b}_6$ are appearing only in one configuration; they are the rarest and then the simplest to determine.
\begin{prop}\label{a8}
Let $G$ be a finite admissible symplectic automorphism group action on a K3 surface $S$.
Let $a_k$ and $\mathfrak{b}_k$ be the number of singularities of $S(G)^{[2]}_{\theta}$ as defined in Notation \ref{defsing}.
We have:
\begin{equation}
a_8=\frac{16}{\left|G\right|}\sum_{g\in \mathcal{O}(G)_4}\#(\overline{\mathcal{S}}_g^{+,c}\smallsetminus \overline{\mathcal{F}}_g);
\label{a88}
\end{equation}
\begin{equation}
a_{12}=\frac{12}{\left|G\right|}\sum_{g\in \mathcal{O}(G)_6}\#(\overline{\mathcal{S}}_g^{+,c}\smallsetminus \overline{\mathcal{F}}_g);
\label{a12}
\end{equation}
\begin{equation}
\mathfrak{b}_4=\frac{16}{\left|G\right|}\sum_{g\in \mathcal{O}(G)_4}\#(\overline{\mathcal{S}}_g^{+,nc}\smallsetminus \overline{\mathcal{F}}_g);
\label{b4}
\end{equation}
\begin{equation}
\mathfrak{b}_{6}=\frac{12}{\left|G\right|}\sum_{g\in \mathcal{O}(G)_6}\#(\overline{\mathcal{S}}_g^{+,nc}\smallsetminus \overline{\mathcal{F}}_g).
\label{b6}
\end{equation}
\end{prop}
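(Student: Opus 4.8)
The plan is to count these singularities directly on the quotient $Z=S^2/\mathcal{G}$ and then observe that Fujiki's partial terminalization of Proposition \ref{mini} leaves them untouched, so that their numbers on $S(G)^{[2]}_\theta$ agree with their numbers on $Z$.

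First I would identify which points of $S^2$ produce these four analytic types on $Z$. By Proposition \ref{possible} together with the proof of Lemma \ref{stabilyprely}, a point of $S^2$ whose stabilizer is isomorphic, as a subgroup of $\GL_4(\C)$, to $\left\langle T_k\right\rangle$ or to $\left\langle\diag(\xi_k,\xi_k^{-1},\xi_k^{-1},\xi_k),T_2\right\rangle$ with $k\in\{4,6\}$ must lie in $\Sigma$ and have stabilizer of the shape $\left\langle g,s_0\circ h\right\rangle$ with $g\in G$ of order $k$: points outside $\Sigma$ carry only cyclic stabilizers $\left\langle g\right\rangle$ by Lemma \ref{surfaces0} (hence singularities $a_k$), and generic points of fixed surfaces, i.e. the case $s\in\mathcal{F}_g$, give the types $N_k$ or $M_k$ whose terminalizations carry only $a_2$ or $a_3$ singularities. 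Since $g$ has order $4$ or $6$, the cyclic group $\left\langle g\right\rangle$ has a unique subgroup of each order, so no splitting $g_i\neq g_j$ occurs and one works with $\mathcal{S}_g$ rather than $\mathcal{S}_{g_i,g_j}$; moreover if such a $g$ fixes $(x,s(x))$ then $x\in\SpFix_S g$ automatically (its $G$-stabilizer contains $\left\langle g\right\rangle$ and is cyclic of order $\le 6$ by Hypothesis \ref{hypo}, hence equals $\left\langle g\right\rangle$), $s(x)\in\SpFix_S\theta(g)$, and $s\in\mathcal{S}_g$. By Remark \ref{explainSg} the four possibilities of Lemma \ref{stabilyprely} for the stabilizer match the partition of $\mathcal{S}_g^+$ into $\mathcal{S}_g^{+,c}\cap\mathcal{F}_g$, $\mathcal{S}_g^{+,nc}\cap\mathcal{F}_g$, $\mathcal{S}_g^{+,c}\smallsetminus\mathcal{F}_g$ and $\mathcal{S}_g^{+,nc}\smallsetminus\mathcal{F}_g$; the third gives $\left\langle T_k\right\rangle$, hence a singularity of type $a_{2k}$, and the fourth gives $\left\langle\diag(\xi_k,\xi_k^{-1},\xi_k^{-1},\xi_k),T_2\right\rangle$, hence a singularity of type $\mathfrak{b}_k$.

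Next I would run the orbit count. Singular points of $Z$ of type $a_{2k}$ (resp.\ $\mathfrak{b}_k$) correspond bijectively to $\mathcal{G}$-orbits in $S^2$ of points $(x,s(x))$ as above, and every such orbit has a stabilizer of order $|\left\langle T_k\right\rangle|$ (resp.\ $|\left\langle\diag(\xi_k,\xi_k^{-1},\xi_k^{-1},\xi_k),T_2\right\rangle|$), which equals $8$ when $k=4$ and $12$ when $k=6$. Since $|\mathcal{G}|=2|G|$ by Lemma \ref{card} with $n=2$, each such orbit has size $|G|/4$ when $k=4$ and $|G|/6$ when $k=6$. To count the points: running $g$ over $\mathcal{O}(G)_k$ (which legitimately identifies $g$ with $g^{-1}$, as these share specific fixed points and the sets $\mathcal{S}_g$, $\mathcal{F}_g$, $\overline{\mathcal{S}}_g^{\bullet}$, and since the unique order-$k$ subgroup of the stabilizer pins down this class), there are $\#\SpFix_S g$ choices for $x$, equal to $4$ if $k=4$ and $2$ if $k=6$ by Lemma \ref{speci46}, and, by Lemma \ref{Sg}, for each $x$ the distinct points $s(x)$ with $s$ in the relevant subset are parametrized by $\overline{\mathcal{S}}_g^{+,c}\smallsetminus\overline{\mathcal{F}}_g$ (resp.\ $\overline{\mathcal{S}}_g^{+,nc}\smallsetminus\overline{\mathcal{F}}_g$). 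Dividing the total by the orbit size gives, for instance,
\[
a_8(Z)=\frac{1}{|G|/4}\sum_{g\in\mathcal{O}(G)_4}4\,\#\bigl(\overline{\mathcal{S}}_g^{+,c}\smallsetminus\overline{\mathcal{F}}_g\bigr)=\frac{16}{|G|}\sum_{g\in\mathcal{O}(G)_4}\#\bigl(\overline{\mathcal{S}}_g^{+,c}\smallsetminus\overline{\mathcal{F}}_g\bigr),
\]
and the formulas for $a_{12}$, $\mathfrak{b}_4$, $\mathfrak{b}_6$ come out identically with the evident substitutions of $\mathcal{O}(G)_6$, of $\overline{\mathcal{S}}_g^{+,nc}$, and of the prefactors $12/|G|$ and $16/|G|$.

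Finally I would check that these singular points survive the terminalization unchanged. The groups $\left\langle T_k\right\rangle$ and $\left\langle\diag(\xi_k,\xi_k^{-1},\xi_k^{-1},\xi_k),T_2\right\rangle$ act on $\C^4$ with $0$ as the only fixed point of each non-trivial element, so the corresponding points of $Z$ are isolated singularities, of codimension $4$, hence already terminal by Proposition \ref{terminal}; in particular they lie off the codimension-$2$ singular locus of $Z$, which by Proposition \ref{mini} is the only part of $Z$ modified by Fujiki's terminalization, and that modification only introduces singularities of type $a_2$ or $a_3$. Hence $a_{2k}(S(G)^{[2]}_\theta)=a_{2k}(Z)$ and $\mathfrak{b}_k(S(G)^{[2]}_\theta)=\mathfrak{b}_k(Z)$, which completes the proof. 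The main obstacle is the bookkeeping rather than any single hard step: getting the dictionary between the four subsets of $\mathcal{S}_g^+$ and the four stabilizers of Lemma \ref{stabilyprely} exactly right, confirming that no fixed-point configuration other than those listed can yield these analytic types (so the count is exhaustive and no double counting occurs), and verifying that the resolution of the codimension-$2$ locus does not interfere with the isolated singularities being counted.
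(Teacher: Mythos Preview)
Your proposal is correct and follows essentially the same approach as the paper: identify that these analytic types arise only from points $(x,s(x))\in\Sigma$ with $g$ of order $4$ or $6$ and $s$ in the appropriate piece of $\mathcal{S}_g^+$, count such points via Lemma \ref{speci46} and Lemma \ref{Sg}, and divide by the orbit size $|\mathcal{G}|/|\text{stabilizer}|$. Your write-up is in fact slightly more complete than the paper's, since you explicitly note that no $g_i\neq g_j$ splitting can occur for $g$ of order $4$ or $6$, and you justify that these isolated singularities (lying off the codimension-$2$ locus because $s\notin\mathcal{F}_g$) are untouched by the terminalization of Proposition \ref{mini}; the paper leaves both points implicit. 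One minor phrasing issue: the case $s\in\mathcal{F}_g$ does not correspond to \emph{generic} points of fixed surfaces but rather to the special points of those surfaces where the stabilizer jumps to $\langle g,s_0\circ h\rangle$ with $h\in F$; your conclusion that these yield $N_k$ or $M_k$ is nonetheless correct.
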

\begin{proof}
A singularity of type $a_8$ comes from a fixed point with a stabilizer which is cyclic of order 8. According to Section \ref{onSxS} and Remark \ref{completermk}, this can only occur in the right side of case (i) of Lemma \ref{stabilyprely} with $\mathcal{O}(g)=4$. 
Therefore, a singularity of type $a_8$ comes from a fixed point $(x,y)$ with a stabilizer $\left\langle g,s_0\circ s\right\rangle$ which verifies the following conditions:
\begin{itemize}
\item[(i)]
$g$ has order 4;
\item[(ii)]
the fixed point is of the form $(x,s(x))$ with $x\in \SpFix g$ and $s\in \mathcal{S}_g^+$;
\item[(iii)]
$g$ and $s_0\circ s$ commutes;
\item[(iv)]
$\overline{s}\notin \overline{\mathcal{F}}_g$.
\end{itemize}
According to Lemma \ref{s0h}, the condition (ii) is needed to have a stabilizer of the form $\left\langle g,s_0\circ s\right\rangle$ rather than $\left\langle g\right\rangle$.
The condition (iii) is needed to have an abelian stabilizer. The condition (iv) is needed to choose the right side of Lemma \ref{stabilyprely} (i) rather than the left side. 

Now, we count how many points with such a stabilizer there are in $S^2$.
These points are of the form $(x,s(x))$ with $x\in \SpFix g$ and $s\in \mathcal{S}_g^{+,c}\smallsetminus \mathcal{F}_g$.
We recall that if $s'=s\circ g^t$ then $s'(x)=s(x)$.
Therefore by Lemma \ref{speci46}, there are $4\times \#(\overline{\mathcal{S}}_g^{+,c}\smallsetminus \overline{\mathcal{F}}_g)$ such fixed points.
Let $\Fix_8$ be the set of points in $S^2$ with a stabilizer which is a cyclic group of order 8.
Then:
$$\#\Fix_8=4\sum_{g\in\mathcal{O}(G)_4}\#(\overline{\mathcal{S}}_g^{+,c}\smallsetminus \overline{\mathcal{F}}_g).$$
The group $\mathcal{G}$ acts on $\Fix_8$. By definition of $\Fix_8$, the orbits of this action have cardinality $\frac{\left|\mathcal{G}\right|}{8}=\frac{\left|G\right|}{4}$; each of these orbits provide a singularity of type $a_8$. We obtain:
$$a_8=\frac{4\times 4}{\left|G\right|}\sum_{g\in\mathcal{O}(G)_4}\#(\overline{\mathcal{S}}_g^{+,c}\smallsetminus \overline{\mathcal{F}}_g).$$
The proofs of (\ref{a12}), (\ref{b4}) and (\ref{b6}) are identical with other conditions on the stabilizer. 
\end{proof}
\begin{rmk}
Actually, considering the list of all possible $G$ and $\theta$ on a K3 surface, the singularity $a_{12}$ will never be encountered, but this could not have been predicted only from Hypothesis \ref{hypo} (see Section \ref{examplescalculs}).
\end{rmk}
The next singularities, slightly more frequent, are the ones of type $a_4$ and $a_6$. We recall that $t(g)$ is defined in Lemma \ref{fixedG}.

\begin{prop}\label{a6}
Let $G$ be a finite admissible symplectic automorphism group acting on a K3 surface $S$.
Let $a_k$ be the number of singularities of $S(G)^{[2]}_{\theta}$ as defined in Notation \ref{defsing}.
We have:
\begin{align*}
a_6&=\frac{3}{\left|G\right|}\sum_{g\in \mathcal{O}(G)_3}(6-2k_6(g))\#(\overline{\mathcal{S}}_g^+\smallsetminus \overline{\mathcal{F}}_g)+\frac{6}{\left|G\right|}\sum_{g\in \mathcal{O}(G)_3}\sum_{1\leq i\neq j \leq k_6(g)}\#(\overline{\mathcal{S}}_{g_i,g_j}^+\smallsetminus \overline{\mathcal{F}}_{g_i})\\
&+\frac{6}{\left|G\right|}\sum_{g\in \mathcal{O}(G)_6}\#(\overline{\mathcal{S}}_g^-)+\frac{6}{\left|G\right|}\sum_{g\in \mathcal{O}(G)_6}2-t(g),
\end{align*}
where for each $g\in \mathcal{O}(G)_3$, the $(g_i)_{i\in \left\{1,...,k_6(g)\right\}}$ are generators of the different cyclic groups of order 6 containing $g$. Similarly:
\begin{align*}
a_4&=\frac{2}{\left|G\right|}\sum_{g\in \mathcal{O}(G)_2}(8-2k_6(g)-4k_4(g))\#(\overline{\mathcal{S}}_g^+\smallsetminus \overline{\mathcal{F}}_g)+\frac{4}{\left|G\right|}\sum_{g\in \mathcal{O}(G)_2}\sum_{1\leq i\neq j \leq k_6(g)}\#(\overline{\mathcal{S}}_{g_i,g_j}^+\smallsetminus \overline{\mathcal{F}}_{g_i})\\
&+\frac{8}{\left|G\right|}\sum_{g\in \mathcal{O}(G)_2}\sum_{1\leq i\neq j \leq k_4(g)}\#(\overline{\mathcal{S}}_{h_i,h_j}^+\smallsetminus \overline{\mathcal{F}}_{g_i})+\frac{8}{\left|G\right|}\sum_{g\in \mathcal{O}(G)_4}\#(\overline{\mathcal{S}}_g^-)+\frac{8}{\left|G\right|}\sum_{g\in \mathcal{O}(G)_4}4-t(g),
\end{align*}
where for each $g\in \mathcal{O}(G)_2$, the $(g_i)_{i\in \left\{1,...,k_6(g)\right\}}$ are generators of the different cyclic groups of order 6 containing $g$ and the $(h_i)_{i\in \left\{1,...,k_4(g)\right\}}$ are generators of the different cyclic groups of order 4 containing $g$.
\end{prop}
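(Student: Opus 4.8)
The plan is to prove the two identities by exactly the bookkeeping used for Proposition \ref{a8}: locate every fixed point of $\mathcal{G}$ on $S^2$ whose local terminalization carries a singularity of type $a_6$ (resp. $a_4$), count these points, and divide by the cardinality of the corresponding $\mathcal{G}$-orbit.

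First I would settle which local models contribute. By Proposition \ref{mini} the terminalizations $\widetilde{N}_k$ and $\widetilde{M}_k$ only carry residual singularities of type $a_2$ and $a_3$; combined with Lemma \ref{stabilyprely} and Proposition \ref{possible} this shows that, among the points of $\Sigma$, the ones relevant for $a_6$ (resp. $a_4$) are exactly those whose stabilizer is cyclic conjugate to $\langle T_3\rangle$ (resp. to $\langle T_2\rangle$), together with those of stabilizer $\langle g\rangle$ with $\mathcal{O}(g)=6$ (resp. $\mathcal{O}(g)=4$); the points outside $\Sigma$ that matter are the isolated fixed points with stabilizer $\langle g\rangle$, $\mathcal{O}(g)=6$ (resp. $4$), which by the symplectic constraint (the other factor contributes $\diag(\xi_6^a,\xi_6^{-a})$ with $\gcd(a,6)=1$, hence $a\equiv\pm1$) are automatically of type $a_6$ (resp. $a_4$). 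By Lemma \ref{s0h}, a point $(x,s(x))\in\Sigma$ with $x\in\SpFix g_i$ and $\langle g_i\rangle\cap\langle g_j\rangle=\langle g\rangle$ has stabilizer conjugate to $\langle T_{\mathcal{O}(g)}\rangle$ precisely when $s\in\mathcal{S}_{g_i,g_j}^{+}\smallsetminus\mathcal{F}_{g_i}$ and $g$, $s_0\circ s$ commute; but when $\mathcal{O}(g)\in\{2,3\}$ the element $g$ acts locally as $-\id$ or admits no non-abelian stabilizer (for $k=3$ the only non-abelian model is $N_3$, which contains $T_1$, i.e.\ lies in $\mathcal{F}_g$), so in those cases $\mathcal{S}_{g_i,g_j}^{+,nc}\smallsetminus\mathcal{F}_{g_i}=\emptyset$ and the relevant set collapses to $\overline{\mathcal{S}}_{g_i,g_j}^{+}\smallsetminus\overline{\mathcal{F}}_{g_i}$ (and likewise $\overline{\mathcal{S}}_g^{+}\smallsetminus\overline{\mathcal{F}}_g$). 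This produces, for $a_6$, three $\Sigma$-families — type $(g,\theta(g))$ with $\mathcal{O}(g)=3$; type $(g_i,\theta(g_j))$ with $g_i\neq g_j$ of order $6$ and $\langle g_i\rangle\cap\langle g_j\rangle$ of order $3$; and the points $(x,s(x))$ with $\mathcal{O}(g)=6$, $s\in\mathcal{S}_g^{-}$ — plus one non-$\Sigma$ family, the $N(g)=2(2-t(g))$ isolated fixed points of Lemma \ref{fixedG} with $\mathcal{O}(g)=6$. For $a_4$ one gets the analogous list with orders $2,4$ in the roles of $3,6$, except that the $(g_i,\theta(g_j))$ family now splits according to whether $g_i,g_j$ have order $6$ or order $4$ (both with intersection of order $2$), giving the second and third sums, and the isolated points of Lemma \ref{fixedG} with $\mathcal{O}(g)=4$ number $N(g)=4(4-t(g))$.

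Then I would carry out the count family by family. The number of fixed points on $S^2$ in a $\Sigma$-family is (number of specific $x$) $\times$ (number of partners $y=s(x)$ up to $s\sim s\circ g^{t}$), where the first factor is $n(g)=6-2k_6(g)$, $n(g)=8-2k_6(g)-4k_4(g)$, or $n(g)\in\{2,4\}$ by Lemmas \ref{speci3}, \ref{speci2}, \ref{speci46} according to the order, and the second factor is the cardinality of the appropriate $\overline{\mathcal{S}}^{\bullet}_{\cdot}$; for the non-$\Sigma$ families it is simply $N(g)$. Each such point has stabilizer of order $6$ in the $a_6$ count and of order $4$ in the $a_4$ count, and by Lemma \ref{card} we have $|\mathcal{G}|=2|G|$, so every $\mathcal{G}$-orbit of such points has cardinality $|G|/3$ (resp.\ $|G|/2$) and contributes exactly one singularity of the required type. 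Summing over $g$, identifying $g$ with $g^{-1}$ (they share the same specific fixed points, which is why the sums run over $\mathcal{O}(G)_k$), and collecting the prefactors $3/|G|$, $6/|G|$ (resp.\ $2/|G|$, $4/|G|$, $8/|G|$)—the factor $2$ in $N(g)=2(2-t(g))$, resp.\ $4$ in $N(g)=4(4-t(g))$, being absorbed into the prefactor—yields exactly the stated formulas.

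The main obstacle is the case analysis of the first step: one must verify, using Lemma \ref{stabilyprely} and Proposition \ref{mini}, that no other local model contributes an $a_6$ or $a_4$ singularity — in particular that the abelian stabilizers $\langle\diag(\xi_k,\dots),T_1\rangle$ produce only $a_2$ or $a_3$, that $\langle T_4\rangle$, $\langle T_6\rangle$ produce $a_8$, $a_{12}$ and $\langle\diag(\xi_k,\dots),T_2\rangle$ the $\mathfrak{b}_k$ (never $a_4$ or $a_6$) — and that the refinement $\mathcal{S}^{+}=\mathcal{S}^{+,c}\sqcup\mathcal{S}^{+,nc}$ indeed collapses as claimed when $\mathcal{O}(g)\in\{2,3\}$. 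Once this classification is pinned down, the remainder is the orbit-counting template already applied to $a_8$.
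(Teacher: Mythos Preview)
Your proposal is correct and follows essentially the same approach as the paper: both identify the same three (resp.\ four) families of fixed points for $a_6$ (resp.\ $a_4$) --- the $\Sigma$-points with cyclic stabilizer $\langle g,s_0\circ h\rangle$ of order $6$ (resp.\ $4$) split by type, the $\Sigma$-points with $s\in\mathcal{S}_g^{-}$ for $g$ of order $6$ (resp.\ $4$), and the non-$\Sigma$ points of Lemma~\ref{fixedG} --- and then divide by the orbit size $|G|/3$ (resp.\ $|G|/2$). Your explicit justification of the collapse $\overline{\mathcal{S}}_g^{+}\smallsetminus\overline{\mathcal{F}}_g = \overline{\mathcal{S}}_g^{+,c}\smallsetminus\overline{\mathcal{F}}_g$ for $\mathcal{O}(g)\in\{2,3\}$ (via Lemma~\ref{commute} for $k=2$ and the absence of a $T_2$-model for $k=3$ in Lemma~\ref{stabilyprely}) is a detail the paper leaves implicit.
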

\begin{proof}
We prove the proposition for $a_4$, the proof is identical for $a_6$.
The singularities of type $a_4$ can appear from three different configurations of fixed points:
\begin{itemize}
\item[(a)]
a fixed point in $\Sigma$ with a cyclic stabilizer $\left\langle g,s_0\circ h\right\rangle$ with $g\in G$ of order 2 and $(s_0\circ h)^2\in \left\langle g\right\rangle$. This is the case (i) of Lemma \ref{s0h} and the right side of Lemma \ref{stabilyprely} (i).
\item[(b)]
a fixed point in $\Sigma$ with a stabilizer $\left\langle g\right\rangle$ with $g\in G$ of order 4. This is the case (ii) of Lemma \ref{s0h}.
\item[(c)]
a fixed point which is not in $\Sigma$ fixed by an automorphism $g\in G$ of order 4. This is described in Lemma \ref{fixedG}.
\end{itemize}
We are going to count all the fixed points from these three configurations.
\begin{itemize}
\item[(c)]
The simplest configuration are the one given by Lemma \ref{fixedG}. %It corresponds to fixed points of type $(g,\theta(g))$ with $g$ of order 4.
Let $g$ be an automorphism of order 4, then $g$ has $4(4-t(g))$ specific fixed points which are not in $\Sigma$. Moreover the stabilizer of such a fixed point has order 4; so the action of $\mathcal{G}$ on this set of points has orbits of cardinality $\frac{\left|\mathcal{G}\right|}{4}=\frac{\left|G\right|}{2}$. 
Therefore, the contribution to the singularities of these kinds of fixed points is:
$$\frac{2}{\left|G\right|}\sum_{g\in \mathcal{O}(G)_4}4(4-t(g))=\frac{8}{\left|G\right|}\sum_{g\in \mathcal{O}(G)_4}(4-t(g)).$$
\item[(b)]
These fixed points are points that can be written $(x,s(x))$ with $s\in \mathcal{S}_g^-$ and $x$ fixed by an automorphism of order 4. Therefore there are 4 choices for $x$ by Lemma \ref{speci46} and $\#(\overline{\mathcal{S}}_g^-)$ choices for $s(x)$ according to Lemma \ref{s0h} (ii). As before the orbits of the action of $\mathcal{G}$ on this set of points have cardinality $\frac{\left|G\right|}{2}$.
Hence the contribution of these fixed points to the singularities is:
$$\frac{2}{\left|G\right|}\sum_{g\in \mathcal{O}(G)_4}4\times\#(\overline{\mathcal{S}}_g^-)=\frac{8}{\left|G\right|}\sum_{g\in \mathcal{O}(G)_4}\#(\overline{\mathcal{S}}_g^-).$$
\item[(a)]
These fixed points have a stabilizer $\left\langle g,s_0\circ h\right\rangle$ as described in Lemma \ref{s0h} (i) and the right side of Lemma \ref{stabilyprely} (i), with $g$ of order 2.
These fixed points can be of three different types:
\begin{itemize}
\item[(I)]
the type $(g,\theta(g))$;
\item[(II)]
the type $(g_i,\theta(g_j))$ with $g_i$ and $g_j$ of order 6 such that $\left\langle g_i\right\rangle\cap\left\langle g_j\right\rangle=\left\langle g\right\rangle$;
\item[(III)]
the type $(h_i,\theta(h_j))$ with $h_i$ and $h_j$ of order 4 such that $\left\langle h_i\right\rangle\cap\left\langle h_j\right\rangle=\left\langle g\right\rangle$.
\end{itemize}
We count the number of fixed points for each types.
\begin{itemize}
\item[(I)]
Let $(x,s(x))$ be a fixed point of type $(g,\theta(g))$ with stabilizer $\left\langle g,s_0\circ h\right\rangle$. There are $8-k_6(g)-k_4(g)$ choices for $x$ according to Lemma \ref{speci2} and there are $\#(\overline{\mathcal{S}}_g^+\smallsetminus \overline{\mathcal{F}}_g)$ choices for $s(x)$. Therefore, these fixed points contribute to $$\frac{2}{\left|G\right|}\sum_{g\in \mathcal{O}(G)_2}(8-2k_6(g)-4k_4(g))\#(\overline{\mathcal{S}}_g^+\smallsetminus \overline{\mathcal{F}}_g)$$
singularities of type $a_4$.
\item[(II)]
Let $(x,s(x))$ be a fixed point of type $(g_i,\theta(g_j))$ with stabilizer $\left\langle g,s_0\circ h\right\rangle$ and $g_i$, $g_j$ of order 6. There are 2 choices for $x$ according to Lemma \ref{speci46} and $\#(\overline{\mathcal{S}}_{g_i,g_j}^+\smallsetminus \overline{\mathcal{F}}_{g_i})$ choices for $s(x)$.
Therefore, the contribution to the singularities of these points are given by:
$$\frac{2}{\left|G\right|}\sum_{g\in \mathcal{O}(G)_2}\sum_{1\leq i\neq j \leq k_6(g)}2\times\#(\overline{\mathcal{S}}_{g_i,g_j}^+\smallsetminus \overline{\mathcal{F}}_{g_i})=\frac{4}{\left|G\right|}\sum_{g\in \mathcal{O}(G)_2}\sum_{1\leq i\neq j \leq k_6(g)}\#(\overline{\mathcal{S}}_{g_i,g_j}^+\smallsetminus \overline{\mathcal{F}}_{g_i}).$$
\item[(III)]
In this case, the computation is identical to the one in case (II).
\end{itemize}
\end{itemize}
\end{proof}
Finally, it remains to count the singularities of type $a_2$ and $a_3$ which are the most frequent.
\begin{prop}\label{a3}
Let $G$ be a finite admissible symplectic automorphism group action on a K3 surface $S$. 
Let $a_k$ be the number of singularities of $S(G)^{[2]}_{\theta}$ as defined in Notation \ref{defsing}. For each $g\in G$, let $N(g)$ be the numbers provided in Lemma \ref{fixedG}.
We have:
\begin{align*}
a_3&=\frac{3}{2\left|G\right|}\sum_{g\in \mathcal{O}(G)_3}N(g)+\frac{3}{2\left|G\right|}\sum_{g\in \mathcal{O}(G)_3}(6-2k_6(g))\#(\overline{\mathcal{S}}_g^-)+\frac{3}{\left|G\right|}\sum_{g\in \mathcal{O}(G)_3}\sum_{1\leq i\neq j \leq k_6(g)}\#(\overline{\mathcal{S}}_{g_i,g_j}^-)\\
&+\frac{6}{\left|G\right|}\sum_{g\in \mathcal{O}(G)_3}(6-2k_6(g))\#(\overline{\mathcal{S}}_g^{+,c}\cap \overline{\mathcal{F}}_g)+\frac{12}{\left|G\right|}\sum_{g\in \mathcal{O}(G)_3}\sum_{1\leq i\neq j \leq k_6(g)}\#(\overline{\mathcal{S}}_{g_i,g_j}^{+,c}\cap \overline{\mathcal{F}}_{g_i})\\
&+\frac{48}{\left|G\right|}\sum_{g\in \mathcal{O}(G)_6}\#(\overline{\mathcal{S}}_g^{+,c}\cap \overline{\mathcal{F}}_g).
\end{align*}
Moreover, we have:
\begin{align*}
a_2&=\frac{1}{\left|G\right|}\sum_{g\in \mathcal{O}(G)_2}N(g)+\frac{1}{\left|G\right|}\sum_{g\in \mathcal{O}(G)_2}(8-2k_6(g)-4k_4(g))\#(\overline{\mathcal{S}}_g^-)+\frac{2}{\left|G\right|}\sum_{g\in \mathcal{O}(G)_2}\sum_{1\leq i\neq j \leq k_6(g)}\#(\overline{\mathcal{S}}_{g_i,g_j}^-)\\
&+\frac{4}{\left|G\right|}\sum_{g\in \mathcal{O}(G)_2}\sum_{1\leq i\neq j \leq k_4(g)}\#(\overline{\mathcal{S}}_{h_i,h_j}^-)+\frac{12}{\left|G\right|}\sum_{g\in \mathcal{O}(G)_6}\#(\overline{\mathcal{S}}_g^{+,nc}\cap \overline{\mathcal{F}}_g)+\frac{64}{\left|G\right|}\sum_{g\in \mathcal{O}(G)_4}\#(\overline{\mathcal{S}}_g^{+,c}\cap \overline{\mathcal{F}}_g)
\end{align*}
\end{prop}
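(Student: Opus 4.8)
The plan is to follow the method of Propositions \ref{a8} and \ref{a6}: first describe the singular points of $Z:=S^2/\mathcal{G}$ whose local terminalization contributes a point of type $a_3$ (resp.\ $a_2$) to $S(G)^{[2]}_{\theta}$; then count each such point of $Z$ by counting the $\mathcal{G}$-orbits in $S^2$ of the corresponding fixed loci, using the analysis of Sections \ref{onS} and \ref{onSxS}; and finally weight each contribution by the number of $a_3$- (resp.\ $a_2$-)points that the local terminalization of Proposition \ref{mini} produces. By Proposition \ref{possible} and its proof together with Proposition \ref{mini}, a point of type $a_3$ on $S(G)^{[2]}_{\theta}$ arises in exactly one of the following ways: (a) from a point of $Z$ of analytic type $\C^4/\langle g\rangle$ with $\mathcal{O}(g)=3$ (which is $\Q$-factorial terminal and survives the terminalization); (b) from a point of type $M_3$, whose terminalization carries $2$ points of type $a_3$; (c) from a point of type $M_6$, whose terminalization carries $4$ points of type $a_3$. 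For $k=3$ the remaining singularity types of $Z$ allowed by Lemma \ref{stabilyprely}, namely $N_3$ and $\langle T_3\rangle$, become respectively smooth and of type $a_6$ after terminalization, while the non-abelian $T_2$-type does not occur for $k=3$; so none of them produces an $a_3$-point. Symmetrically, a point of type $a_2$ arises: (a$'$) from a point of type $\C^4/\langle g\rangle$ with $\mathcal{O}(g)=2$; (b$'$) from a point of type $N_6$, contributing $1$ point of type $a_2$; (c$'$) from a point of type $M_4$, contributing $4$ points of type $a_2$; for $k=2$ the remaining cases ($N_2=M_2$ smooth, $\langle T_2\rangle$ of type $a_4$, non-abelian $T_2$-type absent) contribute nothing.

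For (a) and (a$'$) I would argue as follows. By Remark \ref{completermk}, a point of $S^2$ with cyclic stabilizer $\langle g\rangle$ either is not in $\Sigma$, and is then a specific fixed point of $g$ for the action on $S^2$ — the number of these being $\sum N(g)$ over $g\in\mathcal{O}(G)_3$ (resp.\ $\mathcal{O}(G)_2$), with $N(g)$ given by Lemma \ref{fixedG}; or it lies in $\Sigma$, in which case Lemmas \ref{stabili2} and \ref{s0h}(ii) show it equals $(x,s(x))$ with $x\in\SpFix g_i$, $s\in\mathcal{S}^-_{g_i,g_j}$ and $\langle g_i\rangle\cap\langle g_j\rangle=\langle g\rangle$, allowing $g_i=g_j=g$. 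For $\mathcal{O}(g)=3$ the pairs $(g_i,g_j)$ are $g_i=g_j=g$ or $g_i\neq g_j$ of order $6$; for $\mathcal{O}(g)=2$ they are $g_i=g_j=g$, or $g_i\neq g_j$ of order $6$, or $g_i\neq g_j$ of order $4$. Counting the choices of $x$ by Lemmas \ref{speci46}, \ref{speci3}, \ref{speci2}, the choices of $s(x)$ by the orbit sets $\overline{\mathcal{S}}^-_{g}$ and $\overline{\mathcal{S}}^-_{g_i,g_j}$, and dividing by the orbit length $|\mathcal{G}|/\mathcal{O}(g)$, which by Lemma \ref{card} equals $2|G|/3$ (resp.\ $|G|$), produces the first three terms of the $a_3$-formula and the first four of the $a_2$-formula.

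For (b), (c), (b$'$) and (c$'$): a point of $Z$ of type $M_k$ (resp.\ $N_k$) is the image of a fixed point $(x,s(x))\in\Sigma$ whose stabilizer $\langle g,s_0\circ h\rangle$ is abelian (resp.\ non-abelian), with cyclic core $g$ of order $k$ and with $s_0\circ h$ of order $2$. By Remark \ref{explainSg}, if $x\in\SpFix g$ this is exactly the condition $s\in\mathcal{S}^{+,c}_g\cap\mathcal{F}_g$ (resp.\ $s\in\mathcal{S}^{+,nc}_g\cap\mathcal{F}_g$), and if $x\in\SpFix g_i$ with $\langle g_i\rangle\cap\langle g_j\rangle=\langle g\rangle$ it is the analogous condition on $\mathcal{S}_{g_i,g_j}$ and $\mathcal{F}_{g_i}$. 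Since by Hypothesis \ref{hypo} $G$ has no element of order $8$ or $12$, a pair $g_i\neq g_j$ (which then has order $4$ or $6$) forces the core $g$ to have order $2$ or $3$; hence such pairs never occur for $M_4$, $M_6$ or $N_6$, while for $M_3$ they occur precisely with $g_i\neq g_j$ of order $6$. Counting $x$'s and $s(x)$'s as above, dividing by the orbit lengths $|\mathcal{G}|/|M_3|=|G|/3$, $|\mathcal{G}|/|M_6|=|\mathcal{G}|/|N_6|=|G|/6$ and $|\mathcal{G}|/|M_4|=|G|/4$, and multiplying by the resolution multiplicities $2$ for $\widetilde{M}_3$, $4$ for $\widetilde{M}_6$, $1$ for $\widetilde{N}_6$ and $4$ for $\widetilde{M}_4$ (Proposition \ref{mini}), yields the remaining terms of both formulas. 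Adding up the three contributions in each of the two cases gives the stated identities.

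I expect the main obstacle to be the precise dictionary between the group-theoretic data and the analytic types, rather than the counting itself: one must match the decorated sets $\mathcal{S}^{\pm}_g$, $\mathcal{S}^{+,c}_g$, $\mathcal{S}^{+,nc}_g$, $\mathcal{F}_g$ (and their $\mathcal{S}_{g_i,g_j}$ analogues) to the stabilizers of Lemmas \ref{s0h} and \ref{stabilyprely} through Remark \ref{explainSg} — in particular the correspondences $\mathcal{S}^{+,c}_g\cap\mathcal{F}_g\leftrightarrow M_k$ and $\mathcal{S}^{+,nc}_g\cap\mathcal{F}_g\leftrightarrow N_k$ — and then rule out every other configuration as a source of an $a_2$- or $a_3$-point, using Hypothesis \ref{hypo} and Proposition \ref{mini}. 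The rest is the same bookkeeping of orbit lengths ($|\mathcal{G}|=2|G|$ by Lemma \ref{card}) and of resolution multiplicities as in the proofs of Propositions \ref{a8} and \ref{a6}.
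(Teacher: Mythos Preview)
Your proposal is correct and follows essentially the same approach as the paper. The paper organizes the analysis into three cases---points not in $\Sigma$, points in $\Sigma$ with cyclic stabilizer $\langle g\rangle$, and points lying on a fixed surface whose terminalization (Proposition \ref{mini}) leaves residual singularities---while you organize by the analytic type of the singular point in $Z$; the two decompositions coincide, and your identification $\mathcal{S}_g^{+,c}\cap\mathcal{F}_g\leftrightarrow M_k$, $\mathcal{S}_g^{+,nc}\cap\mathcal{F}_g\leftrightarrow N_k$, your exclusion of the $(g_i,g_j)$ terms for $k\in\{4,6\}$ via Hypothesis \ref{hypo}, and your orbit-length and multiplicity bookkeeping all match the paper exactly.
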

\begin{proof}
The computation is slightly different for $a_3$ and $a_2$, so we are going to prove the two equations.
\subsubsection*{Computation of $a_3$:}
The singularities of type $a_3$ can appear from three different configurations of fixed points.
\begin{itemize}
\item[(a)]
The fixed points which are not in $\Sigma$ and are fixed by an automorphism of order 3. There are described in Lemma \ref{fixedG}.
\item[(b)]
The fixed points which are in $\Sigma$ with a stabilizer of order 3. This is the case of Lemma \ref{s0h} (ii) when $g$ has order 3.
\item[(c)]
It can also be singularities remaining after a partial resolution as described in Proposition \ref{mini}.
\end{itemize}
We count the singularities for each of these configurations.
\begin{itemize}
\item[(a)]
According to Lemma \ref{fixedG}, the number of fixed points in this configuration is $N(g)$ for each $g\in \mathcal{O}(G)_3$. Hence, it provides the following contribution to the singularities:
$$\frac{3}{2\left|G\right|}\sum_{g\in \mathcal{O}(G)_3}N(g).$$
\item[(b)]
The fixed points of this configuration are in $\Sigma$, so can be written $(x,s(x))$. They can be of two different types:
\begin{itemize}
\item[(I)]
type $(g,\theta(g))$;
\item[(II)]
type $(g_i,\theta(g_j))$, with $g_i$ and $g_j$ of order 6. 
\end{itemize}
Since the stabilizer of these fixed points is of order 3, we are in the case (ii) of Lemma \ref{s0h}. Therefore, in case (I), there are $6-2k_6(g)$ choices for $x$ according to Lemma \ref{speci3} and $\#\left(\overline{\mathcal{S}}_g^{-}\right)$ choices for $s(x)$. We obtain the following contribution to the singularities: 
$$\frac{3}{2\left|G\right|}\sum_{g\in \mathcal{O}(G)_3}(6-2k_6(g))\#(\overline{\mathcal{S}}_g^-).$$
Similarly, in case (II), we have 2 choices for $x$ according to Lemma \ref{speci46} and $\#\left(\overline{\mathcal{S}}_{g_i,g_j}^{-}\right)$ choices for $s(x)$. We obtain the following contribution to the singularities: 
$$\frac{3}{\left|G\right|}\sum_{g\in \mathcal{O}(G)_3}\sum_{1\leq i\neq j \leq k_6(g)}\#(\overline{\mathcal{S}}_{g_i,g_j}^-).$$
\item[(c)]
The singularities in this case are obtained as remaining singularities of terminalizations (see Proposition \ref{mini}). Therefore, they come from fixed points contained in a fixed surface. These fixed points can be written $(x,s(x))$ with $s\in F$ (see Lemma \ref{surfaces}). If the stabilizer of $(x,s(x))$ is given by $\left\langle s_0\circ s\right\rangle$ then the singularities induced by $(x,s(x))$ is resolved by a terminalization. Hence by Lemma \ref{complete}, a fixed point $(x,s(x))$ with $s\in F$ can induce singularities on $S(G)_{\theta}^{[2]}$ if and only if there exists $g$, $g_i$, $g_j$ in $G\smallsetminus \left\{\id\right\}$ such that $s\in \mathcal{S}_{g_i,g_j}$ (eventually with $g=g_i=g_j$). According to Proposition \ref{mini}, there are two configurations that can lead to singularities of type $a_3$ on the terminalization:
\begin{itemize}
\item[(I)]
when $g$ has order 3 and the stabilizer of $(x,s(x))$ is abelian;
\item[(II)]
when $g$ has order 6 and the stabilizer of $(x,s(x))$ is abelian.
\end{itemize}
We count the singularities induced by these two previous configurations. We stat by considering the case (II) which is slightly simplest.
\begin{itemize}
\item[(II)]
Since $g$ has order 6, by Lemma \ref{speci46}, there are 2 choices for $x$. Since $s\in \mathcal{F}_g$ and the stabilizer of $(x,s(x))$ is abelian, by Lemma \ref{SgigjLemme}, there are 
$\#\overline{\mathcal{S}}_{g}^{+,c}\cap \overline{\mathcal{F}}_g$ choices for $s(x)$. Moreover according to Proposition \ref{mini}, the terminalization provides 4 singularities of type $a_3$. Note also that the stabilizer of $ (x,s(x))$ has order $2\times 6=12$. We obtain that the contribution to the singularities in this case is:
$$\frac{12}{2\left|G\right|}\sum_{g\in \mathcal{O}(G)_6}4\times 2\times\#(\overline{\mathcal{S}}_g^{+,c}\cap \overline{\mathcal{F}}_g)=\frac{48}{\left|G\right|}\sum_{g\in \mathcal{O}(G)_6}\#(\overline{\mathcal{S}}_g^{+,c}\cap \overline{\mathcal{F}}_g).$$
\item[(I)]
In this case $g$ has order 3. Therefore, the stabilizer of $(x,s(x))$ has order $2\times 3=6$. Moreover, $(x,s(x))$ can be of type $(g,\theta(g))$ or $(g_i,\theta(g_j))$ with $\left\langle g_i\right\rangle\cap\left\langle g_j\right\rangle=\left\langle g\right\rangle$ and $g_i$ and $g_j$ of order 6. 

When $(x,s(x))$ is of type $(g,\theta(g))$, applying the same results as before (Lemmas \ref{speci3}, \ref{SgigjLemme} and Proposition \ref{mini}), there are $6-2k_6(g)$ choices for $x$ and $\# \overline{\mathcal{S}}_g^{+,c}\cap \overline{\mathcal{F}}_g$ choices for $s(x)$. Moreover, the terminalization in this case provide 2 singularities of type $a_3$. We obtain the following contribution to the singularities:
$$\frac{6}{2\left|G\right|}\sum_{g\in \mathcal{O}(G)_3}2\times(6-2k_6(g))\#(\overline{\mathcal{S}}_g^{+,c}\cap \overline{\mathcal{F}}_g)=\frac{6}{\left|G\right|}\sum_{g\in \mathcal{O}(G)_3}(6-2k_6(g))\#(\overline{\mathcal{S}}_g^{+,c}\cap \overline{\mathcal{F}}_g).$$

When $(x,s(x))$ is of type $(g_i,\theta(g_j))$, there are 2 choices for $x$ and $\# \overline{\mathcal{S}}_{g_i,g_j}^{+,c}\cap \overline{\mathcal{F}}_{g_i}$ choices for $s(x)$. Hence, we have the following contribution to the singularities:
$$\frac{6}{2\left|G\right|}\sum_{g\in \mathcal{O}(G)_3}\sum_{1\leq i\neq j \leq k_6(g)}2\times 2\times\#(\overline{\mathcal{S}}_{g_i,g_j}^{+,c}\cap \overline{\mathcal{F}}_{g_i})=\frac{12}{\left|G\right|}\sum_{g\in \mathcal{O}(G)_3}\sum_{1\leq i\neq j \leq k_6(g)}\#(\overline{\mathcal{S}}_{g_i,g_j}^{+,c}\cap \overline{\mathcal{F}}_{g_i}).$$
\end{itemize}
 \end{itemize}
\subsubsection*{Computation of $a_2$:}
As before, the singularities of type $a_2$ can appear from three different configurations of fixed points.
\begin{itemize}
\item[(a)]
The fixed points which are not in $\Sigma$ and are fixed by an automorphism of order 2. There are described in Lemma \ref{fixedG}.
\item[(b)]
The fixed points which are in $\Sigma$ with a stabilizer of order 2. This is the case of Lemma \ref{s0h} (ii) when $g$ has order 2.
\item[(c)]
It can also be singularities remaining after a partial resolution as described in Proposition \ref{mini}.
\end{itemize}
The computation in case (a) follows as before from Lemma \ref{fixedG}. The case (b) is also similar apart that the fixed points $(x,s(x))$ can be of three different types: $(g,\theta(g))$, with $g$ of order 2, $(g_i,\theta(g_j))$ with $g_i$, $g_j$ of order 6 and $(h_i,\theta(h_j))$ with $h_i$, $h_j$ of order 4. Then the proof is identical to the proof for $a_3$ replacing Lemma \ref{speci3} by Lemma \ref{speci2}.

However, the case (c) is slightly different for $a_3$ and for $a_2$. Case (c) comes from fixed points $(x,s(x))\in\Sigma$ with a stabilizer $\left\langle g,s_0\circ s\right\rangle$ and $s\in F$ (see Lemma \ref{surfaces}). According to Proposition \ref{mini}, the terminalizations which leave singularities of type $a_2$ are the following:
\begin{itemize}
\item[(I)]
When $\left\langle g,s_0\circ s\right\rangle$ is non-abelian and $g$ has order 6.
\item[(II)]
When $\left\langle g,s_0\circ s\right\rangle$ is abelian and $g$ has order 4.
\end{itemize}
We count the singularities provided by these two configurations.
\begin{itemize}
\item[(I)]
In this case $g$ has order 6. Hence there are 2 choices for $x$ by Lemma \ref{speci46}. Since the stabilizer is non-abelian, there are 
$\#\overline{\mathcal{S}}_{g}^{+,nc}\cap \overline{\mathcal{F}}_g$ choices for $s(x)$ (see Lemma \ref{SgigjLemme}). Moreover according to Proposition \ref{mini}, the terminalization provides one singularity of type $a_2$. Note also that the stabilizer of $(x,s(x))$ has order $2\times 6=12$. We obtain that the contribution to the singularities in this case is:
$$\frac{12}{2\left|G\right|}\sum_{g\in \mathcal{O}(G)_6}2\times\#(\overline{\mathcal{S}}_g^{+,nc}\cap \overline{\mathcal{F}}_g)=\frac{12}{\left|G\right|}\sum_{g\in \mathcal{O}(G)_6}\#(\overline{\mathcal{S}}_g^{+,nc}\cap \overline{\mathcal{F}}_g).$$
\item[(II)]
In this case $g$ has order 4. Hence, there are 4 choices for $x$ according to Lemma \ref{speci46}. Since the stabilizer is abelian, there are 
$\#\overline{\mathcal{S}}_{g}^{+,c}\cap \overline{\mathcal{F}}_g$ choices for $s(x)$. By Proposition \ref{mini}, the terminalization provides 4 singularities of type $a_2$. In this case the stabilizer of $(x,s(x))$ has order $2\times 4=8$. We obtain that the contribution to the singularities in this case is:
$$\frac{8}{2\left|G\right|}\sum_{g\in \mathcal{O}(G)_4}4\times 4\times\#(\overline{\mathcal{S}}_g^{+,c}\cap \overline{\mathcal{F}}_g)=\frac{64}{\left|G\right|}\sum_{g\in \mathcal{O}(G)_4}\#(\overline{\mathcal{S}}_g^{+,c}\cap \overline{\mathcal{F}}_g).$$
\end{itemize}
\end{proof}
\subsection{A practical method to verify the computations of the singularities}\label{verif}
This section comes from a discussion with Song Jieao (see \cite{Song}). 
\begin{prop}\label{rational}
Let $X$ be a primitively symplectic orbifold. Let $C_X$ be its Fujiki constant (see Section \ref{Fujikirelationsection}).
Then, the number
$$\sqrt{\frac{(7c_2(X)^2-4c_4(X))C_X}{15}}$$
is a rational number.
\end{prop}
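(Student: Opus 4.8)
The plan is to combine the Fujiki relation of Proposition \ref{Fujikiformula} with the characteristic-number identities already at our disposal, namely Riemann--Roch in the form of Proposition \ref{RR} and the formula for $c_2^2$ in Proposition \ref{c2}. The key observation is that for a primitively symplectic orbifold $X$ of dimension $4$ there is a classical relation between $c_2(X)^2$, $c_4(X)$ and the Fujiki constant, of the shape
\begin{equation*}
7c_2(X)^2-4c_4(X)=\frac{15}{C_X}\cdot b,
\end{equation*}
where $b$ is an \emph{integer}: more precisely, the left-hand side is (up to the universal factor $15$) the evaluation of the Hirzebruch--Riemann--Roch polynomial $\chi(X,\mathcal{O}_X)$-type expression against the square of a primitive integral class, so by the Fujiki relation $\alpha^4=C_X q_X(\alpha)^2$ it equals $\frac{15}{C_X}q_X(\alpha)^2$ for a suitable $\alpha\in H^2(X,\Z)$. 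Substituting into the quantity under the square root gives
\begin{equation*}
\frac{(7c_2(X)^2-4c_4(X))C_X}{15}=q_X(\alpha)^2,
\end{equation*}
whose square root is $|q_X(\alpha)|\in\Z$, hence rational.

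First I would recall, following the discussion attributed to Song (reference \cite{Song}), the orbifold Riemann--Roch computation of $\chi(X,L)$ for $L$ a locally V-free line bundle on a $4$-dimensional primitively symplectic orbifold $X$. Since $h^1(X,\mathcal{O}_X)=0$ and $h^0(X,\Omega_X^{[2]})=1$, the Hodge numbers of $\mathcal{O}_X$ contribute $\chi(X,\mathcal{O}_X)=3$, and the leading term of $\chi(X,L)$ in $c_1(L)$ is governed by the Fujiki relation. The Todd-class correction terms at the singular points combine into exactly the invariant $s_x$ of Notation \ref{sS0} (this is the content of Proposition \ref{RR}), and the remaining symmetric function of the Chern roots that appears is the combination $7c_2^2-4c_4$, which is the one that multiplies $q_X(\alpha)$ linearly. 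Concretely, the Riemann--Roch polynomial in $q_X(\alpha)$ has the form $\chi(X,L)=\tfrac{C_X}{8}q_X(\alpha)^2+\tfrac{1}{?}\,(\text{linear in }q_X(\alpha))\cdot\tfrac{7c_2^2-4c_4}{something}+\chi(X,\mathcal{O}_X)$ plus an orbifold correction; matching the coefficient of the linear term forces $7c_2(X)^2-4c_4(X)$ to be $\frac{15}{C_X}$ times a rational number that, evaluated on an integral primitive class, is an integer.

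Then I would make this precise: pick any $\alpha\in H^2(X,\Z)$ with $q_X(\alpha)\neq 0$ (such classes exist because $q_X$ is non-degenerate, by Proposition \ref{Fujikiformula}), apply the above Riemann--Roch identity to $L$ a power of a class proportional to $\alpha$, and read off that $\frac{(7c_2(X)^2-4c_4(X))C_X}{15}$ equals a perfect square $q_X(\alpha)^2$ of an integer (after clearing the denominator introduced by passing from $\alpha$ to an integral primitive multiple, which only multiplies by a square). Taking the square root yields the rational number $|q_X(\alpha)|$, which proves the claim. One must be slightly careful that $c_2(X)^2$ and $c_4(X)$ are defined via the orbifold Chern classes as in Propositions \ref{c4} and \ref{c2}, and that the combination $7c_2^2-4c_4$ is exactly the coefficient that survives; this bookkeeping is the only delicate point.

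\textbf{Main obstacle.} The hard part will be isolating, inside the orbifold Hirzebruch--Riemann--Roch computation, precisely the combination $7c_2(X)^2-4c_4(X)$ and verifying that its coefficient in the $\chi(X,L)$-polynomial is $\frac{15}{C_X}$ times a rational quantity which is integral on integral classes — i.e.\ keeping track of the universal constants $7$, $4$ and $15$ through the Todd class of a $4$-dimensional holomorphic-symplectic (orbifold) and through the singular contributions. Once the identity $7c_2(X)^2-4c_4(X)=\frac{15}{C_X}q_X(\alpha)^2$ (for a primitive integral $\alpha$ with $q_X(\alpha)^2$ read off from the linear Riemann--Roch coefficient) is established, the rationality is immediate. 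I expect the orbifold correction terms to cancel out of this particular combination exactly as the $s(X)$-terms are packaged in Proposition \ref{RR}, so no new singularity computation is needed beyond what is already recorded in Section \ref{topo}.
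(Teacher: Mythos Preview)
Your overall instinct---that the quantity under the square root is a perfect square of something rational coming from a Fujiki-type relation---is correct, but the identification you make is wrong and the derivation you sketch does not produce the relation you need.

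The paper's proof is two lines of citation. First, by \cite[Lemma 4.6]{Lol} there is a rational constant $C(c_2)$ such that
\[
c_2(X)\cdot\alpha^2=C(c_2)\,q_X(\alpha)\quad\text{for all }\alpha\in H^2(X,\C);
\]
this is a \emph{generalized Fujiki relation} for $c_2$, not for $\alpha^4$. Second, Beckmann--Song \cite[Corollary 2.6 and Section 3]{Song} prove the identity
\[
7c_2(X)^2-4c_4(X)=\frac{15\,C(c_2)^2}{C_X}.
\]
Rearranging, the expression under the square root equals $C(c_2)^2$, and its square root is the rational number $C(c_2)$.

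Your proposal conflates $C(c_2)$ with $q_X(\alpha)$. The linear-in-$q$ term of the Riemann--Roch polynomial $\chi(X,L)$ has coefficient $C(c_2)/24$ (coming from $\tfrac{1}{24}c_1(L)^2\cdot c_2(X)$), not anything involving $7c_2^2-4c_4$; the latter combination does not appear as a coefficient in $\chi(X,L)$ at all. So the step ``matching the coefficient of the linear term forces $7c_2^2-4c_4=\tfrac{15}{C_X}\cdot(\text{integer})$'' fails: Riemann--Roch for line bundles gives you $C(c_2)$, not $7c_2^2-4c_4$. The bridge between the two---the identity $7c_2^2-4c_4=15C(c_2)^2/C_X$---is precisely the nontrivial content of \cite{Song}, and it is not a consequence of Riemann--Roch for $\chi(X,L)$ alone; it requires computing $c_2^2$ itself via the generalized Fujiki formalism. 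Your ``main obstacle'' paragraph correctly senses that isolating the constants is the delicate point, but the mechanism you propose (reading it off a single $\chi(X,L)$ expansion) cannot produce it. You need either to invoke \cite{Song} directly, as the paper does, or to reproduce their computation of $c_2^2$ in terms of $C(c_2)$ and $C_X$.
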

\begin{proof}
Let $X$ be a primitively symplectic orbifold.
By \cite[Lemma 4.6]{Lol}, there exists a rational number $C(c_2)$ such that:
$$c_2(X)\cdot \alpha^2=C(c_2)q_X(\alpha),$$
for all $\alpha\in H^2(X,\C)$.
By \cite[Corollary 2.6 and Section 3]{Song}, we have:
\begin{equation}
7c_2(X)^2-4c_4(X)=\frac{15C(c_2)^2}{C_X}.
\label{songeq}
\end{equation}
Then, we have:
\begin{equation}
C(c_2)=\sqrt{\frac{(7c_2(X)^2-4c_4(X))C_X}{15}}
\label{Cc2}
\end{equation}
which is a rational number.
\end{proof}
\begin{cor}\label{verification}
Let $S$ be a K3 surface and $G$ a symplectic automorphism group of $S$.
Let $X=S(G)_{\theta}^{[2]}$. Then:
$$\sqrt{\frac{\left|G\right|(7c_2(X)^2-4c_4(X))}{5}}$$
is a rational number.
\end{cor}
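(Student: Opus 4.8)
The plan is to derive Corollary \ref{verification} directly from Proposition \ref{rational} by computing the Fujiki constant $C_X$ of $X = S(G)^{[2]}_\theta$ explicitly in terms of $|G|$. First I would recall that by Proposition \ref{primimi} (and Corollary \ref{corirr} when $\theta$ is valid; but in fact the Fujiki relation of Proposition \ref{Fujikiformula} only requires $X$ to be a primitively symplectic variety of dimension $4$, which always holds here), the Fujiki constant $C_X$ is well defined. Proposition \ref{rational} then tells us that $\sqrt{(7c_2(X)^2 - 4c_4(X))C_X/15}$ is rational. So it suffices to show that $C_X$ and $|G|$ differ by the square of a rational number, i.e. that $C_X \cdot |G|$ is, up to a rational square, equal to a universal constant, and then absorb the numerical factor $15$ versus $5$.

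Concretely, I would apply Proposition \ref{Fujiki} with $n = 2$. Taking $\alpha \in H^2(S,\Z)$ with $\alpha^2 \neq 0$ (for instance the class of an ample divisor, noting $H^2(S,\Z)^G$ contains such a class since $G$ is symplectic hence acts trivially on $H^{2,0} \oplus H^{0,2}$ and preserves polarizations after averaging), Proposition \ref{Fujiki} gives
$$C_X \, q_X\!\left(r^*(\epsilon(\alpha))\right)^2 = \frac{4!\,|G|}{2!\,2^2}\left(|G|\,(\alpha^2)\right)^2 = 6\,|G|^3\,(\alpha^2)^2.$$
Since $q_X$ takes rational (indeed integral) values on $H^2(X,\Z)$ and $r^*(\epsilon(\alpha)) \in H^2(X,\Z)$, the quantity $q_X(r^*(\epsilon(\alpha)))$ is a nonzero rational number (nonzero because the left side is nonzero). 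Writing $q := q_X(r^*(\epsilon(\alpha)))$, we get $C_X = 6\,|G|^3(\alpha^2)^2/q^2$, so that
$$C_X\,|G| = 6\,|G|^4\,(\alpha^2)^2/q^2 = 6\left(\frac{|G|^2(\alpha^2)}{q}\right)^2,$$
which shows $C_X |G|$ is $6$ times the square of a rational number.

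Finally I would combine this with Proposition \ref{rational}: the number $\sqrt{(7c_2(X)^2-4c_4(X))C_X/15}$ is rational, hence so is its square times any rational, and multiplying by $|G|\cdot 15 / (5 \cdot 6) \cdot 6 = $ — more carefully: $\frac{|G|(7c_2^2 - 4c_4)}{5} = \frac{15}{5}\cdot\frac{(7c_2^2-4c_4)C_X}{15}\cdot\frac{|G|}{C_X} = 3 \cdot \frac{(7c_2^2-4c_4)C_X}{15} \cdot \frac{|G|}{C_X}$, and $\frac{|G|}{C_X} = \frac{q^2}{6|G|^3(\alpha^2)^2}$ is a rational square divided by $6$. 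So $\frac{|G|(7c_2^2-4c_4)}{5}$ equals $\frac{(7c_2^2-4c_4)C_X}{15}$ times $\frac{3q^2}{6|G|^3(\alpha^2)^2} = \frac{q^2}{2|G|^3(\alpha^2)^2}$; hence its square root is $\sqrt{(7c_2^2-4c_4)C_X/15}$ — a rational by Proposition \ref{rational} — times $\frac{q}{\sqrt{2}\,|G|\sqrt{|G|}\,|\alpha^2|}$. This still carries a $\sqrt{2|G|}$ in the denominator, so the cleaner route is: rather than fixing the constant, just observe $C_X = 6|G|^3(\alpha^2)^2/q^2$ so $C_X/|G|^{?}$... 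The main obstacle, and the point I would be most careful about, is tracking the precise power of $|G|$ and the numerical constant so that $|G| \cdot (7c_2^2 - 4c_4)/5$ comes out to be $(\text{rational from Prop.\ \ref{rational}})^2$ times an exact rational square; I expect that $C_X$ is in fact of the form $(\text{square})/|G|$ up to the universal constant, once one uses the correct normalization of $\epsilon$ and $q_X$ (possibly via $\frac{r^*\epsilon}{(n-1)!}$ as in the remark after Proposition \ref{Fujiki}, which for $n=2$ is just $r^*\epsilon$), making the $|G|$ in the corollary's numerator exactly cancel the $|G|$ hidden in $C_X$ and leave a perfect square. I would finish by substituting this value of $C_X$ into \eqref{Cc2} and simplifying to reach the claimed rationality.
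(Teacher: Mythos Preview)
Your approach is exactly the paper's approach: specialize Proposition \ref{Fujiki} to $n=2$ and feed the resulting expression for $C_X$ into Proposition \ref{rational}. The only reason your argument does not close is an arithmetic slip. You wrote
\[
\frac{4!\,|G|}{2!\,2^2} = 6|G|,
\]
but $4!/(2!\cdot 2^2) = 24/8 = 3$, so the correct specialization is
\[
C_X\, q_X\!\left(r^*(\epsilon(\alpha))\right)^2 = 3\,|G|^3(\alpha^2)^2,
\]
which is exactly equation (\ref{n=2}) in the paper. With this correction, setting $q := q_X(r^*(\epsilon(\alpha)))$ gives $C_X = 3|G|^3(\alpha^2)^2/q^2$, and hence
\[
\frac{(7c_2(X)^2-4c_4(X))\,C_X}{15}
= \frac{(7c_2(X)^2-4c_4(X))\,|G|}{5}\cdot\left(\frac{|G|\,\alpha^2}{q}\right)^{2}.
\]
Since the left-hand side is a rational square by Proposition \ref{rational} and $(|G|\alpha^2/q)^2$ is a nonzero rational square, it follows immediately that $\frac{|G|(7c_2(X)^2-4c_4(X))}{5}$ is a rational square. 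The stray $\sqrt{2|G|}$ factor you were worried about is an artifact of the erroneous constant $6$; with the correct constant $3$, the $3/15 = 1/5$ cancellation is exact and no further normalization of $\epsilon$ or $q_X$ is needed.
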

\begin{proof}
By Proposition \ref{Fujiki}, we have:
\begin{equation}
C_Xq_X(r^*\epsilon(\alpha))^2=3\left|G\right|(\left|G\right|\alpha^2)^2,
\label{n=2}
\end{equation}
where $r$ and $\epsilon$ are defined in Section \ref{Fujikirelationsection} and $\alpha\in H^2(S,\Z)$.
Combining (\ref{n=2}) with Proposition \ref{rational}, we obtain our result.
\end{proof}
\begin{rmk}
In practice, we use the results of Section \ref{topo} to compute $\sqrt{\frac{\left|G\right|(7c_2(X)^2-4c_4(X))}{5}}$ (see Section \ref{examplescalculs}); then it can be expressed in term of the singularities and the Betti numbers $b_2$, $b_3$. Therefore Proposition \ref{rational} can be used to verify if the computation of the singularities is correct.
\end{rmk}
For this purpose, we set:
\begin{nota}\label{verifnota}
$\overline{C(c_2)}:=\sqrt{\frac{\left|G\right|(7c_2(X)^2-4c_4(X))}{5}}.$
\end{nota}
We provide an example of use of Proposition \ref{rational}. The following error have been noticed and corrected in \cite{Song}.
\begin{erra}
Let $K_2(T)$ be a generalized Kummer fourfold endowed with $g_4$ a natural symplectic automorphism of order 4 (coming from an automorphism of the torus). 
Let $K_4'$ be a terminalization of the quotient $K_2(T)/g_4$. 

The computation of the singularities of $K_4'$ in \cite[Sections 5.4]{Fu} is incorrect.
The correct singularities are 8 singularities of analytic type $\C^4/h_4$ and 30 singularities of analytic type $\C^4/-\id$,
with $h_4=\diag(i,-i,-i,i)$. 
\end{erra} 
\begin{proof}
Let $a_2$ (resp. $a_4$) be the number of singularities of analytic type $\C^4/-\id$ (resp. $\C^4/h_4$). 
In \cite[Sections 5.4]{Fu}, it is given:
$$b_2(K_4')=6,\ b_3(K_4'),\ a_2=45,\ \text{and}\ a_4=2.$$
We show that this result leads to $\sqrt{\frac{(7c_2(X)^2-4c_4(X))C_X}{15}}$ irrational.
By Lemma \ref{s}, we have:
$$s(K_4')=-45-3\times 2=-51.$$
By Proposition \ref{RR}, we obtain:
$$\chi(K_4')=48+12\times 6-51=69.$$
By Proposition \ref{c4}:
$$c_4(K_4')=69-\frac{45}{2}-\frac{2\times 3}{4}=45.$$
By Lemma \ref{S0}:
$$S_0(K_4')=\frac{45}{2^5}+\frac{18}{2^6}=\frac{27}{16}.$$
Therefore by Proposition \ref{c2}:
$$c_2(K_4')^2=720-\frac{240\times 27}{16}+15=330.$$
Since $C_{K_2(T)}=3^2$, by Proposition \ref{Fujiki2}:
$$C_{K_4'}=y^2,$$
with $y$ a rational number.
Therefore:
$$\frac{(7c_2(K_4')^2-4c_4(K_4'))C_{K_4'}}{15}=\frac{(7\times 330-4\times 45)y^2}{15}=142y^2.$$ 
By Proposition \ref{rational}, $\sqrt{142}$ is a rational number; this leads to a contradiction. It proves that the computation of the singularities of $K_4'$ in \cite[Sections 5.4]{Fu} is incorrect. The correct computation is provided in \cite[Appendix A]{Song}.
\end{proof}
\begin{rmk}\label{mistakes}
Let $\mathcal{D}_6$ be the dihedral group of order 6.
With the same method, we find that the computations of the singularities of $S(\mathcal{D}_6)^{[2]}_{\id}$ in \cite[Section 5.7]{Fu} and of $S(C_4^2)^{[2]}$, $S(C_6)^{[2]}$, $S(C_2\times C_4)^{[2]}$, $S(C_2\times C_6)^{[2]}$ in \cite[Section 13]{Fujiki} are incorrect.
The correct computations are provided in Section \ref{examplescalculs}.
\end{rmk}
\section{Examples}\label{examplescalculs}
\subsection{The valid involutions}
We recall that the equivalent relation between involutions on a finite symplectic group $G$ is stated in Definition \ref{equiinv}.
In order to prove Theorem \ref{main4}, we need a description of all the possible valid involution modulo equivalence.  
For this purpose, every groups $G$ are embedded in a permutation group $\mathfrak{S}_n$; this embedding is described by giving a family of generators of $G$ in $\mathfrak{S}_n$. Then the involutions on $G$ are described via an element of order 2 in $\mathfrak{S}_n$; the involution is obtained by the action by conjugation of this element on $G$. For instance $\mathfrak{A}_4$ is embedded in $\mathfrak{S}_4$ via the family of generators $\left\{(0,1,2),(1,2,3)\right\}$. Then the transposition $(1,2)$ induces an involution on $\mathfrak{A}_4$: $\theta(g)=(1,2)\circ g \circ (1,2)$.

The next lemma has been obtained with a computer, its proof is given in the annexe sections \ref{firstmethod} and \ref{secondmethod} and it is based on Proposition \ref{invodeform}.
In the annexe we explain the computer programs but do not provide them entirely; they can be found in \cite{github}.
We follow the classification of symplectic automorphism groups of a K3 surface given in \cite{xiao}. The group's names are specified in Section \ref{notanota}.
\begin{lemme}\label{mainlemma2}
Let $G$ be a finite admissible non-abelian symplectic automorphism group of a K3 surface.
The list of all equivalent classes of valid involutions on $G$ is given by the following representatives:
\[
\begin{tabular}{|c|c|c|c|}
\hline
$G$ & n & Embedding in $\mathfrak{S}_n$ & classes of valid involutions\\
\hline
$\mathfrak{S}_3$ & 3 & $(0,1)$; $(0,1,2)$ &$\id$\\
\hline
$\mathcal{D}_4$ & 4 &$(0,1,2,3)$; $(0,3)(1,2)$ & $\id$\\
\hline
$\mathfrak{A}_4$ & 4 &$(0,1,2)$; $(1,2,3)$ & $(1,2)$\\
\hline
$\mathcal{D}_6$ & 6 & $(0,1,2,3,4,5)$; $(0,5)(1,4)(2,3)$ & $\id$\\
\hline
$C_2\times\mathcal{D}_4$ & 6 &$(0,1,2,3)$; $(0,3)(1,2)$; $(4,5)$ & $\id$\\
\hline
$C_2^2\rtimes C_4$ & 7 &$(0,1,2,7)(3,4,5,6)$; $(0,4)(2,6)$& $(0, 6)(2, 4)$\\
\hline
$\mathfrak{A}_{3,3}$ & 9& $(0,1,8)(2,3,4)(5,6,7)$; $(0,1)(2,5)(3,7)(4,6)$; & $\id$ and $(0, 5)(1, 2)(3, 7)$\\
& & $(0,3,6)(1,4,7)(2,5,8)$& \\
\hline
$C_3\times \mathfrak{S}_3$ & 6 & $(0,1,2)$; $(0,1)$; $(3,4,5)$ & $(3,4)$\\
\hline
$\mathfrak{S}_4$ & 4 & $(0,1)$; $(0,1,2,3)$ & $\id$\\
\hline
$C_2\times\mathfrak{A}_4$ & 6 & $(0,1,2)$; $(1,2,3)$; $(4,5)$ & $(1,2)$\\
\hline
$C_2^2\wr C_2$ & 8&$(0,1)$; $(2,3)$; $(4,5)$; $(6,7)$; $(0,4)(1,5)(2,6)(3,7)$ & $\id$ and $(2, 6)(3, 7)$\\
\hline
$C_2^3\rtimes C_4$ &8 &$(0,1,2,7)(3,4,5,6)$; $(1,5)(2,6)$ & $(1, 3)(5, 7)$\\
\hline
$C_3^2\rtimes C_4$ & 6 & $(0,3,4,1)(2,5)$; $(1,3,5)$; $(0,4)(1,3)$ & $(0, 2)$\\
\hline
$C_3\times \mathfrak{A}_4$ & 7 & $(0,1,2)$; $(1,2,3)$; $(4,5,6)$ & $(1,2)(4,6)$\\
\hline
$\mathfrak{S}_3\times \mathfrak{S}_3$ & 6 & $(0,1)$; $(0,1,2)$; $(3,4)$; $(3,4,5)$ & $\id$\\
\hline
$C_2^2\rtimes \mathfrak{A}_4$ & 12 & $(0,9)(2,11)(3,6)(5,8)$; $(0,6)(2,8)(3,9)(5,11)$ & $(0, 4)(1, 9)(3, 7)(6, 10)$\\
 & & $(0,4,8)(1,5,9)(2,6,10)(3,7,11)$ & \\
\hline
$C_4^2\rtimes C_3$ & 12 & $(0,9,6,3)(2,5,8,11)$; $(0,4,8)(1,5,9)(2,6,10)(3,7,11)$ & $(1, 8)(2, 7)(4, 11)(5, 10)$\\
\hline
$C_2\times \mathfrak{S}_4$ & 6 & $(0,1)$; $(2,3)$; $(2,3,4,5)$ & $\id$\\
\hline
$C_2^2\times\mathfrak{A}_4$ & 8 & $(0,1,2)$; $(1,2,3)$; $(4,5)$; $(6,7)$ & $(1,2)$\\
\hline
$\mathfrak{S}_3\wr C_2$ & 9 & $(0,1)(2,4)(5,6)$; $(0,1,8)(2,3,4)(5,6,7)$;& $\id$\\ 
 & &$(0,7)(1,3)(4,6)$; $(0,3,6)(1,4,7)(2,5,8)$ &\\
\hline
$C_2^4\rtimes \mathfrak{S}_3$ & 8&$(0,7)(1,2)$; $(0,1,2)(4,5,6)$; $(0,4)(1,6)(2,5)(3,7)$ & $\id$ and $(0, 1)(3, 5)$\\
\hline
$C_2^4\rtimes C_6$ & 8 &  $(0,2)(1,7)(3,5)(4,6)$; $(3,5)(4,6)$; $(0,7)(1,2)(3,4)(5,6)$; & $(0, 1)(4, 5)$\\
 & &$(0,1,2)(3,5,4)$; $(0,4)(1,5)(2,6)(3,7)$ & \\
\hline
$\mathfrak{A}_4^2$ & 8 & $(0,1,2)$; $(1,2,3)$; $(4,5,6)$; $(5,6,7)$ & $(1,2)(5,6)$\\
\hline
\end{tabular}
\]
\end{lemme}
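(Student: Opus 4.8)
The plan is to reduce Lemma \ref{mainlemma2} to a finite, computer-verifiable case analysis. First I would cut down the list of candidate groups. By Xiao's classification \cite{xiao} there are only finitely many finite symplectic automorphism groups $G$ on a K3 surface, and $G$ is admissible exactly when every quotient singularity of $S/G$ is of type $A_1$, $A_2$, $A_3$ or $A_5$; since a symplectic automorphism of order $k$ with an isolated fixed point produces an $A_{k-1}$ singularity and a non-cyclic point stabilizer produces a $D$- or $E$-type singularity, admissibility is equivalent to the purely group-theoretic condition that every nontrivial element of $G$ has order in $\{2,3,4,6\}$ and every point stabilizer is cyclic. Running through Xiao's tables under this constraint leaves exactly the non-abelian groups of the statement (the abelian case being already settled, with $\inv$ the unique valid involution). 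For each such $G$ I would fix the permutation representation $G\hookrightarrow\mathfrak{S}_n$ of the table and verify, by inspecting the normalizer $N_{\mathfrak{S}_n}(G)$, that the natural map $N_{\mathfrak{S}_n}(G)\to\Aut(G)$ hits every automorphism of order $\le 2$; consequently every involution of $G$ is induced by conjugation by an element of $\mathfrak{S}_n$ of order $\le 2$, and the involutions are parametrized, with the redundancy coming from $C_{\mathfrak{S}_n}(G)$, by such permutations.

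Next I would have a short program enumerate, for each $G$, all involutions $\theta\in\Aut(G)$ and retain the \emph{valid} ones, i.e.\ those for which $F_\theta:=\{g\in G : \theta(g)=g^{-1}\}$ generates $G$ (Definition \ref{validintro}); this step is routine. It then remains to organise the valid involutions into equivalence classes in the sense of Definition \ref{equiinv}, and for this I would use two tools: Corollary \ref{classvalid}, which collapses all inner involutions into a single class for the groups with trivial center, and Proposition \ref{invodeform}, which — given elements $h_1,h_2$ of $G$, or, for the Mukai-type group $\mathfrak{S}_3\wr C_2$ and a few similar cases, of a larger automorphism group of $S$ known to act — satisfying its four conditions, shows $\theta_1\sim\theta_2$; for the admissible groups with nontrivial center the inner involutions are also handled directly by Proposition \ref{invodeform}. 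Searching over $(h_1,h_2)$ in $G$ (respectively in the relevant larger group) should match every remaining valid involution, inner or outer, to one of the listed representatives, and thus produce the table.

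The main obstacle is completeness of this last reduction: Definition \ref{equiinv} is geometric — an isomorphism of quotients — whereas Corollary \ref{classvalid} and Proposition \ref{invodeform} only give sufficient conditions for equivalence, so one must argue that the exhaustive search over conjugating elements recovers every available equivalence and that no valid involution escapes the listed classes. Dually, for the three groups where two classes are recorded ($A_{3,3}$, $C_2^2\wr C_2$ and $C_2^4\rtimes\mathfrak{S}_3$) one must check that the two representatives are genuinely inequivalent; this I would do through the invariants of the associated Fujiki fourfolds — the second Betti number via Proposition \ref{b2} and the singularity counts via Propositions \ref{a8}, \ref{a6} and \ref{a3} — together with Proposition \ref{bimero}, by which equivalent involutions give deformation-equivalent terminalizations and hence equal $b_2$ and equal singularities: for $A_{3,3}$ the two involutions already differ in the number $a_3$ of $A_2$-singularities ($12$ versus $20$), while for the remaining two the finer comparison (or the possibility that these classes in fact coincide) is deferred to Section \ref{deformSection}.
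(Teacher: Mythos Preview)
Your outline matches the paper's approach closely: enumerate valid involutions, collapse via Proposition~\ref{invodeform}, and separate the surviving classes by invariants. Two points need sharpening.

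First, the collapsing step is more delicate than you indicate. Proposition~\ref{invodeform} requires $\widetilde{G}$ to be an \emph{automorphism group of $S$}, not merely an abstract overgroup of $G$; you cannot take $\widetilde{G}=\Aut(G)$ unless you know it acts on the K3. For several groups, searching with $\widetilde{G}=G$ leaves too many classes, and the paper must locate intermediate groups $G\subset H\subset\Aut(G)$ that are themselves known symplectic K3 groups: for $\mathfrak{A}_{3,3}$ one uses groups $H_j$ of order $36$ (necessarily $\mathfrak{S}_3^2$ or $C_3^2\rtimes C_4$); for $C_4^2\rtimes C_3$ the full automorphism group $\widetilde{F}$ of the Fermat quartic; for $C_2^2\rtimes\mathfrak{A}_4$ copies of $C_2^4\rtimes C_6$; for $C_2^4\rtimes\mathfrak{S}_3$ the Mukai group $H_{192}=C_2^4\rtimes\mathcal{D}_6$. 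Your parenthetical ``a larger automorphism group of $S$ known to act'' is exactly the crux, and finding such $\widetilde{G}$ case by case is the nontrivial content of the proof.

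Second, your last sentence is wrong. For $C_2^2\wr C_2$ the two classes have $b_2=16$ versus $b_2=11$, and for $C_2^4\rtimes\mathfrak{S}_3$ they have $b_2=10$ versus $b_2=6$; so Proposition~\ref{b2} (together with the singularity count) already separates them, exactly as for $\mathfrak{A}_{3,3}$. Nothing is deferred to Section~\ref{deformSection}, which treats deformation equivalences between orbifolds coming from \emph{different} groups, not the inequivalence of two involutions on the same $G$.
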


\begin{rmk}
Note that when $G$ is abelian, there is only one valid involution which is given by the inverse: $g\mapsto g^{-1}$.
\end{rmk}
\begin{rmk}\label{invremark}
We remark in Lemma \ref{mainlemma2} that for most of the groups, there is only one class of valid involutions. There are two classes only for the groups $\mathfrak{A}_{3,3}$, $C_2^2\wr C_2$ and $C_2^4\rtimes \mathfrak{S}_3$. Moreover, when there are two classes of valid involutions, one is always the class of $\id$.
\end{rmk}
\begin{nota}
As a consequence of Remark \ref{invremark} and to simplify the notation, we adopt the following conversion. When there is only one class $\theta$ of valid involutions on the group $G$, the orbifold $S(G)^{[2]}_{\theta}$ is denoted by $S(G)^{[2]}$. When there are two classes of valid involutions on $G$ and one of them is the class of $\id$, the two associated orbifolds are denoted by $S(G)^{[2]}_{\id}$ and $S(G)^{[2]}_{\not\sim\id}$.
\end{nota}
\subsection{Examples in dimension 4}
We deduce from Lemma \ref{mainlemma2} all the possible deformation classes of Fujiki orbifolds obtained from an admissible group $G$.

For each orbifold, we provide the second Betti number $b_2$, the singularities, the fourth Betti number $b_4$, the Euler characteristic $\chi$, the Chern numbers $c_4$, $c_2^2$ and $\overline{C(c_2)}$ defined in Notation \ref{verifnota}. We recall that the third Betti number is always trivial according to Proposition \ref{b3}. The different type of singularities are defined in Notation \ref{defsing}.
\begin{thm}\label{main3}
All the 4-dimensional Fujiki orbifolds $S(G)^{[2]}_{\theta}$ (modulo deformation), obtained from a finite admissible symplectic group $G$ on a K3 surface $S$,
are listed below.
\[
\begin{tabular}{|c|c|c|c|c|c|c|c|}
\hline
$G$, $\theta$ & $b_2$ & singularities&$b_4$ &$\chi$&$c_4$&$c_2^2$&$\overline{C(c_2)}$\\
\hline
$C_2$ & 16 & $a_2=28$ &178 & 212 & 198 & 576 &36\\
\hline
$C_3$ & 11 & $a_3=15$ & 126 &150&140&500&42\\
\hline
$C_2^2$ & 14 & $a_2=36$&150&180&162&504&48\\
\hline
$C_4$ & 10 & $a_2=10$, $a_4=6$&118&140&130.5&486&48\\
\hline
$\mathfrak{S}_3$ & 10 & $a_2=28$, $a_3=12$&94&116&94&328&48\\
\hline
$C_6$ & 8 & $a_2=9$, $a_3=6$, $a_6=1$&100&118&$\frac{326}{3}$&$\frac{1472}{3}$&60\\
\hline
$C_2^3$ & 16 & $a_2=28$ & 178 & 212 & 198 & 576 & 72\\
\hline
$\mathcal{D}_4$ & 11 & $a_2=36$, $a_4=3$&111&135&114.75&387&60\\
\hline
$C_2\times C_4$ &10 & $a_2=12$, $a_4=4$ &122 & 144 &135&540&72\\
\hline
$C_3^2$ & 7 &  $a_3=12$&92&108&100&540&78\\
\hline
$\mathfrak{A}_4$ & 7 & $a_2=12$, $a_3=15$, $a_4=4$&62&78&59&248&60\\
\hline
$\mathcal{D}_6$ & 10 & $a_2=28$, $a_3=10$&98&120&$\frac{298}{3}$&$\frac{1096}{3}$&72\\
\hline
$C_2\times C_6$ & 8 & $a_2=12$, $a_3=3$&108&126&118&616&96\\
\hline
$C_2\times \mathcal{D}_4$ & 14 & $a_2=36$&150&180&162&504&96\\
\hline
$C_2^2\rtimes C_4$ & 10 & $a_2=10$, $a_4=6$&118&140&130.5&486&96\\
\hline
$C_4^2$ & 8 & $a_2=6$&120&138&135&720&120\\
\hline
$\mathfrak{A}_{3,3}$,\ $\id$ & 8 & $a_2=28$, $a_3=12$&74&92&70&320&84\\
\hline
$\mathfrak{A}_{3,3}$,\ $\not\sim\id$& 8 & $a_2=28$, $a_3=20$& 58&76&$\frac{146}{3}$&$\frac{512}{3}$&60\\
\hline
$C_3\times \mathcal{D}_3$ & 6 & $a_2=9$, $a_3=10$, $a_6=1$&72&86&74&408&96\\
\hline
$\mathfrak{S}_4$ & 8 & $a_2=24$, $a_3=12$, $a_4=3$&69&87&64.75&247&84\\
\hline
$C_2\times \mathfrak{A}_4$ & 6 & $a_2=13$, $a_3=6$, $a_4=4$, $a_6=1$&64&78&$\frac{191}{3}$&$\frac{932}{3}$&96\\
\hline
$C_2^2\wr C_2$,\ $\id$ & 16 & $a_2=28$&178&212&198&576&144\\ 
\hline
$C_2^2\wr C_2$,\ $\not\sim\id$ & 11 & $a_2=36$, $a_4=3$&111&135&114.75&387&120\\ 
\hline
$C_2^3\rtimes C_4$ & 8& $a_2=17$, $a_4=6$, $b_4=1$&87&105&$\frac{729}{8}$&373.5&120\\
\hline
$C_3^2\rtimes C_4$ & 6& $a_2=10$, $a_3=14$, $a_4=6$&50&64&$\frac{271}{6}$&$\frac{626}{3}$&96\\
\hline
$C_3\times \mathfrak{A}_4$ &5 & $a_2=3$, $a_3=9$, $a_4=3$, $b_6=1$&61&73&$\frac{187}{3}$&$\frac{1144}{3}$&132\\
\hline
$\mathfrak{S}_3^2$ &8 & $a_2=20$, $a_3=15$&76&94&74&328&120\\
\hline
$C_2^2\rtimes \mathfrak{A}_4$ &6 & $a_2=14$, $a_3=15$, $a_4=6$&44&58&36.5&158&96\\
\hline
$C_4^2\rtimes C_3$ & 5& $a_2=10$, $a_3=15$, $a_4=1$, $a_8=2$, $b_4=1$&35&47&$\frac{229}{8}$&153.5&96\\
\hline
$C_2\times \mathfrak{S}_4$ & 10& $a_2=28$, $a_3=10$&98&120&$\frac{298}{3}$&$\frac{1096}{3}$&144\\
\hline
$C_2^2\times \mathfrak{A}_4$ &7 & $a_2=12$, $a_3=3$, $a_4=4$&86&102&91&472&168\\
\hline
$\mathfrak{S}_3\wr C_2$ & 7 & $a_2=20$, $a_3=12$, $a_4=3$&63&79&$\frac{235}{4}$&275&156\\
\hline
$C_2^4\rtimes \mathfrak{S}_3$,\ $\id$&10 & $a_2=28$, $a_3=12$&94&116&94&328&192\\
\hline
$C_2^4\rtimes \mathfrak{S}_3$,\ $\not\sim\id$&6 & $a_2=19$, $a_3=12$, $a_4=6$, $b_4=1$&41&55&$\frac{257}{8}$&125.5&120\\
\hline
$C_2^4\rtimes C_6$ & 5& $a_2=16$, $a_3=6$, $a_4=4$, $a_6=1$, $b_4=1$&47&59&$\frac{1015}{24}$&$\frac{1405}{6}$&168\\
\hline
$\mathfrak{A}_4^2$ & 4 & $a_2=4$, $a_3=6$, $a_4=4$, $b_6=2$&48&58&$\frac{283}{6}$&$\frac{938}{3}$&240\\
\hline
\end{tabular} 
 \]
\end{thm}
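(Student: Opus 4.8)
The plan is to prove the theorem as an assemblage of per-group computations: for each admissible pair $(S,G)$ and each equivalence class of valid involution $\theta$ on $G$ I would produce one line of the table, and the facts that equivalent involutions and different terminalizations yield deformation-equivalent Fujiki varieties (Proposition \ref{bimero} together with the remark after Definition \ref{equiinv}) guarantee that the resulting list is complete up to deformation of the input datum $(S,G,\theta)$. First I would fix the inputs. The finite symplectic automorphism groups on a K3 surface are classified by Xiao \cite{xiao}; imposing admissibility (Definition \ref{admissiblegroup2}) keeps, among the abelian groups, $C_2,C_3,C_4,C_6,C_2^2,C_2^3,C_2\times C_4,C_2\times C_6,C_3^2,C_4^2$ (for which $\inv$ is by Lemma \ref{n>2} the only valid involution; the remaining admissible abelian group $C_2^4$ is dropped because $S(C_2^4)^{[2]}$ turns out smooth, hence a deformation of $K3^{[2]}$), and among the non-abelian groups exactly those of Lemma \ref{mainlemma2} together with the representatives of their valid-involution classes listed there. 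This produces the $36$ pairs $(G,\theta)$ of the table.

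For a fixed pair, writing $X=S(G)^{[2]}_\theta$, I would first record that $X$ is genuinely an IHS orbifold: it is an irreducible symplectic variety with $\pi_1(X_{reg})=0$ by Corollary \ref{corirr}, and since $G$ is admissible the codimension-$2$ singularities of $S^2/\langle j_\theta(G),\mathfrak S_2\rangle$ are of type $N_k$ or $M_k$ ($k\in\{2,3,4,6\}$), which by Proposition \ref{mini} admit terminalizations whose remaining singularities are isolated quotient singularities; hence $\codim\Sing X\ge 4$ and $X$ meets Definition \ref{defiorbi}. Then I would compute the entries in the following order: (1) $b_2(X)=\dim_\C H^2(S,\C)^G+\#(F/G)$ by Proposition \ref{b2}, where $\dim_\C H^2(S,\C)^G$ depends only on $G$ and is tabulated in \cite{xiao}, and $\#(F/G)$ is a purely group-theoretic count performed by the programs of Section \ref{programs}; (2) $b_3(X)=0$ by Proposition \ref{b3}; (3) the singularity profile, which by Proposition \ref{possible} involves only the types $a_k$ ($k\in\{2,3,4,6,8,12\}$) and $\mathfrak b_m$ ($m\in\{4,6\}$), with the exact numbers given by the closed formulas of Propositions \ref{a8}, \ref{a6} and \ref{a3}, whose right-hand sides are cardinalities of the sets $\overline{\mathcal S}_g^{\bullet}$, $\overline{\mathcal F}_g$ and $\overline{\mathcal S}_{g_i,g_j}^{\bullet}$ appearing in those propositions, again evaluated by the same programs; (4) $s(X)=\sum_{x\in\Sing X}s_x$, using Lemma \ref{s} for the cyclic singularities (so $s_x=-7$ for a point of type $a_8$, since $\C^4/T_4\simeq\C^4/\diag(\xi_8,\xi_8^{-1},\xi_8^{3},\xi_8^{-3})$) and Lemma \ref{s2} for the types $\mathfrak b_4$ and $\mathfrak b_6$ (giving $s_x=-4$ and $s_x=-5$); (5) $\chi(X)=48+12b_2(X)-3b_3(X)+s(X)$ by Proposition \ref{RR}, and then $b_4(X)=\chi(X)-2-2b_2(X)$ by Poincar\'{e} duality (with $b_0=1$ and $b_1=b_3=0$); (6) $c_4(X)=\chi(X)-\sum_{x\in\Sing X}(1-1/|G_x|)$ by Proposition \ref{c4}; (7) $S_0(X)=\sum_{x\in\Sing X}S_{0,x}$ using Lemma \ref{S0}, then $c_2(X)^2=720-240S_0(X)+c_4(X)/3$ by Proposition \ref{c2}; (8) $\overline{C(c_2)}=\sqrt{|G|(7c_2(X)^2-4c_4(X))/5}$ as in Notation \ref{verifnota}.

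Finally I would use Corollary \ref{verification} as a built-in check on each line: it forces $\overline{C(c_2)}$ to be rational, so if the singularity count of step (3) were wrong the integer $|G|(7c_2(X)^2-4c_4(X))/5$ would typically fail to be a perfect square; this is precisely the mechanism that detected the errors in \cite{Fujiki} and \cite{Fu} recorded in Remark \ref{mistakes}. In every line of the table the relevant integer is indeed a perfect square, confirming the entry, and the resulting data assembles into the table.

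The main obstacle will be step (3). Propositions \ref{a8}--\ref{a3} reduce everything to counting, for each element $g$ of order $2,3,4,6$ and each relevant pair $(g_i,g_j)$ with $\langle g_i\rangle\cap\langle g_j\rangle$ nontrivial, the subsets $\mathcal S_g^{\bullet}$, $\mathcal F_g$ and their analogues $\mathcal S_{g_i,g_j}^{\bullet}$ together with their orbit spaces under $\langle g\rangle$ (resp.\ $\langle g_i\rangle$); carrying this out reliably for the larger groups --- $\mathfrak S_3\wr C_2$, $C_2^4\rtimes\mathfrak S_3$, $C_4^2\rtimes C_3$, $C_2^2\rtimes\mathfrak A_4$ --- is well beyond hand computation (Fujiki's original errors being a warning), so it must be delegated to the programs of Section \ref{programs}, with the $\overline{C(c_2)}$ rationality test of Corollary \ref{verification} as the safeguard that makes the final table trustworthy.
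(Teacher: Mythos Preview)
Your proposal is correct and follows essentially the same route as the paper: enumerate the admissible $(G,\theta)$ via Xiao's list and Lemma~\ref{mainlemma2}, then for each entry compute $b_2$ from Proposition~\ref{b2}, the singularities from Propositions~\ref{a8}--\ref{a3} (delegated to the programs), and the remaining invariants by plugging into Propositions~\ref{RR}, \ref{c4}, \ref{c2} and Lemmas~\ref{s}, \ref{s2}, \ref{S0}, with Corollary~\ref{verification} as a consistency check. The only cosmetic differences are that the paper writes $b_4=46+10b_2+s(X)$ directly (equivalent to your $b_4=\chi-2-2b_2$), and that the omission of $C_2^4$ is justified there by deformation equivalence with $S^{[2]}$ (Proposition~\ref{defequiprop}\,(i)) rather than by smoothness; also the uniqueness of the valid involution on an abelian $G$ is immediate and does not need Lemma~\ref{n>2}.
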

\begin{proof}
Applying the results of the previous sections, the data of the theorem are obtained with the help of a computer (see \cite{github} for our computer programs and the annexe section for explanations) .
\subsubsection*{Second Betti number}
The second Betti number is computed directly from Proposition \ref{b2} using the program \cite[Second Betti number]{github}. Program \cite[Second Betti number]{github} provides the factor $\#(F/G)$ of Proposition \ref{b2}. The factors $\rk H^2(S,\Z)^G$ are found in \cite{xiao} for each $G$.
\subsubsection*{The singularities}
The singularities are obtained from Propositions \ref{a8}, \ref{a6}, \ref{a3} using the program \cite[Singularities]{github}. According to Proposition \ref{possible}, the singularities of $S(G)_{\theta}^{[2]}$ are expressed in term of $a_2$, $a_3$, $a_4$, $a_6$, $a_8$, $\mathfrak{b}_4$ and $\mathfrak{b}_6$ (in practice, we realize that the singularities of type $a_{12}$ never appear; so for simplicity in the computation, we omit it).
\subsubsection*{The fourth Betti number and the Euler characteristic}
We set $X:=S(G)_{\theta}^{[2]}$. By Proposition \ref{RR} and Proposition \ref{b3}, we have:
$$b_4(X)=46+10b_2(X)+s(X)\ \text{and}\ \chi(X)=48+12b_2(X)+s(X).$$
Then by Definition of $s(X)$ (see Notation \ref{sS0}) and Lemmas \ref{s}, \ref{s2}:
\begin{equation}
b_4(X)=46+10b_2(X)-a_2-2a_3-3a_4-5a_6-7a_8-4\mathfrak{b}_4-5\mathfrak{b}_6
\label{bb4}
\end{equation}
and:
\begin{equation}
\chi(X)=48+12b_2(X)-a_2-2a_3-3a_4-5a_6-7a_8-4\mathfrak{b}_4-5\mathfrak{b}_6.
\label{chi}
\end{equation}
So $b_4(X)$ and $\chi(X)$ are easily computed from $b_2(X)$ and the singularities.
\subsubsection*{The Chern numbers $c_4$ and $c_2^2$}
By Proposition \ref{c4}:
\begin{equation}
c_4(X)=\chi(X)-\frac{a_2}{2}-\frac{2a_3}{3}-\frac{3a_4}{4}-\frac{5a_6}{6}-\frac{7a_8}{8}-\frac{7\mathfrak{b}_4}{8}-\frac{11\mathfrak{b}_6}{12}.
\label{cc4}
\end{equation}
Moreover by Proposition \ref{c2}: 
\begin{equation}
c_2(X)^2=720+\frac{c_4(X)}{3}-240\left(\frac{a_2}{2^5}+\frac{2a_3}{27}+\frac{9a_4}{2^6}+\frac{329a_6}{864}+\frac{41a_8}{2^7}+\frac{25\mathfrak{b}_4}{2^7}+\frac{545\mathfrak{b}_6}{1728}\right).
\label{cc2}
\end{equation}
\end{proof}
\begin{rmk}
The orbifold $S(C_2^4)^{[2]}$ has been omitted since it is deformation equivalent to $S^{[2]}$ as explained in \cite[Proposition 14.5]{Fujiki} (see Proposition \ref{defequiprop} below).
\end{rmk}
\begin{rmk}\label{mistakeFujiki}
Note that Theorem \ref{main3} corrects some mistakes related to the singularities in \cite[Theorem 13.1]{Fujiki}. 
Indeed, the singularities of $S(C_6)^{[2]}$, $S(C_2\times C_4)^{[2]}$, $S(C_2\times C_6)^{[2]}$ and $S(C_4^2)^{[2]}$ provided in \cite[Theorem 13.1]{Fujiki} are incorrect. If we compute $\overline{C(c_2)}$ with the data of \cite[Theorem 13.1]{Fujiki}, we find respectively:
$$\sqrt{3720},\ 16\sqrt{19},\ 36\sqrt{7},\ \text{and}\ 8\sqrt{210},$$
which are irrational numbers (we recall from Corollary \ref{verification} that these numbers have to be rational).
\end{rmk}
\begin{rmk}
As a consequence of Proposition \ref{locallytriv}, we know that two orbifolds $S(G_1)^{[2]}_{\theta_1}$ and $S(G_2)^{[2]}_{\theta_2}$ are not deformation equivalent when their second Betti number or their singularities are different. It follows that Theorem \ref{main3} provides at least 29 deformation classes of irreducible symplectic orbifolds. 
\end{rmk}
We can actually prove that some of the orbifolds given in Theorem \ref{main3} are deformation equivalent.
The next proposition is a generalization of an idea of Fujiki \cite[Proposition 14.5]{Fujiki}.
\begin{prop}\label{defequiprop}
The following couples of orbifolds are deformation equivalent:
\begin{itemize}
\item[(i)]
$S(C_2^4)^{[2]}\sim S^{[2]}$;
\item[(ii)]
$S(C_2^2\wr C_2)_{\id}^{[2]}\sim S(C_2)^{[2]}$;
\item[(iii)]
$S(C_2^4\rtimes \mathfrak{S}_3)^{[2]}\sim S(\mathfrak{S}_3)^{[2]}$;
\item[(iv)]
$S(C_2^3)^{[2]}\sim S(C_2)^{[2]}$;
\item[(v)]
$S(C_2\times \mathcal{D}_4)^{[2]}\sim S(C_2^2)^{[2]}$.
\end{itemize}
\end{prop}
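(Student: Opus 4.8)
The plan is to isolate a single \emph{surgery lemma} generalising \cite[Proposition 14.5]{Fujiki} and to deduce all five equivalences from it. First I would normalise the involutions: $C_2^4$ and $C_2^3$ are abelian so $\theta=\inv=\id$ there, $C_2\times\mathcal{D}_4$ has a single class of valid involution which is $\sim\id$ by Proposition \ref{AxG}, and in case (iii) the intended involution is the class of $\id$ (the only one compatible with the invariants of $S(\mathfrak{S}_3)^{[2]}$). By Proposition \ref{bimero} this loses nothing, and for $\theta=\id$ one has $j_\theta=\Delta$, the diagonal embedding, so that $S^2/\langle j_\theta(G),\mathfrak{S}_2\rangle=\Sym^2(S)/G$ with $G$ acting diagonally. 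Next I would record the common pattern: in each case the first group $G_1$ carries a normal subgroup $H$ of symplectic automorphisms with $G_1/H\cong G_2$ (the second group) and $H\cong C_2^{2m}$ elementary abelian of \emph{even} rank --- $H=G_1=C_2^4$, $G_2=\{\id\}$ in (i); $H$ the base $C_2^4$ of $C_2^2\wr C_2$, $G_2=C_2$ in (ii); $H=C_2^4$, $G_2=\mathfrak{S}_3$ in (iii); $H\cong C_2^2\subset C_2^3$, $G_2=C_2$ in (iv); $H=C_2\times Z(\mathcal{D}_4)\cong C_2^2$, $G_2=\mathcal{D}_4/Z(\mathcal{D}_4)\cong C_2^2$ in (v). In every case the minimal resolution $\Sigma$ of $S/H$ is again a K3 surface and $G_2$ acts symplectically on it.

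The surgery lemma to be proved is: \emph{if $H\cong C_2^{2m}$ is a group of symplectic automorphisms of a K3 surface $S$, normalised by a symplectic group $G_1$ with $G_1/H=G_2$, then there exist a K3 surface $\widetilde{S}$ with a symplectic $G_2$-action and a $G_2$-equivariant bimeromorphic map, an isomorphism in codimension $1$, from a terminalization of $\Sym^2(S)/H$ onto $\widetilde{S}^{[2]}$.} Granting it, since $\Delta H$ is normal in $\langle\Delta G_1,\mathfrak{S}_2\rangle$ with quotient $\langle\Delta G_2,\mathfrak{S}_2\rangle$ and $S^{[2]}\to\Sym^2(S)$ is a $G_1$-equivariant crepant resolution, a terminalization of the defining quotient $\Sym^2(S)/G_1=(\Sym^2(S)/H)/G_2$ of $S(G_1)^{[2]}_{\id}$ is also a terminalization of $\big(\text{terminalization of }\Sym^2(S)/H\big)/G_2$, hence --- by the lemma, in codimension $1$ --- of $\widetilde{S}^{[2]}/G_2\sim_{\mathrm{bir}}\Sym^2(\widetilde{S})/G_2$. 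By the flop argument in the proof of Proposition \ref{bimero} (via \cite[Corollary 6.17]{Bakker2}) any two such terminalizations are deformation equivalent, so $S(G_1)^{[2]}_{\id}\sim\widetilde{S}(G_2)^{[2]}_{\theta'}$ for the induced valid involution $\theta'$, which one checks is $\sim\id$ for each of the five $G_2$ (abelian in (i), (ii), (iv), (v); the unique valid class on $\mathfrak{S}_3$ in (iii), by Lemma \ref{mainlemma2} and Corollary \ref{classvalid}). Finally $\widetilde{S}$ and $S$ are both K3 surfaces carrying the symplectic group $G_2$, so $(\widetilde{S},G_2)\sim(S,G_2)$ by Nikulin's theory; the deformation invariance of the Fujiki construction (the remark following Definition \ref{Fujikiconstruc}) then yields $\widetilde{S}(G_2)^{[2]}_{\theta'}\sim S(G_2)^{[2]}_{\theta'}$, the second orbifold in each pair. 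This settles all five cases at once, case (i) with $G_2=\{\id\}$ being Fujiki's statement.

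For the surgery lemma itself I would analyse $\Sym^2(S)/H$. Its singular locus has a divisorial part --- the images of the surfaces $\{\{x,hx\}:x\in S\}$ attached to the involutions $h\in H$, with transverse singularity $\C^2/\{\pm\id\}$, that is the models $N_2=M_2$ of Section \ref{recallFujiki} --- together with finitely many singularities in codimension $\ge 4$, coming from pairs fixed by subgroups of $H$ and from the small diagonal $\{\{x,x\}\}$. Proposition \ref{mini} then furnishes a terminalization $W$ of $\Sym^2(S)/H$, and the content of the lemma is that $W$ is $G_2$-equivariantly bimeromorphic in codimension $1$ to $\widetilde{S}^{[2]}$ for a K3 surface $\widetilde{S}$ equipped with a degree-$\sqrt{|H|}$ map to $\Sigma$ (this is where even rank of $H$ enters, so that $\sqrt{|H|}$ is an integer and $\widetilde{S}$ --- being a symplectic quotient of the K3 $S$ --- is again a K3). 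For $H=G_1=C_2^4$ and $G_2=\{\id\}$ this is precisely \cite[Proposition 14.5]{Fujiki}, and the explicit birational geometry of the Fujiki local models used there carries over verbatim; the only new input is that all the choices (the local models, the ``square-root'' datum producing $\widetilde{S}$, the crepant resolutions) are made $G_1$-equivariantly, hence descend along $G_1\to G_2$.

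The main obstacle is exactly this surgery lemma --- exhibiting $\widetilde{S}$ and the codimension-$1$, $G_2$-equivariant bimeromorphism $W\dashrightarrow\widetilde{S}^{[2]}$ concretely, which requires Fujiki's hands-on resolution of the local models $N_k$, $M_k$ and is where the even-rank hypothesis on $H$ is essential. Everything else --- the reductions to $\theta=\id$, the identification of $H$ and $G_2$ in each of the five cases, the transport of the $G_2$-action, and the closing appeal to deformation invariance --- is routine bookkeeping that I would carry out case by case.
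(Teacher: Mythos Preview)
Your group-theoretic reduction is correct --- in each case there is indeed a normal $H\cong C_2^{2m}$ in $G_1$ with $G_1/H\cong G_2$ --- and isolating a single ``surgery lemma'' is a clean way to organise the argument. But your sketch of the lemma's proof has a genuine gap: the bimeromorphism from a terminalization of $\Sym^2(S)/H$ to some $\widetilde{S}^{[2]}$ is a \emph{global} statement, and Fujiki's resolution of the local models $N_k,M_k$ only tells you what a terminalization looks like analytically near the singular locus, not that it globally coincides with the Hilbert square of a K3. Neither Fujiki's Proposition~14.5 nor the paper's proof proceeds by local surgery. Your description of $\widetilde{S}$ as ``a symplectic quotient of the K3 $S$'' carrying a degree-$\sqrt{|H|}$ map to $\Sigma$ is also off: in case~(i) the output is $\widetilde{S}=S$ itself, and in (iv),~(v) it is the Kummer surface of a genuinely different torus.

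The paper's route is different and supplies exactly the missing global ingredient: one takes $S$ Kummer, $S\to T/\pm\id$, and uses the torus isogeny
\[
f\colon T^2\to T^2,\qquad (x,y)\mapsto(x+y,\,x-y),
\]
whose kernel is the diagonal copy of $T[2]\cong C_2^4$. Each group $\mathcal{G}_i=\langle j_{\id}(G_i),\mathfrak{S}_2\rangle$ lifts to a group $\widetilde{\mathcal{G}}_i$ on $T^2$ containing $I=\langle(-\id,\id),(\id,-\id)\rangle$, and one checks by hand, case by case, that $f_*(\widetilde{\mathcal{G}}_1)=\widetilde{\mathcal{G}}_2$, so $f$ induces an \emph{isomorphism} $T^2/\widetilde{\mathcal{G}}_1\simeq T^2/\widetilde{\mathcal{G}}_2$ and hence a bimeromorphism $S^2/\mathcal{G}_1\dashrightarrow S^2/\mathcal{G}_2$; Proposition~\ref{bimero} then finishes. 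Your normal subgroup $H$ is realised as a group of $2$-torsion translations (all of $T[2]$ in (i)--(iii); a subgroup, together with passage to a quotient torus $T'$, in (iv),~(v)), and the quotient $G_2$ as genuine torus automorphisms --- which is why $T$ must be chosen of the form $E\times E$ (or $E_{\xi_3}\times E_{\xi_3}$) in cases (ii), (iii),~(v). So your surgery lemma is the right target, but the torus isogeny is the mechanism that proves it, not local birational geometry of the Fujiki models.
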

\begin{proof}
The general idea of the proof is to find some bimeromorphisms between the orbifolds listed in the proposition and then to apply Proposition \ref{bimero}.
The bimeromorphisms is obtained as follows.
Let $S\rightarrow T/-\id$ be a Kummer surface with $T$ a torus. 
Assume that we want to show that two orbifolds $S(G_1)_{\theta_1}^{[2]}$ and $S(G_2)_{\theta_2}^{[2]}$ are deformation equivalent. 
Using Proposition \ref{bimero}, it is enough to find a bimeromorphism between $S^2/\left\langle j_{\theta_1}(G_1),\mathfrak{S}_2\right\rangle $ and $S^2/\left\langle j_{\theta_2}(G_2),\mathfrak{S}_2\right\rangle$ (see Notation \ref{jtheta} for the definition of $j_{\theta}$). Such a bimeromorphism is obtained via an isogeny of $T^2$.
We denote $\mathcal{G}_i:=\left\langle j_{\theta_i}(G_i),\mathfrak{S}_2\right\rangle$ for all $i\in\left\{1,2\right\}$.
The goal of the proof is to find two groups $\widetilde{\mathcal{G}_1}$ and $\widetilde{\mathcal{G}_2}$ acting on $T^2$ such that there exists a bimeromorphism:
$$T^2/\widetilde{\mathcal{G}_1}\dashrightarrow T^2/\widetilde{\mathcal{G}_2},$$
with the groups $\widetilde{\mathcal{G}_i}$ which are given by extensions of $\left\langle (-\id,\id),(\id,-\id)\right\rangle$ by $\mathcal{G}_i$;
then the groups $\mathcal{G}_i$ induce an automorphism group on $S^2\rightarrow T^2/\left\langle (-\id,\id),(\id,-\id)\right\rangle$ and we obtain the desired bimeromorphism between $S^2/\mathcal{G}_1$ and $S^2/\mathcal{G}_2$.

Let $g$ and $h$ be two automorphisms of $T$, we denote by $(g,h)$ the induced diagonal action on $T^2$. To simplify the expression, we set $I:=\left\langle (-\id,\id),(\id,-\id)\right\rangle$.

The isogeny used to obtain the different bimeromorphisms is always the same:
$$\xymatrix@R0pt{f:\ \ T^2\ar[r]&T^2\\
(x,y)\ar[r]&(x+y,x-y).}$$
Note that: 
\begin{equation}
\Ker f=\left\{\left.(a,a)\right|\ a\in T[2]\right\}\simeq C_2^4.
\label{kerf}
\end{equation}
To finish the proof, we only have to provide the groups $\widetilde{\mathcal{G}_1}$ and $\widetilde{\mathcal{G}_2}$ in the different cases.
We denote by $s_0$ the automorphism of $T^2$ which exchanges the two factors ($s_0(x,y)=(y,x)$).
\begin{itemize}
\item[(i)]
In this case, $\mathcal{G}_2=\left\langle s_0\right\rangle$ and $\mathcal{G}_1=\left\langle s_0 \right\rangle\times \left\{\left.t_{(a,a)}\right|\ a\in T[2]\right\}$, with $t_{(a,a)}$ the translation by $(a,a)$. 
Let $H$ be a sub-group of $\Aut(T^2)$, we denote $f_*(H):=\left\{\left.f\circ h\right|\ h\in H\right\}$.
Since (\ref{kerf}), note that: $$f_*(I\rtimes\mathcal{G}_1)=I\rtimes\mathcal{G}_2.$$
Therefore, $f$ induces an isomorphism:
$$\frac{T^2}{I\rtimes \mathcal{G}_1}\rightarrow\frac{T^2}{I\rtimes \mathcal{G}_2}.$$
Moreover, the group $\widetilde{\mathcal{G}_2}=I\rtimes \mathcal{G}_2$ induces the group $\mathfrak{S}_2$ on $S^2$ and the group $\widetilde{\mathcal{G}_1}=I\rtimes \mathcal{G}_1$ induces the group $\left\langle j_{\id}(C_2^4),\mathfrak{S}_2\right\rangle$.
\item[(ii)]
Since the method is identical as (i), we only provide the groups $\widetilde{\mathcal{G}_1}$ and $\widetilde{\mathcal{G}_2}$.
In this case, we assume that $T=E\times E$, with $E$ an elliptic curve. We consider $s_2\in \Aut(T)$ given by the matrix:
$$\begin{pmatrix}0&1\\
-1&0
\end{pmatrix}.$$ 
Note that $s_2^2=-\id$.
Then, we take $\widetilde{\mathcal{G}_2}=\left\langle I, (s_2,s_2), s_0\right\rangle$ and 
$$\widetilde{\mathcal{G}_1}= \left(\left\{\left.t_{(a,a)}\right|\ a\in T[2]\right\}\rtimes\left\langle I, (s_2,s_2)\right\rangle \right)\times\left\langle s_0 \right\rangle.$$
The group $\widetilde{\mathcal{G}_2}$ is an extension of $I$ and $\left\langle (s_2,s_2), s_0\right\rangle/(-\id,-\id)$ and the group $\widetilde{\mathcal{G}_2}$ is an extension of $I$ and $\left(\left\{\left.t_{(a,a)}\right|\ a\in T[2]\right\}\rtimes\left\langle (s_2,s_2)\right\rangle /(-\id,-\id)\right)\times\left\langle s_0 \right\rangle$.
Therefore, we have $\mathcal{G}_2\simeq C_2\times \left\langle s_0\right\rangle$ and $\mathcal{G}_1\simeq (C_2^2\wr C_2)\times \left\langle s_0\right\rangle$.
\item[(iii)]
In this case, we assume that $T=E_{\xi_3}\times E_{\xi_3}$ with $E_{\xi_3}:=\C/\left\langle 1,\xi_3\right\rangle$, where $\xi_3$ is a third root of the unity. We denote by $\rho$ the automorphism of $T$ with the diagonal matrix $\diag(\xi_3,\xi_3^{-1})$.
Then, we take 
$\widetilde{\mathcal{G}_2}=\left\langle (s_2,s_2),(\rho,\rho), I,s_0\right\rangle$ 
and 
$\widetilde{\mathcal{G}_1}= \left(\left\{\left.t_{(a,a)}\right|\ a\in T[2]\right\}\rtimes\left\langle I,(s_2,s_2),(\rho,\rho)\right\rangle\right)\times\left\langle s_0 \right\rangle.$
\item[(iv)]
Let $a\in T[2]$. We take $\widetilde{\mathcal{G}_2}=\left\langle I, t_{(a,a)}, s_0\right\rangle$ and $\widetilde{\mathcal{G}_1}=\left(\left\{\left.t_{(b,b)}\right|\ b\in T[2]\right\}\right)\times\left\langle I,s_0,t_{(a,0)} \right\rangle$.
Note that $f\circ t_{(a,0)}=t_{(a,a)}$.
Moreover the quotient $T^2/\widetilde{\mathcal{G}_1}$ can be written differently as follows:
$$T^2/\widetilde{\mathcal{G}_1}=T'^2/\widetilde{\mathcal{G}_1}',$$
with $T':=T/\left\langle t_{(a,0)}\right\rangle$ and $\widetilde{\mathcal{G}_1}'=\widetilde{\mathcal{G}_1}/\left\langle t_{(a,0)},t_{(0,a)}\right\rangle$. We have $\widetilde{\mathcal{G}_1}'\simeq I\times C_2^3\times \left\langle s_0'\right\rangle$, with $s_0'$ which exchanges the two factors of $T'^2$; then $f$ provides a bimeromorphism:
$$S'^2/\left\langle j_{\id}(C_2^3),\mathfrak{S}_2\right\rangle\dashrightarrow S^2/\left\langle j_{\id}(C_2),\mathfrak{S}_2\right\rangle,$$
where $S'$ is the Kummer surface obtained from $T'$ and $S$ the Kummer surface obtained from $T$. We conclude as usual with Proposition \ref{bimero}.
\item[(v)]
This case is very similar to (iv).
The next bimeromorphism is obtained as before via $f$ and a quotient torus $T'$ with an isomorphism: 
$$T'^2/\widetilde{\mathcal{G}_1}' \rightarrow T^2/\widetilde{\mathcal{G}_2}.$$
So we directly provide $T'$, $\widetilde{\mathcal{G}_1}'$ and $\widetilde{\mathcal{G}_2}$.
We assume that $T=E\times E$ with $E=\C/\left\langle 1,x\right\rangle$ an elliptic curve.
Then we consider $\widetilde{\mathcal{G}_2}=\left\langle (s_2,s_2),I,s_0,t_{(\frac{x}{2},\frac{x}{2},\frac{x}{2},\frac{x}{2})}\right\rangle$ and
$$\widetilde{\mathcal{G}_1}=\left(\left\{\left.t_{(a,a)}\right|\ a\in T[2]\right\}\rtimes\left\langle (s_2,s_2),I,t_{(\frac{x}{2},\frac{x}{2},0,0)},t_{(0,0,\frac{x}{2},\frac{x}{2})}\right\rangle \right)\times\left\langle s_0 \right\rangle.$$
Note that $f\circ t_{(\frac{x}{2},\frac{x}{2},0,0)}=f\circ t_{(0,0,\frac{x}{2},\frac{x}{2})}=t_{(\frac{x}{2},\frac{x}{2},\frac{x}{2},\frac{x}{2})}$.
Then we consider $T'=T/\left\langle t_{(\frac{x}{2},\frac{x}{2})}\right\rangle$ and $\widetilde{\mathcal{G}_1}'=\widetilde{\mathcal{G}_1}/\left\langle t_{(\frac{x}{2},\frac{x}{2},0,0)},t_{(0,0,\frac{x}{2},\frac{x}{2})}\right\rangle$.
It remains to verify that $\widetilde{\mathcal{G}_1}'$ is the relevant group. 
Let $g\in \widetilde{\mathcal{G}_1}$, we denote its image in $\widetilde{\mathcal{G}_1}'$ by $\overline{g}$. 
The group $\widetilde{\mathcal{G}_1}'$ is generated by $\overline{I}$, $\overline{s_0}$, $\overline{(s_2,s_2)}\circ \overline{t_{(\frac{1}{2},0,\frac{1}{2},0)}}$, $\overline{t_{(\frac{1}{2},0,\frac{1}{2},0)}}$ and $\overline{t_{(\frac{x}{2},0,\frac{x}{2},0)}}$.
Note that $\left(\overline{(s_2,s_2)}\circ \overline{t_{(\frac{1}{2},0,\frac{1}{2},0)}}\right)^2=\overline{(-\id,-\id)}\circ \overline{t_{(\frac{1}{2},\frac{1}{2},\frac{1}{2},\frac{1}{2})}}$.
The group $\left\langle s_2\circ t_{(\frac{1}{2},0)}, t_{(\frac{1}{2},0)}\right\rangle/-\id\simeq \mathcal{D}_4$ because
$$(s_2\circ t_{(\frac{1}{2},0)})^{-1}=-t_{(\frac{1}{2},0)}\circ s_2.$$
Moreover $\overline{t_{(\frac{x}{2},0,\frac{x}{2},0)}}$ commutes with $\overline{(s_2,s_2)}$ since we have quotiented $T$ by $t_{(\frac{x}{2},\frac{x}{2})}$. 
Therefore, we have:
$$\mathcal{G}_1'=\widetilde{\mathcal{G}_1}'/\overline{I}\simeq \mathcal{D}_4\times C_2\times \left\langle s_0\right\rangle.$$
\end{itemize}
\end{proof}
\begin{rmk}
Note that we still have three couples of orbifolds with the same Betti numbers and singularities. These couples are:
\begin{itemize}
\item
$S(C_2^2\wr C_2)_{\not\sim\id}^{[2]}$ and $S(\mathcal{D}_4)^{[2]}$;
\item
$S(C_2^2\rtimes C_4)^{[2]}$ and $S(C_4)^{[2]}$;
\item
$S(C_2\times \mathfrak{S}_4)^{[2]}$ and $S(\mathcal{D}_6)^{[2]}$.
\end{itemize}
These couples could be pairs of deformation equivalent orbifolds;
note that the previous method do not apply for these couples.
\end{rmk}
\begin{rmk}
Let $S$ be a K3 surface endowed with a finite admissible symplectic automorphism group $G$. 
All the results of Section \ref{sing}, all the methods described in this section and our computer programs \cite{github} still applied to $S(G)_{\theta}^{[2]}$ when the involution $\theta$ is not valid. Therefore, we could also provide all the examples of Fujiki primitively symplectic orbifolds obtained from a finite admissible symplectic automorphism group $G$ endowed with a non-valid involution.
\end{rmk}

\subsection{Some examples in dimension 6}\label{dim6section}
In  dimension higher than 4, the terminalizations are unknown. However, in few cases, it is still possible to conclude.
\begin{prop}\label{dim6bis}
The varieties $S(C_2^k)^{[3]}$ for $1\leq k \leq 4$ are irreducible symplectic orbifolds.
\end{prop}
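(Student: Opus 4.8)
The plan is to show that the quotient $S^3/\langle j_{\inv}(C_2^k),\mathfrak{S}_3\rangle$ admits an explicit terminalization which is an orbifold, and then invoke Corollary \ref{corirr2} (which already guarantees that $S(C_2^k)^{[3]}$ is an irreducible symplectic variety with simply connected smooth locus, since any involution on $C_2^k$ is valid and $n=3\geq 3$). What remains is therefore to produce a terminalization with only quotient singularities in codimension $\geq 4$, i.e. a \emph{primitively symplectic orbifold} in the sense of Definition \ref{defiorbi}. The point is that for $C_2^k$ the singularities appearing in $S^3/\mathcal{G}$ are sufficiently simple (codimension-2 loci of type $A_1$, coming from the involutions with fixed locus of codimension 2, together with deeper strata) that a known local model applies.

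First I would determine the singularities of $Z:=S^3/\mathcal{G}$ with $\mathcal{G}=\langle j_{\inv}(C_2^k),\mathfrak{S}_3\rangle$, using the description of elements with codimension-2 fixed locus from Remark \ref{fixed}: these are the $g_{i,j}$, transpositions composed with an elementary element $(\id,\dots,g,\dots,g^{-1},\dots)$, but since $G=C_2^k$ is abelian of exponent 2 we have $g=g^{-1}$, so $g_{i,j}$ is an involution and the generic point of the codimension-2 component is of analytic type $\C^{4}\times(\C^2/\pm\id)$. The deeper singular strata come from products and from the elements of $\mathfrak{S}_3$ of order $2$ and $3$; concretely, away from the big divisorial stratum, the local models are quotients of $\C^6$ by subgroups of $\mathfrak{S}_3\ltimes(C_2^k)^3$, and one checks these are all generated in codimension 2 by symplectic involutions, so that a crepant partial resolution along the divisorial stratum (blowing up the reduced singular locus in codimension 2, exactly as in the classical Nikulin/Fujiki construction for $S(C_2)^{[2]}$) resolves the codimension-2 part and leaves only quotient singularities in codimension $\geq 4$. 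The resulting variety $Y$ is then crepant over $Z$, has $\Q$-factorial terminal (quotient) singularities by Proposition \ref{terminal}, hence is a terminalization; it has only quotient singularities and $\codim\Sing Y\geq 4$ by construction, so it is a primitively symplectic orbifold, and $\pi_1(Y_{reg})=0$ by Corollary \ref{corirr2} (or Proposition \ref{fonda}).

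The step I expect to be the main obstacle is verifying that \emph{every} remaining local model after the divisorial blow-up is genuinely a quotient singularity with the codimension-$\geq 4$ property, rather than hiding a codimension-2 or codimension-3 singular stratum that obstructs the orbifold conclusion; this requires going through the stabilizer groups $\mathcal{G}_x$ for points $x\in S^3$ (which, by the analogue of Lemma \ref{n>2bis}, are subgroups of $\mathfrak{S}_3\ltimes(C_2^k)^3$ acting symplectically on $\C^6$) and checking for each that, after removing the contribution of the symplectic involutions with $4$-dimensional fixed locus, the rest acts with fixed loci of codimension $\geq 4$. Here the restriction $k\leq 4$ is what keeps $C_2^k$ a genuine symplectic automorphism group of a K3 surface (by Nikulin/Mukai, only $k\leq 4$ occurs), so that the list of stabilizer types is finite and controlled. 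A convenient shortcut is to observe that $C_2^k\subset C_2^4$ and that $S(C_2^4)^{[3]}$ dominates the others up to the bimeromorphism machinery of Proposition \ref{bimero}, so it suffices to treat $k=4$; alternatively one reduces, via an isogeny of an abelian surface as in the proof of Proposition \ref{defequiprop}, to the torus picture $T^3/\big(\langle(-\id,\id,\id),(\id,-\id,\id),(\id,\id,-\id)\rangle\rtimes\mathcal{G}\big)$ where the local models are completely explicit linear quotients and the terminalization can be written down directly. Either way, the essential content is the local resolution of $\C^4\times(\C^2/\pm\id)$ along the codimension-2 stratum, which is standard (it is the $A_1$-singularity times a smooth factor), and the bookkeeping that nothing worse than $\codim\geq 4$ quotient singularities survives.
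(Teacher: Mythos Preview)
Your overall strategy is correct and matches the paper's: the IHS property with simply connected smooth locus comes for free from Corollary~\ref{corirr2}, and the only issue is to exhibit a terminalization of $S^3/\mathcal{G}$ with quotient singularities in codimension~$\geq 4$. Your classification of the codimension-2 fixed loci via Remark~\ref{fixed} is also the right starting point.

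The gap is that ``blow up the reduced codimension-2 locus'' is not enough, and your sketch does not engage with the deeper strata where the real difficulty lies. The paper enumerates the stabilizer types explicitly: besides the generic $A_1$ points and the $N_2\times\C^2$ points (handled by Proposition~\ref{mini}), there are points with stabilizer $\mathfrak{S}_3$ (diagonal $(x,x,x)$ with $x$ generic), whose local model $(\C^2)^3/\mathfrak{S}_3$ requires the Hilbert scheme resolution rather than a blow-up. More seriously, the hardest strata are the points $(x,x,\iota'(x))$ and $(x,x,x)$ with $x\in\Fix\iota$, where the stabilizer is $\langle(-\id,-\id,\id),\mathfrak{S}_3\rangle$, a group of order~24 acting on $(\C^2)^3$. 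A naive blow-up does not terminalize this. The paper's key idea, which you are missing, is an explicit linear change of variables $f(x,y,z)=\tfrac12(y+z-x,\,x+z-y,\,x+y-z)$ under which the action of $(-\id,-\id,\id)$ becomes the permutation $(1,2)(3,4)$ on the kernel of the sum map $(\C^2)^4\to\C^2$; this identifies the local model with $(\C^2)^3/\mathfrak{S}_4$, which has a known crepant resolution (the generalized Kummer construction). Without this identification there is no obvious terminalization of this stratum that stays in the orbifold category, and your proposal gives no mechanism to produce one. Your suggested reductions (to $k=4$, or via torus isogeny) do not bypass this local problem, since the same stabilizer type occurs for every $k\geq 1$.
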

\begin{proof}
We recall that $S(C_2^k)^{[3]}$ is obtained as a terminalization of the quotient 
$S^3/\mathcal{G}$, with
$$\mathcal{G}=\left\langle \left\{\left.(\iota,\iota,\id)\right|\ \iota\in C_2^k\right\}\cup\mathfrak{S}_3\right\rangle.$$
By Theorem \ref{quotientsvalide}, we know that $S(C_2^k)^{[3]}$ is an irreducible symplectic variety with a simply connected smooth locus.
Moreover by Propositions \ref{bimero} and \ref{locallytriv}, we know that if one terminalization of $S^3/\mathcal{G}$ is orbifold, then all the terminalizations are orbifold.
Therefore, it only remains to find one terminalization that makes $S(C_2^k)^{[3]}$ an orbifold. 

We denote $\pi:S^3\rightarrow S^3/\mathcal{G}$ the quotient map.
The group $\mathcal{G}$ has cardinality $6\times 2^{2k}$ according to Lemma \ref{card}. We denote by $O$ the subset of $S^3$ which contains the elements with an orbit under the action of $\mathcal{G}$ of a cardinality strictly smaller than $6\times 2^{2k}$. 
According to Proposition \ref{terminal}, to find the terminalization of $S^3/\mathcal{G}$, we have first to find the irreducible components of $O$ of dimension 4. We denote by $O_4$ the union of all these components.
According to Remark \ref{fixed}, these components are of the form:
$$\left\{\left.(x,\iota(x))\right|\ x\in S\right\}\times S\ \ \text{or}\ \ \left\{\left.(x,x)\right|\ x\in S\right\}\times S,$$
with $\iota\in C_2^k\smallsetminus{\id}$. The points of these components are fixed by a subgroup of $\mathcal{G}$. Since a point in $S$ can only be fixed by one unique symplectic involution, the different possibilities are the following (up to permutation of the factors).
\begin{itemize}
\item[(i)]
generic point of $O_4$; they are fixed by an automorphism of order 2: a transposition or the composition of a transposition with an involution.
\item[(ii)]
The points of the form $(x,x,y)$ with $x\in \Fix \iota$, $\iota\in C_2^k\smallsetminus{\id}$ and $y$ a generic point in $S$.
These points are fixed by $\left\langle (1,2),(\iota,\iota,\id)\right\rangle$.
\item[(iii)]
The points of the form $(x,\iota(x),\iota'(x))$ with $\iota$ and $\iota'$ two involutions in $C_2^k$ (possibly $\id$) and $x$ a generic point in $S$.
These points are fixed by $\left\langle (1,2)\circ(\iota,\iota,\id),(1,3)\circ(\iota',\id,\iota')\right\rangle$.
\item[(iv)]
The points of the form $(x,\iota'(x),y)$ with $x\in \Fix \iota$, $\iota\in C_2^k\smallsetminus{\id}$, $\iota'$ any involution (possibly $\id$) and $y\in \Fix \iota$ but $y$ is not in the orbit of $x$ under the action of $C_2^k$.
These points are fixed by $\left\langle (1,2)\circ(\iota',\iota',\id),(\iota,\iota,\id),(\iota,\id,\iota),(\id,\iota,\iota)\right\rangle$.
\item[(v)]
The points of the form $(x,\iota'(x),\iota''(x))$ with $x\in \Fix \iota$ and $\iota$, $\iota'$ and $\iota''$ three involutions in $C_2^k$ such that $\iota\neq\id$ (while $\iota'$ and $\iota''$ can possibly be any involution including $\id$).
These points are fixed by $\left\langle (1,2)\circ(\iota',\iota',\id),(\iota,\iota,\id),(\iota,\id,\iota),(\id,\iota,\iota), (1,3)\circ(\iota'',\id,\iota'')\right\rangle$.
\end{itemize} 
The image by $\pi$ of the previous points are of the following analytic types:
\begin{itemize}
\item[(i)]
the type $\left(\C^2\right)^3/(-\id,\id,\id)$; a blow-up resolves crepantly these singularities.
\item[(ii)]
The type $\left(\C^2\right)^3/\left\langle (-\id,-\id,\id),(1,2)\right\rangle$. This quotient corresponds to $\left(\C^2\right)^2/\left\langle (-\id,-\id),(1,2)\right\rangle\times \C^2$. Then, a crepant resolution is provided by Proposition \ref{mini}.
\item[(iii)]
The type $\left(\C^2\right)^3/\mathfrak{S}_3$. A crepant resolution is obtained by the Hilbert scheme of 3 points on $\C^2$.
\item[(iv)]
 Let $$B=\left\langle (-\id,-\id,\id),(1,2),(-\id,\id,-\id),(\id,-\id,-\id)\right\rangle.$$  In this case, the singular point is of analytic type $\left(\C^2\right)^3/B$. The group $A=\left\langle (-\id,-\id,\id),(1,2)\right\rangle$ is normal in $B$.
Then, we can first quotient by $A$.
By Proposition \ref{mini}, we have a crepant resolution $\widetilde{(\C^2)^3/A}\rightarrow \left(\C^2\right)^3/A$. Then the involutions $(-\id,\id,-\id)$ and $(\id,-\id,-\id)$ induce involutions on $\widetilde{(\C^2)^3/A}$ with a fixed locus in codimension 4. We keep the same notation for the induced involutions on $\widetilde{(\C^2)^3/A}$. Then $\left(\widetilde{(\C^2)^3/A}\right)/\left\langle (-\id,\id,-\id),(\id,-\id,-\id)\right\rangle$ has singularities in codimension 4. By Proposition \ref{terminal}, we have found a terminalization.
\item[(v)]
The case (v) is of analytic type $\left(\C^2\right)^3/\left\langle (-\id,-\id,\id),\mathfrak{S}_3\right\rangle$.
The sequel of the proof is dedicated to this case.
\end{itemize}
There is a natural embedding of $\left(\C^2\right)^3$ in $\left(\C^2\right)^4$ given by $(x,y,z)\mapsto (x,y,z,-x-y-z)$.
Therefore there is a natural action of $\mathfrak{S}_4$ on $\left(\C^2\right)^3$ via the previous embedding. For instance the permutation $(1,2)(3,4)$ acts on $\left(\C^2\right)^3$ by sending $(x,y,z)$ to $(y,x,-x-y-z)$. 
We are going to show that cases (v) and (vi) are analytically equivalent to $\left(\C^2\right)^3/\mathfrak{S}_4$. This will end the proof, since this quotient has a crepant resolution given by the Kummer resolution.
We consider the following change of variables:
$$\xymatrix@R0pt{f:\ \left(\C^2\right)^3\ar[r]&\left(\C^2\right)^3\\
(x,y,z)\ar[r]&(\frac{y+z-x}{2},\frac{x+z-y}{2},\frac{x+y-z}{2}).}$$
Via this change of variables the action of $\mathfrak{S}_3$ is unchanged. 
Moreover, we have: 
\begin{align*}
f\circ (-\id,-\id,\id)(x,y,z)&=(\frac{-y+z+x}{2},\frac{-x+z+y}{2},\frac{-x-y-z}{2})\\
&=(\frac{-y+z+x}{2},\frac{-x+z+y}{2},-\frac{-x+y+z}{2}-\frac{x-y+z}{2}-\frac{x+y-z}{2}).
\end{align*}
Therefore, the action of $(-\id,-\id,\id)$ is now given by the following matrix:
$\begin{pmatrix}
0 & 1 & -1\\
1 & 0 & -1\\
0 & 0 & -1
\end{pmatrix}.$
This corresponds to the action of $(1,2)(3,4)$. It follows that the action of the group $\left\langle (-\id,-\id,\id),\mathfrak{S}_3\right\rangle$ after the change of variables corresponds to the action of the group $\left\langle (1,2)(3,4),\mathfrak{S}_3\right\rangle=\mathfrak{S}_4$.
\end{proof}
\begin{rmk}
According to Corollary \ref{examples}, the previous proposition provides 6-dimensional irreducible symplectic orbifolds with second Betti number 15, 11, 9 and 8 respectively.
\end{rmk}
\newpage
\appendix
\section{Introduction}
The programs to compute the second Betti number and the singularities are straightforward application of Propositions \ref{b2}, \ref{a8}, \ref{a6}, \ref{a3} (see Section \ref{singpropor} and \ref{singProgram}). However, the methods to find the valid involutions of Lemma \ref{mainlemma2} have not yet been explained. The main goal of this section is to explain these methods in relation with our Python programs. In particular, we provide the Python commands in order to use the programs in \cite{github}. To help the understanding, we illustrate the explanations by studying some precise examples of groups.

All our programs are based on permutation groups. Therefore a first step is to find a permutation representation for each groups. Each group is given via a list of generators in a permutation group. To do so we use the two data bases \cite{Dokchitser} and \cite{LMFDB}.
\section{Finding the valid involutions: first method}\label{firstmethod}
\subsection{General description of the method}
This method is efficient when the group $G$ is with a trivial center and with $\Aut(G)$ small enough. The goal of this method is to see involution as conjugation by an element of order 2. 
We have the following exact sequence:
$$\xymatrix{0\ar[r]&\ar[r]Z(G)& G\ar[r]^{\Phi\ \ \ \ \ }& \Aut(G)\ar[r]&\Out(G)\ar[r]&0,}$$
with $\Phi(g)(h)=ghg^{-1}$. 
Assume that $Z(G)=\left\{\id\right\}$ and we can find a permutation representation such that:
$G\hookrightarrow \Aut(G)\hookrightarrow \mathfrak{S}_n$. Then all involutions on $G$ can be expressed by the conjugation by an element of $\mathfrak{S}_n$. Indeed, let $f\in \Aut(G)$, then $f\circ \Phi(g) \circ f^{-1}=\Phi(f(g))$. Based on these remarks, the program \cite[Finding\_involution1]{github} is constructed as follows.
\begin{itemize}
\item
First, we consider all the elements of order 2 in $\mathfrak{S}_n$ and we select the ones which correspond to valid involutions on $G$. 
In \cite[Finding\_involution1]{github}, this is given by the function:
\begin{lstlisting}
valid_involution(L,n)
\end{lstlisting}
\begin{itemize}
\item \textbf{input}: 
\begin{itemize}
\item
$L$: list of generators of $G$
\item
$n$: integer to specify that we are in $\mathfrak{S}_n$.
\end{itemize}
\item \textbf{output}: the list of all order 2-elements in $\mathfrak{S}_n$ that induces a valid involution on $G$.
\end{itemize}
\item
Second, we find the valid involutions which are equivalent using Proposition \ref{invodeform}.
This is given by the function:
\begin{lstlisting}
classes_valid_involution(L,N,P,n)
\end{lstlisting}
\begin{itemize}
\item \textbf{input}:
\begin{itemize}
\item
$L$: list of generators of $G$
\item
$N$: list of generators of $\widetilde{G}$ (see proposition \ref{invodeform})
\item
$P$: a list of valid involutions
\item
$n$: integer to specify that we are in $\mathfrak{S}_n$.
\end{itemize}
\item\textbf{output}:
a list obtained from $P$ by keeping only one representative of the equivalent classes of valid involutions. 
\end{itemize}
\end{itemize}
This first method can also be used for some groups with non-trivial center using the following trick.
\begin{prop}\label{AxG}
We assume that $G=A\times G'$ with $A$ a finite abelian group and $G'$ a finite group. We denote by $\inv_A$ the unique valid involution on $A$. Then all valid involutions on $G$ can be written $\inv_A\times \theta'$, where $\theta'$ is a valid involution on $G'$.
\end{prop}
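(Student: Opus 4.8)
The plan is to analyse how a valid involution of $G=A\times G'$ is constrained by the direct product structure, using the abelianization $\Ab(G)=A\times\Ab(G')$ as the main bookkeeping tool. Let $\theta$ be a valid involution of $G$ and fix generators $(g_1,\dots,g_k)$ of $G$ with $\theta(g_i)=g_i^{-1}$. First I would pass to $\Ab(G)$: the classes $\bar g_i$ still generate $\Ab(G)$, and the induced endomorphism $\bar\theta$ sends each $\bar g_i$ to $\bar g_i^{-1}$; since $x\mapsto x^{-1}$ is an endomorphism of the abelian group $\Ab(G)$ agreeing with $\bar\theta$ on generators, $\bar\theta=\inv$ on all of $\Ab(G)$. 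As $A$ is a direct factor of $G$ and $A\times\{\id\}$ is central, $\theta(A\times\{\id\})$ is central too; comparing with $\bar\theta=\inv$ then forces the $A$-component of $\theta|_{A\times\{\id\}}$ to be $\inv_A$, and it forces $\theta(\{\id\}\times G')\subset\{\id\}\times G'$ (the $A$-component of $\theta(\id,h)$ coincides with that of $\bar\theta(\overline{(\id,h)})=-\overline{(\id,h)}$, which is trivial). Writing $\theta':=\theta|_{\{\id\}\times G'}$, the identity $\theta^2=\id$ gives $\theta'^2=\id$, so $\theta'$ is an involution of $G'$, and $\bar\theta=\inv$ shows it inverts $\Ab(G')$.

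Next I would settle the clean case. Writing $g_i=(a_i,h_i)$ and projecting $\theta(g_i)=g_i^{-1}$ to $G'$ yields $\theta'(h_i)=\beta(a_i)^{-1}h_i^{-1}$, where $\beta\colon A\to Z(G')\cap[G',G']$ is the homomorphism measuring the failure of $\theta$ to preserve $A\times\{\id\}$: its image lies in $Z(G')$ by centrality and in $[G',G']$ because it is killed in $\Ab(G')$. If $\beta$ is trivial, then the $h_i$ generate $G'$ and satisfy $\theta'(h_i)=h_i^{-1}$, so $\theta'$ is valid and $\theta=\inv_A\times\theta'$ exactly. In particular the statement holds verbatim whenever $Z(G')\cap[G',G']$ is trivial, which already covers $G'\in\{\mathfrak{S}_3,\mathfrak{S}_4,\mathfrak{A}_4\}$ (trivial centre) and hence most of the product groups to which this trick is applied in the tables of Lemma \ref{mainlemma2} and Theorem \ref{main3}.

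The delicate point, which I expect to be the main obstacle, is the case where $\beta$ is nontrivial: such a $\theta$ shears $A\times\{\id\}$ into $A\times(Z(G')\cap[G',G'])$ and is then literally not of the form $\inv_A\times\theta'$, so the conclusion must be read up to the equivalence of Definition \ref{equiinv}. Here I would first exploit $\theta^2=\id$ once more to pin down $\beta$ (it gives $\theta'\circ\beta=\beta$, so $\Image\beta$ is pointwise fixed by $\theta'$, from which one can still extract a valid generating family for $G'$ and conclude that $\theta'$ is valid). Then, to relate $\theta$ and $\inv_A\times\theta'$, I would feed both into Proposition \ref{invodeform}, taking $\widetilde G$ to be an appropriate overgroup of $G$ inside $\Aut(S)$ and choosing $h_1,h_2\in\widetilde G$ tailored to absorb the shear $\beta$; conditions (i)--(iv) there then reduce to the identities on $\beta$ obtained above. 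The real work is in exhibiting such $h_1,h_2$ within an available automorphism group and checking the four conditions; once that is done, Proposition \ref{invodeform} yields the equivalence $\theta\sim\inv_A\times\theta'$ and the proof is complete.
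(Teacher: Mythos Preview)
Your route through the abelianization is more careful than the paper's and uncovers a genuine issue. The paper's proof is very short: writing $f_j=a_jf'_j$ and a chosen $a\in A$ as a word $\prod f_{j}$, it deduces $\prod f'_{j}=\id$ and then asserts $\theta(a)=\prod f_{j}^{-1}=\prod a_{j}^{-1}\prod (f'_{j})^{-1}=a^{-1}$. The weak link is the claim $\prod (f'_{j})^{-1}=\id$: from $\prod f'_{j}=\id$ one only gets the product of inverses \emph{in the reversed order} equal to $\id$, so the displayed equality needs exactly the extra control you isolate, namely that the $G'$-component lies in $Z(G')\cap[G',G']$ and hence vanishes when that intersection is trivial. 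Your abelianization argument makes this transparent and shows that the paper's conclusion $\theta|_{A}=\inv_A$ and $\theta(G')\subset G'$ holds unconditionally on $\Ab(G)$ but only literally on $G$ when $Z(G')\cap[G',G']=\{\id\}$.

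You are right that the literal statement can fail otherwise. For instance, with $A=C_2=\langle\epsilon\rangle$ and $G'=Q_8$, the map $\theta(\epsilon^a,q)=(\epsilon^a,(-1)^a\,kqk^{-1})$ is a valid involution of $A\times G'$ (it inverts the generators $(1,i),(1,j),(\epsilon,k)$) with $\theta(\epsilon,1)=(\epsilon,-1)\notin A\times\{1\}$, so $\theta$ is not of the form $\inv_A\times\theta'$. This confirms your diagnosis; your ``clean case'' is precisely the correct hypothesis, and since every $G'$ to which the paper actually applies this proposition ($\mathfrak{S}_3$, $\mathfrak{S}_4$, $\mathfrak{A}_4$) has trivial centre, the applications in Lemma~\ref{mainlemma2} are unaffected. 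Your proposed repair of the general case via Proposition~\ref{invodeform} is plausible in spirit but cannot be completed abstractly: it requires producing concrete $h_1,h_2$ inside an automorphism group of a K3 surface, which is not available for arbitrary $G'$. I would therefore state and prove the proposition under the extra hypothesis $Z(G')\cap[G',G']=\{\id\}$ (or simply $Z(G')=\{\id\}$), which is what the paper needs and what your argument cleanly establishes.
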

\begin{proof}
To simplify the notation, we identify $A$ with $A\times \left\{\id\right\}$ and $G'$ with $\left\{\id\right\}\times G'$. 
Let $\theta$ be a valid involution on $G$. Therefore, there exists a family of generators $(f_1,...,f_k)$ such that $\theta(f_j)=f_j^{-1}$ for all $j\in\left\{1,...,k\right\}$. All this generators can be written $f_j=a_jf'_j$ with $a_j\in A$ and $f'_j\in G'$. Let $a\in A$; we can write $a$ as a product of elements of $(f_1,...,f_k)$:
$$a=\prod f_j=\prod a_j \prod f'_j=\prod a_j,$$
because, necessarily $\prod f'_j=\id$.
Moreover:
$$\theta(a)=\prod f_j^{-1}=\prod a_j^{-1} \prod f_j'^{-1}=\prod a_j^{-1}=a^{-1}.$$
Hence $\theta_{|A}=\inv_A$.
Necessarily, $(f_1',...,f_k')$ is a family of generators of $G'$.
Furthermore, we have:
$$a_j^{-1}f_j'^{-1}=f_j^{-1}=\theta(f_j)=\theta(a_j)\theta(f_j')=a_j^{-1}\theta(f_j').$$
That is $\theta(f_j')=f_j'^{-1}$. Hence $\theta_{|G'}$ is a valid involution.
\end{proof}
\subsection{Description on an example}\label{C2p4S3}
We illustrate the method on one example. 
We consider the group $\SmallGroup(96,227)$;
it is a semi-direct product of $C_2^4$ with $\mathfrak{S}_{3}$. 

According to the data base \cite{Dokchitser} and \cite{LMFDB}, we can embed $C_2^4\rtimes\mathfrak{S}_{3}$ in $\mathfrak{S}_{8}$ as follows. We set \textbf{C2p4S3} a list of generators.
\begin{lstlisting}
C2p4S3=[Permutation(0,7)(1,2), Permutation(7)(0,1,2)(4,5,6),
        Permutation(0,4)(1,6)(2,5)(3,7)]
\end{lstlisting} 
Note that $C_2^4\rtimes\mathfrak{S}_{3}$ has a trivial center.
Then with Python (for instance by using the function "The\_Automorphisms" of \cite[Finding\_involution1]{github}), we can verify that we have an embedding:
$$C_2^4\rtimes\mathfrak{S}_{3}\hookrightarrow\Aut(C_2^4\rtimes \mathfrak{S}_{3})\hookrightarrow \mathfrak{S}_{8}.$$
So, all the valid involutions on $C_2^4\rtimes\mathfrak{S}_{3}$ can be obtained with the method explained before.
First, we can run the program with $\widetilde{G}=G=C_2^4\rtimes\mathfrak{S}_{3}$.
\begin{lstlisting}
>>> P=valid_involution(C2p4S3,8)
>>> classes_valid_involution(C2p4S3,C2p4S3,P,8)
[Permutation(7), Permutation(0, 7)(5, 6), Permutation(0, 7)(4, 6), Permutation(7)(0, 1)(3, 5)]
\end{lstlisting}
We find 4 possible classes of valid involutions. To sharpen this result, we need to consider a bigger $\widetilde{G}$.
However, it can be hard to find the appropriated $\widetilde{G}$. For this reason, we have developed a function for this purpose.

\subsection{Method to find the group $\widetilde{G}$ of Proposition \ref{invodeform}}\label{Gtilde}
We start with $H\supset G$ an over-group of $G$ , $\theta_1$ and $\theta_2$ two valid involutions on $G$. The algorithm provides $\widetilde{G}$ such that:
\begin{itemize}
\item
$G\subset \widetilde{G} \subset H$;
\item
we can find $h_1$ and $h_2$ in $\widetilde{G}$ that verified the assumptions of Proposition \ref{invodeform} given $\theta_1$ and $\theta_2$ equivalent.
\end{itemize}
This is given by the function:
\begin{lstlisting}
finding_G_tilde(L,N,A,B,r)
\end{lstlisting}
\begin{itemize}
\item \textbf{input}:
\begin{itemize}
\item
$L$: list of generators of $G$
\item
$N$: list of generators of $H$
\item
$A$ and $B$: two valid involutions that we want to show equivalent
\item
$r$: the desired cardinality for $\widetilde{G}$.
\end{itemize}
\item\textbf{output}:
an element $g$ such that $\widetilde{G}=\left\langle g,G\right\rangle$. Moreover $A$ and $B$ are found equivalent by applying Proposition \ref{invodeform} with $\widetilde{G}$.
\end{itemize}

We can use this function to our previous example.
\begin{lstlisting}
>>> N=[Permutation(0,1),Permutation(0,1,2,3,4,5,6,7)]
>>> A=Permutation(0, 7)(5, 6)
>>> B=Permutation(0, 7)(4, 6)
>>> finding_G_tilde(C2p4S3,N,A,B,192)
Permutation(0, 4)(1, 6)(2, 3)(5, 7)
\end{lstlisting}
I.e. taking $\widetilde{G}=\left\langle G, (0, 4)(1, 6)(2, 3)(5, 7)\right\rangle$ allows to show that $(0, 7)(5, 6)$ and $(0, 7)(4, 6)$ are equivalent using Proposition \ref{invodeform}. It remains to verify that $\widetilde{G}$ is an automorphism group of a K3 surface. We can verify with Python that $\widetilde{G}$ has trivial center, then $\widetilde{G}$ can only be isomorphic to $H_{192}=C_2^4\rtimes \mathcal{D}_6$ according to \cite{Dokchitser}; moreover this group is a symplectic group of a K3 surface as explained in \cite[Section 2, n°8]{Mukai}. We apply the same method with the permutations $(0, 7)(4, 6)$ and $(0, 1)(3, 5)$ to show that they are equivalent. 
We obtain that there are at most two equivalent classes of valid involution on $C_2^4\rtimes\mathfrak{S}_{3}$ given by $\id$ and:
\begin{lstlisting}
Permutation(7)(0, 1)(3, 5)
\end{lstlisting} 
In Section \ref{singpropor}, we will see that these two involutions provide orbifolds with different second Betti numbers; so they cannot be equivalent. So, we have two classes of valid involutions as stated in Lemma \ref{mainlemma2}.
\subsection{Application of the method on our list of admissible groups}
\subsubsection*{Groups of the form $A\times G'$}
According to Proposition \ref{AxG}, we can use this first method to find the classes of valid involutions on the groups $G=\mathfrak{S}_3$, $C_3\times \mathfrak{S}_3$, $C_2\times \mathfrak{S}_4$, $\mathfrak{A}_4$, $C_2\times\mathfrak{A}_4$, $C_3\times \mathfrak{A}_4$, $C_2^2\times\mathfrak{A}_4$ or $\mathfrak{S}_4$. For all these groups, applying the function \emph{classes\_valid\_involution} with $\widetilde{G}=G$ is enough to conclude.
\subsubsection*{The groups $\mathfrak{S}_3^2$, $\mathfrak{S}_3\wr C_2$, $\mathfrak{A}_4^2$ and $C_2^4\rtimes C_6$}
These groups are with a trivial center and we can find an embedding 
$G\hookrightarrow \Aut(G)\hookrightarrow \mathfrak{S}_n$. We provide these embeddings in the next table by giving a family of generators for $\Aut(G)$ (the generators of $G$ are given in Lemma \ref{mainlemma2}).
These embeddings can be found by hand or using the data bases \cite{Dokchitser} and \cite{LMFDB}.
Moreover, for these groups applying our method with $\widetilde{G}=G$ is enough to conclude.
\[
\begin{tabular}{|c|c|c|c|}
\hline
$G$ & n & $\Aut(G)$ & embbeding of $\Aut(G)$ in $\mathfrak{S}_n$\\
\hline
$\mathfrak{S}_3^2$ & 6 & $\mathfrak{S}_3\wr C_2$ & $(0,1)$; $(0,1,2)$; $(3,4)$; $(3,4,5)$; $(0,3)(1,4)(2,5)$\\
\hline
$\mathfrak{S}_3\wr C_2$ & 9 &$\Agam_1(\mathbb{F}_9)$ &$(0,5,3,4,1,2,7,6)$; $(0,1)(2,4)(5,6)$; \\
& & &$(0,1,8)(2,3,4)(5,6,7)$; $(0,3,6)(1,4,7)(2,5,8)$\\ 
\hline
$\mathfrak{A}_4^2$ & 8 &$\mathfrak{S}_4\wr C_2$ & $(0,1)$; $(0,1,2,3)$; $(4,5)$; $(4,5,6,7)$; $(0,4)(1,5)(2,6)(3,7)$\\
\hline
\end{tabular}
\]
For $C_2^4\rtimes C_6$, there is a small complication.
In order to have $C_2^4\rtimes C_6\hookrightarrow \Aut(C_2^4\rtimes C_6)\hookrightarrow \mathfrak{S}_n$,
we need to consider a different embedding from the one provided in Lemma \ref{mainlemma2}.
We need to take $n=12$.
\begin{itemize}
\item
\textbf{Embedding of $C_2^4\rtimes C_6$:} $(0,2,4,6,8,10)(1,3,5,7,9,11)$, $(0,5,11,6)(1,7,2,8)(3,9)(4,10)$.
\item
\textbf{Embedding of $\Aut(C_2^4\rtimes C_6)$:} 

$(0,11)(1,2)$, $(0,2,4,6,8,10)(1,3,5,7,9,11)$, $(0,10)(1,7)(2,8)(3,5)(4,6)(9,11)$.
\end{itemize}
Once we know that there is only one class of valid involutions, we can give a representative using the simplest embedding given in Lemma \ref{mainlemma2}.
\subsubsection*{The group $\mathfrak{A}_{3,3}$}
The group $\Aut(\mathfrak{A}_{3,3})$ is given by $\AGL_2(\mathbb{F}_3)$. According to  \cite{Dokchitser} and \cite{LMFDB}, we can find an embedding 
$$\mathfrak{A}_{3,3}\hookrightarrow \Aut(\mathfrak{A}_{3,3}) \hookrightarrow \mathfrak{S}_{9},$$
obtained with the following generators (the generators of $\mathfrak{A}_{3,3}$ are given in Lemma \ref{mainlemma2}):
				$$\Aut(\mathfrak{A}_{3,3})=\left\langle(0,5,3,4,1,2,7,6); (0,1,8)(2,3,4)(5,6,7); (2,3,4)(5,7,6); (0,3,6)(1,4,7)(2,5,8)\right\rangle.$$

If we apply Proposition \ref{invodeform} with $\widetilde{G}=G=\mathfrak{A}_{3,3}$, we find 7 possible classes of valid involutions. Hence, we have to use the function \emph{finding\_G\_tilde} of \cite[Finding\_involution1]{github} as explained in Section \ref{Gtilde}.

Let $(\theta_j)_{j\in\left\{1,...,7\right\}}$ be the 7 valid involutions that we find before. With the function \emph{finding\_G\_tilde} we can find 5 embeddings for $2\leq j\leq 6$:
$$\mathfrak{A}_{3,3}\hookrightarrow H_j \hookrightarrow \Aut(\mathfrak{A}_{3,3}),$$
such that $\left|H_j\right|=36$ and applying Proposition \ref{invodeform} with $\widetilde{G}=H_j$ provides $\theta_j\sim \theta_{j+1}$. 
Necessarily $H_j$ can only be $\mathfrak{S}_3^2$ or $C_3^2\rtimes C_4$ which are both automorphism groups of a K3 surface (the only groups of order 36 containing $\mathfrak{A}_{3,3}$ according to \cite{Dokchitser}).

After this process, it remains potential two classes of valid involutions. We can see that they are not equivalent by computing the singularities of the associated orbifolds (see Theorem \ref{main3}). 
\section{Finding the valid involutions: second method}\label{secondmethod}
When $G$ has a non-trivial center, or when $\Aut(G)$ is big, or if we do not know an embedded of $\Aut(G)$ in a permutation group, the previous method is not effective. In these cases, we need another strategy.
\subsection{General description of the method}
A valid involution can also be seen via a set of generators. Indeed if $\theta$ is a valid involution, there exists a set $\left\{g_1,...,g_k\right\}$ of generators such that $\theta(g_i)=g_i^{-1}$ for all $i\in\left\{1,...,k\right\}$. Therefore the involution $\theta$ is fully defined by the set $\left\{g_1,...,g_k\right\}$. The program \cite[Finding\_involution2]{github} is based on this remark. In particular, it uses a direct consequence of the Tarski irredundant Basis Theorem.
\begin{defi}
Let $G$ be a finite group. Let $\mathcal{B}\subset G$ be a subset. We say that $\mathcal{B}$ is a basis of $G$ if:
\begin{itemize}
\item[(i)]
$\left\langle\mathcal{B} \right\rangle=G$ and;
\item[(ii)]
 none of the proper subsets of $\mathcal{B}$ generate $G$.
\end{itemize}
To shorten, a basis is a minimal set of generators.
\end{defi}
\begin{thm}[Tarski irredundant Basis Theorem]\label{Tarski}
~\\
Let $G$ be a finite group. Let $d(G)$ and $m(G)$ be respectively the the smallest and the largest cardinality for a basis of $G$.
Then for all $d(G)\leq k \leq m(G)$, there exists a basis of $G$ of cardinality $k$.
\end{thm}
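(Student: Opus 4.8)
The plan is to reduce the theorem to a single ``step-down'' assertion and then induct downward from $m(G)$. First I would record the elementary fact that a generating set of $G$ of minimum cardinality is automatically irredundant: removing a redundant element would give a strictly smaller generating set. Hence the value $d(G)$ is already realized by an irredundant generating set, and it suffices to prove the step-down assertion: \emph{if $G$ admits an irredundant generating set of size $n$ with $n>d(G)$, then it admits one of size $n-1$}. Granting this, one starts from an irredundant generating set of size $m(G)$ and applies the assertion repeatedly, producing an irredundant generating set of every cardinality between $d(G)$ and $m(G)$, which is precisely the claim. Throughout, the only property of irredundancy I use is that a set $C$ is irredundant if and only if $\langle C\smallsetminus\{c\}\rangle\neq G$ for every $c\in C$.

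For the step-down assertion I would use an exchange procedure. Fix an irredundant generating set $B=\{b_1,\dots,b_n\}$ with $n>d(G)$ and a minimum generating set $A$, so that $A$ is irredundant and $|A|=d(G)<n$. Since $\langle b_1,\dots,b_{n-1}\rangle=\langle B\smallsetminus\{b_n\}\rangle\neq G$ while $\langle A\rangle=G$, there is an element $a\in A$ lying outside $\langle b_1,\dots,b_{n-1}\rangle$; then $B'=\{b_1,\dots,b_{n-1},a\}$ again generates $G$, and $a$ is not redundant in $B'$ (deleting it gives $\langle b_1,\dots,b_{n-1}\rangle\neq G$). Passing to an irredundant generating subset $B''\subseteq B'$, which necessarily contains $a$, there are three possibilities: $|B''|=n-1$, and we are done; or $|B''|=n$, in which case $B''$ is an irredundant generating set of size $n$ containing strictly more elements of $A$ than $B$ did, and one repeats the construction with $B''$ in place of $B$; or $|B''|\le n-2$, the problematic ``overshoot''. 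Because $A$ is finite, the middle case can recur only finitely many times, so after finitely many steps the procedure terminates either by reaching size $n-1$ or by overshooting.

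The heart of the matter --- and the step I expect to be the real obstacle --- is controlling the size of the set produced at the end, that is, ruling out the overshoot to size $\le n-2$: the exchange procedure by itself does not pin down the final cardinality. I would handle this by a minimal-counterexample argument. Suppose the realized cardinalities do not form an interval, and choose realized values $p<q$ with no realized value strictly between them and with $q$ minimal among all such upper endpoints; minimality forces every value in $[d(G),p]$ to be realized (otherwise a gap with a smaller upper endpoint would exist). Now run the exchange procedure starting from an irredundant generating set of size $q$ and using a minimum generating set for $A$, of size $d(G)\le p$. Any irredundant generating set that appears along the way has size $q$ (still realized, so we continue), or size in the forbidden range $(p,q)$ (impossible by choice of the gap), or size $\le p$. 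The remaining, and most delicate, part of the bookkeeping is to organize the exchange so that at the \emph{first} step where the cardinality strictly decreases, the redundant generator that is removed can be chosen to leave behind an irredundant generating set --- necessarily of size exactly $q-1$ --- which contradicts $q-1\in(p,q)$. This rules out any gap, so the set of realized cardinalities is exactly the interval $[d(G),m(G)]$; I expect this last construction at the gap-top $q$ to require the most care, as it is essentially the technical core of Tarski's original argument.
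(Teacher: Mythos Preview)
The paper does not supply a proof of this statement: the Tarski irredundant basis theorem is quoted as a classical result and used only to justify a step in a computer search, so there is no proof in the paper to compare against. I therefore assess your argument on its own terms.

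Your exchange step contains a genuine error. You claim that if $a\in A$ lies outside $H=\langle b_1,\dots,b_{n-1}\rangle$ then $B'=\{b_1,\dots,b_{n-1},a\}$ again generates $G$; but $a\notin H$ only yields $\langle H,a\rangle\supsetneq H$, not $\langle H,a\rangle=G$. Concretely, take $G=\mathfrak{S}_4$, the irredundant basis $B=\{(1,2),(3,4),(2,3)\}$ of size $3$, and the minimum generating set $A=\{(1,3,2,4),(1,2,3)\}$ of size $2$. Removing $b_3=(2,3)$ gives $H=\langle(1,2),(3,4)\rangle$; both elements of $A$ lie outside $H$, yet choosing $a=(1,3,2,4)$ yields $\langle(1,2),(3,4),(1,3,2,4)\rangle$, a dihedral subgroup of order $8$, not $\mathfrak{S}_4$. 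The usual remedy is not to swap in a single element of $A$, but to adjoin elements of $A$ to $\{b_1,\dots,b_{n-1}\}$ one at a time and stop at the first moment the resulting set generates; the last element adjoined is then automatically irredundant, and one works from there. Even with this repair, however, the heart of the matter --- forcing the cardinality to drop by \emph{exactly} one --- is precisely the step you leave open at the end. Your minimal-gap argument does not close it: you assert that at the first strict decrease one ``can choose'' the removed generator so as to land on an irredundant set of size $q-1$, but you give no mechanism for that choice, and you yourself flag it as the technical core you have not carried out. Without this, the proof is incomplete.
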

The program \cite[Finding\_involution2]{github} follows several steps:
\begin{itemize}
\item
We find all the family of generators of the group $G$ using Theorem \ref{Tarski}. This is the function 
\begin{lstlisting}
generators(L)
\end{lstlisting}
\begin{itemize}
\item \textbf{input}: $L$ : a list of generators of $G$.
\item \textbf{output}: the list of all families of generators of $G$.
\end{itemize}
\item
Among these families of generators we select the ones that provide a well defined involution on $G$. We also eliminate the redundancy: if two family of generators provide the same involution, we consider only one of them.
There are the functions 
\begin{lstlisting}
bijection(L)
involutions(L)
\end{lstlisting}
\begin{itemize}
\item \textbf{input}: $L$ : a list of generators of $G$.
\item \textbf{output of \emph{bijection}}: the list of all bijections (given by families of generators).
\item \textbf{output of \emph{involutions}}: the list of all involutions (given by families of generators).
\end{itemize}
\item
Finally, we find the valid involutions which are equivalent with exactly the same idea as before using Proposition \ref{invodeform}.
\begin{lstlisting}
classes_valid_involution2(P,L,N)
\end{lstlisting}
\begin{itemize}
\item \textbf{input}: 
\begin{itemize}
\item $P$: list of involutions (given by families of generators of $G$);
\item $L$: a list of generators of $G$;
\item $N$: a list of generators of $\widetilde{G}$.
\end{itemize}
\item \textbf{output}: a list obtained from $P$ by keeping only one representative of the equivalent classes of valid involutions.
\end{itemize}
\end{itemize}
\subsection{Description on an example}
We consider the group $C_2^2\rtimes C_4$. Using the data bases \cite{Dokchitser} and \cite{LMFDB}, we set \textbf{C2p2C4} a family of generators: 
\begin{lstlisting}
C2p2C4=[Permutation(0,1,2,7)(3,4,5,6),Permutation(7)(0,4)(2,6)]
\end{lstlisting}
Then:
\begin{lstlisting}
>>> P=involutions(C2p2C4)
>>> P
[[Permutation(0, 1, 2, 7)(3, 4, 5, 6), Permutation(7)(0, 4)(2, 6)], [Permutation(0, 3, 6, 1)(2, 5, 4, 7), Permutation(7)(0, 4)(2, 6)]]
\end{lstlisting}
We find two different valid involutions $\theta_1$ and $\theta_2$ each given by a family of generators. Then, we apply the function \emph{classes\_valid\_involution2} with the group $\widetilde{G}=G$ (see Proposition \ref{invodeform}). 
\begin{lstlisting}
>>> classes_valid_involution2(P,C2p2C4,C2p2C4)
[[Permutation(0, 1, 2, 7)(3, 4, 5, 6), Permutation(7)(0, 4)(2, 6)]]
\end{lstlisting}
We obtain only one class of valid involution. 
Since, we know that there is only one class of valid involutions, we can still apply the first method (Section \ref{firstmethod}) to find an expression of a valid involution as provided in Lemma \ref{mainlemma2}.
\subsection{Application of the method on our list of admissible groups}
\subsubsection*{The groups $\mathcal{D}_4$, $\mathcal{D}_6$, $C_2\times \mathcal{D}_4$, $C_2^3\rtimes C_4$ or $C_3^2\rtimes C_4$}
For all these groups, we use the function \emph{classes\_valid\_involution2} with $\widetilde{G}=G$ and we find only one class of valid involutions.
\subsubsection*{The group $C_2^2\wr C_2$}
For this group applying Proposition \ref{invodeform} with $\widetilde{G}=G$ is not enough to conclude. We need to consider $\widetilde{G}=C_2^4\rtimes C_6$ which is one of our admissible automorphism groups (see Lemma \ref{mainlemma2}). To apply \emph{classes\_valid\_involution2}, we need to know an embedding $C_2^2\wr C_2\hookrightarrow C_2^4\rtimes C_6$ (see Lemma \ref{mainlemma2} for the generators of $C_2^4\rtimes C_6$):
$$C_2^2\wr C_2=\left\langle (0,2)(1,7)(3,5)(4,6); (3,4)(5,6); (3,5)(4,6); (0,7)(1,2)(3,4)(5,6); (0,4)(1,5)(2,6)(3,7)\right\rangle.$$
\subsubsection*{The group $C_4^2\rtimes C_3$}
In this case, if we try to apply Proposition \ref{invodeform} with $\widetilde{G}=G$, we find several valid involutions that could still be equivalent.
Therefore, we need to consider a bigger $\widetilde{G}$. 
We are going to take for $\widetilde{G}$
the group $\widetilde{F}$ defined in \cite[Section 2, n°5]{Mukai} acting on the Fermat quartic and containing $C_4^2\rtimes C_3$. This group is generated by the automorphisms of order 4: $(x,y,z,t)\mapsto (i^ax,i^by,i^cz,i^dt)$, with $a,b,c,d \in C_4$ and by the permutations of the coordinates. This group can be embedded in $\mathfrak{S}_{64}$ via its natural action on the set:$$\left\{\left.(i^a:i^b:i^c:1)\right|\ (a,b,c)\in C_4^3\right\}.$$
This embedding is realized in Python by the function ``F'' in \cite[Finding\_involution2]{github}. We proceed as follows.
We order the set $E$ by numbering in base 4 with 1 which corresponds to 0, i to 1, -1 to 2 and -i to 3. For instance $(1:i:-i:1)$ is the number 310 in base 4. Then, the Python function provides the following generators of $\widetilde{F}$ as permutations in $\mathfrak{S}_{64}$:
$$u:(x,y,z,t)\mapsto (ix,y,z,t);\ \ v:(x,y,z,t)\mapsto (x,iy,z,t);\ \ w:(x,y,z,t)\mapsto (x,y,iz,t);$$
$$ t1:(x,y,z,t)\mapsto (y,x,z,t)\ \ t2:(x,y,z,t)\mapsto (x,z,y,t)\ \ t3:(x,y,z,t)\mapsto (x,y,t,z).$$
For instance, the morphism $u$ corresponds to the permutation: $$\prod_{0}^{15}(i,i+1,i+2,i+3).$$
Taking $\widetilde{G}=\widetilde{F}$, we find only one class of valid involution.
\subsubsection*{The group $C_2^2\rtimes \mathfrak{A}_4$}
If we apply Proposition \ref{invodeform} taking $\widetilde{G}=G$, we obtain 10 valid involutions $(\theta_{i})$.
It is hard to find a effective $\widetilde{G}$ by hand. So we use the function \emph{finding\_G\_tilde} described in Section \ref{Gtilde} (the function \emph{finding\_G\_tilde} can be implemented with the two methods and can also be found in \cite[Finding\_involution2]{github}).

We choose $H=(C_2^2\rtimes \mathfrak{A}_{4})\rtimes\mathfrak{A}_{5}$ which can be embedded in $\mathfrak{S}_{16}$ (we choose this group because it is easy to construct and big enough (of index 2 in $\Aut(C_2^2\rtimes \mathfrak{A}_{4})$)). The embedding $C_2^2\rtimes \mathfrak{A}_4\hookrightarrow (C_2^2\rtimes \mathfrak{A}_{4})\rtimes\mathfrak{A}_{5} \hookrightarrow \mathfrak{S}_{16}$ can be provided by the following generators:
\begin{align*}
&C_2^2\rtimes \mathfrak{A}_{4}=\left\langle (0,8)(1,9)(2,10)(3,11)(4,14)(5,15)(6,12)(7,13);\right.\\ 
&\left.(0,11,14)(1,9,13)(2,8,15)(3,10,12)(4,6,5); (0,10)(1,11)(2,8)(3,9)(4,12)(5,13)(6,14)(7,15)\right\rangle;
\end{align*}
\begin{align*}
&(C_2^2\rtimes \mathfrak{A}_{4})\rtimes\mathfrak{A}_{5}=\left\langle C_2^2\rtimes \mathfrak{A}_{4}; (0,1)(11,9)(14,13)(2,3)(8,10)(15,12);\right.\\
&\left.(0,2)(11,8)(14,15)(1,3)(9,10)(13,12); (0,4)(11,6)(14,5)(2,13)(9,15)(1,8)\right\rangle.
\end{align*}
Then, we apply the function \emph{finding\_G\_tilde} of \cite[Finding\_involution2]{github} to find 9 embeddings:
$$ C_2^2\rtimes \mathfrak{A}_{4}\hookrightarrow \widetilde{G}_i \hookrightarrow (C_2^2\rtimes \mathfrak{A}_{4})\rtimes\mathfrak{A}_{5},$$
with $\widetilde{G}_i\simeq C_2^4\rtimes C_6$.
Each of these embeddings is chosen such that applying Proposition \ref{invodeform} with $\widetilde{G}_i$ shows that $\theta_{1}$ is equivalent to $\theta_{i}$. So finally, we obtain only one class of valid involution. 
\section{Second Betti numbers}\label{singpropor}
To find the second Betti number, we use the function \emph{div} of the program \cite[Second Betti number]{github} based on Proposition \ref{b2}. 
\begin{lstlisting}
div(L,T)
\end{lstlisting}
\begin{itemize}
\item \textbf{input}: 
\begin{itemize}
\item $L$: a list of generators of $G$;
\item $T$: an involution.
\end{itemize}
\item \textbf{output}: the number of exceptional divisors of $S(G)_\theta^{[2]}\rightarrow S\times S/\left\langle j_{\theta}(G),\mathfrak{S}_2\right\rangle$ which is the factor $\#\left(F/G\right)$ in Proposition \ref{b2} (see also Section \ref{Fujikivar} for the notation).
\end{itemize}
To finish the computation of $b_2$, we add $\rk H^2(S,\Z)^G$ which can be found in \cite{xiao}.
For instance, if we use the function with the group $C_2^4\rtimes\mathfrak{S}_{3}$ studied in Section \ref{C2p4S3}:
\begin{lstlisting}
>>> T=Permutation(7)(0, 1)(3, 5)
>>> div(C2p4S3,T)
2
\end{lstlisting}
In \cite{xiao}, we find that $\rk H^2(S,\Z)^{C_2^4\rtimes\mathfrak{S}_{3}}=4$. Hence, we obtain:
$$b_2\left(S(C_2^4\rtimes\mathfrak{S}_{3})_T^{[2]}\right)=2+4=6.$$
If we use the other valid involution that we found.
\begin{lstlisting}
>>> Id=Permutation(7)
>>> div(C2p4S3,Id)
6
\end{lstlisting}
So, we obtain:
$$b_2\left(S(C_2^4\rtimes\mathfrak{S}_{3})_{\id}^{[2]}\right)=6+4=10.$$
\section{Singularities}\label{singProgram}
To find the singularities we use the function \emph{singularities} of the program \cite[Singularities]{github} based on Propositions \ref{b2}, \ref{a8}, \ref{a6}, \ref{a3}. 
\begin{lstlisting}
singularities(L,T)
\end{lstlisting}
\begin{itemize}
\item \textbf{input}: 
\begin{itemize}
\item $L$: a list of generators of $G$;
\item $T$: an involution.
\end{itemize}
\item \textbf{output}: the table $\left[a_2,a_3,a_4,a_6,a_8,\mathfrak{b}_4,\mathfrak{b}_6\right]$. These numbers are defined in Notation \ref{defsing}.
\end{itemize}
For instance with the group $C_2^4\rtimes\mathfrak{S}_{3}$ studied in Section \ref{C2p4S3}:
\begin{lstlisting}
>>> T=Permutation(7)(0, 1)(3, 5)
>>> singularities(C2p4S3,T)
[19,12,6,0,0,1,0]
\end{lstlisting}

\bibliographystyle{amssort}

\noindent
Gr\'egoire \textsc{Menet}

\noindent
Académie militaire de Saint-Cyr Coëtquidan

\noindent
 CReC Saint-Cyr (Centre de recherche de l'académie militaire de Saint-Cyr) 

\noindent 
 56380 Guer, France.

\noindent
{\tt gregoire.menet@ac-amiens.fr}

\end{document}